\DeclareMathAlphabet\mathbb{U}{msb}{m}{n}
\newtheorem{thm}{Theorem}
\newtheorem{cor}[thm]{Corollary}
\newtheorem{ex}[thm]{Example}
\newtheorem{lem}[thm]{Lemma}
\newtheorem{prop}[thm]{Proposition}
\newtheorem{defn}[thm]{Definition}
\newtheorem{rem}[thm]{Remark}
\numberwithin{equation}{section}
\newcounter{nnn}  % auxliar per  numerar problemes
\newcommand{\espai}{\vspace{0.3 truecm}}
\newcommand{\Aa}{\mathcal{A}}
\newcommand{\Bb}{\mathcal{B}}
\newcommand{\Cc}{\mathcal{C}}
\newcommand{\Dd}{\mathcal{D}}
\newcommand{\Ee}{\mathcal{E}}
\newcommand{\Ff}{\mathcal{F}}
\newcommand{\Gg}{\mathcal{G}}
\newcommand{\Hh}{\mathcal{H}}
\newcommand{\Ii}{\mathcal{I}}
\newcommand{\Kk}{\mathcal{K}}
\newcommand{\Mm}{\mathcal{M}}
\newcommand{\Pp}{\mathcal{P}}
\newcommand{\Ss}{\mathcal{S}}
\newcommand{\Tt}{\mathcal{T}}
\newcommand{\Uu}{\mathcal{U}}
\newcommand{\Vv}{\mathcal{V}}
\newcommand{\Ww}{\mathcal{W}}
\newcommand{\AAA}{\mathbb{A}}
\newcommand{\EE}{\mathbb{E}}
\newcommand{\FF}{\mathbb{F}}
\newcommand{\GG}{\mathbb{G}}
\newcommand{\HH}{\mathbb{H}}
\newcommand{\II}{\mathbb{I}}
\newcommand{\JJ}{\mathbb{J}}
\newcommand{\KK}{\mathbb{K}}
\newcommand{\MM}{\mathbb{M}}
\newcommand{\PP}{\mathbb{P}}
\newcommand{\SSS}{\mathbb{S}}
\newcommand{\TT}{\mathbb{T}}
\newcommand{\UU}{\mathbb{U}}
\newcommand{\WW}{\mathbb{W}}
\newcommand{\ZZ}{\mathbb{Z}}
\newcommand{\Asf}{\mathsf{A}}
\newcommand{\Dsf}{\mathsf{D}}
\newcommand{\Fsf}{\mathsf{F}}
\newcommand{\Gsf}{\mathsf{G}}
\newcommand{\Hsf}{\mathsf{H}}
\newcommand{\Ksf}{\mathsf{K}}
\newcommand{\Lsf}{\mathsf{L}}
\newcommand{\Msf}{\mathsf{M}}
\newcommand{\Ssf}{\mathsf{S}}
\newcommand{\Zsf}{\mathsf{Z}}
\newcommand{\Abf}{\mathbf{A}}
\newcommand{\Cbf}{\mathbf{C}}
\newcommand{\To}{\rightarrow}
\begin{document}

\title{Permutation 2-groups I: structure and splitness}
\author{Josep Elgueta}
\address{Applied Mathematics Department\\Polytechnical University of Catalonia}
\email{josep.elgueta@upc.edu}

\thanks{This work was partially supported by the Generalitat de Catalunya (Project: 2009 SGR 1284) and the Ministerio de Educaci\'on y Ciencia of Spain (Projects: MTM2009-14163-C02-02 and MTM2012-38122-C03-01).
}

\begin{abstract}
By a 2-group we mean a groupoid equipped with a weakened group structure. It is called split when it is equivalent to the semidirect product of a discrete 2-group and a one-object 2-group. By a permutation 2-group we mean the 2-group $\SSS ym(\Gg)$ of self-equivalences of a groupoid $\Gg$ and natural isomorphisms between them, with the product given by composition of self-equivalences. These generalize the symmetric groups $\Ssf_n$, $n\geq 1$, obtained when $\Gg$ is a finite discrete groupoid.

After introducing the wreath 2-product $\Ssf_n\wr\wr\ \GG$ of the symmetric group $\Ssf_n$ with an arbitrary 2-group $\GG$, it is shown that for any (finite type) groupoid $\Gg$ the permutation 2-group $\SSS ym(\Gg)$ is equivalent to a product of wreath 2-products of the form $\Ssf_n\wr\wr\ \SSS ym(\Gsf)$ for a group $\Gsf$ thought of as a one-object groupoid. This is next used to compute the homotopy invariants of $\SSS ym(\Gg)$ which classify it up to equivalence. Using a previously shown splitness criterion for strict 2-groups, it is then proved that $\SSS ym(\Gg)$ can be non-split, and that the step from the trivial groupoid to an arbitrary one-object groupoid is the only source of non-splitness. Various examples of permutation 2-groups are explicitly computed, in particular the permutation 2-group of the underlying groupoid of a (finite type) 2-group. It also follows from well known results about the symmetric groups that the permutation 2-group of the groupoid of all finite sets and bijections between them is equivalent to the direct product 2-group $\ZZ_2[1]\times\ZZ_2[0]$, where $\ZZ_2[0]$ and $\ZZ_2[1]$ stand for the group $\ZZ_2$ thought of as a discrete and a one-object 2-group, respectively.
\end{abstract}

\maketitle

\tableofcontents

\section{\large Introduction}

In the last two or three decades, a considerable effort has been made to categorify some parts of mathematics. Roughly, the idea is to take a theory whose objects are sets equipped with some structure, and to develop an analogous theory where the objects are categories equipped with a similar structure. For instance, this idea is carried out by Breen \cite{lB92} and in many subsequent works devoted to the categorification of the theory of group extensions, or by Bernstein, Frenkel and Khovanov \cite{BFK99} and Frenkel, Khovanov and Stroppel \cite{FKS06}, who categorify the representation theory of quantum groups with a view toward the construction of TQFT's in four dimensions \cite{CF94}.

In fact, it quickly became clear that the theory merits to be pushed on to the higher dimensional setting of $n$-categories for any $n\geq 1$, and to $\infty$-categories. Lurie's book \cite{jLbook09} lays the foundations for this ``infinite dimensional mathematics''. For example, Toen and Vezzosi (\cite{TV05}, \cite{TV08}) developed a theory of algebraic geometry on monoidal $\infty$-categories, generalizing algebraic geometry over a monoidal category as developed by Deligne \cite{pD90}, and Ben-Zvi, Francis and Nadler \cite{BFN10} worked on a geometric $\infty$-function theory, a sort of $\infty$-version of the matrix description of a linear map between vector spaces.

This paper should be viewed as part of this program. It is intended to be a first step toward a theory of {\em permutation $\infty$-groups}. By this I mean the ($\infty$-)groups of self-equivalences of an $\infty$-{\em groupoid}. Recall that an $\infty$-groupoid is an $\infty$-version of a set, where there exists morphisms between the elements or 1-morphisms, morphisms between the 1-morphisms or 2-morphisms, and so on, all $k$-morphisms being invertible for any $k\geq 1$, at least up to a (weakly) invertible $(k+1)$-morphism. According to the well known homotopy hypothesis first envisaged by Grothendieck in his famous letter to Quillen, an $\infty$-groupoid is equivalent to a topological space (modulo weak homotopy equivalences). As a first step, in this work I only consider {\em permutation 2-groups}, i.e. the (2-)groups of self-equivalences of arbitrary groupoids ($\infty$-groupoids with only identity $k$-morphisms for any $k>1$).

{\em 2-groups}, also called {\em categorical groups} or {\em gr-categories}, go back, in the disguised form of a crossed module of groups, to works by Whitehead \cite{jhcW49} and Eilenberg and MacLane \cite{EM47} in the late 40's. It is not surprising, then, that there is a considerable body of work devoted to the study of some aspect of 2-groups. Thus there are works on the relationship between $n$-groups and the homotopy types (for instance \cite{CC91},\cite{BCD93}), works on the theory of extensions of 2-groups (for instance \cite{lB92}, \cite{aR03}, \cite{BV02}) or works on the representation theory of 2-groups (for instance \cite{jE4}, \cite{jE6}, \cite{CY05}, \cite{BBFW12}), and, on the {\em 2-representation theory} of groups viewed as discrete 2-groups (for instance \cite{pD97}, \cite{GK08}).

Roughly, a 2-group is a groupoid equipped with a suitably weakened group structure. Its theory can be seen as a refinement of the classical theory of groups. In fact, any notion defined for 2-groups ultimately amounts to various ``classical'' objects interacting in the appropriate way. The notion of 2-group itself provides an example. According to Sinh's theorem \cite{hxSi75} (see also \S~\ref{invariants_homotopics} below), up to the appropriate notion of equivalence, a 2-group $\GG$ is completely given by its {\em first homotopy group} $\pi_0(\GG)$, an abelian group $\pi_1(\GG)$ on which $\pi_0(\GG)$ acts, called its {\em second homotopy group}, and a 3-cocycle of $\pi_0(\GG)$ with values in $\pi_1(\GG)$. The cohomology class of this 3-cocycle wil be denoted by $\alpha(\GG)$ and called the {\em Postnikov invariant} of $\GG$. I shall refer to $\pi_0(\GG),\pi_1(\GG),\alpha(\GG)$ as the {\em homotopy invariants} of $\GG$. 

2-groups naturally appear as 2-groups of self-equivalences of the objects in a 2-category. This is the case of the permutation 2-groups, defined as the 2-groups of self-equivalences of the objects in the 2-category $\mathbf{Gpd}$ of (small) groupoids, functors and natural transformations between these. For any groupoid $\Gg$ we shall denote the corresponding permutation 2-group by $\SSS ym(\Gg)$. The product is given by the composition of self-equivalences and the horizontal composition of natural isomorphisms. When $\Gg$ is the discrete groupoid associated to a set $X$, $\SSS ym(\Gg)$ reduces to the usual symmetric group $\mathsf{Sym}(X)$ viewed as a discrete 2-group (with only identity morphisms). 

There is a particularly simple class of 2-groups. It includes the discrete 2-groups, denoted $\Gsf[0]$, with $\Gsf$ any group, and the one-object (or connected) 2-groups, denoted $\Asf[1]$, with $\Asf$ any abelian group. I mean the 2-groups which, up to equivalence, are the semidirect product of a discrete 2-group $\Gsf[0]$ and a one-object 2-group $\Asf[1]$ for some abelian group $\Asf$ on which $\Gsf$ acts. They will be called {\em split 2-groups} by reasons which will become clear below (see Theorem~\ref{criteri_general_2}). They are characterized by the fact that their Postnikov invariant is zero (see Theorem~\ref{criteri_general}).

Split 2-groups are interesting for at least three reasons. Firstly, many important examples of 2-groups are split. For instance, the ``general linear 2-groups'' associated to some higher versions of a vector space are split (Example~\ref{exemple_2-grup_split} below). Secondly, split 2-groups are considerably much easier to study than arbitrary 2-groups. In fact, some authors, such as Crane and Yetter \cite{CY05}, directly restrict to split 2-groups when studying their representation theory. Finally, split 2-groups are almost generic. Indeed, it follows from Sinh's theorem that any 2-group is equivalent to a sort of twisted version of a split 2-group whose associator is no longer trivial (see Proposition~\ref{A[1]_rtimes_G[0]} below and the comment following it).   

The purpose of this work is to investigate the structure and split character of the permutation 2-groups $\SSS ym(\Gg)$, and to compute their homotopy invariants. To do that, we first introduce the {\em wreath 2-product} of the symmetric group $\Ssf_n$ with an arbitrary 2-group $\GG$. When $\GG$ is discrete, this product reduces to the usual wreath product for groups. The first main result of the paper is then the following (cf. Theorem~\ref{teorema_estructura} in the main text).

\medskip
\noindent
{\sc Theorem.} {\em Let $\{(n_i,\Gsf_i)\}_{i\in I}$ be any family of pairs consisting of a positive integer $n_i\geq 1$ and a group $\Gsf_i$, with $\Gsf_i\ncong\Gsf_{i'}$ for $i\neq i'$. Let $\Gg_i$ be the coproduct of $n_i$ copies of $\Gsf_i$ as a one-object groupoid, and let $\Gg=\coprod_{i\in I}\Gg_i$ (such a groupoid is called of {\em finite type}). Then there is an equivalence of 2-groups
$$
\SSS ym(\Gg)\cong\prod_{i\in I}\Ssf_{n_i}\wr\wr\ \SSS ym(\Gsf_i),
$$
where $\prod$ denotes the (2-)product in the (2-)category of 2-groups.}

\medskip
Here and throughout the entire paper I follow the usual convention in groupoid theory of regarding groups and one-object groupoids as the same thing, and just denote by $\Gsf$ the one-object groupoid associated to a group $\Gsf$.

The second main result has to do with the homotopy invariants of the permutation 2-groups of finite type. Given any group $\Gsf'$ and any $\Gsf'$-module $\Asf$ there is a canonical homomorphism
$$
\xi_n:\Hsf^\bullet(\Gsf',\Asf)\To\Hsf^\bullet(\Ssf_n\wr\Gsf',\Asf^n)
$$
for each $n\geq 1$. When $\Gsf'$ is the group $\mathsf{Out}(\Gsf)$ of outer automorphisms of some group $\Gsf$, and $\Asf$ is the center $\Zsf(\Gsf)$ viewed as a $\mathsf{Out}(\Gsf)$-module in the obvious way, this gives a canonical homomorphism
$$
\xi_n:\Hsf^\bullet(\mathsf{Out}(\Gsf),\Zsf(\Gsf))\To\Hsf^\bullet(\Ssf_n\wr\mathsf{Out}(\Gsf),\Zsf(\Gsf)^n)
$$
for each $n\geq 1$. On the other hand, let be given any family of groups $\{\Gsf_i\}_{i\in I}$ and a family of abelian groups $\{\Asf_i\}_{i\in I}$ with $\Gsf_i$ acting on $\Asf_i$, and let
$$
\zeta:\prod_{i\in I}\Hsf^\bullet(\Gsf_i,\Asf_i)\To\Hsf^\bullet(\prod_{i\in I}\Gsf_i,\prod_{i\in I}\Asf_i)
$$
be the canonical monomorphism mapping $([z_i])_{i\in I}$ to the cohomology class of the product cocycle $\prod_{i\in I}z_i$. Then the second main result may be stated as follows (cf. Theorem~\ref{invariants_homotopics_2-grup_permutacions_general} in the main text).

\medskip
\noindent
{\sc Theorem.} {\em 
Let $\{(n_i,\Gsf_i)\}_{i\in I}$ be and $\Gg=\coprod_{i\in i}\Gg_i$ be as before. Then:
\begin{itemize}
\item[(1)] $\pi_0(\SSS ym(\Gg))\cong\prod_{i\in I}\Ssf_{n_i}\wr\mathsf{Out}(\Gsf_i)$.
\item[(2)] $\pi_1(\SSS ym(\Gg))\cong\prod_{i\in I}\Zsf(\Gsf_i)^{n_i}$, and the $\pi_0(\SSS ym(\Gg))$-module structure is given componentwise by
$$
(\sigma,([\phi_1],\ldots,[\phi_n]))\lhd(z_1,\ldots,z_n)=(\phi_{\sigma^{-1}(1)}(z_{\sigma^{-1}(1)}),\ldots,\phi_{\sigma^{-1}(n)}(z_{\sigma^{-1}(n)})).
$$
\item[(3)] $\alpha(\SSS ym(\Gg))=\zeta(\{\xi_{n_i}(\alpha_i))\}_{i\in I})$, where $\alpha_i=\alpha(\SSS ym(\Gsf_i))$ for $i\in I$. Moreover, $\SSS ym(\Gg)$ is split if and only if $\SSS ym(\Gsf_i)$ is split for all $i\in I$. 
\end{itemize} }

\medskip
Finally, the third main result has to do with the split character of these permutation 2-groups. According to the previous result, non-splitness can only arise in the step from the trivial groupoid to an arbitrary one-object groupoid. A group $\Gsf$ will be called {\em permutationally split} when $\SSS ym(\Gsf)$ is split. It then follows from the previous Theorem that $\SSS ym(\Gg)$ is split if and only if the corresponding groups $\Gsf_i$ are permutationally split for all $i\in I$.  It will be shown that the permutationally split character of a group $\Gsf$ is related to the canonical exact sequence
$$
\xymatrix{
0\ar[r] & \mathsf{Z}(\Gsf)\ar[r] & \Gsf\ar[r]^{c\ \ \ \ } & \mathsf{Aut}(\Gsf)\ar[r]^{\pi\  } & \mathsf{Out}(\Gsf)\ar[r] & 1\ , }
$$
where $c$ is the map sending any element of $\Gsf$ to the corresponding inner automorphism. More precisely, $\Gsf$ turns out to be permutationally split if and only if there exists a set theoretic section of the projection $\pi$ satisfying an appropriate condition of a cocycle type (cf. Proposition~\ref{caracteritzacio_grup_permutacionalment_escindit} in the main text). In particular, all abelian groups, all centerless groups and all groups with only inner automorphisms are permutationally split. However, the last main result of the paper can be stated as follows.

\medskip
\noindent
{\sc Theorem.} {\em 
There exists permutation 2-groups which are non-split.}

\medskip
\noindent
More precisely, it will be shown that the dihedral groups $\Dsf_{8k}$ are non permutationally split for any $k\geq 1$ (cf. Proposition~\ref{2-grups_permutacions_no_escindits} in the main text). Hence the permutation 2-groups $\SSS ym(\Dsf_{8k})$ are non-split.

An important example of a permutation 2-group which can be explicitly computed is that of the underlying groupoid of a finite type 2-group, i.e. a 2-group $\GG$ such that $|\pi_0(\GG)|<\infty$. It will be shown that is is equivalent to a split 2-group of the form
$$
\Asf^n[1]\rtimes(\Ssf_n\wr\mathsf{Aut}(\Asf))[0]
$$
for some positive integer $n\geq 1$ and some abelian group $\Asf$ (cf. Proposition~\ref{2-grup_permutacions_grupoide_subjacent_2-grup} below). In a sequel to this paper we plan to discuss the analog for 2-groups of Cayley's theorem and the extent to which any finite type 2-group, split or not, can be seen as a sub-2-group of such a split 2-group. 

As said before, this work is just a first step to the general theory of permutation $\infty$-groups. It remains for a future work investigating the possible analogues of all of these results in the general setting. One reason by which this may be interesting has to be found in the above mentioned homotopy hypothesis. In the same way as $\infty$-groupoids provide purely algebraic descriptions of topological spaces through the associated fundamental $\infty$-groupoids, permutation $\infty$-groups should provide purely algebraic descriptions of the automorphism groups of topological spaces. Apart from this, investigating permutation 2-groups also constitutes a first step toward a categorification of the theory of permutation representation of groups or equivalently, of the theory of $\Gsf$-sets. This was actually my original motivation. 

Let us now briefly describe how the text is organized. Section~2 begins with a review of the basic facts about 2-groups needed in the sequel, so as to make the paper as self-contained as possible. Next, both the notion of 2-action of a 2-group on another one and the associated notion of semidirect product of 2-groups are discussed. In particular, the wreath 2-product of the symmetric groups with an arbitrary 2-group is introduced. Section~3 is devoted to the notion of split 2-group and to proving various splitness criteria. In particular, a necessary and sufficient condition for a {\em strict} 2-group to be split is shown which is used in the next section to prove the existence of non-split permutation 2-groups. Although some parts of this section may be known to experts, to my knowledge there exists no detailed discussion of this in the literature. Finally, in Section~4 the (finite type) permutation 2-groups are introduced and the result sketched above proved.      

\bigskip
\noindent
{\bf Notation and terminology.} By a 2-category I always mean a strict one. In fact, there appear no weak 2-categories in this work other than the monoidal categories (the one-object weak 2-categories). By a (2,1)-category I shall mean a category enriched over the category of groupoids, i.e. a (strict) 2-category such that all 2-morphisms are invertible. 

Categories are denoted by $\Aa,\Bb,\Cc,...$, and 2-categories by $\mathbf{A},\mathbf{B},\mathbf{C},...$. In particular, $\mathbf{Cat}$ and $\mathbf{Gpd}$ respectively denote the 2-category of (small)categories, functors and natural transformations, and the (2,1)-category of (small) groupoids, functors and natural transformations. The correspoding underlying categories are dentoed by $\Cc at$ and $\Gg pd$. 

The dual of a category $\Aa$ is denoted $\Aa^\vee$, and the set of morphisms between two objects $x,y$ in $\Aa$ is denoted by $Hom_\Aa(x,y)$, or just $Hom(x,y)$ when the category is clear from the context. $x\in\Aa$ means that $x$ is an object of $\Aa$. Similarly, the category of morphisms between two objects $x,y$ in a 2-category $\Abf$ is denoted by $\Hh om_\Abf(x,y)$ or just $\Hh om(x,y)$, and $x\in\Abf$ means that $x$ is an object of $\Abf$. 

Monoids (in particular, groups) with underlying sets $M,N,\ldots$ are denoted by $\mathsf{M},\mathsf{N},\ldots$ For instance, for any category $\Cc$ and any object $x\in\Cc$, $\mathsf{End}(x)$ denotes the endomorphism monoid $(End(x),\circ,id_x)$ and $\mathsf{Aut}(x)$ the automorphism group $(Aut(x),\circ,id_x)$. By the product of two endo- or automorphisms it is meant $\phi\cdot\phi'=\phi\circ\phi'$ in this order.

\section{\large Background of 2-groups}

We assume the reader is familiar with the notion of monoid in the more general setting of (non-discrete) categories, usually called a monoidal category, and with the notion of 2-category, its many objects version. However, to set up the notation we begin by recalling the definition of monoidal category and a few related notions. We next define 2-groups as a special class of monoidal categories, and describe the corresponding 2-category of 2-groups. The most basic facts about 2-groups are reviewed, in particular, the homotopy invariants which classify them up to equivalence. Finally, the notion of action of a 2-group on another 2-group and the associated notion of semidirect product of 2-groups are recalled. As an example, we introduce the {\em wreath 2-product} of the symmetric groups with an arbitrary 2-group.

\subsection{Monoidal categories}

For general references on monoidal categories and their many objects version, the 2-categories, we refer the reader to \cite{sM98}, \cite{fB94-1+2}, \cite{cK95}.

A {\em monoidal category} is a category $\Mm$ together with a functor $\otimes:\Mm\times\Mm\To\Mm$ (called the {\em tensor product}), a distinguished object $e\in\Mm$ (called the {\em unit object}), and natural isomorphisms $a,l,r$ (respectively called the {\it associator} and the {\it left} and {\it right unitors}) whose components
\begin{align*}
a_{x,y,z}&:x\otimes(y\otimes z)\stackrel{\cong}{\longrightarrow}(x\otimes y)\otimes z, \\ l_x&:e\otimes x\stackrel{\cong}{\longrightarrow}x,  \\ r_x&:x\otimes e\stackrel{\cong}{\longrightarrow}x
\end{align*}
satisfy the usual pentagon and triangle coherence conditions. The data $(\otimes,e,a,l,r)$ is called the {\em monoidal structure} on $\Mm$, and the category equipped with such a monoidal structure will be denoted by $\MM$.

A monoidal category $\MM$ is called {\em strict} when all the isomorphisms $a,l,r$ are identities (in particular, $\otimes$ is strictly associative on objects and $e$ is a strict unit for the tensor product). The simplest examples are the monoids $\Msf$ viewed as discrete categories, with $\otimes$ the product of $\Msf$ and $e$ the unit. The corresponding strict monoidal categories are denoted $\Msf[0]$. 

In all monoidal categories we have $l_e=r_e$. This is an easy consequence of the axioms and the naturality of the associator and left and right unitors (see for instance \cite{cK95}). This isomorphism will be denoted by $d:e\otimes e\To e$, and called the {\em unit isomorphism} of $\MM$.

In this work we are mostly interested in monoidal categories whose underlying category is a groupoid $\Gg$ (i.e. a category such that all morphisms are isomorphisms). They are called {\em monoidal groupoids} and denoted by $\GG$. 

\subsection{2-groups}

For a general reference on 2-groups, we refer the reader to \cite{BL03} and the references therein.

\begin{defn}
A {\em 2-group}, also called a {\em categorical group}, a {\em cat-group} or a {\em gr-category}, is a monoidal groupoid $\GG=(\Gg,\otimes,e,a,l,r)$ such that each object $x$ has a two-sided (generically weak) inverse for the tensor product, i.e. an object $x^\ast$ such that $x\otimes x^\ast$ and $x^\ast\otimes x$ are both isomorphic to $e$. A 2-group is called {\em strict} if the underlying monoidal groupoid is strict (i.e., if $a,l,r$ are identities), and each object $x$ has a (two-sided) strict inverse, i.e. an object $x^{-1}$ such that $x\otimes x^{-1}$ and $x^{-1}\otimes x$ are both equal to $e$.
\end{defn}
\noindent
Let us remark that the first condition of strict 2-group does not imply the second (see below).

Strict 2-groups are often described in terms of crossed modules of groups. However, we shall make no use of such a description because it lacks the desired invariance. See Remark~\ref{remarca_2Grp_s} below.

We shall denote by $G_0$ the set of objects, and by $G_1$ the set of morphisms of the underlying groupoid $\Gg$ of a 2-group $\GG$. In the strict case, $G_0$ is a group $\Gsf_0$ with the product given by the tensor product and with the unit object $e$ as unit, and $G_1$ is a group $\Gsf_1$ with the product given by the tensor product and with $id_e$ as unit.

Any quadruple $(x,x^\ast,\eta_x,\epsilon_x)$, with $\eta_x:e\To x\otimes x^\ast$ and $\epsilon_x:x^\ast\otimes x\To e$ isomorphisms satisfying the usual triangle axioms of an adjunction is called a {\em duality}. The choice of such a duality for each $x\in\Gg$ induces an equivalence $\Dd:\Gg^\vee\To\Gg$ mapping $x$ to its dual $x^\ast$, and a morphism $f:x\To y$ to its dual morphism $f^\ast:y^\ast\To x^\ast$ defined by the composite
$$
y^\ast\stackrel{\cong}{\longrightarrow}y^\ast\otimes e\stackrel{id\otimes\eta_x}{\longrightarrow}y^\ast\otimes(x\otimes x^\ast)\stackrel{id\otimes(f\otimes id)}{\longrightarrow}y^\ast\otimes(y\otimes x^\ast)\stackrel{\cong}{\longrightarrow}(y^\ast\otimes y)\otimes x^\ast\stackrel{\epsilon_y\otimes id}{\longrightarrow}e\otimes x^\ast\stackrel{\cong}{\longrightarrow}x^\ast.
$$ 
By precomposing it with the canonical equivalence $\Gg\simeq\Gg^\vee$ we get a self-equivalence $\Ii:\Gg\To\Gg$, called the {\em functor of inverses} of $\GG$.

In general, inverses in a 2-group are unique only up to isomorphism so that there are various functors of inverses, all of them isomorphic. In fact, for any two dualities $(x,x^\ast,\eta_x,\epsilon_x)$ and $(x,\hat{x}^\ast,\hat{\eta}_x,\hat{\epsilon}_x)$ there is a canonical natural isomorphism $x^\ast\cong\hat{x}^\ast$. By contrast, when they exist, strict inverses are always unique. Hence for strict 2-groups there is a canonical $\Ii$, obtained when all dualities are chosen to be trivial ($\eta_x=\epsilon_x=id_e$ for all $x\in\Gg$). The $\otimes$-inverse of a morphism $f:x\To y$ is then the morphism $\Ii(f)$, i.e. $id_{x^\ast}\otimes f^{-1}\otimes id_{y^\ast}$.

Notice that in an arbitrary 2-group all translation functors $-\otimes x,x\otimes -:\Gg\To\Gg$ are self-equivalences of $\Gg$, with respective pseudoinverses $-\otimes x^\ast$ and $x^\ast\otimes -$.

A 2-group $\GG$ is called {\em skeletal} if the underlying groupoid $\Gg$ is skeletal (i.e. if isomorphic objects are necessarily equal). In this case the tensor product is strictly associative on objects, the unit object is a strict unit and inverses are strict and unique. However, it is worth emphasizing that a skeletal 2-group need not be strict. The associator and left and right unitors may be non-trivial.

Among the simplest examples of 2-groups we have the following two families, both of strict and skeletal 2-groups. 

\begin{ex}{\rm
Any group $\Gsf$ is a 2-group when thought of as a discrete monoidal groupoid with the tensor product given by the group law. Such 2-groups are called {\em discrete 2-groups} and denoted by $\Gsf[0]$. When $\Gsf=\mathsf{1}$ we shall just write $\mathbbm{1}$, and call it the {\em trivial 2-group}.}
\end{ex}

\begin{ex}{\rm
Abelian groups $\Asf$ can be thought of as discrete monoidal groupoids but also as one-object monoidal groupoids. In the last case, the composition and the tensor product of morphisms are both given by the group law. The group needs to be abelian because of the functoriality of $\otimes$. As such, abelian groups are called {\em one-object} (or {\em connected}) {\em 2-groups}, and they are denoted by $\Asf[1]$.}
\end{ex}

In the same way as groups arise as groups of automorphism of objects in a category, 2-groups naturally appear as 2-groups of {\it self-equivalences} of objects in a 2-category (i.e. endomorphisms of the object which are invertible only up to a 2-isomorphism). Precisely, for any 2-category $\mathbf{C}$ and any object $x\in\Cbf$ the corresponding 2-group of self-equivalences is the monoidal groupoid
$$
\mathbb{E}quiv_\Cbf(x):=(\Ee quiv_\Cbf(x),\circ,id_x,a,l,r).
$$
$\Ee quiv_\Cbf(x)$ is the groupoid of self-equivalences of $x$ and 2-isomorphisms between them, $\circ$ is the appropriate restriction of the composition functor of $\Cbf$, and $a,l,r$ are given by the appropriate components of the associator $\alpha$ and left and right unitors $\lambda,\rho$ of $\Cbf$. When the 2-category $\Cbf$ is clear from the context we shall write $\EE quiv(x)$ and $\Ee quiv(x)$.

\begin{defn}\label{2-grup_permutacions_grupoide}
Let $\Gg$ be any groupoid. The {\em 2-group of permutations of $\Gg$}, denoted by $\SSS ym(\Gg)$, is the 2-group $\EE quiv_\mathbf{Gpd}(\Gg)$. The underlying groupoid will be denoted by $\Ss ym(\Gg)$.
\end{defn}
In fact, this paper is all about this kind of 2-groups. The simplest case is when $\Gg$ is a one-object groupoid $\Gsf$, i.e. a group. The corresponding 2-group of permutations has the following explicit description.
 
\begin{prop}\label{2-grup_autoequivalencies_grupoide}
Let $\Gsf$ be any group. Then $\SSS ym(\Gsf)$ is the strict 2-group given as follows:
\begin{itemize}
\item[(a)] The objects are in bijection with the automorphisms of $\Gsf$.  We shall denote by $\Ee(\phi)$ the object associated to the automorphism $\phi\in Aut(\Gsf)$.
\item[(b)] For any $\phi,\tilde{\phi}\in Aut(\Gsf)$ a morphism from $\Ee(\phi)$ to $\Ee(\tilde{\phi})$ is given by any $g\in G$ such that $\tilde{\phi}=c_g\circ\phi$, where $c_g$ denotes conjugation by $g$. We shall denote by 
$$
\tau(g;\phi,\tilde{\phi}):\Ee(\phi)\Rightarrow\Ee(\tilde{\phi}):\Gsf\To\Gsf
$$
the morphism defined by such a $g\in G$. It is the natural isomorphism whose unique component is given by $\tau(g;\phi,\tilde{\phi})_\ast=g$.
\item[(c)] The composition and tensor product are given by
\begin{align*}
\Ee(\phi)\otimes\Ee(\phi')&=\Ee(\phi\circ\phi'), \\ \tau(\tilde{g};\tilde{\phi},\tilde{\tilde{\phi}})\circ\tau(g;\phi,\tilde{\phi})&=\tau(\tilde{g}g;\phi,\tilde{\tilde{\phi}}), \\ 
\tau(g;\phi,\tilde{\phi})\otimes\tau(g';\phi',\tilde{\phi}')&=\tau(g\ \phi(g');\phi\circ\phi',\tilde{\phi}\circ\tilde{\phi}').
\end{align*}
\end{itemize}
\end{prop}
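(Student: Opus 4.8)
The plan is to unwind the definition of $\EE quiv_{\mathbf{Gpd}}(\Gsf)$ directly, $\Gsf$ being regarded as the one-object groupoid whose morphism group is $G$.

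First I would determine the objects. A self-equivalence of $\Gsf$ is a functor $F:\Gsf\To\Gsf$ which is an equivalence of categories; since $\Gsf$ has a single object, $F$ necessarily fixes it and is completely encoded by its effect on morphisms, a group endomorphism $G\To G$. Essential surjectivity of $F$ is automatic, and full faithfulness is equivalent to this endomorphism being bijective, i.e. an element of $\mathsf{Aut}(\Gsf)$; conversely every automorphism of $\Gsf$ arises this way. Writing $\Ee(\phi)$ for the functor attached to $\phi$ gives the bijection of (a).

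Next the morphisms. A $2$-morphism $\Ee(\phi)\Rightarrow\Ee(\tilde{\phi})$ in $\mathbf{Gpd}$ is a natural transformation (automatically invertible, the codomain being a groupoid), hence determined by a single element $g\in G$, its unique component. Writing out the naturality square for an arbitrary morphism $x\in G$, and using that composition in $\Gsf$ is the group multiplication, gives $g\,\phi(x)=\tilde{\phi}(x)\,g$ for all $x$, i.e. $\tilde{\phi}=c_g\circ\phi$; and any $g$ with this property is indeed natural. This is the description $\tau(g;\phi,\tilde{\phi})$ of (b). The three formulas of (c) are then simply the vertical composition, the composition $\otimes=\circ$ of functors, and the horizontal (Godement) composition of natural transformations, read off through this dictionary: the vertical composite of $\tau(\tilde{g};\tilde{\phi},\tilde{\tilde{\phi}})$ after $\tau(g;\phi,\tilde{\phi})$ has unique component $\tilde{g}\,g$, and $c_{\tilde{g}g}=c_{\tilde{g}}\circ c_g$ makes the endpoints agree; $\Ee(\phi)\otimes\Ee(\phi')=\Ee(\phi)\circ\Ee(\phi')$ acts on morphisms by $\phi\circ\phi'$; and the Godement composite of $\tau(g;\phi,\tilde{\phi})$ with $\tau(g';\phi',\tilde{\phi}')$ has component $g\,\phi(g')$, while the identity $\phi\circ c_{g'}=c_{\phi(g')}\circ\phi$ yields $\tilde{\phi}\circ\tilde{\phi}'=c_{g\,\phi(g')}\circ(\phi\circ\phi')$, matching the stated endpoints.

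Finally, strictness: the monoidal structure of $\EE quiv_{\mathbf{Gpd}}(\Gsf)$ is induced by that of the strict $2$-category $\mathbf{Gpd}$, in which composition of functors is strictly associative and $id_\Gsf$ is a strict unit, so the associator and the unitors of $\SSS ym(\Gsf)$ are identities; and each $\Ee(\phi)$ has the strict $\otimes$-inverse $\Ee(\phi^{-1})$ since $\phi\circ\phi^{-1}=\phi^{-1}\circ\phi=id_\Gsf$ on the nose. Hence $\SSS ym(\Gsf)$ is a strict $2$-group.

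I do not anticipate a real obstacle; the proof is essentially bookkeeping. The single point requiring care is to fix the composition-order conventions once and for all — the paper's convention $\phi\cdot\phi'=\phi\circ\phi'$ together with the choice of order for composing morphisms inside $\Gsf$ — so that the naturality square produces $\tilde{\phi}=c_g\circ\phi$ (and not $c_{g^{-1}}\circ\phi$) and the Godement formula gives the component $g\,\phi(g')$. With those conventions pinned down, all the verifications above are immediate.
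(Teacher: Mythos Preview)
Your proposal is correct and is precisely the routine verification the paper leaves to the reader: you unwind the definition of $\EE quiv_{\mathbf{Gpd}}(\Gsf)$ for the one-object groupoid $\Gsf$, identify self-equivalences with automorphisms, natural transformations with group elements satisfying the conjugation relation, and read off the vertical, horizontal, and strictness data. There is nothing to add.
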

\begin{proof}
Left to the reader. 
\end{proof}

\bigskip
In general, $\EE quiv_\Cbf(x)$ is non strict even if $\Cbf$ is strict. Thus the underlying monoidal groupoid is strict when $\Cbf$ is strict. However, self-equivalences are invertible only to within 2-isomorphism in general. A strict 2-group for a strict $\Cbf$ is given by the 2-group $\AAA ut_\Cbf(x)$ of automorphisms of $x$, i.e. strictly invertible self-equivalences of $x$. Now $\Aa ut_\Cbf(x)$ is usually a proper subgroupoid of $\Ee quiv_\Cbf(x)$. However, the homotopically right ``2-group of symmetries'' of $x$ is $\EE quiv_\Cbf(x)$, not $\AAA ut_\Cbf(x)$. Indeed, equivalent objects in $\Cbf$ have equivalent 2-groups of self-equivalences (see Example~\ref{equivalencia_Equiv(X)_Equiv(Y)} below), but they may have non-equivalent 2-groups of automorphisms. 

Let us finally remind the reader that there are two possible ways of defining the opposite $\MM^{op}$ of an arbitrary monoidal category $\MM$. They differ in the underlying category, which may be either $\Mm$ or its dual category $\Mm^\vee$. However, for monoidal groupoids (in particular, for 2-groups) both versions are equivalent because groupoids are self-dual. To avoid working with the dual groupoid, we shall take as opposite $\GG^{op}$ of a 2-group $\GG$ the 2-group given by
$$
\GG^{op}=(\Gg,\otimes^{rev},e,a^{rev},r,l),
$$
where $\otimes^{rev}:\Gg\times\Gg\To\Gg$ denotes the functor given on objects by $(x,y)\mapsto y\otimes x$ and similarly on morphisms, and $a^{rev}$ is the natural isomorphism whose $(x,y,z)$-component is $(a_{z,y,x})^{-1}$. Notice that for discrete 2-groups this gives $(\Gsf[0])^{op}=\Gsf^{op}[0]$.

\subsection{The (2,1)-category of 2-groups}
\label{2-categoria_2Grp}

Given two monoidal categories $\MM$ and $\MM'$, a {\em monoidal functor} between them is a triple $\FF=(\Ff,\mu,\nu)$ with $\Ff:\Mm\To\Mm'$ a functor, $\mu$ a family of isomorphisms $\mu_{x,y}:\Ff x\otimes\Ff y\To\Ff(x\otimes y)$ natural in $x,y$, and $\nu:e'\To\Ff e$ an isomorphism such that
\begin{equation} \label{axioma_coherencia}
\xymatrix{
\Ff x\otimes'(\Ff y\otimes'\Ff z)\ar[d]_{id\otimes'\mu_{y,z}}\ar[rr]^{a'_{\Ff x,\Ff y,\Ff z}\ \ \ } && (\Ff x\otimes'\Ff y)\otimes'\Ff z\ar[d]^{\mu_{x,y}\otimes' id}
   \\ \Ff x\otimes'\Ff(y\otimes z)\ar[d]_{\mu_{x,y\otimes z}} &&  \Ff(x\otimes y)\otimes'\Ff z\ar[d]^{\mu_{x\otimes y,z}} \\ \Ff(x\otimes(y\otimes z))\ar[rr]_{\Ff(a_{x,y,z})} &&\Ff((x\otimes y)\otimes z)
}
\end{equation}
\begin{equation} \label{coherencia_iota}
\xymatrix{
(\Ff x)\otimes' e'\ar[d]_{id\otimes'\nu}\ar[r]^{r'_{\Ff x}} & \Ff x \\ \Ff x\otimes'\Ff e\ar[r]_{\mu_{x,e}} & \Ff(x\otimes e)\ar[u]_{\Ff r_x} }\qquad \xymatrix{
e'\otimes'(\Ff x)\ar[d]_{\nu\otimes' id}\ar[r]^{l'_{\Ff x}} & \Ff x \\ \Ff e\otimes'\Ff x\ar[r]_{\mu_{e,x}} & \Ff(e\otimes x)\ar[u]_{\Ff l_x}\  }
\end{equation}
commute for all $x,y,z\in\Mm$. We shall refer to the pair $(\mu,\nu)$ as the {\em monoidal structure} on $\Ff$, and to $\nu$ as the {\em unit isomorphism}.

\begin{defn}
For any 2-groups $\GG$ and $\GG'$, a {\em morphism of 2-groups} between them is a monoidal functor $\FF=(\Ff,\mu,\nu)$ between the underlying monoidal groupoids. It will be called a {\em normalized morphism} when the unit isomorphism $\nu$ is an identity (in particular, the unit object is strictly preserved), and a {\em strict morphism} when both $\mu$ and $\nu$ are identities.
\end{defn}

In fact, in the case of 2-groups the unit isomorphism turns out to be redundant. More precisely, we have the following.

\begin{prop}\label{existencia_iso_unitat}
Let $\GG,\GG'$ be any 2-groups, and let be given a functor $\Ff:\Gg\To\Gg'$ and natural isomorphisms $\mu_{x,y}:\Ff x\otimes'\Ff y\To\Ff(x\otimes y)$ such that (\ref{axioma_coherencia}) commutes for all $x,y,z\in\Gg$. Then there exists a unique isomorphism $\nu:e'\To\Ff e$ making (\ref{coherencia_iota}) commute for all $x\in\Gg$.
\end{prop}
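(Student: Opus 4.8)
The plan is to treat this as the 2-group incarnation of the classical fact that the unit constraint of a (weak) monoidal functor is already determined by its multiplicative constraint; the extra leverage available in the 2-group setting is that, as noted above, every translation functor $z\otimes'-$ and $-\otimes' z$ on the underlying groupoid $\Gg'$ of $\GG'$ is a self-equivalence, hence fully faithful and reflects isomorphisms.

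\emph{Uniqueness} is immediate: if $\nu$ makes (\ref{coherencia_iota}) commute for all $x$, then the left-hand square at $x=e$ reads $l'_{\Ff e}=\Ff(l_e)\circ\mu_{e,e}\circ(\nu\otimes' id_{\Ff e})$, so, cancelling the isomorphisms $\Ff(l_e)$ and $\mu_{e,e}$, one gets $\nu\otimes' id_{\Ff e}=\mu_{e,e}^{-1}\circ\Ff(l_e)^{-1}\circ l'_{\Ff e}$, a morphism completely determined by $(\Ff,\mu)$ and the structures of $\GG,\GG'$; since $-\otimes'\Ff e$ is faithful, $\nu$ is uniquely determined. \emph{Existence} runs the same computation backwards: the morphism $g:=\mu_{e,e}^{-1}\circ\Ff(l_e)^{-1}\circ l'_{\Ff e}:e'\otimes'\Ff e\To\Ff e\otimes'\Ff e$ belongs to $Hom_{\Gg'}(e'\otimes'\Ff e,\Ff e\otimes'\Ff e)$, so by full faithfulness of $-\otimes'\Ff e$ it equals $\nu\otimes' id_{\Ff e}$ for a unique morphism $\nu:e'\To\Ff e$, and $\nu$ is an isomorphism because $-\otimes'\Ff e$ reflects isomorphisms and $g$ is one. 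By construction the left-hand square of (\ref{coherencia_iota}) commutes at $x=e$.

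The remaining — and only delicate — step is to verify that this $\nu$ makes \emph{both} squares of (\ref{coherencia_iota}) commute for \emph{every} object $x$. For the left-hand square I would chase the diagram built from the coherence hexagon (\ref{axioma_coherencia}) specialized with one argument equal to $e$, the naturality of $\mu$, the triangle axiom of $\GG$ and $\GG'$, and the derived Mac Lane coherence identities (such as $l_{x\otimes y}=(l_x\otimes id_y)\circ a_{e,x,y}$ together with $l_e=r_e=d$ in $\GG$ and $l'_{e'}=r'_{e'}=d'$ in $\GG'$), finishing by cancelling a translation functor via faithfulness of $\Ff e\otimes'-$. The right-hand square is then obtained either by the mirror-image argument using the right unitors, or deduced from the left-hand one using the relations $l_e=r_e=d$ and $l'_{e'}=r'_{e'}=d'$. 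I expect no single hard idea here; the obstacle is purely organizational, namely that \emph{no} compatibility of $\Ff$ or $\mu$ with the units is assumed, so every unit-level statement has to be bootstrapped from (\ref{axioma_coherencia}) and Mac Lane coherence in $\GG$ and $\GG'$ alone, and the bookkeeping of the several coherence cells must be done carefully.
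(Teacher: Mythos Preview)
Your proposal is correct and follows essentially the same route as the paper: both define $\nu$ by applying full faithfulness of a translation functor in $\GG'$ to the unit square of (\ref{coherencia_iota}) specialized at $x=e$ (the paper writes this with $\Ff d$ in place of your $\Ff(l_e)$, which is the same since $d=l_e$), and both leave the verification of (\ref{coherencia_iota}) for general $x$ as a diagram chase. You actually go a bit further than the paper in sketching how that chase uses the hexagon (\ref{axioma_coherencia}) with one entry equal to $e$ together with Mac Lane coherence; the paper simply says ``left to the reader.'' One cosmetic point: what you call ``the left-hand square'' of (\ref{coherencia_iota}) is the second of the two displayed squares (the one involving $l$ and $l'$); your written-out equation makes clear you mean that one, so there is no mathematical issue.
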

\begin{proof}
All translations functors in a 2-group are equivalences, in particular fully faithful. Hence any functor $\Ff:\Gg\To\Gg'$ equipped with an isomorphism $\mu_{e,e}:\Ff e\otimes'\Ff e\To\Ff(e\otimes e)$ automatically preserves the unit objects up to a canonical isomorphism $\nu:e'\To\Ff e$ uniquely determined by the commutativity of any of the diagrams
$$
\xymatrix{
(\Ff e)\otimes' e'\ar[d]_{id\otimes'\nu}\ar[r]^{r'_{\Ff e}} & \Ff e \\ \Ff e\otimes'\Ff e\ar[r]_{\mu_{e,e}} & \Ff(e\otimes e)\ar[u]_{\Ff d} }\qquad \xymatrix{
e'\otimes'(\Ff e)\ar[d]_{\nu\otimes' id}\ar[r]^{l'_{\Ff e}} & \Ff e \\ \Ff e\otimes'\Ff e\ar[r]_{\mu_{e,e}} & \Ff(e\otimes e)\ar[u]_{\Ff d}\ . }
$$
We leave to the reader checking that this isomorphism also makes (\ref{coherencia_iota}) commute.
\end{proof}
\noindent
Notice that the unit isomorphism $\nu$ can be non trivial even if $\mu_{e,e}$ is an identity. 

Because of Proposition~\ref{existencia_iso_unitat}, henceforth we shall refer to any pair $(\Ff,\mu)$ as before also as a morphism of 2-groups, and we shall write $\FF=(\Ff,\mu)$. It further follows that any morphism of 2-group $\FF:\GG\To\GG'$ automatically preserves inverses. The isomorphisms $\Ff(x^\ast)\cong(\Ff x)^\ast$ are canonically given once the dualities in $\GG'$ have been fixed.

\begin{ex}\label{morfismes_entre_2-grups_discrets}{\rm
For any groups $\Gsf,\Gsf'$ a morphism of 2-groups from $\Gsf[0]$ to $\Gsf'[0]$ is the same thing as a group homomorphism $f:\Gsf\To\Gsf'$. The associated morphism of 2-groups will be denoted by $f[0]:\Gsf[0]\To\Gsf'[0]$. }
\end{ex}

\begin{ex}\label{morfismes_entre_2-grups_un_objecte}{\rm
For any abelian groups $\Asf,\Asf'$ a morphism of 2-groups from $\Asf[1]$ to $\Asf'[1]$ just amounts to a pair $(f,a')$, with $f:\Asf\To\Asf'$ a group homomorphism and $a'$ any element of $A'$. The homomorphism $f$ gives the underlying functor, which we shall denote $f[1]:\Asf[1]\To\Asf'[1]$, and $a'$ gives the monoidal structure. }
\end{ex}

\begin{ex}\label{equivalencia_induida_entre_2-grups_autoequivalencies}{\rm
Let $x,y$ be two equivalent objects in a 2-category $\Cbf$, and let $(h,h^\ast,\eta,\epsilon)$ be any adjoint equivalence, with $h:x\To y$, $h^\ast:y\To x$ morphisms in $\Cbf$, and $\eta:id_x\Rightarrow h^\ast\circ h$ and $\epsilon:h\circ h^\ast\Rightarrow id_y$ 2-isomorphisms satisfying the usual triangle axioms of an adjunction. Then there is an induced morphism of 2-groups
$$
\FF(h,h^\ast,\eta,\epsilon):\EE quiv(x)\To\EE quiv(y)
$$
mapping any self-equivalence $f$ of $x$ to $h\circ f\circ h^\ast$, and any 2-isomorphism $\tau$ to $1_{h}\circ\tau\circ 1_{h^{\ast}}$. The monoidal structure
$$
\mu_{f,f'}:h\circ f\circ h^\ast\circ h\circ f'\circ h^\ast\Rightarrow h\circ f\circ f'\circ h^\ast
$$
is given in the obvious way by the inverse of $\eta$. }
\end{ex}

\begin{ex}\label{functor_monoidal_I}{\rm
For any 2-group $\GG$, any functor of inverses $\Ii:\Gg\To\Gg$ canonically extends to a morphism of 2-groups $\II:\GG\To\GG^{op}$. The monoidal structure is given by the isomorphisms $\mu_{x,y}:y^\ast\otimes x^\ast\To(x\otimes y)^\ast$ canonically induced by the dualities chosen to define $\Ii$. }
\end{ex}  

\begin{defn}
Given two morphisms of 2-groups $\FF_1,\FF_2:\GG\To\GG'$, a {\em 2-morphism} between them it a natural transformation $\tau:\Ff_1\Rightarrow\Ff_2$ such that
$$
\xymatrix{ 
\Ff_1 x\otimes'\Ff_1 y\ar[r]^{\mu_{1;x,y}}\ar[d]_{\tau_x\otimes'\tau_y} & \Ff_1(x\otimes y)\ar[d]^{\tau_{xy}} \\ \Ff_2x\otimes'\Ff_2y\ar[r]_{\mu_{2;x,y}} & \Ff_2(x\otimes y) }
$$
commutes for all objects $x,y\in\Gg$.
\end{defn}
\noindent
If $\nu_1,\nu_2$ are the corresponding unit isomorphisms of $\FF_1,\FF_2$, it is shown that the diagram
\begin{equation}\label{monoidalitat_nu}
\xymatrix{ e'\ar[r]^{\nu_2}\ar[d]_{\nu_1} & \Ff_2 e \\ \Ff_1 e\ar[ru]_{\tau_e} & }
\end{equation}
also commutes. Hence a 2-morphism of 2-groups is nothing but a monoidal natural transformation between the corresponding monoidal functors. Notice that all 2-morphisms $\tau$ are invertible because the underlying category of $\GG'$ is a groupoid. When such a $\tau$ exists, $\FF_1$ and $\FF_2$ are said to be 2-isomorphic.

\begin{ex}\label{Equiv(G[0])}{\rm
For any groups $\Gsf,\Gsf'$ and any group homomorphisms $f_1,f_2:\Gsf\To\Gsf'$ there is no 2-(iso)morphism $f_1[0]\Rightarrow f_2[0]:\Gsf[0]\To\Gsf'[0]$ unless $f_1=f_2$, and in this case the 2-morphism is necessarily the identity. }
\end{ex}

\begin{ex}\label{Equiv(A[1])}{\rm
For any abelian groups $\Asf,\Asf'$, any group homomorphisms $f_1,f_2:\Asf\To\Asf'$ and any elements $a'_1,a'_2\in\Asf'$ there exists a 2-(iso)morphism $(f_1[1],a'_1)\Rightarrow(f_2[1],a'_2):\Asf[1]\To\Asf'[1]$ if and only if $f_1=f_2$, and in this case the morphism is also unique (but not an identity unless $a'_1=a'_2$).}
\end{ex}

Not every morphism of 2-groups is 2-isomorphic to a strict morphism. In fact, this is so in the more general context of monoidal categories and even for homomorphisms between bicategories (see \cite{BKP89}). However, it is easy to check that any morphism of 2-groups is at least 2-isomorphic to a normalized one.

Morphisms of 2-groups and 2-morphisms between them compose in the obvious way, and 2-groups are thus the objects of a (2,1)-category $\mathbf{2Grp}$. The corresponding hom-groupoids will be denoted by $\Hh om_\mathbf{2Grp}(\GG,\GG')$ or just $\Hh om(\GG,\GG')$, and the underlying category by $2\Gg rp$.

As in any 2-category, we may speak of {\em equivalent} 2-groups, i.e. 2-groups $\GG$ and $\GG'$ for which there exist morphisms $\FF:\GG\To\GG'$ and $\FF':\GG'\To\GG$, called {\em equivalences}, such that $\FF'\circ\FF$ and $\FF\circ\FF'$ are 2-isomorphic to the respective identities. In this case, we shall write $\GG\simeq \GG'$. This is weaker than the condition that $\GG$ and $\GG'$ be isomorphic as objects in $2\Gg rp$, in which case we shall write $\GG\cong\GG'$.

\begin{ex}\label{equivalencia_Equiv(X)_Equiv(Y)}{\rm
If $x,y$ are any equivalent objects in a 2-category $\Cbf$ the morphism of 2-groups $\FF(h,h^\ast,\eta,\epsilon):\EE quiv_\Cbf(x)\To\EE quiv_\Cbf(y)$ defined in Example~\ref{equivalencia_induida_entre_2-grups_autoequivalencies} is an equivalence. A pseudoinverse $\FF'(h,h^\ast,\eta,\epsilon):\EE quiv_\Cbf(y)\To\EE quiv_\Cbf(x)$ is given on objects by $g\mapsto h\circ(g\circ h^\ast)$. Thus equivalent objects in a 2-category always have equivalent 2-groups of self-equivalences.  }
\end{ex}

The canonical isomorphism $\Gsf\cong\Gsf^{op}$ for any group $\Gsf$ has the following generalization to arbitrary 2-groups. Later on, it allows us to equally think of left or right 2-actions of a 2-group on another 2-group.

\begin{prop}\label{equivalencia_G-op_G}
Any 2-group $\GG$ is equivalent to its opposite 2-group $\GG^{op}$ but non canonically.
\end{prop}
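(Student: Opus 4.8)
The plan is to promote the functor of inverses of $\GG$ to the required equivalence. By Example~\ref{functor_monoidal_I}, once a duality $(x,x^\ast,\eta_x,\epsilon_x)$ has been chosen for every $x\in\Gg$, the resulting functor of inverses $\Ii:\Gg\To\Gg$ extends to a morphism of 2-groups $\II=(\Ii,\mu):\GG\To\GG^{op}$, whose monoidal structure is given by the canonical isomorphisms $\mu_{x,y}:y^\ast\otimes x^\ast\To(x\otimes y)^\ast$. Since this morphism is already at our disposal, the task reduces to showing that $\II$ is an equivalence and that it cannot be chosen canonically.

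First I would recall that the underlying functor $\Ii$ is a self-equivalence of $\Gg$: it was constructed as the composite of the canonical equivalence $\Gg\simeq\Gg^\vee$ with the equivalence $\Dd:\Gg^\vee\To\Gg$, and hence is essentially surjective (each $x$ is isomorphic to $(x^\ast)^\ast$) and fully faithful. I would then invoke the standard transport-of-structure principle: a monoidal functor between 2-groups is an equivalence in $\mathbf{2Grp}$ as soon as its underlying functor is an equivalence of categories. To realize this concretely here, take for pseudoinverse the morphism of 2-groups $\II':\GG^{op}\To(\GG^{op})^{op}=\GG$ obtained in the same way from a functor of inverses of $\GG^{op}$ (using, for each $x$, the duality obtained by reading $(x,x^\ast,\eta_x,\epsilon_x)$ backwards, which exhibits $x$ as an inverse of $x^\ast$). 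The underlying functor of $\II'\circ\II$ is then $x\mapsto x^{\ast\ast}$, and the double-dual natural isomorphism $x\cong x^{\ast\ast}$ upgrades to a monoidal natural isomorphism $\mathrm{id}_\GG\cong\II'\circ\II$; symmetrically $\II\circ\II'\cong\mathrm{id}_{\GG^{op}}$. Hence $\GG\simeq\GG^{op}$.

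For the non-canonicity I would argue that the morphism $\II$ genuinely depends on the chosen dualities: any two choices yield 2-isomorphic morphisms, via the canonical isomorphisms $x^\ast\cong\hat x^\ast$ between different inverses, but for a general (non-strict) 2-group there is no distinguished choice --- only in the strict case is there a canonical functor of inverses, the one attached to the trivial dualities. I would also note that one cannot keep objects fixed: a morphism $\GG\To\GG^{op}$ that is the identity on the underlying groupoid would have to carry a monoidal structure consisting of isomorphisms $y\otimes x\To x\otimes y$, and already for $\GG=\Gsf[0]$ with $\Gsf$ non-abelian no such isomorphisms exist. So passing to the opposite genuinely requires inverting objects, and the equivalence $\GG\simeq\GG^{op}$ is not canonical.

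The step I expect to be the main obstacle is checking that the double-dual isomorphism, and more generally the comparison natural transformations for the two composites, are \emph{monoidal}, so that they are bona fide 2-morphisms in $\mathbf{2Grp}$ and not merely natural isomorphisms of the underlying functors. This amounts to a coherence computation with the triangle identities for the dualities (carried out in $\GG$ and, for $\II'$, in $\GG^{op}$ with its reversed associator); it is routine but must be done with care, and it can be bypassed entirely by appealing to the general transport-of-structure theorem for monoidal functors.
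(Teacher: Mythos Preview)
Your proposal is correct and follows the same approach as the paper: both invoke the morphism of 2-groups $\II:\GG\To\GG^{op}$ from Example~\ref{functor_monoidal_I} as the desired equivalence, and both attribute the non-canonicity to the dependence on the chosen dualities. You supply more detail than the paper's two-sentence proof (an explicit pseudoinverse via $\II':\GG^{op}\to(\GG^{op})^{op}=\GG$, and the extra observation that the identity on the underlying groupoid cannot serve for non-abelian $\Gsf[0]$), but the underlying idea is identical.
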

\begin{proof}
An equivalence is given by any of the morphisms of 2-groups $\II:\GG\To\GG^{op}$ of Example~\ref{functor_monoidal_I}. It is canonical only to within 2-isomorphism, in that it depends on the choice of a duality $(x,x^\ast,\eta_x,\epsilon_x)$ for each object $x\in\Gg$.
\end{proof}
Notice that this is false for arbitrary monoidal groupoids. In fact, it is already false in the discrete setting of monoids.

Equivalent 2-groups are also called {\em homotopic} because 2-groups provide an algebraic model of the pointed connected homotopy 2-types. Thus the theory of 2-groups has a topological counterpart in the theory of homotopy 2-types. Because of that, we shall often refer to the invariants of a 2-group up to equivalence as {\em homotopy invariants}.

\begin{rem}\label{remarca_2Grp_s}{\rm
Any 2-group is equivalent to a strict 2-group. In fact, this is true for arbitrary monoidal categories, and it is a consequence of MacLane's coherence theorem. However, the {\em locally full} sub-2-category $\mathbf{2Grp}_{str}$ of $\mathbf{2Grp}$ with objects the strict 2-groups and morphisms the strict ones is not a homotopically right version of $\mathbf{2Grp}$ because there are morphisms of strict 2-groups which are not 2-isomorphic to a strict one. Moreover, strict 2-groups together with the strict morphisms between them define a category $2\Gg rp_s$ equivalent to the category $\Cc ross\Mm od$ of crossed modules of groups and the morphisms between these (see \cite{BS76}). However, working with the more elementary category $\Cc ross\Mm od$ to study 2-groups is an evil option by the same reason.
}
\end{rem}

Being the objects in a 2-category, each 2-group $\GG$ has its own 2-group of self-equivalences $\EE quiv_\mathbf{2Grp}(\GG)$. This is to be distinguished from the 2-group $\SSS ym(\Gg)$ of self-equivalences of the underlying groupoid of $\GG$. There is a canonical strict morphism of 2-groups
$$
\UU_\GG=(\Uu_\GG,id):\EE quiv_\mathbf{2Grp}(\GG)\To\SSS ym(\Gg)
$$
mapping the self-equivalence $\EE=(\Ee,\mu)$ of $\GG$ to the self-equivalence $\Ee$ of $\Gg$, and any 2-morphism $\tau$ to itself. The underlying functor $\Uu_\GG$ is faithful, but non full in general.

\begin{ex}\label{autoequivalencies_G[0]_i_A[1]}{\rm
For any group $\Gsf$ we have
\begin{equation*}
\EE quiv_\mathbf{2Grp}(\Gsf[0])\cong\mathsf{Aut}(\Gsf)[0],
\end{equation*}
while $\SSS ym(G[0])$ is isomorphic to $\mathsf{Aut}(G)[0]$ (recall that $G$ stands for the underlying set of $\Gsf$). The canonical morphism $\UU_{\Gsf[0]}$ is nothing but the inclusion of groups $\mathsf{Aut}(\Gsf)\hookrightarrow\mathsf{Aut}(G)$. Similarly, for any abelian group $\Asf$ we have
\begin{equation*}
\EE quiv_\mathbf{2Grp}(\Asf[1])\simeq\mathsf{Aut}(\Asf)[0]
\end{equation*}
(now we have just an equivalence). By contrast, we shall see in Corollary~\ref{Sim(A)} below that the 2-group $\SSS ym(\Asf)$ is something more involved. In particular, it is not equivalent to a discrete discrete 2-group. However, it includes a copy of $\mathsf{Aut}(\Asf)[0]$. Hence the canonical morphism $\UU_{\Asf[1]}$ still is a sort of inclusion.}
\end{ex}  

Let us finally mention that the 2-category $\mathbf{2Grp}$ admits all PIE-limits, that is, all 2-limits~\footnote{The prefix 2- is used to emphasize that we are thinking of the two-dimensional notion of limit, in which the required universal property involves the {\em hom-categories} of morphisms; see \cite{mK89}. } constructible from 2-products, inserters, and equifiers (a simple characterisation of PIE-limits can be found in \cite{PR91}). Indeed, $\mathbf{2Grp}$ is equivalent to the 2-category $\mathbf{T}${\bf -}$\mathbf{Alg}$ of $\mathbf{T}$-algebras and weak morphisms between them for a certain 2-monad $\mathbf{T}$ on the 2-category of groupoids (see \cite{cSP11}, Corollary~60). Then the claim follows from the general theorem of Blackwell, Kelly and Power \cite{BKP89} according to which for any 2-monad $\mathbf{T}$ on $\mathbf{Gpd}$ the 2-category $\mathbf{T}${\bf -}$\mathbf{Alg}$ admits all PIE-limits.

However, apart from just mentioning cotensor products (see Remarks~\ref{accio_wreath_general} and \ref{wreath_2-product_general} below), in this work we only make use of 2-products. They are called {\em products} because they coincide with the products in the underlying category $2\Gg rp$. Explicitly, for any family of 2-groups $\{\GG_i\}_{i\in I}$, with $\GG_i=(\Gg_i,\otimes_i,e_i,a^{(i)},l^{(i)},r^{(i)})$, the {\em product 2-group} $\prod_{i\in I}\GG_i$ is the product groupoid $\prod_{i\in I}\Gg_i$ equipped with the monoidal structure defined componentwise. Thus the tensor product $\otimes$ is defined by the commutative diagram
$$
\xymatrix{
\left(\prod_{i\in I}\Gg_i\right)\times\left(\prod_{i\in I}\Gg_i\right)\ar[rr]^\otimes\ar[rd]_{\cong} & & \prod_{i\in I}\Gg_i \\ & \prod_{i\in I}(\Gg_i\times\Gg_i)\ar[ru]_{\prod_{i\in I}\otimes_i} & 
}
$$
the unit object is $\mathbf{e}=(e_i)_{i\in I}$ and the structural isomorphisms are given by
\begin{align*}
a_{\mathbf{x},\mathbf{y},\mathbf{z}}&=(a^{(i)}_{x_i,y_i,z_i})_{i\in I},\\ l_\mathbf{x}&=(l^{(i)}_{x_i})_{i\in I}, \\ r_\mathbf{x}&=(r^{(i)}_{x_i})_{i\in I}
\end{align*}  
for any $\mathbf{x}=(x_i)_{i\in I}$, $\mathbf{y}=(y_i)_{i\in I}$ and $\mathbf{z}=(z_i)_{i\in I}$. It follows that any object $\mathbf{x}\in\prod_{i\in I}\Gg_i$ is invertible with (weak) inverse $\mathbf{x}^\ast=(x^\ast_i)_{i\in I}$. In particular, $\prod_{i\in I}\GG_i$ is strict if and only if all 2-groups $\GG_i$ are strict. We shall just write $\GG^n$ when $|I|=n$ and all $\GG_i$ are equal to the same 2-group $\GG$. 

\begin{ex}{\rm
For any family of groups $\{\Gsf_i\}_{i\in I}$ we have $\prod_{i\in I}\Gsf_i[0]=(\prod_{i\in I}\Gsf_i)[0]$. Similarly, for any family of abelian groups $\{\Asf_i\}_{i\in I}$ we have $\prod_{i\in I}\Asf_i[1]=(\prod_{i\in I}\Asf_i)[1]$. (See Corollary~\ref{producte_2-grups_escindits} below for a generalization of these two facts.) }
\end{ex}

\subsection{Homotopy invariants, Sinh's theorem and Postnikov decomposition} 
\label{invariants_homotopics}

The automorphism group $\mathsf{Aut}(e)$ of the unit object in a monoidal groupoid $\GG$ is always abelian. This follows from the Eckmann-Hilton argument applied to the composition and tensor product. Moreover, for any object $x\in\Gg$, there are two canonical homomorphisms of groups $\gamma_x,\delta_x:\mathsf{Aut}(e)\To\mathsf{Aut}(x)$ given by
\begin{align*}
\gamma_x(u)&:=l_x\circ(u\otimes id_x)\circ l_x^{-1}, \\ \delta_x(u)&:=r_x\circ(id_x\otimes u)\circ r_x^{-1}
\end{align*}
for any $u\in Aut(e)$. These homomorphisms have the following useful properties. Actually, all of them are true for arbitrary monoidal {\em categories}, except that in this case $\gamma_x,\delta_x$ are homomorphisms of monoids $\mathsf{End}(e)\To\mathsf{End}(x)$. The proofs are left to the reader as a good exercise to become acquainted with this kind of structure.
\begin{itemize}
\item[(1)] They are compatible with the tensor product in the sense that
\begin{equation}\label{compatibilitat_amb_otimes}
\gamma_{x\otimes y}(u)=\gamma_x(u)\otimes id_y,\quad \delta_{x\otimes y}(u)=id_x\otimes\delta_y(u), \quad \delta_x(u)\otimes id_y=id_x\otimes\gamma_y(u)
\end{equation}
for any $x,y\in\Gg$ and $u\in Aut(e)$.
\item[(2)] They are ``natural'' in $x$, i.e. for any morphism $f:x\To y$ the diagrams
\begin{equation}\label{naturalitat_gamma_delta}
\xymatrix{
X\ar[r]^{\gamma_x(u)}\ar[d]_f & X\ar[d]^f \\ Y\ar[r]_{\gamma_y(u)} & Y }\qquad \xymatrix{
X\ar[r]^{\delta_x(u)}\ar[d]_f & X\ar[d]^f \\ Y\ar[r]_{\delta_y(u)} & Y }
\end{equation}
commute for all $u\in Aut(e)$.
\item[(3)] They are preserved by any equivalence of monoidal groupoids $\EE=(\Ee,\mu):\GG\To\GG'$, i.e. the diagrams
\begin{equation}\label{preservacio_gamma_delta}
\xymatrix{
\mathsf{Aut}(e)\ar[r]^{\Ee_{e,e}}\ar[d]_{\delta_x} & \mathsf{Aut}(\Ee e)\ar[r]^{\tilde{\nu}} & \mathsf{Aut}(e')\ar[d]^{\delta'_{\Ee x}} \\ \mathsf{Aut}(x)\ar[rr]_{\Ee_{x,x}} && \mathsf{Aut}(\Ee x) }\qquad \xymatrix{\mathsf{Aut}(e)\ar[r]^{\Ee_{e,e}}\ar[d]_{\gamma_x} & \mathsf{Aut}(\Ee e)\ar[r]^{\tilde{\nu}} & \mathsf{Aut}(e')\ar[d]^{\gamma'_{\Ee x}} \\ \mathsf{Aut}(x)\ar[rr]_{\Ee_{x,x}} && \mathsf{Aut}(\Ee x)  }
\end{equation}
commute for all $x\in\Gg$, where $\tilde{\nu}$ denotes conjugation by the unit isomorphism $\nu$ of $\EE$.
\end{itemize}

For arbitrary monoidal groupoids, $\gamma_x$ and $\delta_x$ are neither injective nor surjective in general. However, for 2-groups they are both isomorphisms because all translation functors $-\otimes x,\ x\otimes-$ are equivalences and hence, fully faithful functors. It follows that the underlying groupoid of a 2-group $\GG$ is of a very particular type, namely
$$
\Gg\ \simeq\coprod_{[x]\in\pi_0(\Gg)}\Asf_{[x]},
$$
with $\Asf_{[x]}$ a copy of $\mathsf{Aut}(e)$ and $\pi_0(\Gg)$ the set of isomorphism classes of objects in $\Gg$. This makes 2-groups much simpler structures than arbitrary monoidal groupoids. In particular, their classification up to equivalence, discussed below, becomes much easier (for the classification of arbitrary monoidal groupoids, see \cite{CCH13}).

Associated to any 2-group $\GG$ we have the corresponding {\em homotopy groups}, so called because they correspond to the homotopy groups of the associated homotopy 2-type. The {\em first homotopy group} is the set $\pi_0(\Gg)$ equipped with the group structure induced by $\otimes$, i.e.
$$
[x]\cdot[y]=[x\otimes y].
$$
It will be denoted by $\pi_0(\mathbb{G})$. The {\it second homotopy group} is the abelian group $\mathsf{Aut}(e)$. It will be denoted by $\pi_1(\GG)$. It comes equipped with a canonical left $\pi_0(\mathbb{G})$-module structure defined by~\footnote{There is an analogous canonical right action given by $u\rhd[x]=\delta_x^{-1}(\gamma_x(u))$. However, we shall make no use of this action.}
\begin{equation}\label{accio}
[x]\lhd u:=\gamma_x^{-1}(\delta_x(u))
\end{equation}
for any representative $x$ of $[x]$. This is a well defined left action because of (\ref{compatibilitat_amb_otimes}) and (\ref{naturalitat_gamma_delta}). Notice that in the strict case this gives
\begin{equation}\label{accio_cas_estricte}
[x]\lhd u=id_x\otimes u\otimes id_{x^{-1}}.
\end{equation}
It readily follows from (\ref{compatibilitat_amb_otimes}) that
\begin{equation}\label{propietat}
\gamma_{x\otimes y}(u\circ([x]\lhd v))=\gamma_x(u)\otimes\gamma_y(v)
\end{equation}
for any object $x,y\in\Gg$ and any automorphisms $u,v\in Aut(e)$.

Any equivalence of 2-groups $\EE=(\Ee,\mu):\GG\To\GG'$ induces both an isomorphism of groups $\pi_0(\EE):\pi_0(\GG)\To\pi_0(\GG')$ and an isomorphism of $\pi_0(\GG)$-modules $\pi_1(\EE):\pi_1(\GG)\To\pi_1(\GG')$ given by~\footnote{These morphisms are induced by any morphism of 2-groups $\FF:\GG\To\GG'$, but they are not isomorphisms in the generic case.}
\begin{align}
&\pi_0(\EE)([x]):=[\Ee(x)], \label{morfisme1}\\ &\pi_1(\EE)(u):=\nu^{-1}\circ\Ee(u)\circ\nu \label{morfisme2}
\end{align}
where $\nu:e'\To\Ee e$ is the unit isomorphism of $\EE$. Here $\pi_1(\GG')$ is viewed as a $\pi_0(\GG)$-module via its $\pi_0(\GG')$-module structure and the morphism $\pi_0(\GG)$. In particular, both homotopy groups are homotopy invariants.

In general, the pair $(\pi_0(\GG),\pi_1(\GG))$ is not enough to completely classify a generic $\GG$ up to equivalence. The missing item is a cohomology class $\alpha(\GG)\in H^3(\pi_0(\GG),\pi_1(\GG))$ coming from the associator of $\GG$. It is called the {\em Postnikov invariant} of $\GG$ because it is related to the Postnikov tower of the associated homotopy type (see \cite{BS10}).  Any representative of $\alpha(\GG)$ will be called a {\it classifying 3-cocycle} of $\mathbb{G}$. One such representative (and hence, the Postnikov invariant of $\GG$) can be obtained by the following procedure, due to Sinh \cite{hxSi75}. We first choose what Sinh calls an {\it \'epinglage} of $\mathbb{G}$. This is a pair $(s,\theta)$ with 
\begin{itemize}
\item[(i)] $s:\pi_0(\mathbb{G})\To G_0$ any {\em normalized} section of the projection $p:G_0\To\pi_0(\GG)$ mapping each object $x\in\Gg$ to its isomorphism class $[x]$ (normalized means such that $s[e]=e$), and
\item[(ii)] $\theta=\{\theta_x:s[x]\To x,\ x\in\Gg\}$ any family of isomorphisms satisfying the {\it normalization conditions}
\begin{align*}
 \theta_{e\otimes s[x]}&=\lambda^{-1}_{s[x]}, \\ \theta_{s[x]\otimes e}&=\rho^{-1}_{s[x]}.
\end{align*}
\end{itemize}
In fact, Sinh also requires $\theta_{s[x]}=id_{s[x]}$. This condition, however, is unnecessary and it comes into conflict with the normalization conditions when tensoring with $e$ is the identity but the unitors are non-trivial.

Then a classifying 3-cocycle $\mathsf{z}(\mathbb{G})$ is uniquely determined by the commutativity of the diagrams
\begin{equation} \label{definicio_3-cocicle}
\xymatrix{
s[x\otimes x'\otimes x'']\ar[d]_{\theta_{s[x\otimes x']\otimes s[x'']}}\ar[rrrr]^{\gamma_{s[x\otimes x'\otimes x'']}({\sf z}(\mathbb{G})([x],[x'],[x'']))} &&&& s[x\otimes x'\otimes x''] \\ s[x\otimes x']\otimes s[x'']\ar[d]_{\theta_{s[x]\otimes s[x']}\otimes id} &&&& s[x]\otimes s[x'\otimes x'']\ar[u]_{\theta^{-1}_{s[x]\otimes s[x'\otimes x'']}} \\ (s[x]\otimes s[x'])\otimes s[x'']\ar[rrrr]_{a^{-1}_{s[x],s[x'],s[x'']}} &&& & s[x]\otimes(s[x']\otimes s[x''])\ar[u]_{id\otimes\theta^{-1}_{s[x']\otimes s[x'']}}
}
\end{equation}
for all triples $([x],[x'],[x''])$. The map so defined is indeed a 3-cocycle thanks to the pentagon axiom. In fact, it follows from the triangle axiom and the above normalization conditions on the pair $(s,\theta)$ that it is a normalized 3-cocycle.

Unlike $\pi_0(\mathbb{G})$ and $\pi_1(\mathbb{G})$, which are uniquely determined by $\mathbb{G}$, ${\sf z}(\mathbb{G})$ depends on the chosen {\it \'epinglage} $(s,\theta)$. However, different {\it \'epinglages} lead to cohomologous normalized 3-cocycles, and we have a well defined cohomology class $\alpha(\GG)\in\Hsf^3(\pi_0(\GG),\pi_1(\GG))$. Moreover, it is an homotopy invariant of $\GG$. The isomorphism of groups $\Hsf^3(\pi_0(\GG),\pi_1(\GG))\To\Hsf^3(\pi_0(\GG'),\pi_1(\GG'))$ induced by any equivalence of 2-groups $\EE:\GG\To\GG'$ maps $\alpha(\GG)$ to $\alpha(\GG')$.

\begin{thm}[Sinh, 1975]
Let $\GG$ be any 2-group, and let $\mathsf{z}(\GG)$ any classifying normalized 3-cocycle of $\GG$. Then $\GG$ is equivalent to the skeletal 2-group $\hat{\GG}$ given as follows. It has the elements of $\pi_0(\GG)$ as objects and all pairs $(u,[x])\in\pi_1(\GG)\times\pi_0(\GG)$ as morphisms, with
$$
(u,[x]):[x]\To[x].
$$
The unit object is $[e]$ and the composition and tensor product are given by
\begin{align*}
(u,[x])\circ(u',[x])&=(u\circ u',[x]), \\
[x]\otimes[x']&=[x\otimes x'], \\ (u,[x])\otimes(u',[x'])&=(u\circ([x]\lhd u'),[x\otimes x']).
\end{align*}
Finally, the associator is given by
$$
a_{[x],[x'],[x'']}=(\mathsf{z}(\GG)([x],[x'],[x'']),[x\otimes x'\otimes x'']),
$$
and the left and right unitors are trivial.
\end{thm}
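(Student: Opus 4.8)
The plan is to exhibit a monoidal equivalence between $\GG$ and the explicit skeletal $2$-group $\hat\GG$ of the statement, built directly out of an \'epinglage $(s,\theta)$ of $\GG$. The whole point is that such an equivalence is essentially forced once $(s,\theta)$ is fixed, and that its associator is, by construction, given by the $3$-cocycle $\mathsf z(\GG)$.

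First I would build the underlying equivalence of groupoids. Define $\Ee\colon\hat\Gg\to\Gg$ on objects by $[x]\mapsto s[x]$ and on a morphism $(u,[x])\colon[x]\to[x]$ by $\Ee(u,[x]):=\gamma_{s[x]}(u)\colon s[x]\to s[x]$. This is a functor because each $\gamma_{s[x]}\colon\mathsf{Aut}(e)\to\mathsf{Aut}(s[x])$ is a group homomorphism; it is fully faithful because $\gamma_{s[x]}$ is in fact an isomorphism (all translation functors of a $2$-group being equivalences) and there are no morphisms across distinct components on either side; and it is essentially surjective because $\theta_x\colon s[x]\to x$ witnesses $s[x]\cong x$. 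So $\Ee$ is an equivalence of groupoids, a quasi-inverse being $x\mapsto[x]$ with the required natural isomorphism supplied by $\theta$. The monoidal structure is then transported from $\GG$: the unit is $[e]$ since $s[e]=e$, the tensor is $[x]\otimes[x']=[x\otimes x']$ (the class of $s[x]\otimes s[x']$), and the coherence isomorphism is assembled from the $\theta$'s at tensor products of representatives, with typical component $\mu_{[x],[x']}\colon s[x]\otimes s[x']\to s[x\otimes x']$ given by $\theta_{s[x]\otimes s[x']}^{-1}$. Naturality of $\mu$ and the verification that $\otimes$ on morphisms takes the displayed form $(u,[x])\otimes(u',[x'])=(u\circ([x]\lhd u'),[x\otimes x'])$ both reduce to the identity $\gamma_{x\otimes y}(u\circ([x]\lhd v))=\gamma_x(u)\otimes\gamma_y(v)$ of (\ref{propietat}) together with the naturality (\ref{naturalitat_gamma_delta}) of $\gamma$.

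The heart of the matter is the single coherence axiom (\ref{axioma_coherencia}) for $(\Ee,\mu)$: solving it for the induced associator of $\hat\GG$ and rewriting everything in terms of the $\theta$'s and the associator of $\GG$, one finds that this associator must be $\gamma_{s[x\otimes x'\otimes x'']}(\mathsf z(\GG)([x],[x'],[x'']))$, i.e.\ the morphism $(\mathsf z(\GG)([x],[x'],[x'']),[x\otimes x'\otimes x''])$ — for the rewritten diagram is, up to the evident orientation conventions, exactly the diagram (\ref{definicio_3-cocicle}) that was used to define $\mathsf z(\GG)$. The normalization conditions $\theta_{e\otimes s[x]}=\lambda_{s[x]}^{-1}$, $\theta_{s[x]\otimes e}=\rho_{s[x]}^{-1}$ together with $s[e]=e$ then force the transported left and right unitors to be trivial and, via Proposition~\ref{existencia_iso_unitat}, the unit isomorphism of $(\Ee,\mu)$ to be an identity. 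Since $\mathsf z(\GG)$ is a normalized $3$-cocycle (noted above from the pentagon and triangle axioms), $\hat\GG$ is indeed a skeletal $2$-group — automatically so, being a transport of the $2$-group $\GG$, with $[x]^\ast=[x^\ast]$ — and $(\Ee,\mu)\colon\hat\GG\to\GG$ is an equivalence of $2$-groups; hence $\GG\simeq\hat\GG$. The step I expect to be the main obstacle is precisely this identification of the transported associator with $\gamma(\mathsf z(\GG))$: it is a diagram chase through several nested instances of $\theta$ at tensor products of representatives, and one must be scrupulous with the orientation conventions — the associator of $\GG$ versus its inverse, and whether the computation is run through $\Ee$ or through its quasi-inverse — so that (\ref{axioma_coherencia}) lines up with (\ref{definicio_3-cocicle}) exactly. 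Everything else (functoriality, the equivalence property, naturality of $\mu$, the unit normalization) is routine given the properties (\ref{compatibilitat_amb_otimes}), (\ref{naturalitat_gamma_delta}), (\ref{propietat}) of $\gamma$ and $\delta$ recorded above.
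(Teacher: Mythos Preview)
The paper does not actually prove Sinh's theorem; it states it with attribution to \cite{hxSi75} and only remarks afterwards that $\hat\GG$ is a well-defined 2-group (pentagon from the cocycle condition, triangle from normalization, inverses given by $(u^{-1},[x])$ and $[x^\ast]$). So there is no proof in the paper to compare against.

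Your sketch is the standard argument and is correct. In fact, the equivalence of groupoids you write down is exactly the functor $\Ee_s$ that the paper itself introduces later as Lemma~\ref{equivalencia_induida} and exploits throughout \S\ref{2-grups_escindits} for the splitness criteria; you are simply carrying out the additional step of equipping $\Ee_s$ with the monoidal structure $\mu_{[x],[x']}=\theta_{s[x]\otimes s[x']}^{-1}$ coming from the full \'epinglage. The key observation --- that the coherence hexagon~(\ref{axioma_coherencia}) for $(\Ee_s,\mu)$ is precisely the diagram~(\ref{definicio_3-cocicle}) defining $\mathsf z(\GG)$ --- is exactly the point, and you have correctly identified it. Your caveat about orientation conventions is well placed: the diagram~(\ref{definicio_3-cocicle}) involves $a^{-1}$ rather than $a$, so when you unwind~(\ref{axioma_coherencia}) you must be careful that the two hexagons match on the nose and not up to an overall inversion; this is bookkeeping, not a conceptual gap.
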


The pentagon axiom holds precisely because $\mathsf{z}(\GG)$ is a 3-cocycle, and the triangle axiom holds because $\mathsf{z}(\GG)$ is normalized. The monoidal category so defined is a 2-group with $(u^{-1},[x])$ as inverse of $(u,[x])$, and $[x^\ast]$ as $\otimes$-inverse of $[x]$. 

Finally, let us point out that for any 2-group $\GG$ there is a canonical sequence of 2-group morphisms given by
$$
\mathbbm{1}\longrightarrow\pi_1(\GG)[1]\stackrel{\JJ}{\longrightarrow}\GG\stackrel{\PP}{\longrightarrow}\pi_0(\GG)[0]\longrightarrow\mathbbm{1}.
$$
Here $\JJ$ denotes the inclusion of $\pi_1(\GG)[1]$ into $\GG$ as the group of automorphisms of the unit object $e$, and $\PP$ is given by the functor mapping each object to its isomorphism class. Both morphisms are strict. This sequence is sometimes called the {\em Postnikov decomposition} of $\GG$. Notice that it is ``2-exact'' in the sense that (1) the underlying functor of $\JJ$ is an embedding, (2) $\JJ$ defines an equivalence of 2-groups between $\pi_1(\GG)[1]$ and the homotopy fiber of $\PP$ over $[e]$ (i.e. the full subgroupoid of $\Gg$ generated by the objects isomorphic to $e$), and (3) the underlying functor of $\PP$ is surjective. In Section~3 we shall see conditions under which this sequence ``splits''.

\subsection{2-actions, semidirect product of 2-groups and wreath 2-products}
\label{2-accions}

The notion of left or right action of a group on another group have the following straightforward generalization to arbitrary 2-groups (see \cite{GI01}).

\begin{defn}
Let $\GG$ and $\HH$ be 2-groups. A {\em left 2-action} of $\mathbb{G}$ on $\mathbb{H}$, also called a {\em $\GG$-2-group structure} on $\HH$, is a morphism of 2-groups $\mathbb{F}=(\Ff,\mu):\mathbb{G}\To\mathbb{E}quiv_\mathbf{2Grp}(\mathbb{H})$. A {\it right 2-action} of $\mathbb{G}$ on $\mathbb{H}$ is a left 2-action of $\GG^{op}$ on $\HH$.
\end{defn}

\noindent
More explicitly, a $\GG$-2-group structure on $\HH$ is given by the following data and axioms.
\begin{itemize}
\item[$\bullet$] A self-equivalence $\mathbb{E}_x=(\Ee_x,\mu_x)$ of $\mathbb{H}$ for each object $x\in\Gg$ (the image of $x$ by $\Ff$). We shall write
\begin{align*}
\Ee_x(m)&\equiv x\lhd m \\ \Ee_x(f)&\equiv x\lhd f
\end{align*}
for any object $m$ and morphism $f$ in $\mathcal{H}$. With these notations, the functoriality of $\Ee_x$ says that
\begin{align*}
x\lhd (g\circ f)&=(x\lhd g)\circ(x\lhd f), \\ x\lhd id_m&=id_{x\lhd m}, 
\end{align*}
and the monoidal structure $\mu_x$ of $\Ee_x$ is given by a family of isomorphisms
$$
\mu_{x;m,n}:x\lhd(m\otimes n)\stackrel{\cong}{\To}(x\lhd m)\otimes(x\lhd n),\qquad m,n\in\Hh
$$
natural in $m,n$ and such that
$$
\xymatrix{
x\lhd((m\otimes n)\otimes p)\ar[rr]^{x\lhd a^{-1}_{m,n,p}}\ar[d]_{\mu_{x;m\otimes n,p}} && x\lhd(m\otimes(n\otimes p))\ar[d]^{\mu_{x;m,n\otimes p}} \\ [x\lhd(m\otimes n)]\otimes(x\lhd p)\ar[d]_{\mu_{x;m,n}\otimes id_{x\lhd p}} && (x\lhd m)\otimes[x\lhd(n\otimes p)]\ar[d]^{id_{x\lhd m}\otimes\mu_{x;n,p}} \\ [(x\lhd m)\otimes(x\lhd n)]\otimes(x\lhd p)\ar[rr]_{a^{-1}_{x\lhd m,x\lhd n,x\lhd p}} && (x\lhd m)\otimes[(x\lhd n)\otimes(x\lhd p)] 
}
$$
commute for all objects $m,n,p\in\mathcal{H}$.

\item[$\bullet$] A monoidal natural isomorphism $\tau_\phi:\mathbb{E}_x\Rightarrow\mathbb{E}_y$ for each morphism $\phi:x\To y$ in $\Gg$ (the image of $\phi$ by $\Ff$). Thus $\tau_\phi$ is given by a family of isomorphisms
$$
\tau_{\phi;m}\equiv\phi\lhd m:x\lhd m\stackrel{\cong}{\To}y\lhd m,\qquad m\in\mathcal{H}
$$
natural in $m$ and such that
$$
\xymatrix{
x\lhd(m\otimes n)\ar[rr]^{\mu_{x;m,n}}\ar[d]_{\phi\lhd(m\otimes n)} & & (x\lhd m)\otimes(x\lhd n)\ar[d]^{(\phi\lhd m)\otimes(\phi\lhd n)} \\ y\lhd(m\otimes n)\ar[rr]_{\mu_{y;m,n}} & & (y\lhd m)\otimes(y\lhd n)
}
$$
commutes for all objects $m,n\in\mathcal{H}$. Moreover, by the functoriality of $\Ff$ it must be
\begin{align*}
(\psi\circ\phi)\lhd m&=(\psi\lhd m)\circ(\phi\lhd m) \\ id_x\lhd m&=id_{x\lhd m}
\end{align*}
for all composable morphisms $\phi,\psi$ in $\Gg$ and all objects $x\in\Gg$ and $m\in\mathcal{H}$.
\item[$\bullet$] A monoidal natural isomorphism $\mu_{x,y}:\mathbb{E}_{x\otimes y}\Rightarrow\mathbb{E}_x\circ\mathbb{E}_y$ for each pair of objects $x,y\in\Gg$ (the monoidal structure of $\Ff$). Thus $\mu_{x,y}$ is given by a family of isomorphisms
$$
\mu_{x,y;m}:(x\otimes y)\lhd m\stackrel{\cong}{\To}x\lhd(y\lhd m),\qquad m\in\mathcal{H}
$$
natural in $m$ and such that
\begin{equation*}
\xymatrix{
(x\otimes y)\lhd(m\otimes n)\ar[rr]^{\mu_{x\otimes y;m,n}}\ar[d]_{\mu_{x,y;m\otimes n}} & & [(x\otimes y)\lhd m]\otimes[(x\otimes y)\lhd n]\ar[d]^{\mu_{x,y;m}\otimes\mu_{x,y;n}} \\ x\lhd(y\lhd(m\otimes n))\ar[rd]_{x\lhd\mu_{y;m,n}} & & [(x\lhd(y\lhd m)]\otimes[x\lhd(y\lhd n)] \\ & x\lhd[(y\lhd m)\otimes(y\lhd n)]\ar[ru]_{\mu_{x;y\lhd m,y\lhd n}} &
}
\end{equation*}
commutes for all objects $m,n\in\mathcal{H}$.
\end{itemize}
Such a set of data will be denoted by $(\Ee,\mu,\tau,\mu)$, and sometimes just $\lhd$. The data defining a right 2-action is exactly the same except that $\mu_{x,y}$ is a natural isomorphism $\mu_{x,y}:\Ee_{y\otimes x}\Rightarrow\Ee_x\circ\Ee_y$. For right 2-actions we shall use the notations
\begin{align*}
\Ee_x(m)&\equiv m\rhd x \\ \Ee_x(f)&\equiv f\rhd x.
\end{align*}
Then the remaining data defining a right 2-action look like
$$
\mu_{x;m,n}:(m\otimes n)\rhd x\stackrel{\cong}{\To}(m\rhd x)\otimes(n\rhd x)
$$   
$$
m\rhd\phi:m\rhd x\stackrel{\cong}{\To}m\rhd y
$$
$$
\mu_{x,y;m}:m\rhd (y\otimes x)\stackrel{\cong}{\To}(m\rhd y)\rhd x
$$

By taking the composite with the equivalence $\GG^{op}\simeq\GG$ or with a pseudoinverse, any left 2-action has an associated right 2-action, and conversely. In fact, there are various such right or left 2-actions associated to a given left or right 2-action, respectively. They depend on the chosen equivalence $\GG^{op}\simeq\GG$ (see Example~\ref{equivalencia_G-op_G} above). However, all of them are equivalent in the appropriate sense. Hence we can equally think of left or of right 2-actions.

For any left 2-action $\lhd$ and any morphisms $\phi:x\To y$, $f:m\To n$ in $\Gg$ and $\Hh$, respectively, the naturality of $\tau_{\phi;m}$ in $m$ implies the existence of a well defined morphism $\phi\lhd f:x\lhd m\To y\lhd n$ given by
\begin{equation}\label{phi_lhd_f}
\phi\lhd f:=(\phi\lhd n)\circ(x\lhd f)=(y\lhd f)\circ(\phi\lhd m),
\end{equation}
and similarly for right 2-actions.

Since the morphisms of 2-groups preserve the unit objects (see Proposition~\ref{existencia_iso_unitat}), hidden in any left 2-action of a 2-group $\mathbb{G}$ on a 2-group $\mathbb{H}$ we have two additional families of unit isomorphisms
\begin{align*}
\nu_x&:e\stackrel{\cong}{\To} x\lhd e,\qquad x\in\Gg \\ \nu_m&:m\stackrel{\cong}{\To} e\lhd m, \qquad m\in\Hh.
\end{align*}
The first family comes from the preservation of the unit object of $\HH$ by the self-equivalences $\mathbb{E}_x$. The second one just corresponds to the components of a unit isomorphism $\nu: id_\mathbb{H}\Rightarrow\mathbb{E}_e$ associated to the morphism $\mathbb{G}\To\mathbb{E}quiv(\mathbb{H})$. Both families are natural in the respective labels. Moreover, the diagram
\begin{equation*}
\xymatrix{
(x\otimes y)\lhd e\ar[rr]^{\mu_{x,y;e}} & & x\lhd(y\lhd e) \\ e\ar[u]^{\nu_{x\otimes y}}\ar[rr]_{\nu_x}  && x\lhd e\ar[u]_{x\lhd\nu_y}
}
\end{equation*}
commutes for any $x,y\in\Gg$. Indeed, the unit isomorphism of the morphism $\mathbb{E}_x\circ\mathbb{E}_y$ is given by
$$
\xymatrix{
\Ee_x(\Ee_y(e))\ar[rr]^{\Ee_x(\nu_y)} && \Ee_x(e)\ar[r]^{\nu_x} & e. }
$$
In our notations, this is the morphism $\nu_x\circ(x\lhd\nu_y)$. Hence the commutativity of the previous diagram follows from the monoidality of $\mu_{x,y}$. Finally, the two isomorphisms $\nu_{e}$, the unit isomorphism of $\EE_e$ and the $e$-component of $\nu$, coincide because of (\ref{monoidalitat_nu}).

Similarly, hidden in any right 2-action $\rhd$ we have unit isomorphisms
\begin{align*}
\nu_x&:x\stackrel{\cong}{\To} e\rhd x \\ \nu_m&:m\stackrel{\cong}{\To} m\rhd e
\end{align*}
satisfying analogous conditions.

Notice that any 2-action of $\GG$ on $\HH$ is equivalent to a {\em normalized} one in which the unit isomorphisms $\nu_m$ are all trivial. This is because any morphism of 2-groups is 2-isomorphic to a normalized one. Nevertheless, it seems that the isomorphisms $\nu_x$ can not be assumed to be all simultaneously trivial without loss of generality. Indeed, they refer to different morphisms of 2-groups, the self-equivalences $\mathbb{E}_x$. However, these morphisms are not independent. They are part of a bigger structure, namely, the morphism of 2-groups from $\mathbb{G}$ to $\mathbb{E}quiv_\mathbf{2Grp}(\mathbb{H})$.

A left or right 2-action will be called {\em strict} when the corresponding morphism of 2-groups is strict (i.e. all isomorphisms $\mu_{x,y;m}$ and all unit isomorphisms $\nu_m$ are identities), and it will be called {\em strongly strict} when it is strict and all self-equivalences $\EE_x$ are strict monoidal functors (i.e. all $\mu_{x;m,n}$ and all unit isomorphisms $\nu_x$ are also trivial). 

\begin{ex}\label{accio_G[0]_sobre_H[0]}{\rm
Any left action of a group $\Gsf$ on a group $\Hsf$ induces a strongly strict left 2-action of $\Gsf[0]$ on $\Hsf[0]$ given by the composition
$$
\Gsf[0]\longrightarrow\mathsf{Aut}(\Hsf)[0]\stackrel{\cong}{\longrightarrow}\EE quiv(\Hsf[0]).
$$
More interestingly, when $\Hsf$ is an abelian group $\Asf$, any left $\Gsf$-module structure on $\Asf$ induces an additional strongly strict left 2-action of $\Gsf[0]$ on $\Asf[1]$, given by the composition
$$
\Gsf[0]\longrightarrow\mathsf{Aut}(\Asf)[0]\stackrel{\simeq}{\longrightarrow}\EE quiv(\Asf[1])
$$
(see Example~\ref{autoequivalencies_G[0]_i_A[1]}). }
\end{ex}

\begin{ex}[{\em Wreath 2-action}]\label{accio_wreath}{\rm
For any 2-group $\GG$ and any $n\geq 1$, there are canonical left and right 2-actions of $\Ssf_n[0]$ on the product 2-group $\GG^n$ which generalize the usual wreath actions of $\Ssf_n$. In the right case, it is the strongly strict 2-action
$$
\WW_{n,\GG}:\Ssf_n^{op}[0]\To\EE quiv_\mathbf{2Grp}(\GG^n)
$$
defined as follows. It maps any $\sigma\in\Ssf_n$ to the permutation morphism $\PP_\sigma:\GG^n\To\GG^n$ with underlying self-equivalence $\Pp_\sigma:\Gg^n\To\Gg^n$ the automorphism of $\Gg^n$ defined by
\begin{align}
\Pp_\sigma(\mathbf{x})\equiv\mathbf{x}\rhd\sigma&:=(x_{\sigma(1)},\ldots,x_{\sigma(n)}), \label{def_P_sigma_1}\\ \Pp_\sigma(\mathbf{f})\equiv\mathbf{f}\rhd\sigma&:=(f_{\sigma(1)},\ldots,f_{\sigma(n)})\label{def_P_sigma_2}
\end{align}
for any object $\mathbf{x}=(x_1,\ldots,x_n)$ and morphism $\mathbf{f}=(f_1,\ldots,f_n)$. This automorphism is a strict automorphism of $\GG^n$. Indeed, the monoidal structure on $\GG^n$ is defined componentwise (see \S~\ref{2-categoria_2Grp}). Thus we have
\begin{align*}
\Pp_\sigma(\mathbf{x}\otimes\mathbf{x}')&=(x_1\otimes x'_1,\ldots,x_n\otimes x'_n)\rhd\sigma \\ &=(x_{\sigma(1)}\otimes x'_{\sigma(1)},\ldots,x_{\sigma(n)}\otimes x'_{\sigma(n)}) \\ &=(x_{\sigma(1)},\ldots,x_{\sigma(n)})\otimes(x'_{\sigma(1)},\ldots,x'_{\sigma(n)}) \\ &=((x_1,\ldots,x_n)\rhd\sigma)\otimes((x'_1,\ldots,x'_n)\rhd\sigma) \\ &=\Pp_\sigma(\mathbf{x})\otimes\Pp_\sigma(\mathbf{x}'),
\end{align*}
and similarly for morphisms. Moreover,
\begin{align*}
a_{\Pp_\sigma(\mathbf{x}),\Pp_\sigma(\mathbf{x}'),\Pp_\sigma(\mathbf{x}'')}&=(a_{x_{\sigma(1)},x'_{\sigma(1)},x''_{\sigma(1)}},\ldots,a_{x_{\sigma(n)},x'_{\sigma(n)},x''_{\sigma(n)}}) \\ &=(a_{x_1,x'_1,x''_1},\ldots,a_{x_n,x'_n,x''_n})\rhd\sigma \\ &=\Pp_\sigma(a_{\mathbf{x},\mathbf{x}',\mathbf{x}''}),
\end{align*}
so that (\ref{axioma_coherencia}) commutes when $\mu$ is equal to the identity. Notice that the corresponding unit isomorphism $\nu$ is also trivial because the unit isomorphism of $\GG^n$ as a monoidal groupoid is $\mathbf{d}=(d,\ldots,d)$ and hence, both $r_{\Pp_\sigma(\mathbf{e})}$ and $\Pp_\sigma(\mathbf{d})$ are equal to $\mathbf{d}$.

The 2-action so defined is indeed strict (hence, strongly strict) because $\EE quiv_{\mathbf{2Grp}}(\GG^n)$ is strict as a monoidal groupoid and
$$
\Ww_{n,\GG}(\sigma\sigma')=\Ww_{n,\GG}(\sigma')\circ\Ww_{n,\GG}(\sigma)
$$
for any permutations $\sigma,\sigma'\in S_n$. When $\GG=\Gsf[0]$, this 2-action reduces to the usual wreath action of $\Ssf_n$ on the group $\Gsf$. }
\end{ex}

\begin{rem}\label{accio_wreath_general}{\rm
This last Example can be further generalized to a canonical 2-action of $\SSS ym(\Kk)$ on the {\em cotensor product} 2-group $\HH^\Kk$ for any non necessarily discrete groupoid $\Kk$. However, we do not consider this general case because it is not needed in the sequel.}
\end{rem}

The following definition already appears in \cite{GI01}. It generalizes to arbitrary 2-groups the notion of semidirect product of groups.

\begin{defn}\label{producte_semidirecte}
Let $\rhd=(\Ee,\mu,\tau,\mu)$ be a right action of a 2-group $\mathbb{G}$ on a 2-group $\mathbb{H}$. Then the {\em semidirect product} of $\mathbb{G}$ and $\mathbb{H}$, denoted by $\mathbb{G}\ltimes\mathbb{H}$, is the groupoid $\Gg\times\mathcal{H}$ equipped with the following monoidal structure:
\begin{itemize}
\item[$\bullet$] the tensor product is given on objects $(x,m),(x',m')$ and morphisms $(\phi,f)$, $(\phi',f')$ by
$$
(x,m)\otimes(x',m'):=(x\otimes x',(m\rhd x')\otimes m'),
$$
$$
(\phi,f)\otimes(\phi',f'):=(\phi\otimes\phi',(f\rhd\phi')\otimes f');
$$ 
\item[$\bullet$] the unit object is $(e,e)$;
\item[$\bullet$] the associator is given by
$$
a_{(x,m),(x',m'),(x'',m'')}:=(a_{x,x',x''},\hat{a}_{x',x'';m,m',m''}),
$$
with $\hat{a}_{x',x'';m,m',m''}$ the isomorphism uniquely defined by the commutative diagram
\begin{equation}\label{hat_a}
\xymatrix{
(m\rhd(x'\otimes x''))\otimes ((m'\rhd x'')\otimes m'')\ar[d]_{a_{m\rhd(x'\otimes x''),m'\rhd x'',m''}}\ar[rr]^{\ \ \ \ \ \ \ \hat{a}_{x',x'';m,m',m''}\ \ \ \ }  & & [((m\rhd x')\otimes m')\rhd x'']\otimes m''\ar[d]^{\mu_{x'';m\rhd x',m'}\otimes id} \\  [(m\rhd(x'\otimes x''))\otimes(m'\otimes x'')]\otimes m''\ar[rr]_{\ (\mu_{x',x'';m}\otimes id)\otimes id\ \ } & & [((m\rhd x')\rhd x'')\otimes(m'\rhd x'')]\otimes m''
}
\end{equation}
for any objects $x',x''\in\Gg$ and $m,m',m''\in\Hh$;
\item[$\bullet$] the left and right unitors are given by
$$
l_{(x,m)}:=(l_{x},\hat{l}_{x,m}),
$$
$$
r_{(x,m)}:=(r_{x},\hat{r}_{m}),
$$
with $\hat{l}_{x,m}$ and $\hat{r}_{m}$ the isomorphisms defined by the commutative diagrams
\begin{equation}\label{hat_l_r}
\xymatrix{
(e\rhd x)\otimes m\ar[r]^{\ \ \ \ \hat{l}_{x,m}}\ar[d]_{\nu_x\otimes id}  & m \\ e\otimes m\ar[ru]_{l_m} &
} \qquad \xymatrix{
(m\rhd e)\otimes e\ar[r]^{\ \ \ \ \hat{r}_{m}}\ar[d]_{\nu_m\otimes id}  & m \\ m\otimes e\ar[ru]_{r_m} &
}
\end{equation}
for any $x\in\Gg$ and $m\in\Hh$.
\end{itemize}
Similarly, for any left action $\lhd=(\Ee,\mu,\tau,\mu)$ of $\GG$ on $\HH$, the semidirect product of $\GG$ and $\HH$, denoted by $\HH\rtimes\GG$, is the groupoid $\Hh\times\Gg$ equipped with the analogous monoidal structure.
\end{defn}
It is long but easy to check that the structure so defined is indeed a 2-group. In particular, a two-sided (weak) inverse in $\GG\ltimes\HH$ of $(x,m)\in\Gg\times\mathcal{H}$ is given by the pair
$$
(x,m)^\ast=(x^\ast,m^\ast\rhd x^\ast)
$$
for any two-sided inverses $x^\ast$ and $m^\ast$ of $x$ and $m$, respectively. Indeed, by definition we have
\begin{align*}
(x,m)^\ast\otimes(x,m)&=(x^\ast\otimes x,[(m^\ast\rhd x^\ast)\rhd x]\otimes m), \\  (x,m)\otimes(x,m)^\ast&=(x\otimes x^\ast,(m\rhd x^\ast)\otimes(m^\ast\rhd x^\ast)).
\end{align*}
Then any isomorphisms $\epsilon_x:x^\ast\otimes x\To e$, $\eta_x:e\To x\otimes x^\ast$ and $\epsilon_m:m^\ast\otimes m\To e$, $\eta_m:e\To m\otimes m^\ast$ induce isomorphisms
$$
\xymatrix{
[(m^\ast\rhd x^\ast)\rhd x]\otimes m\ar[rr]^{\mu^{-1}_{x^\ast,x;m^\ast}\otimes id} && [m^\ast\rhd(x^\ast\otimes x)]\otimes m\ar[rr]^{\ \ \ \ (m^\ast\rhd\epsilon_x)\otimes id} && (m^\ast\rhd e)\otimes m \ar[r]^{\ \ \ \ \nu_{m^\ast}\otimes id} & m^\ast\otimes m\ar[r]^{\ \ \ \ \epsilon_m} & e},
$$
$$
\xymatrix{
(m\rhd x^\ast)\otimes(m^\ast\rhd x^\ast)\ar[rr]^{\ \ \ \ \mu^{-1}_{x^\ast;m,m^\ast}} && (m\otimes m^\ast)\rhd x^\ast\ar[rr]^{\ \ \ \ \eta^{-1}_m\rhd x^\ast} & & e\rhd x^\ast\ar[r]^{\ \ \ \ \nu_{x^\ast}} & e
}.
$$

\espai
Notice that even if $\mathbb{G}$ and $\mathbb{H}$ are both strict 2-groups, $\GG\ltimes\HH$ need not be strict unless the action of $\GG$ on $\HH$ is {\em strongly} strict. Indeed, the isomorphisms $\mu_{x,y;m}$ and $\mu_{x;m,n}$ and the associated units $\nu_x,\nu_m$ both appear explicitly in the monoidal structure of the semidirect product, and also in the previous isomorphisms between $(x,m)\otimes(x,m)^\ast$ and $(e,e)$.

\begin{ex}[{\em Discrete case}]{\rm
Returning to Example~\ref{accio_G[0]_sobre_H[0]}, any right $\Gsf$-group $\Hsf$ gives rise to a semidirect product 2-group $\Gsf[0]\ltimes\Hsf[0]$. It readily follows from the above general definition that $\Gsf[0]\ltimes\Hsf[0]\cong(\Gsf\ltimes\Hsf)[0]$. }
\end{ex}

We also know from Example~\ref{accio_G[0]_sobre_H[0]} that any left $\Gsf$-module structure on an abelian group $\Asf$ induces a left action of $\Gsf[0]$ on $\Asf[1]$. The corresponding semidirect product 2-groups $\Asf[1]\rtimes\Gsf[0]$ play a basic role in what follows. They will be called (left) {\em elementary 2-groups}. They have the following very simple description.

\begin{prop}\label{A[1]_rtimes_G[0]}
Up to isomorphism, the elementary 2-group $\Asf[1]\rtimes\Gsf[0]$ is the strict 2-group with the elements of $G$ as objects, all pairs $(a,g)\in A\times G$ as morphisms, with $(a,g):g\To g$, and the composition and tensor product given by
\begin{align*}
(a',g)\circ(a,g)&=(a'+a,g), \\
g\otimes g'&=gg', \\ (a,g)\otimes(a',g')&=(a+g\lhd a',gg').
\end{align*}
The identity of $g$ is $(0,g)$, the inverse of $(a,g)$ is $(-a,g)$, the unit object is the unit $1\in G$ and the $\otimes$-inverse of $g$ is $g^{-1}$.
\end{prop}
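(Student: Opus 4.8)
The plan is to unwind Definition~\ref{producte_semidirecte}, in its left-handed form, for the left 2-action of $\Gsf[0]$ on $\Asf[1]$ constructed in Example~\ref{accio_G[0]_sobre_H[0]} from the given $\Gsf$-module structure, and to read off each ingredient of the monoidal structure. First I would identify the underlying groupoid $\Hh\times\Gg$ of $\Asf[1]\rtimes\Gsf[0]$: the underlying groupoid of $\Asf[1]$ has a single object $\ast$ with $\mathsf{Aut}(\ast)=\Asf$ and composition given by the group law of $\Asf$, while the underlying groupoid of $\Gsf[0]$ is discrete on $G$; hence the product groupoid has object set $\{\ast\}\times G$, which I identify with $G$, and a morphism from $(\ast,g)$ to $(\ast,g')$ is a pair $(a,\mathrm{id}_g)$ precisely when $g=g'$. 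Writing $(a,g)$ for $(a,\mathrm{id}_g)$ recovers the stated objects and morphisms, and since composition in a product groupoid is componentwise, $(a',g)\circ(a,g)=(a'+a,g)$.

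Next I would compute the tensor product using the left analogue of the formulas of Definition~\ref{producte_semidirecte}, namely $(m,x)\otimes(m',x')=(m\otimes(x\lhd m'),\,x\otimes x')$ on objects and $(f,\phi)\otimes(f',\phi')=(f\otimes(\phi\lhd f'),\,\phi\otimes\phi')$ on morphisms. On objects the first component is forced to be $\ast$, so $g\otimes g'=gg'$, with unit object $(\ast,1)$, i.e.\ $1\in G$. On morphisms, for $\phi=\mathrm{id}_g$ the natural isomorphism $\tau_{\mathrm{id}_g}$ is the identity, so by (\ref{phi_lhd_f}) the morphism $\mathrm{id}_g\lhd a'$ is just $\Ee_g(a')=g\lhd a'$; since the tensor product of morphisms in $\Asf[1]$ is again the (additively written) group law of $\Asf$, this gives $(a,g)\otimes(a',g')=(a+g\lhd a',\,gg')$, as claimed. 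Computing inverses the same way, $g^{-1}$ is a $\otimes$-inverse of $g$, $(0,g)$ is the identity of $g$, and $(-a,g)$ is the inverse of $(a,g)$ in the groupoid.

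Finally I would check strictness. Because $\Gsf[0]$ and $\Asf[1]$ are strict 2-groups and, by Example~\ref{accio_G[0]_sobre_H[0]}, the 2-action is \emph{strongly} strict, all the isomorphisms $\mu_{x,y;m}$, $\mu_{x;m,n}$ and all unit isomorphisms $\nu_x$, $\nu_m$ appearing in Definition~\ref{producte_semidirecte} are identities; hence the auxiliary isomorphisms $\hat a$, $\hat l$, $\hat r$ defined by the diagrams (\ref{hat_a}) and (\ref{hat_l_r}) are identities, and together with the triviality of the associator and unitors of $\Gsf[0]$ this makes all structural isomorphisms of $\Asf[1]\rtimes\Gsf[0]$ trivial, so the 2-group is strict --- precisely the situation flagged in the remark following Definition~\ref{producte_semidirecte}. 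I do not expect any real obstacle here; the only points demanding care are writing down the left-handed versions of the semidirect-product formulas correctly and observing, via (\ref{phi_lhd_f}), that $\phi\lhd f$ collapses to the action on morphisms when $\phi$ is an identity.
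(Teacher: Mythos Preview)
Your proposal is correct and follows exactly the approach the paper indicates: the paper's own proof simply says the result ``readily follows from the general description of a semidirect product as given in Definition~\ref{producte_semidirecte} and the action of $\Gsf[0]$ on $\Asf[1]$ as defined in Example~\ref{accio_G[0]_sobre_H[0]}'' and leaves the details to the reader, and you have supplied precisely those details.
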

\begin{proof}
It readily follows from the general description of a semidirect product as given in Definition~\ref{producte_semidirecte} and the action of $\Gsf[0]$ on $\Asf[1]$ as defined in Example~\ref{accio_G[0]_sobre_H[0]}. The details are left to the reader.
\end{proof}
 
In spite of their simplicity, elementary 2-groups are almost generic. Indeed, the 2-group $\hat{\GG}$ defined in \S~\ref{invariants_homotopics} is almost of this kind, with $\Gsf=\pi_0(\GG)$ and $\Asf=\pi_1(\GG)$ equipped with its canonical $\pi_0(\GG)$-module structure given by (\ref{accio}). $\hat{\GG}$ only differs from an elementary 2-group in that it comes equipped with a non-trivial associator. In the general case, the associator is given by
$$
a_{g,g',g''}=(\mathsf{z}(g,g',g''),gg'g'')
$$
for a given normalized 3-cocicle $\mathsf{z}$ of $\Gsf$ with values in $\Asf$. The 2-group so defined will be denoted by $\Asf[1]\rtimes_\mathsf{z}\Gsf[0]$. Such 2-groups will be called {\em special}. Sinh's theorem says then that any 2-group $\GG$ is equivalent to the special 2-group
\begin{equation*}
\mathbb{G}\simeq\pi_1(\mathbb{G})[1]\rtimes_{{\sf z}(\mathbb{G})}\pi_0(\mathbb{G})[0]
\end{equation*}
for any classifying (normalized) 3-cocycle $\mathsf{z}(\GG)$ of $\GG$.

Another family of examples of semidirect products which also plays a basic role in the sequel is the following.

\begin{defn}
For any $n\geq 1$ and any 2-group $\GG$ the {\em wreath 2-product} of $\Ssf_n$ with $\GG$, denoted by $\Ssf_n\wr\wr\ \GG$, is the semidirect product 2-group $\Ssf_n[0]\ltimes\GG^n$ associated the the wreath 2-action of $\Ssf_n[0]$ on $\GG^n$ described in Example~\ref{accio_wreath}.
\end{defn}
It has the following explicit description.

\begin{prop}\label{S_n_wreath_GG}
The wreath 2-product $\Ssf_n\wr\wr\ \GG$ is the groupoid $\Ssf_n[0]\times\Gg^n$ equipped with the following monoidal structure:
\begin{itemize}
\item[(a)] the tensor product is given by
\begin{align*}
(\sigma,\mathbf{x})\otimes(\sigma',\mathbf{x}')&=(\sigma\sigma',(\mathbf{x}\rhd\sigma')\otimes\mathbf{x}'), \\ (id_\sigma,\mathbf{f})\otimes(id_{\sigma'},\mathbf{f}')&=(id_{\sigma\sigma'},(\mathbf{f}\rhd\sigma')\otimes\mathbf{f}')
\end{align*}
for any permutations $\sigma,\sigma'\in S_n$ and any objects $\mathbf{x},\mathbf{x}'$ and morphisms $\mathbf{f},\mathbf{f}'$ in $\Gg^n$;
\item[(b)] the unit object is $(id_n,\mathbf{e})$, with $\mathbf{e}=(e,\ldots,e)$;
\item[(c)] the associator is given by
\begin{equation*}
a_{(\sigma,\mathbf{x}),(\sigma',\mathbf{x}'),(\sigma'',\mathbf{x}'')}:=(id_{\sigma\sigma'\sigma''},a_{\mathbf{x}\rhd(\sigma'\sigma''),\mathbf{x}'\rhd\sigma'',\mathbf{x}''})
\end{equation*}
for any permutations $\sigma,\sigma'\sigma''\in S_n$ and objects $\mathbf{x},\mathbf{x}',\mathbf{x}''$ in $\Gg^n$;
\item[(d)] the left and right unitors are given by
\begin{equation*}
l_{(\sigma,\mathbf{x})}:=(id_\sigma,l_{\mathbf{x}}),
\end{equation*}
\begin{equation*}
r_{(\sigma,\mathbf{x})}:=(id_\sigma,r_{\mathbf{x}}),
\end{equation*}
for any permutation $\sigma\in S_n$ and object $\mathbf{x}\in\Gg^n$.
\end{itemize}
In particular, $\Ssf_n\wr\wr\ \GG$ is strict (resp. skeletal) when $\GG$ is strict (resp. skeletal), and in this case the strict inverse of $(\sigma,\mathbf{x})$ is given by
\begin{equation*}
(\sigma,\mathbf{x})^{-1}=(\sigma^{-1},\mathbf{x}^{-1}\rhd\sigma^{-1})
\end{equation*}
where $\mathbf{x}^{-1}=(x_1^{-1},\ldots,x_n^{-1})$.
\end{prop}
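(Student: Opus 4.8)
The plan is to unwind Definition~\ref{producte_semidirecte} for the particular semidirect product $\Ssf_n[0]\ltimes\GG^n$, using the fact, established in Example~\ref{accio_wreath}, that the wreath $2$-action $\WW_{n,\GG}\colon\Ssf_n^{op}[0]\To\EE quiv_\mathbf{2Grp}(\GG^n)$ is \emph{strongly strict}. This is really the only substantive input: it guarantees that every structural isomorphism occurring in Definition~\ref{producte_semidirecte} that is contributed by the action --- namely the $\mu_{x,y;m}$, the $\mu_{x;m,n}$, and the hidden unit isomorphisms $\nu_x,\nu_m$ --- is an identity. What is then left of the monoidal structure of $\Ssf_n[0]\ltimes\GG^n$ is assembled purely from the discrete $2$-group $\Ssf_n[0]$ (tensor product the group product of $S_n$, trivial associator and unitors, all morphisms of the form $id_\sigma$), from the componentwise monoidal structure of $\GG^n$ recalled in \S~\ref{2-categoria_2Grp}, and from the permutation functors $\Pp_\sigma$ of (\ref{def_P_sigma_1})--(\ref{def_P_sigma_2}).

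First I would read off the underlying groupoid: by Definition~\ref{producte_semidirecte} it is $\Ssf_n[0]\times\Gg^n$, as stated. For (a), substituting $x=\sigma$, $m=\mathbf{x}$ in the defining formula $(x,m)\otimes(x',m')=(x\otimes x',(m\rhd x')\otimes m')$ and using $\mathbf{x}\rhd\sigma'=\Pp_{\sigma'}(\mathbf{x})$ gives the object part, and the same substitution in $(\phi,f)\otimes(\phi',f')=(\phi\otimes\phi',(f\rhd\phi')\otimes f')$ with $\phi=id_\sigma$, $\phi'=id_{\sigma'}$ gives the morphism part. The unit object $(e,e)$ of the semidirect product is $(id_n,\mathbf{e})$, which is (b).

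For the associator, the first component of $a_{(x,m),(x',m'),(x'',m'')}=(a_{x,x',x''},\hat a_{x',x'';m,m',m''})$ is the associator of $\Ssf_n[0]$, hence $id_{\sigma\sigma'\sigma''}$; and since every $\mu$ in the defining diagram (\ref{hat_a}) is an identity while, by Example~\ref{accio_wreath}, $(\mathbf{x}\rhd\sigma')\rhd\sigma''=\mathbf{x}\rhd(\sigma'\sigma'')$, that square collapses to $\hat a_{\sigma',\sigma'';\mathbf{x},\mathbf{x}',\mathbf{x}''}=a_{\mathbf{x}\rhd(\sigma'\sigma''),\,\mathbf{x}'\rhd\sigma'',\,\mathbf{x}''}$ in $\Gg^n$, which is (c). The unitors are handled identically: their first components are $l_\sigma=r_\sigma=id_\sigma$, and in diagrams (\ref{hat_l_r}) the triviality of $\nu_x,\nu_m$ --- which here is \emph{literal}, since $\mathbf{e}\rhd\sigma=\mathbf{e}$ and $\mathbf{x}\rhd id_n=\mathbf{x}$ on the nose --- forces $\hat l_{\sigma,\mathbf{x}}=l_{\mathbf{x}}$ and $\hat r_{\mathbf{x}}=r_{\mathbf{x}}$, giving (d). The final assertions follow at once: the associator and unitors of $\Ssf_n\wr\wr\ \GG$ are identities exactly when those of $\GG$ (equivalently of $\GG^n$) are, so $\Ssf_n\wr\wr\ \GG$ is strict iff $\GG$ is; two objects $(\sigma,\mathbf{x})$, $(\sigma',\mathbf{x}')$ are isomorphic iff $\sigma=\sigma'$ and $\mathbf{x}\cong\mathbf{x}'$ in $\Gg^n$, so $\Ssf_n\wr\wr\ \GG$ is skeletal iff $\GG$ is; and the strict inverse comes from specializing the general formula $(x,m)^\ast=(x^\ast,m^\ast\rhd x^\ast)$ of Definition~\ref{producte_semidirecte} to $x^{-1}=\sigma^{-1}$ and $m^{-1}=\mathbf{x}^{-1}=(x_1^{-1},\ldots,x_n^{-1})$.

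I do not expect a genuine obstacle --- the proof is entirely bookkeeping --- but the one spot that needs care is keeping the orientation conventions straight: $\WW_{n,\GG}$ is presented as a \emph{left} $2$-action of $\Ssf_n^{op}[0]$, so its composition law reads $\Pp_{\sigma\sigma'}=\Pp_{\sigma'}\circ\Pp_\sigma$, i.e.\ $(\mathbf{x}\rhd\sigma)\rhd\sigma'=\mathbf{x}\rhd(\sigma\sigma')$; and the tensor product appearing in the $\Ssf_n[0]$-slot of Definition~\ref{producte_semidirecte} is the plain product of $S_n$, not its reverse. Tracking these correctly is exactly what makes the tensor product and the associator come out in the stated form rather than a mirrored one.
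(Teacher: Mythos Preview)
Your proof is correct and follows exactly the same route as the paper's: both simply unwind Definition~\ref{producte_semidirecte} for the strongly strict wreath $2$-action of Example~\ref{accio_wreath}, so that the structural isomorphisms $\mu_{x,y;m}$, $\mu_{x;m,n}$, $\nu_x$, $\nu_m$ collapse and diagrams (\ref{hat_a}) and (\ref{hat_l_r}) yield the stated associator and unitors. Your write-up is more explicit than the paper's one-line proof (and your closing remark about keeping track of the composition law $\Pp_{\sigma\sigma'}=\Pp_{\sigma'}\circ\Pp_\sigma$ is exactly the right thing to flag), but the substance is the same.
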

\begin{proof}
The expression for the tensor product readily follows from (\ref{def_P_sigma_1})-(\ref{def_P_sigma_2}) and the analog of (\ref{phi_lhd_f}) for right actions, and the associator and left and right unitors follow from (\ref{hat_a}) and (\ref{hat_l_r}), respectively. 
\end{proof}
The reader may easily check that equivalent 2-groups have equivalent wreath 2-products with the same permutation group, i.e. we have
$$
\GG\simeq\GG'\ \Longrightarrow\ \Ssf_n\wr\wr\ \GG\simeq\Ssf_n\wr\wr\ \GG'.
$$
Later on, we shall compute the homotopy invariants of $\Ssf_n\wr\wr\ \GG$ in terms of $n$ and the homotopy invariants of  $\GG$ (see Proposition~\ref{invariants_producte_wreath}).

\begin{rem}\label{wreath_2-product_general}{\rm
More generally, for any 2-group $\GG$, any (non necessarily discrete) groupoid $\Kk$ and any right 2-action of a 2-group $\HH$ on $\Kk$ (i.e. any morphism of 2-groups $\HH^{op}\To\SSS ym(\Kk)$), one can also define the wreath 2-product $\HH\wr\wr_\Kk\GG$. Indeed the 2-action of $\HH$ on $\Kk$ together with the canonical 2-action of $\SSS ym(\Kk)$ on the cotensor product $\GG^\Kk$ of $\GG$ by $\Kk$ mentioned in Remark~\ref{accio_wreath_general} induces a 2-action of $\HH$ on $\GG^\Kk$. Then $\HH\wr\wr_\Kk\GG$ is the corresponding semidirect product 2-group $\HH\ltimes\GG^\Kk$. Details will be given in a future work. 
}\end{rem}

\section{\large Split 2-groups and splitness criteria}
\label{2-grups_escindits}

In this section we define split 2-groups and discuss various necessary and sufficient conditions of splitness. The first condition (Theorem~\ref{criteri_general}) works for any 2-group, and it is an easy consequence of Sinh's algorithm for constructing a classifying 3-cocycle of a 2-group from a given {\em \'epinglage} (see \S~\ref{invariants_homotopics}). The next two conditions (Theorem~\ref{criteri_general_2}) also work for arbitrary 2-groups, and justify the name given to these kind of 2-groups. Finally, the last condition (Theorem~\ref{criteri_split_2-grups_estrictes}) applies only to {\em strict} 2-groups, but in some cases it gives an easier way to establish the splitness of the 2-group. As a first application, we shall see that the wreath 2-product $\Ssf_n\wr\wr\ \GG$ is split if and only if $\GG$ is split. In Section~4, we shall use this last condition to prove the existence of non-split 2-groups of permutations.

\subsection{Split 2-groups.}

As said in the introduction, a split 2-group is basically an elementary 2-group, i.e. a semidirect product of a discrete and a one-object 2-group. However, the notion of elementary 2-group is not invariant by equivalences. Therefore, we take as definition the following.

\begin{defn}
A 2-group $\GG$ is {\em split} if it is equivalent to an elementary 2-group $\Asf[1]\rtimes\Gsf[0]$ for some group $\Gsf$ and some $\Gsf$-module $\Asf$.
\end{defn}
Observe that, as well as strict, elementary 2-groups are skeletal. This gives half of the next alternative characterization of split 2-groups.

\begin{prop}\label{G_split_sii_estricte_esqueletic}
A 2-group $\GG$ is split if and only if it is equivalent to a strict skeletal 2-group.
\end{prop}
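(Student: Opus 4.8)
\emph{Plan.} The two implications are very asymmetric. The ``only if'' direction is essentially the observation recorded just before the statement: an elementary 2-group $\Asf[1]\rtimes\Gsf[0]$ is not only strict but also skeletal, since by Proposition~\ref{A[1]_rtimes_G[0]} its underlying groupoid has only endomorphisms --- every morphism $(a,g)$ runs from $g$ to $g$ --- so distinct objects are never isomorphic. Hence if $\GG\simeq\Asf[1]\rtimes\Gsf[0]$ then $\GG$ is equivalent to a strict skeletal 2-group, and there is nothing further to do.

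For the converse, assume $\GG\simeq\GG'$ with $\GG'$ strict and skeletal; since splitness is defined by an equivalence and equivalences compose, it suffices to prove $\GG'$ itself is split, and I would do this by exhibiting a strict \emph{isomorphism} $\GG'\cong\Asf[1]\rtimes\Gsf'[0]$. Set $\Gsf':=\pi_0(\GG')$, which coincides with the group $G'_0$ of objects of $\GG'$ because $\GG'$ is skeletal (isomorphic objects being equal, and $\otimes$ being strictly associative with strict unit on objects), and set $\Asf:=\pi_1(\GG')=\mathsf{Aut}(e)$ with its canonical $\Gsf'$-module structure (\ref{accio}). The two facts from \S\ref{invariants_homotopics} that I would lean on are that each $\gamma_g:\mathsf{Aut}(e)\To\mathsf{Aut}(g)$ is a group isomorphism and the identity (\ref{propietat}). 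Define $\Phi:\GG'\To\Asf[1]\rtimes\Gsf'[0]$ to be the identity on objects and, on a morphism $u:g\To g$ (all other hom-sets of $\GG'$ being empty by skeletality), to be $\Phi(u):=(\gamma_g^{-1}(u),g)$.

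It then remains to check that $\Phi$ is a strict morphism of 2-groups which is bijective on objects and on each hom-set; the bijectivity is clear, and functoriality holds because each $\gamma_g^{-1}$ is a homomorphism $(\mathsf{Aut}(g),\circ)\To\Asf$. Strict monoidality is the only point needing attention: on objects $\otimes$ is the group law of $\Gsf'$ on both sides and all associators and unitors are trivial on both sides (both 2-groups being strict), so (\ref{axioma_coherencia}) and (\ref{coherencia_iota}) hold with $\mu$ and $\nu$ identities; and on morphisms, applying $\gamma_{g\otimes g'}$ turns the required identity $\Phi(u\otimes u')=\Phi(u)\otimes\Phi(u')$ --- via (\ref{propietat}) --- exactly into the formula for $(\gamma_g^{-1}(u),g)\otimes(\gamma_{g'}^{-1}(u'),g')$ given in Proposition~\ref{A[1]_rtimes_G[0]} (writing $\Asf$ additively, so that $\circ$ there becomes $+$). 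This last compatibility on morphisms is the single genuinely non-routine step, and (\ref{propietat}) is tailor-made for it; everything else is bookkeeping. A slightly longer, more homotopy-theoretic route to the converse would instead run Sinh's algorithm with the tautological \'epinglage $(s,\theta)=(\mathrm{id},\mathrm{id})$ --- admissible because in the strict case tensoring with $e$ is the identity \emph{and} the unitors vanish, so the normalization conditions collapse to $\theta_{s[x]}=id_{s[x]}$ --- observe that diagram (\ref{definicio_3-cocicle}) then reduces to identities and forces $\alpha(\GG')=0$, and conclude by Sinh's theorem that $\GG'\simeq\pi_1(\GG')[1]\rtimes\pi_0(\GG')[0]$.
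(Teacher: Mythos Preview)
Your proof is correct and follows essentially the same route as the paper's own argument: both directions are handled the same way, and for the substantive ``if'' direction you construct the very same strict isomorphism $\GG'\cong\pi_1(\GG')[1]\rtimes\pi_0(\GG')[0]$ via the identity on objects and $u\mapsto(\gamma_g^{-1}(u),g)$ on morphisms, with (\ref{propietat}) doing the work for tensor-compatibility on morphisms. Your brief alternative via the tautological \emph{\'epinglage} and Sinh's theorem is also valid and in fact anticipates the content of Theorem~\ref{criteri_general}.
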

\begin{proof}
It is enough to prove that for any strict skeletal 2-group $\KK$ we have
\begin{equation*}
\KK\cong\pi_1(\KK)[1]\rtimes\Ksf_0[0],
\end{equation*}
where $\Ksf_0$ denotes the group of objects of $\KK$ with the product given by the tensor product. The action of $\Ksf_0$ on $\Ksf_1$ is that given by
$$
k\lhd u=\gamma_k^{-1}(\delta_k(u)).
$$
Let $\Ff:\Kk\To\pi_1(\KK)[1]\times\Ksf_0[0]$ be the functor acting on objects as the identity and on a morphism $f:k\To k$ by
$$
\Ff f=(\gamma_k^{-1}(f),k).
$$
Since $\gamma_k:\pi_1(\KK)\To\mathsf{Aut}(k)$ is an isomorphism of groups, it is a fully faithful functor. Moreover, for any morphisms $f:k\To k$ and $f':k'\To k'$ we have
\begin{align*}
(\Ff f)\otimes(\Ff f')&=(\gamma_k^{-1}(f),k)\otimes(\gamma^{-1}_{k'}(f'),k') \\ &=(\gamma_k^{-1}(f)\circ(k\lhd\gamma^{-1}_{k'}(f')),k\otimes k').
\end{align*}
Now, it follows from (\ref{propietat}) that
$$
\gamma_{k\otimes k'}(\gamma_k^{-1}(f)\circ(k\lhd\gamma^{-1}_{k'}(f')))=\gamma_k(\gamma_k^{-1}(f))\otimes\gamma_{k'}(\gamma^{-1}_{k'}(f'))=f\otimes f'
$$
and hence
\begin{align*}
\Ff(f\otimes f')&=(\gamma_{k\otimes k'}^{-1}(f\otimes f'),k\otimes k') \\ &=(\gamma_k^{-1}(f)\circ(k\lhd\gamma^{-1}_{k'}(f')),k\otimes k') \\ &=(\Ff f)\otimes(\Ff f').
\end{align*}
Therefore, $\Ff$ is a strict monoidal functor and hence, an isomorphism of 2-groups.
\end{proof}

\begin{ex}\label{2-grup_escindit_grup_abelia}{\rm
For any abelian group $\Asf$, the 2-group $\SSS ym(\Asf)$ of self-equivalences of $\Asf$ as a one-object groupoid is split. Indeed, we shall see in Corollary~\ref{Sim(A)} below that
$$
\SSS ym(\Asf)\simeq\Asf[1]\rtimes\mathsf{Aut}(\Asf)[0],
$$
with $\mathsf{Aut}(\Asf)$ acting on $\Asf$ in the canonical way. }
\end{ex}

\begin{ex}\label{exemple_2-grup_split}{\rm
The previous example does not generalize to arbitrary non-abelian groups (see below). However, it remains true when the construction is appropriately linearized. More precisely, for any field $\Fsf$, let $\Vv ect_\Fsf$ the category of finite dimensional $\Fsf$-vector spaces, and $\mathbf{Cat}_\Fsf$ the 2-category of (small) categories enriched on $\Vv ect_\Fsf$, i.e. the 2-category of (small) $\Fsf$-linear categories, $\Fsf$-linear functors and natural transformations. For any group $\Gsf$, an object in $\mathbf{Cat}_\Fsf$ is given by the additive completion of the free $\Fsf$-linear category generated by the one-object groupoid $\Gsf$. Let us denote this category by $\Vv ect_\Fsf[\Gsf]$. For instance, when $\Gsf$ is trivial, it is nothing but the category $\Vv ect_\Fsf$ itself. More generally, the product category $\Vv ect_\Fsf[\Gsf]^n$ for any $n\geq 1$ is also an object in $\mathbf{Cat}_\Fsf$. Then when $\Fsf$ is algebraically closed the 2-group $\EE quiv(\Vv ect_\Fsf[\Gsf]^n)$ is split. Indeed, let $r$ be the number of conjugacy classes of $\Gsf$, $d_1,\ldots,d_s$ the dimensions of the finite dimensional irreducible representations of $\Gsf$, and $k_i\geq 1$ the number of non equivalent irreducible representations of dimension $d_i$ (in particular, $k_1+\cdots+k_s=r$). Then there exists a left action of the group $\Ssf_n\times(\Ssf_{k_1}\times\cdots\times\Ssf_{k_s})^n$ on the abelian group $(\Fsf^\ast)^{rn}$ such that
$$
\EE quiv(\Vv ect_\Fsf[\Gsf]^n)\simeq(\Fsf^\ast)^{rn}[1]\rtimes(\Ssf_n\times(\Ssf_{k_1}\times\cdots\times\Ssf_{k_s})^n)[0].
$$
For more details and a proof of this see \cite{jE5}. In particular, when $\Gsf$ is trivial it follows that
$$
\EE quiv_{\mathbf{Cat}_\Fsf}(\Vv ect_\Fsf^n)\simeq(\Fsf^\ast)^n[1]\rtimes\Ssf_n[0],
$$
where $(\Fsf^*)^n$ is equipped with the usual wreath product action of $\Ssf_n$
$$
\sigma\lhd(\lambda_1,\ldots,\lambda_n)=(\lambda_{\sigma^{-1}(1)},\ldots,\lambda_{\sigma^{-1}(n)})
$$
for all $(\lambda_1,\ldots,\lambda_n)\in(\Fsf^*)^n$.
}
\end{ex}

\begin{prop}\label{splitness_producte_wreath_1}
For any $n\geq 1$ and any split 2-group $\GG$, the wreath 2-product $\Ssf_n\wr\wr\ \GG$ is split.
\end{prop}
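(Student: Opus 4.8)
The plan is to reduce everything to the skeletal case by invoking the characterization of split 2-groups in Proposition~\ref{G_split_sii_estricte_esqueletic} --- a 2-group is split precisely when it is equivalent to a strict skeletal 2-group --- together with the two facts already recorded about the wreath 2-product: that it is invariant under equivalence of the second argument (the remark following Proposition~\ref{S_n_wreath_GG}), and that $\Ssf_n\wr\wr\ \KK$ is strict and skeletal whenever $\KK$ is (also Proposition~\ref{S_n_wreath_GG}).

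Concretely, first, since $\GG$ is split, Proposition~\ref{G_split_sii_estricte_esqueletic} yields a strict skeletal 2-group $\KK$ with $\GG\simeq\KK$. By equivalence-invariance of the wreath 2-product we then have $\Ssf_n\wr\wr\ \GG\simeq\Ssf_n\wr\wr\ \KK$. Next, Proposition~\ref{S_n_wreath_GG} guarantees that $\Ssf_n\wr\wr\ \KK$ is itself strict and skeletal. Finally, the reverse implication of Proposition~\ref{G_split_sii_estricte_esqueletic} applied to $\Ssf_n\wr\wr\ \KK$ shows it is split, and hence so is the equivalent 2-group $\Ssf_n\wr\wr\ \GG$.

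There is no genuine obstacle here; the statement is essentially a bookkeeping consequence of results already in hand, and the later Proposition~\ref{invariants_producte_wreath} (computing the homotopy invariants of $\Ssf_n\wr\wr\ \GG$) will give a second, invariant-theoretic proof once available. The only point that merits a sentence of care is the equivalence-invariance of the wreath construction, which the paper delegates to the reader. If one wishes to make it explicit: given an equivalence $\EE=(\Ee,\mu):\GG\To\GG'$, define $\widetilde{\EE}:\Ssf_n\wr\wr\ \GG\To\Ssf_n\wr\wr\ \GG'$ on objects and morphisms componentwise via $\Ee$ on the $\Gg^n$ factor and the identity on $\Ssf_n$, with monoidal structure assembled componentwise from $\mu$. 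Because the wreath 2-action of $\Ssf_n[0]$ is strongly strict and is given on $\Gg^n$ purely by permuting the coordinates (see Example~\ref{accio_wreath}), any such componentwise-defined functor automatically commutes with it; checking that $\widetilde{\EE}$ is a morphism of 2-groups then reduces, coordinate by coordinate, to the fact that $\EE$ is one, and a pseudoinverse of $\EE$ induces a pseudoinverse of $\widetilde{\EE}$ in the same way.
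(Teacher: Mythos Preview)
Your proof is correct and follows essentially the same route as the paper's own argument: reduce to a strict skeletal $\KK$ via Proposition~\ref{G_split_sii_estricte_esqueletic}, use equivalence-invariance of the wreath 2-product, and then apply Proposition~\ref{S_n_wreath_GG} to see that $\Ssf_n\wr\wr\ \KK$ is again strict skeletal. Your added paragraph sketching the equivalence-invariance explicitly is a welcome elaboration of a point the paper leaves to the reader.
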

\begin{proof}
As pointed out before, equivalent 2-groups have equivalent wreath 2-products. Hence if $\GG$ is split and consequently, $\GG\simeq\KK$ for some strict skeletal 2-group $\KK$ (see Proposition~\ref{G_split_sii_estricte_esqueletic}) we have
$$
\Ssf_n\wr\wr\ \GG\simeq\Ssf_n\wr\wr\ \KK.
$$
Now the wreath 2-product of a strict skeletal 2-group is a strict skeletal 2-group (see Proposition~\ref{S_n_wreath_GG}). Therefore $\Ssf_n\wr\wr\ \GG$ is split. 
\end{proof}

\noindent
In fact, we shall see below that $\Ssf_n\wr\wr\ \GG$ is split only when $\GG$ is split (see \S~\ref{splitness_producte_wreath_2}).

Before going on, let us remark two easy facts about elementary 2-groups that we shall need in what follows. Firstly, their homotopy groups are as expected, i.e.
\begin{align*}
\pi_0(\Asf[1]\rtimes\Gsf[0])&=\Gsf, \\ \pi_1(\Asf[1]\rtimes\Gsf[0])&=\Asf\times\{1\}\cong\Asf
\end{align*}
with the given action of $\Gsf$ on $\Asf$. Thus for any $a\in A$ and any $g\in G$ we have
\begin{align}
\gamma_g(a,1)&=(a,1)\otimes(0,g)=(a,g), \label{gamma_split}\\  \delta_g(a,1)&=(0,g)\otimes(a,1)=(g\lhd a,g),\label{delta_split}
\end{align}
and hence, the canonical left action (\ref{accio}) gives
$$
\gamma_g^{-1}(\delta_g(a,1))=\gamma_g^{-1}(g\lhd a,g)=g\lhd a.
$$
Secondly, elementary 2-groups are completly classified by the homotopy groups. In fact, we have the following explicit description for the morphisms between two elementary 2-groups. It is a generalization of the descriptions given in Examples~\ref{morfismes_entre_2-grups_discrets} and \ref{morfismes_entre_2-grups_un_objecte} of the morphisms between discrete and one-object 2-groups, respectively. 

\begin{lem}
A morphism of elementary 2-groups $\FF:\Asf[1]\rtimes\Gsf[0]\To\Asf'[1]\rtimes\Gsf'[0]$ amounts to a triple $(\rho,\beta,\mathsf{z})$ with $\rho:\Gsf\To\Gsf'$ a group homomorphism, $\beta:\Asf\To\Asf'_\rho$ a morphism of $\Gsf$-modules and $\mathsf{z}:\Gsf\times\Gsf\To\Asf'_\rho$ any 2-cocycle of $\Gsf$ with values in $\Asf'_\rho$ (the abelian group $\Asf'$ equipped with the $\Gsf$-module structure induced by $\rho$ and its $\Gsf'$-module structure). 
\end{lem}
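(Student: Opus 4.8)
The plan is to unwind, in the explicit model of Proposition~\ref{A[1]_rtimes_G[0]}, what the data $(\Ff,\mu)$ of a morphism of 2-groups $\Asf[1]\rtimes\Gsf[0]\To\Asf'[1]\rtimes\Gsf'[0]$ amounts to, and to read off from the functoriality, naturality and coherence axioms the triple $(\rho,\beta,\mathsf{z})$ together with the conditions it must satisfy. Since elementary 2-groups are skeletal, their underlying groupoids have no morphisms between distinct objects; a functor $\Ff$ is therefore the same as a map $\rho\colon G\To G'$ on objects together with, for each $g\in G$, a map $\beta_g\colon A\To A'$ describing the action of $\Ff$ on the automorphism group $\{(a,g)\colon g\To g\}\cong A$. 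Functoriality of $\Ff$ with respect to the composition $(a',g)\circ(a,g)=(a'+a,g)$ immediately forces each $\beta_g$ to be a group homomorphism.

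Next I would analyze the monoidal structure. A component $\mu_{g,g'}\colon\Ff g\otimes'\Ff g'\To\Ff(g\otimes g')$ is a morphism of $\Asf'[1]\rtimes\Gsf'[0]$, hence an endomorphism of a single object, so $\rho(g)\rho(g')=\rho(gg')$ and $\rho$ is a group homomorphism; then $\mu_{g,g'}$ is recorded by an element $\mathsf{z}(g,g')\in A'$ via $\mu_{g,g'}=(\mathsf{z}(g,g'),\rho(gg'))$. Naturality of $\mu$ in the pair $(g,g')$, tested against the morphisms $(a,g)\otimes(a',g')=(a+g\lhd a',gg')$, yields after cancelling $\mathsf{z}$ the identity $\beta_g(a)+\rho(g)\lhd'\beta_{g'}(a')=\beta_{gg'}(a)+\beta_{gg'}(g\lhd a')$; specializing $a'=0$ gives $\beta_g=\beta_{gg'}$ and hence, taking $g=e$, $\beta_g=\beta_e=:\beta$ independently of $g$, while specializing $a=0$ gives $\rho(g)\lhd'\beta(a')=\beta(g\lhd a')$, i.e. $\beta$ is a morphism of $\Gsf$-modules $\Asf\To\Asf'_\rho$. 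Finally, because both elementary 2-groups are strict, the coherence diagram (\ref{axioma_coherencia}) collapses to the square $\mu_{g,g'g''}\circ(\mathrm{id}_{\Ff g}\otimes'\mu_{g',g''})=\mu_{gg',g''}\circ(\mu_{g,g'}\otimes'\mathrm{id}_{\Ff g''})$, and computing both composites in the model of Proposition~\ref{A[1]_rtimes_G[0]} turns it into $\mathsf{z}(g,g'g'')+\rho(g)\lhd'\mathsf{z}(g',g'')=\mathsf{z}(gg',g'')+\mathsf{z}(g,g')$, which is exactly the 2-cocycle condition for $\mathsf{z}\colon\Gsf\times\Gsf\To\Asf'_\rho$. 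Conversely, given any such triple one defines $\Ff$ and $\mu$ by these formulas and the same computations read backwards show that all axioms hold; by Proposition~\ref{existencia_iso_unitat} the unit isomorphism is then the forced element $\nu=-\mathsf{z}(e,e)$ and needs no separate check. This gives the claimed bijection between morphisms and triples $(\rho,\beta,\mathsf{z})$.

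I do not expect a serious obstacle: the argument is bookkeeping inside the explicit description of Proposition~\ref{A[1]_rtimes_G[0]}. The only mildly delicate points are the order in which information is extracted — one must use naturality of $\mu$ \emph{before} the coherence axiom, so as to first learn that the $\beta_g$ all coincide and that $\beta$ is $\rho$-equivariant, and only then reduce the coherence square to the $2$-cocycle identity — and keeping careful track of which module structure is in play, namely $\Gsf$ acting on $\Asf'$ through $\rho$, since this is what makes the assertions ``$\beta$ is a $\Gsf$-module morphism $\Asf\To\Asf'_\rho$'' and ``$\mathsf{z}$ is a $\Gsf$-$2$-cocycle with values in $\Asf'_\rho$'' meaningful. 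If one wishes, one can also note that normalized morphisms correspond precisely to normalized cocycles, recovering Examples~\ref{morfismes_entre_2-grups_discrets} and \ref{morfismes_entre_2-grups_un_objecte} as the special cases $\Asf=0$ and $\Gsf=\mathsf{1}$.
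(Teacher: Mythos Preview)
Your proof is correct and is precisely the direct unwinding the paper intends: the paper's own proof is simply ``Left to the reader,'' and the computations you outline --- extracting $\rho$ from the object map, $\beta$ from naturality of $\mu$, and the 2-cocycle condition from the coherence pentagon with trivial associators --- are exactly what that omitted verification consists of.
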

\begin{proof}
Left to the reader.
\end{proof}

\espai 
\noindent
We shall denote by $\FF(\rho,\beta,\mathsf{z})$, or $\FF(\rho,\beta)$ if $\mathsf{z}$ is trivial, the morphism of 2-groups associated to the triple $(\rho,\beta,\mathsf{z})$. It is given by the functor $\Ff(\rho,\beta):\Asf[1]\times\Gsf[0]\To\Asf'[1]\times\Gsf'[0]$ defined by
\begin{align*}
\Ff(\rho,\beta)(g)&:=\rho(g), \\ \Ff(\rho,\beta)(a,g)&:=(\beta(a),\rho(g)),
\end{align*}
and equipped with the monoidal structure $\mu(\rho,\mathsf{z})$ with components
$$
\mu(\rho,\mathsf{z})_{g_1,g_2}:=(\mathsf{z}(g_1,g_2),\rho(g_1g_2)).
$$

\begin{cor}
With the above notations, two elementary 2-groups $\Asf[1]\rtimes\Gsf[0]$ and $\Asf'[1]\rtimes\Gsf'[0]$ are equivalent if and only if there exists an isomorphism of groups $\rho:\Gsf\To\Gsf'$ and an isomorphism of $\Gsf$-modules $\beta:\Asf\To\Asf'_\rho$. In this case, the set of equivalences between both 2-groups is in bijection with the set of all triples $(\rho,\beta,\mathsf{z})$, where $\rho,\beta$ are as before and $\mathsf{z}$ is any 2-cocycle of $\Gsf$ with values in $\Asf'_\rho$. 
\end{cor}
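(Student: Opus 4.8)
The plan is to read everything off from the preceding Lemma together with the invariance of $\pi_0$ and $\pi_1$ under equivalences of 2-groups. By that Lemma, every morphism $\FF:\Asf[1]\rtimes\Gsf[0]\To\Asf'[1]\rtimes\Gsf'[0]$ has the form $\FF(\rho,\beta,\mathsf{z})$, so it suffices to decide for which triples $(\rho,\beta,\mathsf{z})$ the morphism $\FF(\rho,\beta,\mathsf{z})$ is an equivalence; the claim is that this happens precisely when $\rho$ is an isomorphism of groups and $\beta$ an isomorphism of $\Gsf$-modules, with $\mathsf{z}$ unconstrained. Granting this, both assertions of the Corollary are immediate: an equivalence exists iff such $\rho,\beta$ exist (take $\mathsf{z}=0$), and when they do, the equivalences are exactly the $\FF(\rho,\beta,\mathsf{z})$ with $\rho,\beta$ isomorphisms of the stated kind and $\mathsf{z}$ an arbitrary $2$-cocycle, which by the Lemma are in bijection with the corresponding triples.

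For the forward implication, assume $\FF(\rho,\beta,\mathsf{z})$ is an equivalence. Using the identifications $\pi_0(\Asf[1]\rtimes\Gsf[0])=\Gsf$ and $\pi_1(\Asf[1]\rtimes\Gsf[0])\cong\Asf$ recorded above, formula (\ref{morfisme1}) gives $\pi_0(\FF(\rho,\beta,\mathsf{z}))=\rho$ (the 2-group being skeletal, isomorphism classes of objects are just objects), while formula (\ref{morfisme2}) identifies $\pi_1(\FF(\rho,\beta,\mathsf{z}))$ with $\beta$, since $\Ff(\rho,\beta)$ sends the automorphism $(a,1)$ of the unit to $(\beta(a),1)$ and conjugation by the unit isomorphism $\nu$ in the abelian group $\mathsf{Aut}(e')$ is trivial. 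As an equivalence induces isomorphisms on both homotopy invariants, $\rho$ is an isomorphism of groups and $\beta$ an isomorphism of abelian groups; being already a morphism of $\Gsf$-modules by the Lemma, $\beta$ is then an isomorphism of $\Gsf$-modules.

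For the converse, assume $\rho$ and $\beta$ are isomorphisms of the stated kind and $\mathsf{z}$ is arbitrary. The underlying functor $\Ff(\rho,\beta)$ is a bijection on objects, and on hom-sets it is the composite bijection $\mathsf{Aut}(g)\cong\Asf\cong\Asf'\cong\mathsf{Aut}(\rho(g))$ with middle arrow $\beta$; hence $\Ff(\rho,\beta)$ is an isomorphism of groupoids. It then remains to produce a pseudoinverse as a morphism of 2-groups. A natural candidate is $\FF(\rho^{-1},\beta^{-1},\mathsf{z}')$, where $\beta^{-1}:\Asf'\To\Asf_{\rho^{-1}}$ is the inverse module isomorphism and $\mathsf{z}'$ is the $2$-cocycle of $\Gsf'$ with values in $\Asf_{\rho^{-1}}$ obtained by transporting $-\mathsf{z}$ along $\rho^{-1}$; composing monoidal structures, one checks that the two round trips are $2$-isomorphic to the identities (with this $\mathsf{z}'$ their monoidal parts in fact collapse to identities, so the candidate is even a strict inverse). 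Alternatively, one can simply invoke the general principle that a monoidal functor whose underlying functor is an equivalence of categories is automatically a monoidal equivalence. The main obstacle is precisely this converse step, namely the cocycle bookkeeping needed to verify monoidal coherence and the witnessing $2$-isomorphisms; it is routine but slightly tedious, and disappears entirely if one quotes the general principle just mentioned.
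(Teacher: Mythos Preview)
Your proof is correct and follows essentially the same approach as the paper's: the forward direction via invariance of $\pi_0$ and $\pi_1$, and the converse by observing that $\Ff(\rho,\beta)$ is an equivalence of underlying groupoids (hence the monoidal functor is a monoidal equivalence). You are somewhat more thorough than the paper in that you explicitly verify that every $\FF(\rho,\beta,\mathsf{z})$ with $\rho,\beta$ isomorphisms is an equivalence, whereas the paper only checks the case $\mathsf{z}=0$ and then simply asserts the bijection; your framing (characterize which triples give equivalences, then read off both conclusions) is arguably cleaner.
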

\begin{proof}
The implication to the right follows from the invariance of the homotopy groups. Conversely, given $\rho$ and $\beta$ as in the statement, it is easy to check that the above functor $\Ff(\rho,\beta)$ together with the trivial monoidal structure is a strict monoidal equivalence. As for the last statement, the bijection maps the triple $(\rho,\beta,\mathsf{z})$ to the equivalence $\Ff(\rho,\beta)$ equipped with the non trivial monoidal structure $\mu(\rho,\mathsf{z})$.
\end{proof}

\subsection{First splitness criterion and splitness of the 2-group of permutations of a group}

We saw that an arbitrary 2-group $\GG$ is equivalent to the special 2-group $\pi_1(\GG)[1]\rtimes_{\mathsf{z}}\pi_0(\GG)[0]$, where $\mathsf{z}$ is any classifying normalized 3-cocycle $\mathsf{z}\in\alpha(\GG)$. Hence $\GG$ is split when the Postnikov invariant is $\alpha(\GG)=0$. In fact, the converse is also true. This gives our first criterion of splitness.

\begin{thm} \label{criteri_general}
A 2-group $\GG$ is split if and only if $\alpha(\GG)=0$.
\end{thm}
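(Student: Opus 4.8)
The plan is to establish the two implications separately, in each case reducing everything to Sinh's theorem and to the fact, recalled in \S\ref{invariants_homotopics}, that an equivalence of 2-groups carries the Postnikov invariant of one to that of the other.

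First I would treat the case $\alpha(\GG)=0$. By Sinh's theorem there is an equivalence $\GG\simeq\pi_1(\GG)[1]\rtimes_{\mathsf{z}}\pi_0(\GG)[0]$, where $\mathsf{z}$ is any classifying normalized 3-cocycle of $\GG$. Fixing such a $\mathsf{z}$, the hypothesis $[\mathsf{z}]=\alpha(\GG)=0$ lets me write $\mathsf{z}=\partial t$ for some 2-cochain $t:\pi_0(\GG)\times\pi_0(\GG)\To\pi_1(\GG)$. I would then take the functor from the underlying groupoid of $\pi_1(\GG)[1]\rtimes_{\mathsf{z}}\pi_0(\GG)[0]$ to that of the elementary 2-group $\pi_1(\GG)[1]\rtimes\pi_0(\GG)[0]$ which is the identity, endowed with the monoidal structure whose components are $\mu_{g,g'}=(t(g,g'),gg')$ in the notation of Proposition~\ref{A[1]_rtimes_G[0]}. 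A direct check --- using that these two 2-groups have the same underlying groupoid and the same tensor product on it and differ only in their associators, the trivial one versus the one built from $\mathsf{z}$ --- shows that the coherence diagram (\ref{axioma_coherencia}) for this data is equivalent to the coboundary identity $\mathsf{z}=\partial t$, the unit isomorphism being then supplied automatically by Proposition~\ref{existencia_iso_unitat}. Being bijective on underlying groupoids and having an invertible monoidal structure, this functor is an equivalence, so $\GG\simeq\pi_1(\GG)[1]\rtimes\pi_0(\GG)[0]$ is split.

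Conversely, suppose $\GG$ is split, say $\GG\simeq\Asf[1]\rtimes\Gsf[0]$ for a group $\Gsf$ and a $\Gsf$-module $\Asf$. By homotopy invariance of the Postnikov class it suffices to show $\alpha(\Asf[1]\rtimes\Gsf[0])=0$, and for this I would run Sinh's algorithm (\S\ref{invariants_homotopics}) on $\KK:=\Asf[1]\rtimes\Gsf[0]$. By Proposition~\ref{A[1]_rtimes_G[0]} the 2-group $\KK$ is strict and skeletal, so the projection $p:K_0\To\pi_0(\KK)$ is just the identity map $G\To G$; consequently the pair consisting of the section $s=id_G$ and the family $\theta_g=id_g$ is a legitimate {\it \'epinglage}, the normalization conditions holding trivially. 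With this choice every morphism occurring in the diagram (\ref{definicio_3-cocicle}) defining a classifying 3-cocycle $\mathsf{z}(\KK)$ is an identity --- those built from $\theta$ by construction, and the associator because $\KK$ is strict --- so that $\gamma_{gg'g''}(\mathsf{z}(\KK)(g,g',g''))=id_{gg'g''}$ for all $g,g',g''\in G$. Since each $\gamma_{gg'g''}$ is an isomorphism, $\mathsf{z}(\KK)\equiv 0$, hence $\alpha(\KK)=0$ and therefore $\alpha(\GG)=0$.

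The step I expect to be the main obstacle is the verification, in the first part, that the coherence diagram (\ref{axioma_coherencia}) for the functor $(id,\mu)$ unwinds precisely to $\mathsf{z}=\partial t$: it is a routine diagram chase, but it requires care in passing between the additive notation on $\pi_1(\GG)$ and the multiplicative one on $\pi_0(\GG)$, and in matching the sign conventions of the bar coboundary. The converse direction, by contrast, is essentially immediate once one observes that a strict skeletal 2-group admits the identity {\it \'epinglage} $(id,id)$.
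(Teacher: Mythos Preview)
Your proof is correct and follows the same approach as the paper. The converse direction (split $\Rightarrow\alpha=0$) is identical: homotopy invariance of $\alpha$ reduces to the elementary case, where the {\it \'epinglage} $(id,id)$ forces every arrow in (\ref{definicio_3-cocicle}) to be an identity. For the forward direction the paper is terser---it dispatches it in the paragraph preceding the theorem by noting that Sinh's theorem gives $\GG\simeq\pi_1(\GG)[1]\rtimes_{\mathsf z}\pi_0(\GG)[0]$ for \emph{any} $\mathsf z\in\alpha(\GG)$, hence for $\mathsf z=0$---whereas you make explicit the equivalence $\pi_1(\GG)[1]\rtimes_{\mathsf z}\pi_0(\GG)[0]\simeq\pi_1(\GG)[1]\rtimes\pi_0(\GG)[0]$ via the identity functor with monoidal structure built from a trivialising $2$-cochain; this is the standard argument underlying the paper's remark, and your identification of (\ref{axioma_coherencia}) with the coboundary relation is correct up to a harmless sign convention (if the signs do not match on the nose, replace $t$ by $-t$).
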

\begin{proof}
Since $\alpha(\GG)$ is a homotopy invariant of $\GG$, it remains to see that $\alpha(\GG)=0$ when $\GG$ is equal to $\Asf[1]\rtimes\Gsf[0]$ for some group $\Gsf$ and $\Gsf$-module $\Asf$. This follows from Sinh's algorithm for computing $\alpha(\GG)$ from a given {\em \'epinglage} (see \S~\ref{invariants_homotopics}). Indeed, for such a 2-group there is a unique {\em \'epinglage}, given by $s=id_G$ and $\theta_g=id_g$ for all $g\in G$, and the corresponding classifying 3-cocycle is $\mathsf{z}=0$.
\end{proof}

\begin{cor}\label{corolari}
A 2-group $\GG$ is split if and only if it is equivalent to the elementary 2-group $\pi_1(\GG)[1]\rtimes\pi_0(\GG)[0]$, with $\pi_0(\GG)$ acting on $\pi_1(\GG)$ according to (\ref{accio}).
\end{cor}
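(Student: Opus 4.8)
The plan is to deduce the statement directly from Theorem~\ref{criteri_general} together with Sinh's theorem, with essentially no new computation required. The backward implication is immediate: the 2-group $\pi_1(\GG)[1]\rtimes\pi_0(\GG)[0]$ is, by construction (see Proposition~\ref{A[1]_rtimes_G[0]}), an elementary 2-group, namely the semidirect product of the discrete 2-group $\pi_0(\GG)[0]$ and the one-object 2-group $\pi_1(\GG)[1]$ for the $\pi_0(\GG)$-module structure on $\pi_1(\GG)$ described by (\ref{accio}). Hence any 2-group $\GG$ equivalent to it is split by the very definition of split 2-group.

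For the forward implication, I would argue as follows. Suppose $\GG$ is split. By Theorem~\ref{criteri_general} this is equivalent to $\alpha(\GG)=0$, so the zero map is a classifying (normalized) 3-cocycle of $\GG$; that is, we may take $\mathsf{z}(\GG)=0$. Applying Sinh's theorem to this choice of classifying cocycle yields an equivalence
$$
\GG\simeq\pi_1(\GG)[1]\rtimes_{\mathsf{z}(\GG)}\pi_0(\GG)[0]=\pi_1(\GG)[1]\rtimes_{0}\pi_0(\GG)[0].
$$
But a special 2-group $\Asf[1]\rtimes_{\mathsf{z}}\Gsf[0]$ with $\mathsf{z}=0$ has trivial associator, so it is just the elementary 2-group $\Asf[1]\rtimes\Gsf[0]$ (compare the discussion following Proposition~\ref{A[1]_rtimes_G[0]}). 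Here $\Gsf=\pi_0(\GG)$ and $\Asf=\pi_1(\GG)$ equipped with its canonical $\pi_0(\GG)$-module structure (\ref{accio}), which is precisely the module structure entering Sinh's theorem. This gives the asserted equivalence.

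I do not expect any genuine obstacle: the corollary is a formal consequence of the two quoted results. The only points worth a sentence of care are that $0$ is indeed a normalized 3-cocycle (trivially), and that the $\pi_0(\GG)$-action on $\pi_1(\GG)$ produced by Sinh's construction really is the one given by (\ref{accio}) rather than some twist of it — but this is built into the statement of Sinh's theorem as recalled in \S~\ref{invariants_homotopics}, so nothing new needs to be checked.
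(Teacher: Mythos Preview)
Your proposal is correct and follows exactly the argument the paper intends: the corollary is stated immediately after Theorem~\ref{criteri_general} without separate proof because it is a direct consequence of that theorem together with Sinh's theorem, precisely as you spell out. There is nothing to add.
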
 

As an application, let us consider the 2-group $\SSS ym(\Gsf)$ of permutations of a group $\Gsf$ as a one-object groupoid. Next result follows readily from the description of this 2-group given in Proposition~\ref{2-grup_autoequivalencies_grupoide}. The proof is left to the reader.
 
\begin{prop}\label{exemple_2-grup_split_bis}
Let $\GG=\SSS ym(\Gsf)$ for some group $\Gsf$. Then:
\begin{itemize}
\item[(a)] $\pi_0(\GG)$ is the group $\mathsf{Out}(\Gsf)$ of outer automorphisms of $\Gsf$. 
\item[(b)] $\pi_1(\GG)$ is the center $\Zsf(\Gsf)$ of $\Gsf$, and the action of $\mathsf{Out}(\Gsf)$ on $\Zsf(\Gsf)$ is the natural one, i.e.
$$
[\phi]\lhd z=\phi(z).
$$
\item[(c)] An {\em \'epinglage} of $\GG$ is given by a set theoretic section $s:Out(\Gsf)\To Aut(\Gsf)$ such that $s[id_\Gsf]=id_\Gsf$, together with a map $t:Aut(\Gsf)\To G$ such that
\begin{itemize}
\item[(i)] $t(s[\phi])=e$ for all $[\phi]\in Out(\Gsf)$, and
\item[(ii)] $\phi=c_{t(\phi)}\circ s[\phi]$ for all $\phi\in Aut(\Gsf)$.
\end{itemize}
Moreover, the associated classifying 3-cocycle $\mathsf{z}_{s,t}:Out(\Gsf)^3\To Z(\Gsf)$ is given by
\begin{align}\label{3-cocicle_classificador}
\mathsf{z}_{s,t}&([\phi],[\phi'],[\phi''])= \\ &\ s[\phi](t(s[\phi']\circ s[\phi'']))t(s([\phi][\phi'])\circ s[\phi''])^{-1}t(s[\phi]\circ s([\phi'][\phi'']))t(s[\phi]\circ s[\phi'])^{-1}.\nonumber
\end{align}
\end{itemize}
\end{prop}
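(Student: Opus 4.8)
The plan is to read off all three items from the explicit strict model of $\SSS ym(\Gsf)$ supplied by Proposition~\ref{2-grup_autoequivalencies_grupoide}: the objects are the $\Ee(\phi)$ for $\phi\in\mathsf{Aut}(\Gsf)$; a morphism $\Ee(\phi)\To\Ee(\tilde\phi)$ is an element $g\in G$ with $\tilde\phi=c_g\circ\phi$, written $\tau(g;\phi,\tilde\phi)$ and of component $g$; and composition and tensor product are as listed there. For (a): there is a morphism $\Ee(\phi)\To\Ee(\tilde\phi)$ iff $\tilde\phi\circ\phi^{-1}\in\mathsf{Inn}(\Gsf)$, so $[\Ee(\phi)]\mapsto[\phi]$ is a bijection $\pi_0(\GG)\To\mathsf{Out}(\Gsf)$, and it is an isomorphism of groups because $\Ee(\phi)\otimes\Ee(\phi')=\Ee(\phi\circ\phi')$. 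For (b): $\pi_1(\GG)=\mathsf{Aut}(e)$ with $e=\Ee(\mathrm{id}_\Gsf)$, and the endomorphisms of $e$ are exactly the $\tau(g;\mathrm{id}_\Gsf,\mathrm{id}_\Gsf)$ with $c_g=\mathrm{id}_\Gsf$, i.e.\ with $g\in\Zsf(\Gsf)$; the composition rule then identifies $\mathsf{Aut}(e)$ with the group $\Zsf(\Gsf)$. For the $\pi_0(\GG)$-module structure I would use the strict-case formula (\ref{accio_cas_estricte}): since the strict inverse of $\Ee(\phi)$ is $\Ee(\phi^{-1})$, expanding $\mathrm{id}_{\Ee(\phi)}\otimes\tau(z;\mathrm{id}_\Gsf,\mathrm{id}_\Gsf)\otimes\mathrm{id}_{\Ee(\phi^{-1})}$ with the tensor product rule yields $\tau(\phi(z);\mathrm{id}_\Gsf,\mathrm{id}_\Gsf)$, so $[\phi]\lhd z=\phi(z)$; this is independent of the chosen representative of $[\phi]$ because automorphisms preserve $\Zsf(\Gsf)$ while inner ones fix it pointwise.

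For (c) I would first translate the \'epinglage data $(s,\theta)$ of \S~\ref{invariants_homotopics} into the pair $(s,t)$. A normalized section of $p\colon G_0\To\pi_0(\GG)$ is precisely a set-theoretic section $s\colon\mathsf{Out}(\Gsf)\To\mathsf{Aut}(\Gsf)$ with $s[\mathrm{id}_\Gsf]=\mathrm{id}_\Gsf$; by Proposition~\ref{2-grup_autoequivalencies_grupoide}, a choice of isomorphism $\theta_{\Ee(\phi)}\colon\Ee(s[\phi])\To\Ee(\phi)$ amounts to a choice of $t(\phi)\in G$ with $\phi=c_{t(\phi)}\circ s[\phi]$, which is condition (ii); and since $\SSS ym(\Gsf)$ is strict — its unitors are trivial and $e\otimes x=x$ for every $x$ — the normalization conditions on $\theta$ say exactly that $\theta$ is the identity at each object $\Ee(s[\phi])$, which unwinds to $t(s[\phi])=e$, i.e.\ condition (i). I would then compute a classifying $3$-cocycle from this \'epinglage by plugging the data into the defining diagram (\ref{definicio_3-cocicle}): its bottom edge is the associator of a triple tensor product of objects of the form $s[\,\cdot\,]$, hence an identity because the $2$-group is strict, so the cocycle is obtained simply by composing the four remaining edges — each an explicit $\tau(-;-,-)$ assembled from the $\theta$'s via the composition and tensor product laws — and transporting the resulting automorphism of $\Ee(s[\phi\circ\phi'\circ\phi''])$ back to $\Zsf(\Gsf)$ through $\gamma_{\Ee(s[\phi\circ\phi'\circ\phi''])}$, which for this strict $2$-group is the evident identification. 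Carrying this out produces the expression~(\ref{3-cocicle_classificador}).

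The real work is this last computation. Every subscript occurring in (\ref{definicio_3-cocicle}) is an object $s[\,\cdot\,]\otimes s[\,\cdot\,]$, which in the present model is the \emph{composite} $s[\,\cdot\,]\circ s[\,\cdot\,]$ of two automorphisms representing a common class in $\mathsf{Out}(\Gsf)$ but generically distinct from the chosen representative of that class; so one must keep careful track, along each of the four arrows, of which automorphism is the source and which the target, and of how the twisting factor $\phi(g')$ in the tensor product rule of Proposition~\ref{2-grup_autoequivalencies_grupoide} reshuffles the $t$-values. The entire non-triviality of $\mathsf{z}_{s,t}$ is concentrated in the failure of $s$ to be multiplicative, stored in the elements $t(s[\phi]\circ s[\phi'])$ — indeed $\mathsf{z}_{s,t}$ is just the classical characteristic class in $\Hsf^3(\mathsf{Out}(\Gsf),\Zsf(\Gsf))$ of the $4$-term exact sequence $0\to\Zsf(\Gsf)\to\Gsf\stackrel{c}{\To}\mathsf{Aut}(\Gsf)\stackrel{\pi}{\To}\mathsf{Out}(\Gsf)\to 1$ from the introduction — and a useful consistency check is that the element one computes must land in $\Zsf(\Gsf)$, as every value of a classifying $3$-cocycle does.
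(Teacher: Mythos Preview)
Your approach is correct and is exactly the computation the paper leaves to the reader: it offers no proof beyond the instruction to unwind Proposition~\ref{2-grup_autoequivalencies_grupoide} together with the definitions in \S\ref{invariants_homotopics}. Your treatment of (a), (b), and the translation of the \'epinglage data $(s,\theta)$ into the pair $(s,t)$ is accurate, and your plan for (c)---collapse the associator edge in diagram~(\ref{definicio_3-cocicle}) to an identity, compose the four remaining $\tau$-morphisms using the rules of Proposition~\ref{2-grup_autoequivalencies_grupoide}, and read off the label via $\gamma^{-1}$---is exactly right; your closing remark identifying the result with the classical obstruction class of the $4$-term sequence is a useful bonus and anticipates Example~\ref{s.e._Equiv(G[1])}.
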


Despite the appearance of this formula, it is not clear whether this is a 2-coboundary or not, because $t$ takes values in $G$ but not necessarily in $Z(\Gsf)$. However, there are two particular cases where the 2-group $\SSS ym(\Gsf)$ is clearly split. 

\begin{cor}\label{Sim(A)}
Let $\Asf$ be an abelian group. Then $\SSS ym(\Asf)$ is split. More precisely, we have
$$
\SSS ym(\Asf)\simeq\Asf[1]\rtimes\mathsf{Aut}(\Asf)[0],
$$
the action of $\mathsf{Aut}(\Asf)$ on $\Asf$ being the canonical one.
\end{cor}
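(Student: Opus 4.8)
The plan is to read everything off the explicit description of $\SSS ym(\Gsf)$ in Proposition~\ref{2-grup_autoequivalencies_grupoide}, specialized to $\Gsf=\Asf$ abelian. The key observation is that when $\Asf$ is abelian all conjugations $c_g$ are trivial, so by part~(b) of that proposition a morphism $\tau(g;\phi,\tilde\phi):\Ee(\phi)\To\Ee(\tilde\phi)$ can only exist when $\tilde\phi=c_g\circ\phi=\phi$. Hence the underlying groupoid $\Ss ym(\Asf)$ has no morphisms between distinct objects: it is skeletal, and $\mathsf{Aut}(\Ee(\phi))$ is a copy of $\Asf$ for every $\phi\in\mathsf{Aut}(\Asf)$. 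Since $\SSS ym(\Gsf)$ is always strict (Proposition~\ref{2-grup_autoequivalencies_grupoide}), $\SSS ym(\Asf)$ is a strict skeletal $2$-group, so Proposition~\ref{G_split_sii_estricte_esqueletic} gives at once that it is split.

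To obtain the precise form of the equivalence I would then apply Corollary~\ref{corolari}, which yields $\SSS ym(\Asf)\simeq\pi_1(\SSS ym(\Asf))[1]\rtimes\pi_0(\SSS ym(\Asf))[0]$ with $\pi_0$ acting on $\pi_1$ by (\ref{accio}); it only remains to identify the two homotopy groups and this action. By Proposition~\ref{2-grup_autoequivalencies_grupoide} the objects of $\SSS ym(\Asf)$ are the elements of $\mathsf{Aut}(\Asf)$ with $\otimes$ given by composition, so $\pi_0(\SSS ym(\Asf))\cong\mathsf{Aut}(\Asf)$; and $\pi_1(\SSS ym(\Asf))=\mathsf{Aut}(\Ee(id_\Asf))\cong\Asf$ via $g\mapsto\tau(g;id_\Asf,id_\Asf)$. (Alternatively one reads this off Proposition~\ref{exemple_2-grup_split_bis}(a)--(b), using $\mathsf{Out}(\Asf)=\mathsf{Aut}(\Asf)$ and $\Zsf(\Asf)=\Asf$ for abelian $\Asf$, which already gives the action $[\phi]\lhd z=\phi(z)$.)

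For the action I would compute directly, using that $\SSS ym(\Asf)$ is strict so that (\ref{accio_cas_estricte}) applies: $\Ee(\phi)\lhd u=id_{\Ee(\phi)}\otimes u\otimes id_{\Ee(\phi^{-1})}$, where $\Ee(\phi)^{-1}=\Ee(\phi^{-1})$. Taking $u=\tau(g;id,id)$ and applying the tensor-product rule $\tau(g_1;\psi_1,\tilde\psi_1)\otimes\tau(g_2;\psi_2,\tilde\psi_2)=\tau(g_1\,\psi_1(g_2);\psi_1\circ\psi_2,\tilde\psi_1\circ\tilde\psi_2)$ twice --- first to $id_{\Ee(\phi)}\otimes u$ and then to the result tensored with $id_{\Ee(\phi^{-1})}$ --- one finds $\Ee(\phi)\lhd\tau(g;id,id)=\tau(\phi(g);id,id)$; i.e.\ under the identification $\pi_1\cong\Asf$ the action is the tautological one $\phi\lhd g=\phi(g)$. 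Feeding this into Corollary~\ref{corolari} gives $\SSS ym(\Asf)\simeq\Asf[1]\rtimes\mathsf{Aut}(\Asf)[0]$ with $\mathsf{Aut}(\Asf)$ acting canonically, as claimed.

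I do not anticipate a genuine obstacle: the whole point is that abelianness collapses the conjugation data in Proposition~\ref{2-grup_autoequivalencies_grupoide}, after which the statement is a direct corollary of the general skeletal--strict criterion plus bookkeeping. The two places deserving a moment of care are (i) the skeletality claim itself --- that there really are no morphisms between $\Ee(\phi)$ and $\Ee(\tilde\phi)$ for $\phi\neq\tilde\phi$ --- and (ii) tracking the order of the factors in the $\tau$-calculus so that the induced action comes out as $\phi(g)$ and not a $\phi^{-1}$-twisted variant; both are immediate once the multiplication rules of Proposition~\ref{2-grup_autoequivalencies_grupoide} are written down.
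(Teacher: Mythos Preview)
Your proof is correct but takes a genuinely different route from the paper's. The paper argues cohomologically: it invokes Proposition~\ref{exemple_2-grup_split_bis}(c), takes the section $s=\mathrm{id}$ (possible because $\mathsf{Out}(\Asf)=\mathsf{Aut}(\Asf)$ for abelian $\Asf$), observes that then $t$ takes values in $Z(\Asf)=\Asf$, and shows the resulting classifying $3$-cocycle~(\ref{3-cocicle_classificador}) is a coboundary, whence $\alpha(\SSS ym(\Asf))=0$ and Theorem~\ref{criteri_general} applies.

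Your argument bypasses the $3$-cocycle entirely: you note that abelianness forces $c_g=\mathrm{id}$ for all $g$, so by Proposition~\ref{2-grup_autoequivalencies_grupoide}(b) there are no morphisms between distinct objects, making $\SSS ym(\Asf)$ strict \emph{and} skeletal on the nose; Proposition~\ref{G_split_sii_estricte_esqueletic} then gives splitness immediately. This is shorter and more conceptual for the abelian case. The paper's approach, by contrast, flows naturally as a corollary of the general $3$-cocycle formula just established and parallels the proof of the companion Corollary~\ref{splits_no_trivials_1} (the centerless case), where your skeletality shortcut is unavailable. Your identification of $\pi_0$, $\pi_1$, and the action is correct and matches Proposition~\ref{exemple_2-grup_split_bis}(a)--(b), which you could simply cite in place of the direct $\tau$-calculus.
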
 
\begin{proof}
If $\Asf$ is abelian, both $\mathsf{Out}(\Asf)$ and $\mathsf{Aut}(\Asf)$ coincide and we can take as section $s$ the identity. Then $t$ takes values in $Z(\Asf)$, and $\mathsf{z}_{id,t}$ is the coboundary of the 2-cochain given by $\mathsf{c}(\phi,\phi')=t(\phi\circ\phi')$. Hence $\alpha(\SSS ym(\Asf))=0$.
\end{proof}

\begin{cor}\label{splits_no_trivials_1}
Let $\Gsf$ be a group with a trivial center. Then $\SSS ym(\Gsf)$ is split.
\end{cor}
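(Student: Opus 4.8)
The plan is to note that the hypothesis collapses the second homotopy group of $\SSS ym(\Gsf)$ to the trivial group, which makes splitness immediate via the first splitness criterion. First I would invoke Proposition~\ref{exemple_2-grup_split_bis}(b), which identifies $\pi_1(\SSS ym(\Gsf))$ with the center $\Zsf(\Gsf)$; since $\Gsf$ is centerless, $\pi_1(\SSS ym(\Gsf))$ is trivial. Then the Postnikov invariant $\alpha(\SSS ym(\Gsf))$ lies in $\Hsf^3(\mathsf{Out}(\Gsf),\{1\})=0$ and is therefore zero, so Theorem~\ref{criteri_general} gives that $\SSS ym(\Gsf)$ is split. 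Equivalently, by Corollary~\ref{corolari} (or directly by Sinh's theorem applied to $\SSS ym(\Gsf)$) one obtains the explicit equivalence
$$
\SSS ym(\Gsf)\simeq\pi_1(\SSS ym(\Gsf))[1]\rtimes\pi_0(\SSS ym(\Gsf))[0]\cong\mathsf{Out}(\Gsf)[0],
$$
exhibiting $\SSS ym(\Gsf)$ as equivalent to the discrete 2-group on $\mathsf{Out}(\Gsf)$.

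An alternative route avoids homotopy groups entirely and works with the explicit classifying cocycle of Proposition~\ref{exemple_2-grup_split_bis}(c): for any section $s$ of $\pi:\mathsf{Aut}(\Gsf)\To\mathsf{Out}(\Gsf)$ with $s[id_\Gsf]=id_\Gsf$ and any accompanying map $t:Aut(\Gsf)\To G$, the element $\mathsf{z}_{s,t}([\phi],[\phi'],[\phi''])$ produced by formula~(\ref{3-cocicle_classificador}) already lies in $Z(\Gsf)$, which is trivial by hypothesis; hence $\mathsf{z}_{s,t}$ is the zero 3-cocycle and $\alpha(\SSS ym(\Gsf))=0$, so Theorem~\ref{criteri_general} applies again.

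There is essentially no obstacle in this argument; the only point worth spelling out is the (routine) observation that a discrete 2-group $\Gsf'[0]$ is split in the sense of the definition, which one sees by writing it as $\{0\}[1]\rtimes\Gsf'[0]$ with $\{0\}$ the trivial $\Gsf'$-module. Thus the corollary is an immediate consequence of Proposition~\ref{exemple_2-grup_split_bis} and Theorem~\ref{criteri_general}.
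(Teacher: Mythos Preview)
Your proof is correct and matches the paper's approach: the paper observes that since $\Zsf(\Gsf)$ is trivial, the classifying 3-cocycle~(\ref{3-cocicle_classificador}) is trivial, hence $\alpha(\SSS ym(\Gsf))=0$, which is exactly your ``alternative route''. Your first route (arguing that $\pi_1(\SSS ym(\Gsf))$ is trivial so the entire group $\Hsf^3$ vanishes) is a harmless and equally valid rephrasing of the same idea.
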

\begin{proof}
If $\Zsf(\Gsf)$ is trivial, the 3-cocycle (\ref{3-cocicle_classificador}) is also trivial. Hence $\alpha(\SSS ym(\Gsf))=0$.
\end{proof}
Next Example shows that $\SSS ym(\Gsf)$ may also be a split 2-group even for non-abelian and non-centerless groups $\Gsf$. 

\begin{ex}\label{splits_no_trivials_2}{\rm
Let us consider the dihedral groups
$$
\Dsf_n=\langle r,s\ |\ r^n=s^2=e,sr=r^{-1}s\rangle
$$
with $n$ even. These are non-abelian groups with a non-trivial center $\Zsf(\Dsf_n)=\{e,r^{n/2}\}$. Let us see that $\SSS ym(\Dsf_4)$ and $\SSS ym(\Dsf_6)$ are both split. Indeed, for any $n\geq 2$ the group $\mathsf{Aut}(\Dsf_n)$ is isomorphic to the semidirect product $\ZZ_n\rtimes(\ZZ_n)^\ast$ with $(\ZZ_n)^\ast$ acting on $\ZZ_n$ by multiplication. The pair $(p,q)\in\ZZ_n\times(\ZZ_n)^\ast$ has to be identified to the automorphism $\phi_{p,q}:\Dsf_n\To\Dsf_n$ defined on the generators by
$$
\phi_{p,q}(r)=r^q, \qquad \phi_{p,q}(s)=sr^p.
$$
Moreover, $\mathsf{Out}(\Dsf_n)$ is of order 2 for $n=4,6$. Therefore, up to congugation, there is a unique non inner automorphism in these dihedral groups, for instance $\phi_{1,1}$. Its square is trivial in $\mathsf{Out}(\Dsf_n)$. Since the classifying 3-cocycle $\mathsf{z}:Out(\Dsf_n)^3\To Z(\Dsf_n)$ is normalized, this means that there is a unique triple where $\mathsf{z}$ can take a non-zero value, namely $([\phi_{1,1}],[\phi_{1,1}],[\phi_{1,1}])$. For such a triple, Equation~(\ref{3-cocicle_classificador}) gives
$$
\mathsf{z}_{s,t}([\phi_{1,1}],[\phi_{1,1}],[\phi_{1,1}])=s[\phi_{1,1}](t(s[\phi_{1,1}]^2))t(s[\phi_{1,1}]^2)^{-1}.
$$
It is then an easy direct computation checking that for any $\phi\in[\phi_{1,1}]$ and any map $t$ as before we have $\phi(t(\phi^2))=t(\phi^2)$. Therefore the 3-cocycle (\ref{3-cocicle_classificador}) is always trivial for any choices of $s$ and $t$. For instance, in the case of $\Dsf_6$ we have
$$
[\phi_{1,1}]=\{\phi_{1,1},\phi_{3,1},\phi_{5,1},\phi_{1,5},\phi_{3,5},\phi_{5,5}\}.
$$
In all but the first and third automorphisms, the corresponding square is equal to the identity, so that $t(\phi^2)=e$, and this clearly remains fixed by $\phi$. On the other hand, $\phi_{1,1}^2$ is conjugation by $r^2$ or $r^5$, and $\phi_{5,1}^2$ is conjugation by $r$ or $r^3$, and all powers of $r$ remain fixed both by $\phi_{1,1}$ and by $\phi_{5,1}$.  }
\end{ex}

\subsection{More splitness criteria.}

For any 2-group $\GG$ we denote by $p:G_0\To\pi_0(\GG)$ the projection mapping each object of $\Gg$ to its isomorphisms class. It is a group homomorphism when $\GG$ is strict. Let us start with the following observations.

\begin{lem} \label{equivalencia_induida}
For any 2-group $\GG$, any set theoretic section $s:\pi_0(\GG)\To G_0$ of $p$ induces an equivalence of groupoids
$$
\Ee_s:\pi_1(\GG)[1]\times\pi_0(\GG)[0]\To\Gg
$$
given on objects $[x]$ and morphisms $(u,[x])$ by
\begin{align}
\Ee_s[x]&:=s[x], \label{def_E_s_1} \\
\Ee_s(u,[x])&:=\gamma_{s[x]}(u).\label{def_E_s_2}
\end{align}
\end{lem}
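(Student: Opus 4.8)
The plan is to show directly that $\Ee_s$ is a well-defined functor and then that it is essentially surjective and fully faithful; since all the categories in sight are groupoids, this is exactly what is needed to conclude that $\Ee_s$ is an equivalence of groupoids. The only structural fact I would keep in mind throughout is that the underlying groupoid of $\pi_1(\GG)[1]\times\pi_0(\GG)[0]$ has object set $\pi_0(\GG)$ and that its only morphisms are the endomorphisms $(u,[x])\colon[x]\To[x]$ with $u\in\pi_1(\GG)=\mathsf{Aut}(e)$, composition being $(u',[x])\circ(u,[x])=(u'\circ u,[x])$.

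First I would check functoriality. On objects there is nothing to do beyond noting that $s[x]\in G_0$, so $\Ee_s[x]=s[x]$ is an object of $\Gg$. On morphisms, $\gamma_{s[x]}(u)$ is by definition an element of $\mathsf{Aut}(s[x])$, hence an endomorphism of $\Ee_s[x]$, so sources and targets are respected. Because $\gamma_{s[x]}\colon\mathsf{Aut}(e)\To\mathsf{Aut}(s[x])$ is a homomorphism of groups (recalled in \S\ref{invariants_homotopics}), it carries $id_e$ to $id_{s[x]}$ and $u'\circ u$ to $\gamma_{s[x]}(u')\circ\gamma_{s[x]}(u)$; thus $\Ee_s$ preserves identities and composition, and is a functor. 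Next, essential surjectivity: given any $x\in\Gg$, set $[x]=p(x)$; since $s$ is a section of $p$ we have $p(s[x])=[x]$, so $s[x]=\Ee_s[x]$ lies in the isomorphism class of $x$, and hence $x\cong\Ee_s[x]$ in the groupoid $\Gg$.

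Finally, full faithfulness. Fix objects $[x],[x']$. If $[x]\neq[x']$ there are no morphisms $[x]\To[x']$ on the source side, and on the target side $s[x]$ and $s[x']$ have distinct isomorphism classes, so the groupoid $\Gg$ has no morphisms $s[x]\To s[x']$ either; the induced map of hom-sets is the empty bijection. If $[x]=[x']$, the induced map is precisely $\gamma_{s[x]}\colon\pi_1(\GG)=\Hom([x],[x])\To\Hom_\Gg(s[x],s[x])=\mathsf{Aut}(s[x])$, which is a bijection because, as recalled in \S\ref{invariants_homotopics}, for a 2-group every $\gamma_x$ is an isomorphism (the translation functors being equivalences, hence fully faithful). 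Combining the two cases gives full faithfulness, and together with essential surjectivity this makes $\Ee_s$ an equivalence of groupoids.

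I do not expect a genuine obstacle here: the entire content is the remark that $\gamma_{s[x]}$ is at once a group homomorphism (yielding functoriality) and an isomorphism (yielding fullness and faithfulness), both established in \S\ref{invariants_homotopics}, the residual care being only the bookkeeping of the morphisms of the product groupoid $\pi_1(\GG)[1]\times\pi_0(\GG)[0]$.
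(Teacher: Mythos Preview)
Your proof is correct and is precisely the routine verification the paper leaves implicit with its one-word proof ``Straightforward.'' You have unpacked exactly the intended argument: functoriality from $\gamma_{s[x]}$ being a group homomorphism, essential surjectivity from $s$ being a section of $p$, and full faithfulness from $\gamma_{s[x]}$ being an isomorphism in a 2-group.
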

\begin{proof}
Straightforward.
\end{proof}

\noindent
There is a similar induced equivalence $\Ee'_s$ acting on objects as $\Ee_s$ and on morphisms by
$$
\Ee_s'(u,[x])=\delta_{s[x]}(u).
$$
However, for our purposes it is enough to consider the ``left'' equivalences $\Ee_s$.

\begin{lem}\label{splita_si_E_s_monoidal}
A 2-group $\GG$ is split if and only if for some set theoretic section $s:\pi_0(\GG)\To G_0$ of $p$ the induced equivalence $\Ee_s$ extends to an equivalence of 2-groups $\EE_s:\pi_1(\GG)[1]\rtimes\pi_0(\GG)[0]\To\GG$.
\end{lem}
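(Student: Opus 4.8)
The plan is to prove the two implications separately. The direction $(\Leftarrow)$ is immediate: if, for some set theoretic section $s$ of $p$, the equivalence of groupoids $\Ee_s$ of Lemma~\ref{equivalencia_induida} extends to an equivalence of 2-groups $\EE_s\colon\pi_1(\GG)[1]\rtimes\pi_0(\GG)[0]\To\GG$ (the action being the canonical one (\ref{accio}), as everywhere in the text), then $\GG$ is equivalent, \emph{as a 2-group}, to an elementary 2-group, hence split by the very definition of splitness. So all the content is in $(\Rightarrow)$, and there the plan is to invoke the first splitness criterion (Theorem~\ref{criteri_general}) and then build the monoidal structure on $\Ee_s$ by hand.

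Assume $\GG$ is split, so $\alpha(\GG)=0$ by Theorem~\ref{criteri_general}, and fix any normalized section $s\colon\pi_0(\GG)\To G_0$ of $p$. Extending $\Ee_s$ to a morphism of 2-groups with source $\pi_1(\GG)[1]\rtimes\pi_0(\GG)[0]$ amounts to supplying comparison isomorphisms $\mu_{[x],[x']}\colon s[x]\otimes s[x']\To s[x\otimes x']$, natural in $[x],[x']$ and making (\ref{axioma_coherencia}) commute with respect to the \emph{trivial} associator of the source. First I would note that such isomorphisms exist pointwise, since $s[x]\otimes s[x']$ and $s[x\otimes x']$ both lie over $[x]\cdot[x']\in\pi_0(\GG)$; and that \emph{any} such choice is automatically natural in $[x],[x']$: the morphisms of the source are $\gamma$-images, $\Ee_s$ sends them to $\gamma$-images by (\ref{def_E_s_2}), and naturality then reduces, using (\ref{propietat}), to an instance of the naturality property (\ref{naturalitat_gamma_delta}) of the homomorphisms $\gamma$ applied to $\mu_{[x],[x']}$, so it holds automatically. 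Thus the only genuine constraint is (\ref{axioma_coherencia}). Its failure, read off as an element of $\pi_1(\GG)\cong\mathsf{Aut}(s[x]\otimes(s[x']\otimes s[x'']))$ via the isomorphism $\gamma$, defines a function $\mathsf{w}\colon\pi_0(\GG)^3\To\pi_1(\GG)$; using the pentagon axiom in $\GG$ together with the compatibilities (\ref{compatibilitat_amb_otimes}) of $\gamma$ with $\otimes$, one checks that $\mathsf{w}$ is a normalized $3$-cocycle, and in fact a classifying $3$-cocycle of $\GG$ (the one attached to a suitable \'epinglage). Hence $[\mathsf{w}]=\alpha(\GG)=0$, so $\mathsf{w}=\partial\mathsf{c}$ for a normalized $2$-cochain $\mathsf{c}\colon\pi_0(\GG)^2\To\pi_1(\GG)$, and replacing each $\mu_{[x],[x']}$ by $\gamma_{s[x]\otimes s[x']}(\mathsf{c}([x],[x']))^{-1}\circ\mu_{[x],[x']}$ turns $\mathsf{w}$ into $\mathsf{w}-\partial\mathsf{c}=0$, so (\ref{axioma_coherencia}) now commutes for all triples. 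The unit isomorphism is then supplied automatically by Proposition~\ref{existencia_iso_unitat}, giving a morphism of 2-groups $\EE_s=(\Ee_s,\mu)\colon\pi_1(\GG)[1]\rtimes\pi_0(\GG)[0]\To\GG$; since its underlying functor $\Ee_s$ is an equivalence of groupoids (Lemma~\ref{equivalencia_induida}), $\EE_s$ is an equivalence of 2-groups, as desired.

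The main obstacle is the middle step: verifying carefully that the coherence defect $\mathsf{w}$ is a genuine $3$-cocycle representing the Postnikov class $\alpha(\GG)$, and that re-choosing the $\mu_{[x],[x']}$ by a $2$-cochain shifts $\mathsf{w}$ by exactly the corresponding coboundary. This is really a rerun of Sinh's computation of $\alpha(\GG)$ from an \'epinglage, so an alternative phrasing of $(\Rightarrow)$ would be: since $\alpha(\GG)=0$, choose an \'epinglage $(s,\theta)$ of $\GG$ whose classifying $3$-cocycle vanishes identically (possible because, with $s$ fixed, varying $\theta$ realizes every cocycle in the class $\alpha(\GG)$), and apply Sinh's theorem to it; comparing Sinh's formulas for the resulting skeletal 2-group $\hat{\GG}$ with the description in Proposition~\ref{A[1]_rtimes_G[0]} identifies $\hat{\GG}$ with the elementary 2-group $\pi_1(\GG)[1]\rtimes\pi_0(\GG)[0]$, while the underlying functor of the equivalence $\hat{\GG}\To\GG$ is precisely $\Ee_s$. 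Either way, this cocycle bookkeeping — together with the routine fact that a monoidal functor whose underlying functor is an equivalence of groupoids is automatically an equivalence of 2-groups — is the only nontrivial point; everything else follows immediately from the cited results.
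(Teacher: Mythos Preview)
Your proof is correct, but it follows a genuinely different route from the paper's. The paper argues $(\Rightarrow)$ by first invoking Corollary~\ref{corolari} to get \emph{some} equivalence of 2-groups $\EE=(\Ee,\mu):\pi_1(\GG)[1]\rtimes\pi_0(\GG)[0]\To\GG$, then precomposing with the self-equivalence $\EE(\rho^{-1},\beta^{-1})$ of the elementary 2-group (where $\rho=\pi_0(\EE)$, $\beta=\pi_1(\EE)$) to obtain a new equivalence $\EE'$; a short computation using (\ref{preservacio_gamma_delta}) and (\ref{gamma_split}) then shows that the underlying functor $\Ee'$ is precisely $\Ee_s$ for the section $s[x]:=\Ee(\rho^{-1}[x])$. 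No cocycle manipulation is needed.

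Your approach instead fixes a section $s$ from the outset and \emph{constructs} the monoidal structure on $\Ee_s$ by measuring the coherence obstruction as a $3$-cocycle and killing it with a $2$-cochain. This is essentially a replay of Sinh's computation, as you note, and it is perfectly valid here since Theorem~\ref{criteri_general} is already available. What your approach buys is that it works for an \emph{arbitrary} normalized section $s$, so you are in effect proving the stronger Corollary~\ref{cor_criteri_escissio} directly, bypassing the separate Lemma~\ref{independencia_seccio} that the paper needs to pass from ``some $s$'' to ``any $s$''. What the paper's approach buys is brevity: once Corollary~\ref{corolari} is in hand, the normalization trick avoids all the cocycle bookkeeping that you correctly flag as ``the main obstacle'' in your argument.
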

\begin{proof}
The implication to the left is obvious. Conversely, let us suppose that $\GG$ is split. By Corollary~\ref{corolari}, $\GG$ is necessarily equivalent to $\pi_1(\GG)[1]\rtimes\pi_0(\GG)[0]$. Let us choose any equivalence of 2-groups $\EE=(\Ee,\mu):\pi_1(\GG)[1]\rtimes\pi_0(\GG)[0]\To\GG$, and let $\rho=\pi_0(\EE)$ and $\beta=\pi_1(\EE)$ be the associated automorphisms of $\pi_0(\GG)$ and $\pi_1(\GG)$, respectively given by (\ref{morfisme1}) and (\ref{morfisme2}). Then a new equivalence
$$
\EE'=(\Ee',\mu'):\pi_1(\GG)[1]\rtimes\pi_0(\GG)[0]\To\GG
$$
is obtained by precomposing $\EE$ with the self-equivalence $\EE(\rho^{-1},\beta^{-1})$ of $\pi_1(\GG)[1]\rtimes\pi_0(\GG)[0]$. Let us check that $\Ee'=\Ee_s$ for some set theoretic section $s$ of $p$. Indeed, on objects $\Ee'$ is given by $[x]\mapsto\Ee(\rho^{-1}[x])$, and by definition of $\rho$
$$
[\Ee(\rho^{-1}[x])]=\rho(\rho^{-1}[x])=[x].
$$
Hence $\Ee'$ maps each class $[x]$ to some representative of it. Let $s:\pi_0(\GG)\To G_0$ be the set theoretic section so defined, i.e.
$$
s[x]:=\Ee'[x]=\Ee(\rho^{-1}[x]).
$$ 
As for the action on morphisms, if $\{\gamma_x\}_{x\in\Gsf_0}$ denote the canonical isomorphisms of $\GG$ and $\{\hat{\gamma}_{[x]}\}_{[x]\in\pi_0(\GG)}$ those of $\pi_1(\GG)[1]\rtimes\pi_0(\GG)[0]$ we have
\begin{align*}
\Ee'_{[x],[x]}(u,[x])&\ \ =\ \ \Ee_{\rho^{-1}[x],\rho^{-1}[x]}(\beta^{-1}(u),\rho^{-1}[x]) \\ &\stackrel{(\ref{preservacio_gamma_delta})}{=}(\gamma_{\Ee(\rho^{-1}[x])}\circ\beta\circ\hat{\gamma}^{-1}_{\rho^{-1}[x]})(\beta^{-1}(u),\rho^{-1}[x]) \\ &\ \stackrel{(\ref{gamma_split})}{=}\ (\gamma_{s[x]}\circ\beta)(\beta^{-1}(u)) \\ &\ \ \ =\ \ \gamma_{s[x]}(u) \\ &\ \ \ =\ \ \Ee_s(u,[x]).
\end{align*}
Therefore, $\Ee'=\Ee_s$ and $\Ee_s$ indeed extends to the equivalence of 2-groups $\EE_s=\EE'$.
\end{proof}

\begin{lem}\label{independencia_seccio}
If for some set theoretic section $s:\pi_0(\GG)\To G_0$ of $p$ the equivalence $\Ee_{s}$ extends to an equivalence of 2-groups $\EE_s:\pi_1(\GG)[1]\rtimes\pi_0(\GG)[0]\To\GG$, then for any other section $s'$ the equivalence $\Ee_{s'}$ also extends to an equivalence of 2-groups $\EE_{s'}:\pi_1(\GG)[1]\rtimes\pi_0(\GG)[0]\To\GG$.
\end{lem}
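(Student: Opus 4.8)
The plan is to produce $\EE_{s'}$ out of $\EE_s$ by transporting its monoidal structure along a natural isomorphism between the underlying functors $\Ee_s$ and $\Ee_{s'}$.

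First I would fix, for each class $[x]\in\pi_0(\GG)$, an arbitrary isomorphism $c_{[x]}:s[x]\To s'[x]$ in $\Gg$; such an isomorphism exists because $s[x]$ and $s'[x]$ both lie in the isomorphism class $[x]$. I claim the family $c=\{c_{[x]}\}_{[x]\in\pi_0(\GG)}$ is automatically a natural isomorphism $\Ee_s\Rightarrow\Ee_{s'}$ of functors $\pi_1(\GG)[1]\times\pi_0(\GG)[0]\To\Gg$ (see Lemma~\ref{equivalencia_induida}): the objects of the source are the classes $[x]$, with $\Ee_s[x]=s[x]$ and $\Ee_{s'}[x]=s'[x]$, and for a morphism $(u,[x]):[x]\To[x]$ the naturality square to be checked is $\gamma_{s'[x]}(u)\circ c_{[x]}=c_{[x]}\circ\gamma_{s[x]}(u)$, which is exactly an instance of the naturality property (\ref{naturalitat_gamma_delta}) of the homomorphisms $\gamma_{-}(u)$ applied to the morphism $c_{[x]}:s[x]\To s'[x]$. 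Note that no compatibility condition at all is imposed on the family $c$ itself.

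Writing $\EE_s=(\Ee_s,\mu)$ for the assumed extension, I would then equip $\Ee_{s'}$ with the monoidal structure
$$
\mu'_{[x],[y]}:=c_{[x\otimes y]}\circ\mu_{[x],[y]}\circ\bigl(c_{[x]}^{-1}\otimes c_{[y]}^{-1}\bigr):s'[x]\otimes s'[y]\To s'[x\otimes y],
$$
which is natural in $[x]$ and $[y]$ since $c$ and $\mu$ are. The coherence axiom (\ref{axioma_coherencia}) for $(\Ee_{s'},\mu')$ then follows from the one for $(\Ee_s,\mu)$: substituting the definition of $\mu'$ into the hexagon, using functoriality of $\otimes$ to distribute the components of $c$ through the tensor factors, and sliding the associator of $\GG$ past those components by naturality of $a$, one rewrites the hexagon for $\mu'$ as the (already commuting) hexagon for $\mu$ conjugated by components of $c$. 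By Proposition~\ref{existencia_iso_unitat} the pair $(\Ee_{s'},\mu')$ then determines a morphism of 2-groups $\EE_{s'}:\pi_1(\GG)[1]\rtimes\pi_0(\GG)[0]\To\GG$, and since its underlying functor $\Ee_{s'}$ is an equivalence of groupoids (Lemma~\ref{equivalencia_induida}), $\EE_{s'}$ is an equivalence of 2-groups. In fact $c$ is then a monoidal natural isomorphism $\EE_s\Rightarrow\EE_{s'}$, so the two extensions are 2-isomorphic.

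The only step that requires any computation is the verification of (\ref{axioma_coherencia}) for the transported structure, but this is the standard "transport of monoidal structure along a natural isomorphism" and presents no genuine obstacle; the actual content of the statement is merely that extendability of $\Ee_s$ is a property of the isomorphism class of the associated 2-group morphism and not of the particular section $s$.
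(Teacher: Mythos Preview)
Your proposal is correct and follows essentially the same approach as the paper: choose isomorphisms from $s[x]$ to $s'[x]$ (your $c_{[x]}$, the paper's $\theta_{[x]}$), define $\mu'$ by conjugating $\mu$ with these, and verify naturality and coherence using (\ref{naturalitat_gamma_delta}) and functoriality of $\otimes$. Your framing as transport of monoidal structure along a natural isomorphism is a bit more conceptual than the paper's explicit calculation, and your closing remark that $c$ is then a monoidal 2-isomorphism $\EE_s\Rightarrow\EE_{s'}$ is a nice addition not stated in the paper.
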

\begin{proof}
Let $\mu_{[x],[y]}:s[x]\otimes s[y]\To s[x\otimes y]$ be a monoidal structure on $\Ee_s$ which makes it an equivalence of 2-groups between $\pi_1(\GG)[1]\rtimes\pi_0(\GG)[0]$ and $\GG$. For any other section $s'$ of $p$ and any isomorphisms $\theta_{[x]}:s[x]\To s'[x]$, let us consider the isomorphisms
$$
\mu'_{[x],[y]}:=\theta_{[x\otimes y]}\circ\mu_{[x],[y]}\circ(\theta^{-1}_{[x]}\otimes\theta^{-1}_{[y]}).
$$
For any morphisms $(u,[x])$ and $(v,[y])$ we have
\begin{align*}
\Ee_{s'}((u,[x])\otimes(v,[y]))\circ\mu'_{[x],[y]}&\ \ =\ \Ee_{s'}(u\circ([x]\lhd v),[x\otimes y])\circ\mu'_{[x],[y]} \\ &\stackrel{(\ref{def_E_s_2})}{=}\gamma_{s'[x\otimes y]}(u\circ([x]\lhd v))\circ\theta_{[x\otimes y]}\circ\mu_{[x],[y]}\circ(\theta^{-1}_{[x]}\otimes\theta^{-1}_{[y]}) \\ &\stackrel{(\ref{naturalitat_gamma_delta})}{=}\theta_{[x\otimes y]}\circ\gamma_{s[x\otimes y]}(u\circ([x]\lhd v))\circ\mu_{[x],[y]}\circ(\theta^{-1}_{[x]}\otimes\theta^{-1}_{[y]}) \\ &\stackrel{(\ref{def_E_s_2})}{=}\theta_{[x\otimes y]}\circ\Ee_s(u\circ([x]\lhd v),[x\otimes y])\circ\mu_{[x],[y]}\circ(\theta^{-1}_{[x]}\otimes\theta^{-1}_{[y]})  \\ &\ \ =\ \theta_{[x\otimes y]}\circ\mu_{[x],[y]}\circ(\Ee_s(u,[x])\otimes\Ee_s(v,[y]))\circ(\theta^{-1}_{[x]}\otimes\theta^{-1}_{[y]}) \\ &\stackrel{(\ref{def_E_s_2})}{=}\theta_{[x\otimes y]}\circ\mu_{[x],[y]}\circ(\gamma_{s[x]}(u)\otimes\gamma_{s[y]}(v))\circ(\theta^{-1}_{[x]}\otimes\theta^{-1}_{[y]}) \\ &\stackrel{(\ref{naturalitat_gamma_delta})}{=}\theta_{[x\otimes y]}\circ\mu_{[x],[y]}\circ(\theta^{-1}_{[x]}\otimes\theta^{-1}_{[y]})\circ(\gamma_{s'[x]}(u)\otimes\gamma_{s'[y]}(v)) \\ &\stackrel{(\ref{def_E_s_2})}{=}\mu'_{[x],[y]}\circ(\Ee_{s'}(u,[x])\otimes\Ee_{s'}(v,[y]))
\end{align*}
where in the first and fifth equalities we have used the formula for the tensor product of morphisms in an elementary 2-group (see Proposition~\ref{A[1]_rtimes_G[0]}) and the naturality of $\mu_{[x],[y]}$, respectively. Hence, the isomorphisms $\mu'_{[x],[y]}$ are natural in $[x],[y]$. Moreover, the functoriality of $\otimes$ along with the coherence of $\mu_{[x],[y]}$ imply that they are coherent:
\begin{align*}
\mu'_{[x],[y\otimes z]}\circ(id_{s'[x]}\otimes\mu'_{[y],[z]})&=\theta_{[x\otimes y\otimes z]}\circ\mu_{[x],[y\otimes z]}\circ(\theta^{-1}_{[x]}\otimes\theta^{-1}_{[y\otimes z]}) \\ &\hspace{1truecm}\circ(id_{s'[x]}\otimes\theta_{[y\otimes z]})\circ(id_{s'[x]}\otimes\mu_{[y],[z]})\circ(id_{s'[x]}\otimes\theta^{-1}_{[y]}\otimes\theta^{-1}_{[z]}) \\ &=\theta_{[x\otimes y\otimes z]}\circ\mu_{[x],[y\otimes z]}\circ(id_{s[x]}\otimes\mu_{[y],[z]})\circ(\theta^{-1}_{[x]}\otimes\theta^{-1}_{[y]}\otimes\theta^{-1}_{[z]})  \\ &=\theta_{[x\otimes y\otimes z]}\circ\mu_{[x\otimes y],[z]}\circ(\mu_{[x],[y]}\otimes id_{s[z]})\circ(\theta^{-1}_{[x]}\otimes\theta^{-1}_{[y]}\otimes\theta^{-1}_{[z]}) \\ &=\mu'_{[x\otimes y],[z]}\circ(\theta_{[x\otimes y]}\otimes\theta_{[z]})\circ(\mu_{[x],[y]}\otimes id_{s[z]})\circ(\theta^{-1}_{[x]}\otimes\theta^{-1}_{[y]}\otimes\theta^{-1}_{[z]}) \\ &=\mu'_{[x\otimes y],[z]}\circ(\theta_{[x\otimes y]}\otimes id_{s'[z]})\circ(\mu_{[x],[y]}\otimes id_{s'[z]})\circ(\theta^{-1}_{[x]}\otimes\theta^{-1}_{[y]}\otimes id_{s'[z]}) \\ &=\mu'_{[x\otimes y],[z]}\circ(\mu'_{[x],[y]}\otimes id_{s'[z]}).
\end{align*}
Hence the isomorphisms $\mu'_{[x],[y]}$ provide a monoidal structure on $\Ee_{s'}$ making it an equivalence of 2-groups as in the statement.  
\end{proof}

It follows from the previous two Lemmas that we can restrict the attention to {\em normalized} sections, i.e. sections $s$ such that $s[e]=e$. Moreover, we have the following.

\begin{cor}\label{cor_criteri_escissio}
Let $\GG$ be an arbitrary 2-group. Then $\GG$ is split if and only if for any (normalized) set theoretic section $s:\pi_0(\GG)\To G_0$ of $p$ the associated equivalence $\Ee_s$ extends to an equivalence of 2-groups $\EE_s:\pi_1(\GG)[1]\rtimes\pi_0(\GG)[0]\To\GG$.
\end{cor}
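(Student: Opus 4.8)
The plan is to derive this directly from Lemmas~\ref{splita_si_E_s_monoidal} and~\ref{independencia_seccio}, so the argument is essentially a bookkeeping of quantifiers and requires no new computation. Recall that Lemma~\ref{splita_si_E_s_monoidal} asserts the equivalence of ``$\GG$ split'' with ``$\Ee_s$ extends to an equivalence of 2-groups for \emph{some} section $s$'', while Lemma~\ref{independencia_seccio} propagates the extension property of $\Ee_s$ from one section to every other section.

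For the implication ``$\Leftarrow$'', I would argue as follows: if $\Ee_s$ extends to an equivalence of 2-groups $\EE_s:\pi_1(\GG)[1]\rtimes\pi_0(\GG)[0]\To\GG$ for \emph{every} normalized set theoretic section $s$ of $p$, then in particular it does so for \emph{some} section, and Lemma~\ref{splita_si_E_s_monoidal} immediately yields that $\GG$ is split. Nothing further is needed here.

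For the implication ``$\Rightarrow$'', assume $\GG$ is split. By Lemma~\ref{splita_si_E_s_monoidal} there is at least one set theoretic section $s_0:\pi_0(\GG)\To G_0$ of $p$ for which $\Ee_{s_0}$ extends to an equivalence of 2-groups $\EE_{s_0}$. Now fix an arbitrary normalized section $s$ of $p$; applying Lemma~\ref{independencia_seccio} to the ``good'' section $s_0$ together with $s$, one concludes that $\Ee_{s}$ also extends to an equivalence of 2-groups $\EE_{s}:\pi_1(\GG)[1]\rtimes\pi_0(\GG)[0]\To\GG$. Since $s$ was an arbitrary normalized section, this is precisely the asserted conclusion. (As noted just before the statement, one could equally start from a normalized $s_0$, but this is cosmetic.)

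I do not expect a genuine obstacle in the Corollary itself: all the substance already lives in Lemma~\ref{independencia_seccio}, where a monoidal structure $\mu_{[x],[y]}$ on $\Ee_{s_0}$ is transported to a monoidal structure $\mu'_{[x],[y]}$ on $\Ee_{s}$ along arbitrary comparison isomorphisms $\theta_{[x]}:s_0[x]\To s[x]$, and its naturality and coherence are checked. If a reader wished for a fully self-contained proof of the Corollary, the only point meriting care would be exactly that verification — that the transported $\mu'$ is still a coherent natural monoidal structure — but since Lemma~\ref{independencia_seccio} has already established it, here it may simply be invoked.
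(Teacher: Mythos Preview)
Your proof is correct and matches the paper's approach exactly: the Corollary is stated there without explicit proof, being presented as an immediate consequence of Lemmas~\ref{splita_si_E_s_monoidal} and~\ref{independencia_seccio}, and your two implications unpack precisely this. The only implicit point you rely on in the ``$\Leftarrow$'' direction---that normalized sections of $p$ actually exist---is harmless, since one can always redefine any section at $[e]$.
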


We may now prove the following two additional splitness criteria. The second one justifies the name ``split'' for this kind of 2-groups.

\begin{thm}\label{criteri_general_2}
Let $\GG$ be any 2-group. The following conditions are equivalent.
\begin{itemize}
\item[(1)] $\GG$ is split.
\item[(2)] There exists a (normalized) section $s:\pi_0(\GG)\To G_0$ of $p$ and a collection of isomorphisms
$$
\{\mu_{[x],[x']}:s[x]\otimes s[x']\To s[x\otimes x']\}_{[x],[x']\in\pi_0(\GG)}
$$
such that
\begin{equation} \label{axioma_coherencia_amb_s}
\xymatrix{
s[x]\otimes(s[x']\otimes s[x''])\ar[dd]_{id\otimes'\mu_{[x'],[x'']}}\ar[dr]^{\ \ \ \ a_{s[x],s[x'],s[x'']}\ \ \ } & \\ & (s[x]\otimes s[x'])\otimes s[x'']\ar[dd]^{\mu_{[x],[x']}\otimes id}
   \\ s[x]\otimes s[x'\otimes x'']\ar[dd]_{\mu_{[x],[x'\otimes x'']}} &  \\ & s[x\otimes x']\otimes s[x'']\ar[dl]^{\ \ \ \ \mu_{[x\otimes x'],[x'']}} \\ s[x\otimes x'\otimes x''] & }
\end{equation}
commutes for all $[x],[x'],[x'']\in\pi_0(\GG)$.
\item[(3)] The Postnikov exact sequence of $\GG$ 
$$
\mathbbm{1}\longrightarrow\pi_1(\GG)[1]\stackrel{\JJ}{\longrightarrow}\GG\stackrel{\PP}{\longrightarrow}\pi_0(\GG)[0]\longrightarrow\mathbbm{1}
$$
``splits'', i.e. there exists a (unitary) morphism of 2-groups $\SSS:\pi_0(\GG)[0]\To\GG$ such that $\PP\circ\SSS\cong id_{\pi_0(\GG)[0]}$. 
\end{itemize}
\end{thm}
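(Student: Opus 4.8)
The plan is to prove $(1)\Leftrightarrow(2)$ by unwinding the splitness criterion of Corollary~\ref{cor_criteri_escissio}, and then to observe that condition $(3)$ is nothing but a reformulation of $(2)$ in terms of the Postnikov sequence. Throughout I write $p:G_0\To\pi_0(\GG)$ for the projection, and for a section $s$ of $p$ I let $\Ee_s$ be the equivalence of groupoids of Lemma~\ref{equivalencia_induida}.

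\emph{Step $(1)\Leftrightarrow(2)$.} By Corollary~\ref{cor_criteri_escissio}, $\GG$ is split if and only if, for one (equivalently every) normalized section $s$ of $p$, the functor $\Ee_s$ underlies an equivalence of $2$-groups $\pi_1(\GG)[1]\rtimes\pi_0(\GG)[0]\To\GG$, i.e.\ admits a monoidal structure (any such morphism of $2$-groups is automatically an equivalence, since $\Ee_s$ is already an equivalence of underlying groupoids). Since the source elementary $2$-group is strict its associator is an identity, so the coherence axiom~(\ref{axioma_coherencia}) for a candidate monoidal structure $\{\mu_{[x],[x']}:s[x]\otimes s[x']\To s[x\otimes x']\}$ on $\Ee_s$ is exactly the pentagon~(\ref{axioma_coherencia_amb_s}); moreover, by Proposition~\ref{existencia_iso_unitat} the unit isomorphism is then uniquely determined and needs no separate discussion. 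It therefore only remains to check that the naturality in $[x],[x']$ of a family $\mu$ as in $(2)$ --- part of the notion of monoidal structure but not mentioned in $(2)$ --- is automatic. The morphisms of $\pi_1(\GG)[1]\rtimes\pi_0(\GG)[0]$ being the endomorphisms $(u,[x])$, and recalling $\Ee_s(u,[x])=\gamma_{s[x]}(u)$ together with the tensor-product formula $(u,[x])\otimes(v,[x'])=(u\circ([x]\lhd v),[x\otimes x'])$ in an elementary $2$-group (Proposition~\ref{A[1]_rtimes_G[0]}), the naturality square for $(u,[x])$ and $(v,[x'])$ reads
$$\gamma_{s[x\otimes x']}(u\circ([x]\lhd v))\circ\mu_{[x],[x']}=\mu_{[x],[x']}\circ(\gamma_{s[x]}(u)\otimes\gamma_{s[x']}(v)).$$
Since $[s[x]]=[x]$, identity~(\ref{propietat}) rewrites the right-hand side as $\mu_{[x],[x']}\circ\gamma_{s[x]\otimes s[x']}(u\circ([x]\lhd v))$, and the equality then follows from the naturality of $\gamma$ (the first square of~(\ref{naturalitat_gamma_delta})) applied to the morphism $\mu_{[x],[x']}$. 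Hence a family $\{\mu_{[x],[x']}\}$ as in $(2)$ is precisely a monoidal structure on $\Ee_s$, and $(1)\Leftrightarrow(2)$ follows.

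\emph{Step $(2)\Leftrightarrow(3)$.} A unitary morphism of $2$-groups $\SSS:\pi_0(\GG)[0]\To\GG$ consists of a functor $\Ss:\pi_0(\GG)\To\Gg$ --- which, the source being discrete, is determined by its object function $[x]\mapsto\Ss[x]\in G_0$ and carries every morphism to an identity --- together with a monoidal structure $\{\mu^{\SSS}_{[x],[x']}:\Ss[x]\otimes\Ss[x']\To\Ss[x\otimes x']\}$ whose naturality condition is vacuous and whose coherence~(\ref{axioma_coherencia}) reduces, exactly as above, to~(\ref{axioma_coherencia_amb_s}). Furthermore $\PP\circ\SSS$ has object function $[x]\mapsto[\Ss[x]]$ and sends every morphism to an identity, so $\PP\circ\SSS\cong id_{\pi_0(\GG)[0]}$ holds if and only if $[\Ss[x]]=[x]$ for all $[x]$ --- i.e.\ the object function of $\SSS$ is a set theoretic section of $p$, necessarily normalized because $\SSS$ is unitary --- and in that case $\PP\circ\SSS=id_{\pi_0(\GG)[0]}$ on the nose. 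Thus the data of a splitting $\SSS$ of the Postnikov sequence is exactly the data of a normalized section $s$ of $p$ together with isomorphisms $\{\mu_{[x],[x']}\}$ satisfying~(\ref{axioma_coherencia_amb_s}), which is precisely condition $(2)$; this gives $(2)\Leftrightarrow(3)$. (A non-normalized splitting can always be replaced by a $2$-isomorphic unitary one, which still satisfies $\PP\circ\SSS\cong id_{\pi_0(\GG)[0]}$.)

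\emph{Main obstacle.} There is no genuine obstacle: the argument is essentially bookkeeping, threading together Lemma~\ref{equivalencia_induida}, Corollary~\ref{cor_criteri_escissio} and Proposition~\ref{existencia_iso_unitat}. The only step that uses anything beyond pure formalism is the verification that the bare commutativity of~(\ref{axioma_coherencia_amb_s}) already forces the naturality of $\mu$, which relies on identity~(\ref{propietat}) and on the naturality of the canonical homomorphisms $\gamma_x$.
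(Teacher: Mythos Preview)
Your proof is correct and follows essentially the same approach as the paper's: both identify condition~(2) with the existence of a monoidal structure on $\Ee_s$ (using~(\ref{propietat}) for naturality and noting that~(\ref{axioma_coherencia_amb_s}) is just the coherence axiom~(\ref{axioma_coherencia}) specialized to $\Ee_s$), invoke the splitness criterion coming from Lemmas~\ref{splita_si_E_s_monoidal}--\ref{independencia_seccio} (equivalently Corollary~\ref{cor_criteri_escissio}) for $(1)\Leftrightarrow(2)$, and observe that a unitary morphism $\SSS$ as in~(3) unpacks exactly to the data in~(2). Your write-up is simply more explicit about the naturality verification, which the paper relegates to the phrase ``it easily follows from~(\ref{propietat})''.
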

\begin{proof}
It easily follows from (\ref{propietat}) that any family of isomorphisms $s[x]\otimes s[x']\cong s[x\otimes x']$ defines a natural isomorphism between $\otimes\circ(\Ee_s\times\Ee_s)$ and $\Ee_s\circ\otimes$. Moreover, (\ref{axioma_coherencia_amb_s}) is just (\ref{axioma_coherencia}) with $\Ff=\Ee_s$. Hence, a family as in item (2) is the same thing as a monoidal structure on $\Ee_s$ making it an equivalence of 2-groups between $\pi_1(\GG)[1]\rtimes\pi_0(\GG)[0]$ and $\GG$. The equivalence (1)$\Leftrightarrow$(2) follows then from Lemma~\ref{splita_si_E_s_monoidal} (and Lemma~\ref{independencia_seccio} in the normalized case). As for the equivalence (2)$\Leftrightarrow$(3), it is enough to observe that a (unitary) morphism $\SSS$ as in item (3) is exactly the same thing as a (normalized) section $s$ of $p$ together with a family of isomorphisms as in item (2). 
\end{proof}

\begin{cor}\label{criteri_suficient_split}
Let $\GG$ be a strict 2-group. If there exists a set theoretic section $s:\pi_0(\GG)\To G_0$ of the projection map $p:\Gsf_0\To\pi_0(\GG)$ which is a homomorphism of groups, then $\GG$ is split.
\end{cor}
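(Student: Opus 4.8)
The plan is to read off the splitting directly from the given section, via Theorem~\ref{criteri_general_2}, condition (3) (equivalently (2)). Since $\GG$ is strict, the set of objects $G_0$ is the group $\Gsf_0$ whose product is the tensor product $\otimes$ and whose unit is $e$, and the projection $p\colon\Gsf_0\To\pi_0(\GG)$ is a group homomorphism, because the group law on $\pi_0(\GG)$ is $[x]\cdot[y]=[x\otimes y]$. Hence a section $s\colon\pi_0(\GG)\To\Gsf_0$ of $p$ which is a group homomorphism satisfies $s[x]\otimes s[x']=s([x]\cdot[x'])=s[x\otimes x']$ for all $[x],[x']\in\pi_0(\GG)$; moreover, being a homomorphism of groups, $s$ sends the unit of $\pi_0(\GG)$ to the unit of $\Gsf_0$, i.e. $s[e]=e$, so $s$ is normalized.

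Next I would define $\SSS\colon\pi_0(\GG)[0]\To\GG$ to be the morphism of 2-groups whose underlying functor sends each object $[x]$ to $s[x]$ (and every identity morphism to an identity), equipped with the \emph{trivial} monoidal structure $\mu_{[x],[x']}:=id_{s[x\otimes x']}\colon s[x]\otimes s[x']\To s[x\otimes x']$ — this is legitimate precisely because of the equality $s[x]\otimes s[x']=s[x\otimes x']$ established above. The coherence axiom (\ref{axioma_coherencia}) for $(\,s,\mu\,)$ reduces, after cancelling all the identity morphisms $\mu_{\bullet,\bullet}$, to the assertion that $a_{s[x],s[x'],s[x'']}$ is an identity, which holds because $\GG$ is strict; the unit condition is automatic by Proposition~\ref{existencia_iso_unitat} (and is in fact trivial here). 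Thus $\SSS$ is a well-defined (strict, unitary) morphism of 2-groups, and $\PP\circ\SSS$ sends $[x]\mapsto s[x]\mapsto[s[x]]=p(s[x])=[x]$, so $\PP\circ\SSS=id_{\pi_0(\GG)[0]}$ on the nose. Hence the Postnikov exact sequence
$$
\mathbbm{1}\longrightarrow\pi_1(\GG)[1]\stackrel{\JJ}{\longrightarrow}\GG\stackrel{\PP}{\longrightarrow}\pi_0(\GG)[0]\longrightarrow\mathbbm{1}
$$
splits, and condition (3) of Theorem~\ref{criteri_general_2} gives that $\GG$ is split. (Equivalently, one may phrase this as verifying condition (2) of that theorem for the section $s$ and the identity isomorphisms $\mu_{[x],[x']}$, which amounts to exactly the same computation.)

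There is essentially no obstacle: the only points requiring (routine) verification are that in the strict case the product on $G_0$ really is $\otimes$ — so that a group-homomorphism section is literally multiplicative for the tensor product and hence yields a \emph{strict} monoidal section — and that such a section is unit-preserving, hence normalized. Once these are noted, the statement is immediate from the already-established equivalence of conditions (1), (2) and (3) in Theorem~\ref{criteri_general_2}.
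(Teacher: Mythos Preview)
Your proof is correct and is essentially the same as the paper's: both take the identity isomorphisms $\mu_{[x],[x']}=id_{s[x\otimes x']}$ (legitimate since $s$ is a group homomorphism) and observe that the coherence condition (\ref{axioma_coherencia_amb_s}) holds trivially because the associator of a strict $\GG$ is the identity. The paper states this in one line via condition~(2) of Theorem~\ref{criteri_general_2}, whereas you phrase it through condition~(3), but as you yourself note these amount to exactly the same verification.
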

\begin{proof}
Let $s$ be a homomorphism of groups, and take as $\mu_{[x],[x']}$ the corresponding identity. Then (\ref{axioma_coherencia_amb_s}) clearly commutes when the associator is trivial.
\end{proof}

\medskip
It is worth emphasizing that this corollary gives a sufficient but not necessary condition for a strict 2-group $\GG$ to be split. In fact, the condition corresponds to the existence of a set theoretic section $s$ such that $\Ee_s$ is a {\em strict} monoidal functor. However, a functor between monoidal groupoids may be a monoidal functor without being strictly monoidal.

\begin{ex}{\rm
Since $\Dsf_5$ is centerless, we know from Corollary~\ref{splits_no_trivials_1} that $\SSS ym(\Dsf_5)$ is split. However, the projection map $\mathsf{Aut}(\Dsf_5)\To\mathsf{Out}(\Dsf_5)$ has no section which is a group homomorphism. Indeed, a direct computation shows that the group of outer automorphisms is
$$
\mathsf{Out}(\Dsf_5)=\{[\phi_{0,1}],[\phi_{0,2}]\}\cong\ZZ_2
$$
with
\begin{align*}
[\phi_{0,1}]&=\{\phi_{0,1},\phi_{1,1},\phi_{2,1},\phi_{3,1},\phi_{4,1},\phi_{0,4},\phi_{1,4},\phi_{2,4},\phi_{3,4},\phi_{4,4}\}, \\
[\phi_{0,2}]&=\{\phi_{0,2},\phi_{1,2},\phi_{2,2},\phi_{3,2},\phi_{4,2},\phi_{0,3},\phi_{1,3},\phi_{2,3},\phi_{3,3},\phi_{4,3}\}. 
\end{align*}
Hence, if there exists a section $s$ which is a group homomorphism, it sends $[\phi_{0,2}]$ to some representative of order two. But there is no such representative.}
\end{ex}

Later on we shall give a slightly weakened version of this condition which is also necessary (see Theorem~\ref{criteri_split_2-grups_estrictes}).

\subsection{Splitness of a wreath 2-product}
\label{splitness_producte_wreath_2}
We know from Proposition~\ref{splitness_producte_wreath_1} that $\Ssf_n\wr\wr\ \GG$ is split when $\GG$ is split. Let us now prove the converse using Theorem~\ref{criteri_general_2}.

Let us first observe that the first homotopy group of $\Ssf_n\wr\wr\ \GG$ is
\begin{equation}\label{pi0_producte_wreath}
\pi_0(\Ssf_n\wr\wr\ \GG)\cong\Ssf_n\wr\pi_0(\GG).
\end{equation}
This is an immediate consequence of Proposition~\ref{S_n_wreath_GG} and the fact that two objects $(\sigma,\mathbf{x}),(\sigma',\mathbf{x}')$ are isomorphic if and only if $\sigma=\sigma'$ and $\mathbf{x}\cong\mathbf{x}'$. We shall denote by
$$
\mathbf{p}:S_n\times G_0^n\To S_n\times\pi_0(\GG)^n
$$
the corresponding projection map. We have
$$
\mathbf{p}=id_{S_n}\times p\times\cdots\times p,
$$
where $p:G_0\To\pi_0(\GG)$ is the projection map for $\GG$. On the other hand, the automorphism group of the unit object is
$$
\mathsf{Aut}(id_n,\mathbf{e})=\{id_{id_n}\}\times\mathsf{Aut}(\mathbf{e})\cong\mathsf{Aut}(e)^n.
$$
Hence
\begin{equation}\label{pi1_producte_wreath}
\pi_1(\Ssf_n\wr\wr\ \GG)\cong\pi_1(\GG)^n.
\end{equation}
Although we shall not need it right now, let us remark that for a strict 2-group $\GG$ the canonical left action of $\Ssf_n\wr\pi_0(\GG)$ on this group is given by
\begin{equation}\label{accio_producte_wreath}
(\sigma,[\mathbf{x}])\lhd\mathbf{u}=(id_{\mathbf{x}}\otimes\mathbf{u}\otimes id_{\mathbf{x}^{-1}})\rhd\sigma^{-1}.
\end{equation}
Indeed, Proposition~\ref{S_n_wreath_GG} implies that $\Ssf_n[0]\wr\wr\ \GG$ is strict when $\GG$ is strict. Moreover, the explicit description of $\Ssf_n[0]\wr\wr\ \GG$ given in this Proposition also implies that
\begin{align*}
(\sigma,[\mathbf{x}])\lhd(id_{id_n},\mathbf{u})\ &\stackrel{(\ref{accio_cas_estricte})}{=}id_{(\sigma,\mathbf{x})}\otimes(id_{id_n},\mathbf{u})\otimes id_{(\sigma,\mathbf{x})^{-1}} \\  &\ \ =\ (id_\sigma,id_\mathbf{x})\otimes(id_{id_n},\mathbf{u})\otimes (id_{\sigma^{-1}},id_{\mathbf{x}^{-1}\rhd\sigma^{-1}}) \\ &\ \ =\ (id_\sigma,(id_\mathbf{x}\rhd id_n)\otimes\mathbf{u})\otimes (id_{\sigma^{-1}},id_{\mathbf{x}^{-1}\rhd\sigma^{-1}}) \\ &\ \ =\ (id_{id_n},((id_\mathbf{x}\otimes\mathbf{u})\rhd\sigma^{-1})\otimes id_{\mathbf{x}^{-1}\rhd\sigma^{-1}})  \\ &\ \ =\ (id_{id_n},(id_\mathbf{x}\otimes\mathbf{u}\otimes id_{\mathbf{x}^{-1}})\rhd\sigma^{-1}). 
\end{align*}
In the last equality we have used that the wreath 2-action is strict and the appropriate naturality diagram.

Let us now suppose that $\Ssf_n\wr\wr\ \GG$ is split. According to item (2) in Theorem~\ref{criteri_general_2}, this means that there exists a section $\mathbf{s}$ of $\mathbf{p}$, necessarily of the form 
$$
\mathbf{s}=id_{S_n}\times s_1\times\cdots\times s_n
$$
for some sections $s_1,\ldots,s_n$ of $p$, and a family of isomorphisms in $S_n[0]\times\Gg^n$
$$
\{\mu_{(\sigma,[\mathbf{x}]),(\sigma',[\mathbf{x}'])}:\mathbf{s}(\sigma,[\mathbf{x}])\otimes\mathbf{s}(\sigma,[\mathbf{x}])\To\mathbf{s}(\sigma\sigma',[(\mathbf{x}\rhd\sigma')\otimes\mathbf{x}'])\}_{(\sigma,[\mathbf{x}]),(\sigma',[\mathbf{x}'])}
$$
such that
\begin{equation*}
\xymatrix{
\mathbf{s}(\sigma,[\mathbf{x}])\otimes(\mathbf{s}(\sigma',[\mathbf{x}'])\otimes \mathbf{s}(\sigma'',[\mathbf{x}'']))\ar[dd]_{id\otimes'\mu_{(\sigma',[\mathbf{x}']),(\sigma'',[\mathbf{x}''])}}\ar[dr]^{\hspace{1.8truecm}a_{\mathbf{s}(\sigma,[\mathbf{x}]),\mathbf{s}(\sigma',[\mathbf{x}']),\mathbf{s}(\sigma'',[\mathbf{x}''])}\ \ \ } & \\ & (\mathbf{s}(\sigma,[\mathbf{x}])\otimes \mathbf{s}(\sigma',[\mathbf{x}']))\otimes \mathbf{s}(\sigma'',[\mathbf{x}''])\ar[dd]^{\mu_{(\sigma,[\mathbf{x}]),(\sigma',[\mathbf{x}'])}\otimes id}
   \\ \mathbf{s}(\sigma,[\mathbf{x}])\otimes \mathbf{s}(\sigma'\sigma'',[(\mathbf{x}'\rhd\sigma'')\otimes \mathbf{x}''])\ar[dd]_{\mu_{(\sigma,[\mathbf{x}]),(\sigma'\sigma'',[(\mathbf{x}'\rhd\sigma'')\otimes \mathbf{x}''])}} & \\ &  \mathbf{s}(\sigma\sigma',[(\mathbf{x}\rhd\sigma')\otimes \mathbf{x}'])\otimes \mathbf{s}(\sigma'',[\mathbf{x}''])\ar[dl]^{\hspace{1.8truecm}\mu_{(\sigma\sigma',[(\mathbf{x}\rhd\sigma')\otimes \mathbf{x}']),(\sigma'',[\mathbf{x}''])}} \\ \mathbf{s}(\sigma\sigma'\sigma'',[(\mathbf{x}\rhd(\sigma'\sigma''))\otimes((\mathbf{x}'\rhd\sigma'')\otimes \mathbf{x}'')]) & }
\end{equation*} 
commutes for all $(\sigma,[\mathbf{x}]),(\sigma',[\mathbf{x}']),(\sigma'',[\mathbf{x}''])\in S_n\times\pi_0(\GG)^n$. Let us focus the attention on the subfamily $\{\mu_{(id_n,[\mathbf{x}]),(id_n,[\mathbf{x}'])}\}_{[\mathbf{x}],[\mathbf{x}']}$. For short, we shall write
\begin{align*}
\tilde{\mathbf{s}}&\equiv s_1\times\cdots\times s_n,
\\ \tilde{\mathbf{s}}[\mathbf{x}]&\equiv (s_1[x_1],\ldots,s_n[x_n]).
\end{align*}
Then it follows from Proposition~\ref{S_n_wreath_GG} that these isomorphisms are of the form
$$
\mu_{(id_n,[\mathbf{x}]),(id_n,[\mathbf{x}'])}=(id_{id_n},\tilde{\mu}_{[\mathbf{x}],[\mathbf{x}']})
$$
for some isomorphisms $\tilde{\mu}_{[\mathbf{x}],[\mathbf{x}']}:\tilde{\mathbf{s}}[\mathbf{x}]\otimes\tilde{\mathbf{s}}[\mathbf{x}']\To\tilde{\mathbf{s}}[\mathbf{x}\otimes\mathbf{x}']$ in $\Gg^n$ making
\begin{equation*}
\xymatrix{
\tilde{\mathbf{s}}[\mathbf{x}]\otimes(\tilde{\mathbf{s}}[\mathbf{x}']\otimes \tilde{\mathbf{s}}[\mathbf{x}''])\ar[dd]_{id\otimes'\tilde{\mu}_{[\mathbf{x}'],[\mathbf{x}'']}}\ar[dr]^{\ \ \ \ a_{\tilde{\mathbf{s}}[\mathbf{x}],\tilde{\mathbf{s}}[\mathbf{x}'],\tilde{\mathbf{s}}[\mathbf{x}'']}\ \ \ } & \\ & (\tilde{\mathbf{s}}[\mathbf{x}]\otimes \tilde{\mathbf{s}}[\mathbf{x}'])\otimes \tilde{\mathbf{s}}[\mathbf{x}'']\ar[dd]^{\tilde{\mu}_{[\mathbf{x}],[\mathbf{x}']}\otimes id}
   \\ \tilde{\mathbf{s}}[\mathbf{x}]\otimes \tilde{\mathbf{s}}[\mathbf{x}'\otimes \mathbf{x}'']\ar[dd]_{\tilde{\mu}_{[\mathbf{x}],[\mathbf{x}'\otimes \mathbf{x}'']}} &  \\ & \tilde{\mathbf{s}}[\mathbf{x}\otimes \mathbf{x}']\otimes \tilde{\mathbf{s}}[\mathbf{x}'']\ar[dl]^{\ \ \ \ \tilde{\mu}_{[\mathbf{x}\otimes \mathbf{x}'],[\mathbf{x}'']}} \\ \tilde{\mathbf{s}}[\mathbf{x}\otimes \mathbf{x}'\otimes \mathbf{x}''] & }
\end{equation*}
commute for all $[\mathbf{x}],[\mathbf{x}'],[\mathbf{x}'']\in\pi_0(\GG)^n$. Since the monoidal structure on $\Gg^n$ is given componentwise, their components
$$
\tilde{\mu}^{(i)}_{[\mathbf{x}],[\mathbf{x}']}:s_i[x_i]\otimes s_i[x'_i]\To s_i[x_i\otimes x'_i],\qquad i=1,\ldots,n
$$
make the analogous diagrams commute in $\Gg$. By item (2) of Theorem~\ref{criteri_general_2}, we conclude that $\GG$ is split. Hence we have proved the following.

\begin{prop}\label{caracter_split_producte_wreath}
Let $\GG$ be any 2-group. Then for any $n\geq 1$ the wreath 2-product $\Ssf_n\wr\wr\ \GG$ is split if and only if $\GG$ is split.
\end{prop}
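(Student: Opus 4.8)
The plan is to prove the two implications separately, in both cases working through the characterisation of splitness given in Theorem~\ref{criteri_general_2}(2): a 2-group is split precisely when there is a (normalised) section $s$ of the projection $p\colon G_0\to\pi_0(\GG)$ together with a coherent family of isomorphisms $\mu_{[x],[x']}\colon s[x]\otimes s[x']\to s[x\otimes x']$ satisfying (\ref{axioma_coherencia_amb_s}).

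The implication ``$\GG$ split $\Rightarrow$ $\Ssf_n\wr\wr\ \GG$ split'' is already Proposition~\ref{splitness_producte_wreath_1}, so I would simply invoke it: if $\GG\simeq\KK$ with $\KK$ strict and skeletal (Proposition~\ref{G_split_sii_estricte_esqueletic}), then $\Ssf_n\wr\wr\ \GG\simeq\Ssf_n\wr\wr\ \KK$ since equivalent 2-groups have equivalent wreath 2-products, and the latter is again strict skeletal by Proposition~\ref{S_n_wreath_GG}, hence split.

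For the converse I would proceed as follows. First read off the homotopy data of the wreath 2-product from Proposition~\ref{S_n_wreath_GG}: two objects $(\sigma,\mathbf x)$, $(\sigma',\mathbf x')$ are isomorphic iff $\sigma=\sigma'$ and $\mathbf x\cong\mathbf x'$, so $\pi_0(\Ssf_n\wr\wr\ \GG)\cong\Ssf_n\wr\pi_0(\GG)$ with projection $\mathbf p=id_{S_n}\times p^{\times n}$, and $\mathsf{Aut}(id_n,\mathbf e)\cong\mathsf{Aut}(e)^n$ gives $\pi_1(\Ssf_n\wr\wr\ \GG)\cong\pi_1(\GG)^n$. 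Assuming $\Ssf_n\wr\wr\ \GG$ is split, Theorem~\ref{criteri_general_2}(2) supplies a section $\mathbf s$ of $\mathbf p$ — which, because $\mathbf p=id_{S_n}\times p^{\times n}$, is forced to be of the form $id_{S_n}\times s_1\times\cdots\times s_n$ with each $s_i$ a section of $p$ — together with a coherent family $\mu_{(\sigma,[\mathbf x]),(\sigma',[\mathbf x'])}$. I would then restrict attention to the subfamily with $\sigma=\sigma'=id_n$: since the monoidal structure of $\Ssf_n\wr\wr\ \GG$ and of $\Gg^n$ is defined componentwise (\S~\ref{2-categoria_2Grp} and Example~\ref{accio_wreath}), each such isomorphism has the shape $(id_{id_n},\tilde\mu_{[\mathbf x],[\mathbf x']})$ with $\tilde\mu_{[\mathbf x],[\mathbf x']}\colon\tilde{\mathbf s}[\mathbf x]\otimes\tilde{\mathbf s}[\mathbf x']\to\tilde{\mathbf s}[\mathbf x\otimes\mathbf x']$ in $\Gg^n$, and the coherence hexagon of Theorem~\ref{criteri_general_2}(2) for $\Ssf_n\wr\wr\ \GG$, evaluated over the identity permutation, collapses (using Proposition~\ref{S_n_wreath_GG}(c), which says the associator over identity permutations is just that of $\Gg^n$) to the coherence diagram for the pair $(\tilde{\mathbf s},\tilde\mu)$ in $\Gg^n$. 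Passing to the $i$-th component, $s_i$ and $\tilde\mu^{(i)}$ satisfy the hypotheses of Theorem~\ref{criteri_general_2}(2) for $\GG$, so $\GG$ is split.

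The only real work is the bookkeeping in the converse direction: verifying that a section of $\mathbf p$ is genuinely a product of sections of $p$ (immediate) and, more delicately, that the coherence datum restricts correctly — that the pentagon/hexagon in the wreath 2-product, restricted to objects lying over identity permutations, really becomes the componentwise coherence diagram of Theorem~\ref{criteri_general_2}(2) for $\GG$. This is where the explicit formulas of Proposition~\ref{S_n_wreath_GG} and the componentwise nature of the structure on $\Gg^n$ are used; once unwound there is no substantive obstacle.
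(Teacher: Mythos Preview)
Your proposal is correct and follows essentially the same approach as the paper: invoke Proposition~\ref{splitness_producte_wreath_1} for the forward direction, and for the converse use Theorem~\ref{criteri_general_2}(2), restrict the coherent family $\mu_{(\sigma,[\mathbf x]),(\sigma',[\mathbf x'])}$ to the subfamily over $\sigma=\sigma'=id_n$, and then project componentwise to recover the data required for $\GG$. The paper's argument is identical in structure and in the supporting details you highlight (the form of $\mathbf s$ as $id_{S_n}\times s_1\times\cdots\times s_n$, and the use of Proposition~\ref{S_n_wreath_GG} to see that the associator over identity permutations is just that of $\Gg^n$).
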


\subsection{Splitness criterion for strict 2-groups.}
\label{criteri_2-grups_estrictes}

In spite of their generality, the previous conditions of splitness are sometimes difficult to check. For instance, showing that a given cocycle is cohomologically trivial is in general a difficult task. In this subsection we state and prove a new criterion valid only for strict 2-groups. This criterion is used in the next section to prove the existence of non split permutation 2-groups. Unless otherwise indicated, throughout this subsection $\GG$ stands for a strict 2-group.

Theorem~\ref{criteri_general_2} reduces the problem of determining when a 2-group is split to that of determining when a (normalized) set theoretic section $s$ of $p$ admits a family of isomorphisms
$$
\mu_{[x],[x']}:s[x]\otimes s[x']\To s[x\otimes x']
$$
making commutative some diagrams. The answer we shall now give involves a canonical exact sequence of groups associated to any strict 2-group $\GG$. It is the sequence obtained as follows.

Let $H_0$ be the set of all morphisms of $\Gg$ whose domain is the unit object $e$. Since $\GG$ is strict, it becomes a group $\Hsf_0$ with the product given by
$$
(e\stackrel{f}{\longrightarrow}x)\cdot(e\stackrel{f'}{\longrightarrow}x'):=
(e\stackrel{f\otimes f'}{\longrightarrow}x\otimes x).
$$
Moreover, the map $t:H_0\To G_0$ sending any $f:e\To x$ to its codomain is a group homomorphism. Its kernel is the homotopy group $\pi_1(\GG)$, and its image is the subgroup of all objects $x\in G_0$ isomorphic to $e$, i.e. the kernel $\Ksf_0$ of the projection homomorphism $p:\Gsf_0\To\pi_0(\GG)$. Thus we get the exact sequence of groups  
\begin{equation}\label{successio_exacta}
\xymatrix{
0\ar[r] & \pi_1(\GG)\ar[r] & \Hsf_0\ar[r]^t & \Gsf_0\ar[r]^{p\ \ \ } & \pi_0(\GG)\ar[r] & 1 \ .}
\end{equation}
We shall call it the {\em exact 4-sequence} of $\GG$.

\begin{ex}{\rm
We know that $\Ssf_n\wr\wr\ \GG$ is a strict 2-group when $\GG$ is strict. If (\ref{successio_exacta}) is the exact 4-sequence of $\GG$, the exact 4-sequence of $\Ssf_n\wr\wr\ \GG$ is
\begin{equation*}
\xymatrix{
0\ar[r] & \pi_1(\GG)^n\ar[r] & \Hsf^n_0\ar[r]^{\mathbf{t}\ \ \ } & \Ssf_n\wr\Gsf_0\ar[r]^{\mathbf{p}\ \ \ } & \Ssf_n\wr\pi_0(\GG)\ar[r] & 1\ , }
\end{equation*} 
where $\mathbf{t}$ and $\mathbf{p}$ denote the group homomorphisms given by
\begin{align*}
&\mathbf{t}(\mathbf{f}):=(id_n,(t(f_1),\ldots,t(f_n))), \\ &\mathbf{p}(\sigma,\mathbf{x})):=(\sigma,([x_1],\ldots,[x_n]).
\end{align*}
Indeed, we know from (\ref{pi0_producte_wreath}) and (\ref{pi1_producte_wreath}) that the first and second homotopy groups of $\Ssf_n\wr\wr\ \GG$ are $\Ssf_n\wr\pi_0(\GG)$ and $\pi_1(\GG)^n$, respectively. Moreover, it follows from Proposition~\ref{S_n_wreath_GG} that the group of objects of $\Ssf_n\wr\wr\ \GG$ is the wreath product $\Ssf_n\wr\Gsf_0$. Finally, if $K_0$ is the subset of objects of $\Gg$ isomorphic to $e$, the set of morphisms in $S_n[0]\times\Gg^n$ with domain $(id_n,\mathbf{e})$ is
\begin{align*}
\coprod_{\mathbf{x}\in K_0^n}\mathrm{Hom}((id_n,\mathbf{e}),(id_n,\mathbf{x}))&\cong\coprod_{\mathbf{x}\in K_0^n}\left(\prod_{i=1}^n\mathrm{Hom}(e,x_i)\right)
\\ &\cong\left(\coprod_{x\in K_0}\mathrm{Hom}(e,x)\right)^n \\ &\cong H_0^n.
\end{align*}
By Proposition~\ref{S_n_wreath_GG}, this is an isomorphism of groups. }
\end{ex}

\begin{ex}\label{s.e._Equiv(G[1])}{\rm
The exact 4-sequence of the strict 2-group $\SSS ym(\Gsf)$ is isomorphic to the sequence
\begin{equation*}
\xymatrix{
0\ar[r] & \Zsf(\Gsf)\ar[r] & \Gsf\ar[r]^{c\ \ \ \ \ } & \mathsf{Aut}(\Gsf)\ar[r]^p & \mathsf{Out}(\Gsf)\ar[r] & \mathsf{1}\ ,}
\end{equation*}
where $c:\Gsf\To\mathsf{Aut}(\Gsf)$ is the map given by $g\mapsto c_g$. Indeed, by Propositions~\ref{2-grup_autoequivalencies_grupoide} and \ref{exemple_2-grup_split_bis} its group of objects is isomorphic to $\mathsf{Aut}(\Gsf)$, and the first and second homotopy groups are respectively isomorphic to $\mathsf{Out}(\Gsf)$ and $\Zsf(\Gsf)$. As for the group of morphisms with source the unit object $id_{\Gsf}$, its underlying set is
$$
\coprod_{\phi\in\mathrm{Inn}(\Gsf)}\mathrm{Hom}(\Ee(id_G),\Ee(\phi))=\coprod_{\phi\in\mathrm{Inn}(\Gsf)}\{\tau(g;id_G,\phi),\ \forall g\in G\ \mbox{s.t.}\ \phi=c_g\}\cong G.
$$
By Proposition~\ref{2-grup_autoequivalencies_grupoide}, this is an isomorphism of groups. We leave to the reader checking that the map $t$ reduces to the above map $c$.
}
\end{ex}

Let us now consider any set theoretic section $s$ of $p$. We can assume without loss of generality that $s$ is normalized. Then we have the induced map
$$
\hat{s}:\pi_0(\GG)\times\pi_0(\GG)\To K_0
$$
defined by
\begin{equation}\label{definicio_hat_s}
\hat{s}([x],[x']):=s[x]\otimes s[x']\otimes s[x\otimes x']^{-1},
\end{equation}
and measuring the failure of $s$ to be a homomorphism of groups. It is also normalized, i.e. such that
$$
\hat{s}([x],[x'])=e
$$
when $[x]$, $[x']$ or both are equal to $[e]$. Since $\hat{s}$ takes values in $K_0$, it can always be lifted to a map
$$
\psi_s:\pi_0(\GG)\times\pi_0(\GG)\To H_0.
$$
Such a lifting will be called {\em normalized} when
$$
\psi_s([e],[x])=\psi_s([x],[e])=id_e
$$
for all $[x]\in\pi_0(\GG)$. Moreover, the section $s$ induces a left ``action'' $\lhd_s$ of $\pi_0(\GG)$ on $H_0$ given by
\begin{equation*}
[x']\lhd_s f:=id_{s[x']}\otimes f\otimes id_{s[x']^{-1}}.
\end{equation*}
It is an action only in a weak sense. Indeed, $[e]\lhd_s f$ is indeed equal to $f$ but $[x']\lhd_s([x'']\lhd_s f)$ is equal to $[x'\otimes x'']\lhd_s f$ only up to composition with an isomorphism.

Then we have the following splitness criterion for strict 2-groups.

\begin{thm}\label{criteri_split_2-grups_estrictes}
Let $\GG$ be a strict 2-group. Then $\GG$ is split if and only if there exists a normalized set theoretic section $s:\pi_0(\GG)\To G_0$ of $p$ such that the induced map $\hat{s}:\pi_0(\GG)\times\pi_0(\GG)\To K_0$ defined by (\ref{definicio_hat_s}) admits a normalized lifting $\psi_s:\pi_0(\GG)\times\pi_0(\GG)\To H_0$ satisfying the ``2-cocycle condition''
\begin{equation}\label{condicio_2-cocicle}
\psi_s([x],[x'])\cdot\psi_s([x\otimes x'],[x''])=([x]\lhd_s\psi_s([x'],[x'']))\cdot\psi_s([x],[x'\otimes x''])
\end{equation}
for all $[x],[x'],[x'']\in\pi_0(\GG)$.
\end{thm}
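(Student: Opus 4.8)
The plan is to read the statement off from the general criterion Theorem~\ref{criteri_general_2}(2) by translating the coherence condition (\ref{axioma_coherencia_amb_s}) into the cocycle condition (\ref{condicio_2-cocicle}); the exact $4$-sequence (\ref{successio_exacta}) of $\GG$ is exactly the bookkeeping device that makes this translation go. By Lemmas~\ref{splita_si_E_s_monoidal} and \ref{independencia_seccio} we may fix throughout a \emph{normalised} section $s$ of $p$.

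\textbf{Step 1: unwinding Theorem~\ref{criteri_general_2}(2) in the strict case.} Since $\GG$ is strict the associator $a_{s[x],s[x'],s[x'']}$ in (\ref{axioma_coherencia_amb_s}) is an identity, so that diagram degenerates to the requirement that a family of isomorphisms $\mu_{[x],[x']}\colon s[x]\otimes s[x']\To s[x\otimes x']$ satisfy
$$
\mu_{[x],[x'\otimes x'']}\circ(id_{s[x]}\otimes\mu_{[x'],[x'']})=\mu_{[x\otimes x'],[x'']}\circ(\mu_{[x],[x']}\otimes id_{s[x'']})
$$
for all $[x],[x'],[x'']\in\pi_0(\GG)$. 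Thus $\GG$ is split iff some normalised $s$ carries such a coherent family.

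\textbf{Step 2: a dictionary $\mu\leftrightarrow\psi_s$.} Since $\GG$ is strict, $s[x\otimes x']\otimes s[x\otimes x']^{-1}=e$, so tensoring with an appropriate identity and inverting turns $\mu_{[x],[x']}$ into a morphism $e\To\hat s([x],[x'])$ lying in $H_0$ whose $t$-value is $\hat s([x],[x'])$; the resulting map $\psi_s\colon\pi_0(\GG)\times\pi_0(\GG)\To H_0$ is then a lifting of $\hat s$ along $t$. Because tensoring by an invertible object is an equivalence, $\mu\mapsto\psi_s$ is a bijection from coherent families for $s$ onto such liftings, and under it the normalisation $\mu_{[e],[x]}=\mu_{[x],[e]}=id$ corresponds to $\psi_s$ being normalised. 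Finally, a coherent family $\mu$ can always be replaced by one with $\mu_{[e],[x]}=\mu_{[x],[e]}=id$ --- this is the standard normalisation of the unit of a monoidal functor (compare Proposition~\ref{existencia_iso_unitat} and the remark preceding Remark~\ref{remarca_2Grp_s}), and uses only the degenerate coherence equation evaluated on arguments equal to $[e]$. Hence $\GG$ is split iff some normalised $s$ admits a normalised lifting of $\hat s$ making the dictionary-image of the equation of Step~1 hold.

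\textbf{Step 3 (the crux): the equation of Step~1 becomes (\ref{condicio_2-cocicle}).} Tensoring both sides of the Step~1 equation on the right with $id_{s[x\otimes x'\otimes x'']^{-1}}$ and inverting, both sides become morphisms $e\To s[x]\otimes s[x']\otimes s[x'']\otimes s[x\otimes x'\otimes x'']^{-1}$, i.e. elements of $H_0$. Unwinding these two composites using the strictness of $\GG$, the fact that the product of $\Hsf_0$ is given by $\otimes$, and the weak action $[x]\lhd_s f=id_{s[x]}\otimes f\otimes id_{s[x]^{-1}}$, one side turns into $\psi_s([x],[x'])\cdot\psi_s([x\otimes x'],[x''])$ (the factors $\mu_{[x],[x']}\otimes id$ and $\mu_{[x\otimes x'],[x'']}$ contributing the two terms) and the other into $([x]\lhd_s\psi_s([x'],[x'']))\cdot\psi_s([x],[x'\otimes x''])$, the leading $id_{s[x]}\otimes(-)$ on $\mu_{[x'],[x'']}$ being precisely what produces the twist $[x]\lhd_s(-)$. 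This is exactly (\ref{condicio_2-cocicle}). The delicate point --- and the only one --- is the bookkeeping of this last step: one must tensor with the correct identities on the correct sides and track the canonical isomorphisms expressing that $\lhd_s$ is associative only up to isomorphism (these are the very terms that let the translated identity close up); everything else is formal from strictness. Assembling Steps~1--3 gives: $\GG$ is split $\iff$ there is a normalised section $s$ and a normalised lifting $\psi_s$ of $\hat s$ satisfying (\ref{condicio_2-cocicle}), as claimed.
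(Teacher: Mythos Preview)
Your proposal is correct and follows essentially the same route as the paper: start from Theorem~\ref{criteri_general_2}(2), set up the bijection between coherent families $\mu_{[x],[x']}$ and liftings $\psi_s$ via $\psi_s([x],[x'])=\mu^{-1}_{[x],[x']}\otimes id_{s[x\otimes x']^{-1}}$, normalise, and then translate the coherence pentagon (now a square, by strictness) into the $2$-cocycle condition using the interchange law in the strict monoidal groupoid. The paper carries out Step~3 explicitly (writing each side as a composite in $\Gg$ and simplifying via $(f\otimes id)\circ(id\otimes g)=f\otimes g$), whereas you leave this as a careful bookkeeping exercise; but the argument and the dictionary are the same. One small caution: your closing remark about needing ``the canonical isomorphisms expressing that $\lhd_s$ is associative only up to isomorphism'' is a red herring here --- in a strict $2$-group the translation of Step~1 into (\ref{condicio_2-cocicle}) is a purely strict manipulation, and the weakness of $\lhd_s$ never enters the computation.
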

\begin{proof}
Let $\GG$ be split. By Theorem~\ref{criteri_general_2}, there exists a normalized section $s$ and a family of isomorphisms $\mu_{[x],[x']}:s[x]\otimes s[x']\To s[x\otimes x']$ such that (\ref{axioma_coherencia_amb_s}) commutes. We can assume that this family is normalized, i.e. such that
$$
\mu_{[e],[x]}=\mu_{[x],[e]}=id_{s[x]}
$$
for all $[x]\in\pi_0(\GG)$. Indeed, since $s$ is normalized and $\GG$ strict, all objects $s[e]\otimes s[x]$, $s[x]\otimes s[e]$, $s[e\otimes x]$ and $s[x\otimes e]$ are equal to $s[x]$ for any $[x]$. Hence, if the family is not normalized, it is enough to consider the new family defined by
$$
\mu'_{[x],[x']}:=\left\{\begin{array}{ll} \mu_{[x],[x']}, & \mbox{if $[x],[x']\neq[e]$} \\ id_{s[x]}, & \mbox{if $[x']=[e]$} \\  id_{s[x']}, & \mbox{if $[x]=[e]$.} \end{array}\right.
$$
This new family also makes (\ref{axioma_coherencia_amb_s}) commute for all $[x],[x'],[x'']\in\pi_0(\GG)$. Then let us take as lifting $\psi_s$ of $\hat{s}$ the map given by
$$
\psi_s([x],[x']):=\mu^{-1}_{[x],[x']}\otimes id_{s[x\otimes x']^{-1}}.
$$
It is clearly normalized. W claim that it satisfies the ``2-cocycle condition'' (\ref{condicio_2-cocicle}). Indeed, the product in $\Hsf_0$ is the tensor product of morphisms in $\Gg$ and $\GG$ is strict. Therefore the condition takes the form
\begin{align*} 
\mu^{-1}_{[x],[x']}\otimes id_{s[x\otimes x']^{-1}}\otimes&\mu^{-1}_{[x\otimes x'],[x'']}\otimes id_{s[x\otimes x'\otimes x'']^{-1}} \\ &=id_{s[x]}\otimes\mu^{-1}_{[x'],[x'']}\otimes id_{s[x'\otimes x'']^{-1}}\otimes id_{s[x]^{-1}}\otimes\mu^{-1}_{[x],[x'\otimes x'']}\otimes id_{s[x\otimes x'\otimes x'']^{-1}}
\end{align*}
Now, tensoring with any object is a self-equivalence of $\Gg$. Hence this is equivalent to 
\begin{align*} 
\mu^{-1}_{[x],[x']}\otimes id_{s[x\otimes x']^{-1}}\otimes&\mu^{-1}_{[x\otimes x'],[x'']} \\ &=id_{s[x]}\otimes\mu^{-1}_{[x'],[x'']}\otimes id_{s[x'\otimes x'']^{-1}\otimes s[x]^{-1}}\otimes\mu^{-1}_{[x],[x'\otimes x'']}
\end{align*}
Using again that $\GG$ is strict, we can rewrite the left hand side as
\begin{align*} 
\mu^{-1}_{[x],[x']}\otimes id_{s[x\otimes x']^{-1}}&\otimes\mu^{-1}_{[x\otimes x'],[x'']} \\ &=\mu^{-1}_{[x],[x']}\otimes id_{s[x'']}\otimes id_{s[x'']^{-1}\otimes s[x\otimes x']^{-1}}\otimes\mu^{-1}_{[x\otimes x'],[x'']} \\ &=[(\mu^{-1}_{[x],[x']}\otimes id_{s[x'']})\circ id_{s[x\otimes x']\otimes s[x'']}]\otimes[id_e\circ(id_{s[x'']^{-1}\otimes s[x\otimes x']^{-1}}\otimes\mu^{-1}_{[x\otimes x'],[x'']})] \\ &=(\mu^{-1}_{[x],[x']}\otimes id_{s[x'']}\otimes id_e)\circ(id_e\otimes\mu^{-1}_{[x\otimes x'],[x'']}) \\ &=(\mu^{-1}_{[x],[x']}\otimes id_{s[x'']})\circ\mu^{-1}_{[x\otimes x'],[x'']}
\end{align*}
Similarly, it is shown that the right hand side is
$$
id_{s[x]}\otimes\mu^{-1}_{[x'],[x'']}\otimes id_{s[x'\otimes x'']^{-1}\otimes s[x]^{-1}}\otimes\mu^{-1}_{[x],[x'\otimes x'']}=(id_{s[x]}\otimes\mu^{-1}_{[x'],[x'']})\circ\mu^{-1}_{[x],[x'\otimes x'']}.
$$
Therefore (\ref{condicio_2-cocicle}) is equivalent to
$$
(\mu^{-1}_{[x],[x']}\otimes id_{s[x'']})\circ\mu^{-1}_{[x\otimes x'],[x'']}=(id_{s[x]}\otimes\mu^{-1}_{[x'],[x'']})\circ\mu^{-1}_{[x],[x'\otimes x'']},
$$
and this is nothing but the commutativity of (\ref{axioma_coherencia_amb_s}).

Conversely, let us assume that there exists a normalized section $s$ and a normalized lifting $\psi_s$ of the induced map $\hat{s}$ satisfying the 2-cocycle condition. An argument similar to the previous one shows that the isomorphisms
$$
\mu_{[x],[x']}:=\psi_s^{-1}([x],[x'])\otimes id_{s[x\otimes x']^{-1}}
$$
are such that (\ref{axioma_coherencia_amb_s}) commutes. Hence $\GG$ is split by Theorem~\ref{criteri_general_2}.
\end{proof}

\medskip
Notice that this condition is indeed a weakened version of the condition in Corollary~\ref{criteri_suficient_split}. Indeed, when $s$ is a group homomorphism (in paticular, normalized), $\hat{s}$ is trivial. Hence we can take as normalized lifting $\psi_s$ the map sending all pairs $([x],[x'])$ to the identity of the unit object, which clearly satisfies (\ref{condicio_2-cocicle}).

\section{\large Permutation 2-groups}

In this section the symmetric groups are generalized to the (non necessarily discrete) groupoid setting. The new structures will be called {\em permutation 2-groups}, to distinguish them from what are usually called symmetric 2-groups (i.e. 2-groups whose tensor product is commutative up to a fixed coherent natural isomorphism). Their structure is investigated and their homotopy invariants computed. In particular, it is shown that they are non-split in general, and we explicitly identify the source of non-splitness. Various examples of permutation 2-groups are explicitly computed.

\subsection{Permutation 2-groups and finite type permutation 2-groups}

The symmetric groups $\Ssf_n$ generalize in an obvious way to the context of 2-groups. Indeed, we can identify $\Ssf_n$ with the discrete 2-group $\Ssf_n[0]$, and this is nothing but the 2-group of self-equivalences of the discrete groupoid $X[0]$ associated to the set $X=\{1,\ldots,n\}$. This suggests the following general definition, which is an invariant version of Definition~\ref{2-grup_permutacions_grupoide}. 

\begin{defn}
A {\em permutation 2-group} is a 2-group equivalent to the 2-group of permutations $\SSS ym(\Gg)$ of some groupoid $\Gg$.
\end{defn}

We are mainly interested in the permutation 2-groups of the {\em finite type} groupoids. To explain what we mean by this, let us recall that any groupoid $\Gg$ decomposes as a disjoint union of its connected components. In the skeletal case, these are necessarily one-object groupoids $\Gsf$. Hence any groupoid $\Gg$ is of the form
$$
\Gg\simeq\coprod_{j\in J}\Gsf_j
$$
for some family of groups $\{\Gsf_j\}_{j\in J}$. Now, there may exists non-isomorphic objects (hence, objects in different connected components) having isomorphic automorphism groups, i.e. we may have $\Gsf_j\cong\Gsf_{j'}$ for some pair $j,j'$ with $j\neq j'$. By a {\em homogeneous component} of a groupoid $\Gg$ we shall mean the disjoint union of all its connected components having a given automorphism group $\Gsf$ (up to isomorphism), and we shall call $\Gsf$ the {\em base group} of the homogeneous component.

It follows that any groupoid $\Gg$ canonically decomposes as a disjoint union of its homogeneous components $\{\Gg_i\}_{i\in I}$. If the corresponding base groups are $\{\Gsf_i\}_{i\in I}$ we have
\begin{equation*}
\Gg=\coprod_{i\in I}\Gg_i
\end{equation*}
with
$$
\Gg_i\simeq\coprod_{j\in J_i}\Gsf_i.
$$
When $|I|=1$ (only one homogeneous component), we shall say that $\Gg$ is a {\em homogeneous groupoid}.

\begin{ex}{\rm
The underlying groupoid of any 2-group is homogeneous with an abelian base group (see \S~\ref{invariants_homotopics}).}
\end{ex}

The groupoids and the associated permutation 2-groups we are mostly interested in are then the following.

\begin{defn}
A groupoid is called of {\em finite type} when each homogeneous component has a finite number of connected components (i.e. $J_i$ is a finite set for all $i\in I$). A {\em finite type permutation 2-group} is a 2-group equivalent to the permutation 2-group of a finite type groupoid.
\end{defn}
Notice that we put no restriction on the cardinality of the set of homogeneous components nor on the cardinality of the base groups $\Gsf_i$.

\begin{ex}{\rm 
For any finite set $X$ the discrete groupoid $X[0]$ is of finite type. It has one homogeneous component with trivial base group decomposing into $|X|$ connected components.}
\end{ex}

\begin{ex}{\rm 
For any group $\Gsf$ the one-object groupoid $\Gsf$ is of finite type. These are in fact the homogeneous connected groupoids.}
\end{ex}

\begin{ex}{\rm 
The groupoid of all finite sets and bijections between them is of finite type. It has a countable set of homogeneous components, each of them connected, and with respective base groups the symmetric groups $\Ssf_n$, $n\geq 0$.}
\end{ex}

\begin{ex}{\rm 
The groupoid of all finite-dimensional $\Fsf$-vector spaces for any field $\Fsf$ and the isomorphisms between them is of finite type. It also has a countable set of homogeneous components, each of them connected, but now with respective base groups the general linear groups $\Gsf\Lsf_n(\Fsf)$, $n\geq 0$.}
\end{ex}

\begin{ex}{\rm 
The fundamental groupoid $\Pi_1(X)$ of any topological space $X$ with a finite number of path-connected components is of finite type. More generally, the same is true for any $X$ with an arbitrary number of path-connected components if the number of path-connected components with any given group as fundamental group (up to isomorphism) is finite. }
\end{ex}

\begin{ex}{\rm
The action groupoid $\Gg_X$ of any $\Gsf$-set $X$ having a finite number of $\Gsf$-orbits is of finite type. More generally, the same is true when the number of $\Gsf$-orbits with a given stabilizer subgroup is finite. }
\end{ex}

Let $(n,\Gsf)$ be any pair consisting of a positive integer $n\geq 1$ and a group $\Gsf$. We shall denote by $\Gg_{n,\Gsf}$ the skeletal groupoid given by the coproduct of $n$ copies of $\Gsf$, and by $\SSS_{n,\Gsf}$ the associated permutation 2-group $\SSS ym(\Gg_{n,\Gsf})$. We shall just write $\SSS_\Gsf$ when $n=1$, and $\SSS_n$ when $\Gsf=\mathsf{1}$.

Notice that $\SSS_{n}\cong\Ssf_n[0]$ for any $n\geq 1$. Because of this, from now on we shall write $\SSS_n\wr\wr\ \GG$ to denote the wreath 2-product $\Ssf_n\wr\wr\ \GG$.

\begin{prop}\label{S_G_trivial}
$\SSS_\Gsf\simeq\mathbbm{1}$ if and only if the group $\Gsf$ is complete, i.e. centerless and such that all its automorpisms are inner.
\end{prop}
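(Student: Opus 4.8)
The plan is to reduce the statement to the classification of 2-groups up to equivalence by their homotopy invariants, and then to read off these invariants from the description of $\SSS ym(\Gsf)$ already obtained. First I would record the elementary observation that a 2-group $\GG$ is equivalent to the trivial 2-group $\mathbbm{1}$ if and only if $\pi_0(\GG)=\mathsf{1}$ and $\pi_1(\GG)=\mathsf{1}$. The ``only if'' direction is immediate because $\pi_0$ and $\pi_1$ are homotopy invariants (see \S~\ref{invariants_homotopics}). For the ``if'' direction I would invoke Sinh's theorem: $\GG\simeq\pi_1(\GG)[1]\rtimes_{\mathsf{z}(\GG)}\pi_0(\GG)[0]$, and when both $\pi_0(\GG)$ and $\pi_1(\GG)$ are trivial the only available classifying $3$-cocycle is the zero one (since $\Hsf^3(\mathsf{1},\mathsf{1})=0$), so this special 2-group is literally $\mathbbm{1}$.

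It then remains only to identify the two homotopy groups of $\GG=\SSS_\Gsf=\SSS ym(\Gsf)$, and this has already been done: by Proposition~\ref{exemple_2-grup_split_bis} we have $\pi_0(\SSS ym(\Gsf))\cong\mathsf{Out}(\Gsf)$ and $\pi_1(\SSS ym(\Gsf))\cong\Zsf(\Gsf)$. Combining with the previous paragraph, $\SSS_\Gsf\simeq\mathbbm{1}$ if and only if $\mathsf{Out}(\Gsf)=\mathsf{1}$ and $\Zsf(\Gsf)=\mathsf{1}$, that is, every automorphism of $\Gsf$ is inner and $\Gsf$ has trivial center. By definition this is exactly the assertion that $\Gsf$ is a complete group, which is the claim.

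There is essentially no obstacle here; the statement is a direct corollary of Sinh's theorem together with the computation of $\pi_0$ and $\pi_1$ of $\SSS ym(\Gsf)$ recorded earlier. The only two points I would be careful to spell out in the final write-up are (i) the equivalence between triviality of both homotopy groups and triviality of the 2-group up to equivalence, where the vanishing of the Postnikov invariant is automatic because it lives in $\Hsf^3$ of the trivial group, and (ii) the remark that ``centerless and all automorphisms inner'' is precisely the standard definition of a complete group, so that no extra argument is needed once the homotopy groups have been computed.
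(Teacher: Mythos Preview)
Your proof is correct and follows essentially the same approach as the paper: identify $\pi_0(\SSS_\Gsf)\cong\mathsf{Out}(\Gsf)$ and $\pi_1(\SSS_\Gsf)\cong\Zsf(\Gsf)$ via Proposition~\ref{exemple_2-grup_split_bis}, and then invoke Sinh's theorem. Your write-up is in fact slightly more explicit than the paper's, which simply says ``the result follows then from Sinh's theorem'' without spelling out why triviality of both homotopy groups forces equivalence with $\mathbbm{1}$; your remark that the Postnikov invariant is automatically zero because $\Hsf^3(\mathsf{1},\mathsf{1})=0$ makes this step transparent.
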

\begin{proof}
We know from Proposition~\ref{exemple_2-grup_split_bis} that the first and second homotopy groups of $\SSS_\Gsf$ are $\mathsf{Out}(\Gsf)$ and $\Zsf(\Gsf)$, respectively. The result follows then from Sinh's theorem. 
\end{proof}

\begin{prop}\label{Sim(A)_bis}
If $\Asf$ is an abelian group, $\SSS_{\Asf}\simeq\Asf[1]\rtimes\mathsf{Aut}(\Asf)[0]$. 
\end{prop}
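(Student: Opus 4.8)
The plan is to apply Corollary~\ref{Sim(A)} and then identify the permutation 2-group of a one-object groupoid with the permutation 2-group I want.

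First I would observe that for an abelian group $\Asf$, the groupoid $\Gg_{1,\Asf}$ is precisely the one-object groupoid $\Asf$ in the sense used throughout the paper, so by definition $\SSS_{\Asf} = \SSS ym(\Gg_{1,\Asf}) = \SSS ym(\Asf)$. Then the statement is literally a restatement of Corollary~\ref{Sim(A)}, which gives $\SSS ym(\Asf)\simeq\Asf[1]\rtimes\mathsf{Aut}(\Asf)[0]$ with $\mathsf{Aut}(\Asf)$ acting on $\Asf$ in the canonical way. So the proof is a one-line appeal: "$\SSS_\Asf = \SSS ym(\Asf)$ by definition, and the claim is then Corollary~\ref{Sim(A)}."

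If instead a more self-contained argument is wanted, I would argue via the homotopy invariants and the splitness criterion. By Proposition~\ref{exemple_2-grup_split_bis} applied to $\Gsf = \Asf$, the first homotopy group is $\mathsf{Out}(\Asf) = \mathsf{Aut}(\Asf)$ (all automorphisms of an abelian group are "outer" since the only inner one is the identity), the second homotopy group is $\Zsf(\Asf) = \Asf$, and the action of $\mathsf{Out}(\Asf) = \mathsf{Aut}(\Asf)$ on $\Zsf(\Asf) = \Asf$ is $[\phi]\lhd z = \phi(z)$, i.e.\ the canonical one. By Corollary~\ref{Sim(A)} (or by the computation in its proof: taking the identity section $s = \mathrm{id}$, the map $t$ lands in $Z(\Asf) = \Asf$, and the classifying $3$-cocycle $\mathsf{z}_{\mathrm{id},t}$ is the coboundary of the $2$-cochain $(\phi,\phi')\mapsto t(\phi\circ\phi')$), the Postnikov invariant $\alpha(\SSS_\Asf)$ vanishes. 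Hence by Corollary~\ref{corolari} (or Theorem~\ref{criteri_general}), $\SSS_\Asf$ is equivalent to the elementary 2-group $\pi_1(\SSS_\Asf)[1]\rtimes\pi_0(\SSS_\Asf)[0] = \Asf[1]\rtimes\mathsf{Aut}(\Asf)[0]$ with the canonical action.

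There is essentially no obstacle here: the result is a direct corollary of material already established (Proposition~\ref{exemple_2-grup_split_bis} and Corollary~\ref{Sim(A)}), the only point requiring a word is the bookkeeping identification $\SSS_\Asf = \SSS ym(\Asf)$ and the observation $\mathsf{Out}(\Asf) = \mathsf{Aut}(\Asf)$ for abelian $\Asf$. Accordingly I expect the author's proof to be a short reference back to Corollary~\ref{Sim(A)}.
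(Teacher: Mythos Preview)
Your proposal is correct and matches the paper's approach exactly: the paper's proof is the one-line ``This is Corollary~\ref{Sim(A)}.'' Your bookkeeping identification $\SSS_\Asf = \SSS ym(\Asf)$ and your prediction of the author's proof are both spot on.
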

\begin{proof}
This is Corollary~\ref{Sim(A)}.
\end{proof}

\begin{prop}\label{2-grups_permutacions_grups_simetrics}
The permutation 2-groups associated to the symmetric groups $\Ssf_n$ are given by
$$
\SSS_{\Ssf_n}\simeq\left\{\begin{array}{ll} \ZZ_2[1], & \mbox{if $n=2$} \\ \ZZ_2[0], & \mbox{if $n=6$} \\ \mathbbm{1}, & \mbox{if $n\neq 2,6$.} \end{array}\right.
$$
\end{prop}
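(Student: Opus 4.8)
The plan is to reduce the statement to the homotopy invariants computed in Proposition~\ref{exemple_2-grup_split_bis} together with Sinh's theorem. Since $\SSS_{\Ssf_n}=\SSS ym(\Ssf_n)$, that proposition gives $\pi_0(\SSS_{\Ssf_n})\cong\mathsf{Out}(\Ssf_n)$ and $\pi_1(\SSS_{\Ssf_n})\cong\Zsf(\Ssf_n)$ with the natural action. The crucial observation is that in each of the three regimes one of these two groups is trivial, so $\Hsf^3(\pi_0(\SSS_{\Ssf_n}),\pi_1(\SSS_{\Ssf_n}))=0$ and hence the Postnikov invariant $\alpha(\SSS_{\Ssf_n})$ vanishes automatically; Sinh's theorem then pins down $\SSS_{\Ssf_n}$ up to equivalence by the pair $(\pi_0,\pi_1)$ alone.

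First I would recall the classical facts about the symmetric groups: $\Zsf(\Ssf_n)=\mathsf{1}$ except for $n=2$, where $\Ssf_2\cong\ZZ_2$ and $\Zsf(\Ssf_2)=\ZZ_2$; and $\mathsf{Out}(\Ssf_n)=\mathsf{1}$ except for $n=6$, where $\mathsf{Out}(\Ssf_6)\cong\ZZ_2$ (H\"older's theorem on the exceptional outer automorphism of $\Ssf_6$), while $\mathsf{Out}(\Ssf_1)=\mathsf{1}$ trivially and $\mathsf{Out}(\Ssf_2)=\mathsf{1}$ since $\mathsf{Aut}(\ZZ_2)=\mathsf{1}$.

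Then I would split into the three cases. For $n=2$ the quickest route avoids Sinh entirely: since $\Ssf_2=\ZZ_2$ is abelian, Proposition~\ref{Sim(A)_bis} gives $\SSS_{\Ssf_2}\simeq\ZZ_2[1]\rtimes\mathsf{Aut}(\ZZ_2)[0]=\ZZ_2[1]\rtimes\mathbbm{1}[0]\simeq\ZZ_2[1]$. For $n\neq 2,6$ the group $\Ssf_n$ is complete --- centerless and with only inner automorphisms, trivially for $n=1$ and by the classical facts above for $n\geq 3$, $n\neq 6$ --- so Proposition~\ref{S_G_trivial} yields $\SSS_{\Ssf_n}\simeq\mathbbm{1}$. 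For $n=6$ we have $\pi_1(\SSS_{\Ssf_6})=\Zsf(\Ssf_6)=\mathsf{1}$, so in the skeletal model $\hat\GG$ of Sinh's theorem the only morphisms are the identities and the associator (which takes values in $\pi_1$) is trivial; hence $\SSS_{\Ssf_6}\simeq\pi_0(\SSS_{\Ssf_6})[0]=\mathsf{Out}(\Ssf_6)[0]\cong\ZZ_2[0]$.

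There is essentially no genuine obstacle in the argument: all the categorical input is already in place (Propositions~\ref{exemple_2-grup_split_bis}, \ref{Sim(A)_bis}, \ref{S_G_trivial} and Sinh's theorem), and the Postnikov invariant never has a chance to be non-trivial because one homotopy group always vanishes. The only nontrivial ingredient is the classical structure of $\mathsf{Out}(\Ssf_n)$ --- in particular the existence and uniqueness (up to inner automorphisms) of the exceptional automorphism of $\Ssf_6$ --- which I would simply cite rather than reprove.
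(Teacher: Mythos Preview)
Your proof is correct and follows essentially the same approach as the paper: both reduce to the classical facts about $\Zsf(\Ssf_n)$ and $\mathsf{Out}(\Ssf_n)$, use Proposition~\ref{S_G_trivial} for the complete cases $n\neq 2,6$, and handle $n=2,6$ via the homotopy invariants (the paper citing Proposition~\ref{exemple_2-grup_split_bis} directly, you routing $n=2$ through Proposition~\ref{Sim(A)_bis} and making the appeal to Sinh's theorem for $n=6$ more explicit). The only difference is presentational; the paper's proof is terser but relies on exactly the same ingredients.
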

\begin{proof}
$\Ssf_n$ is complete for all $n\neq 2,6$ (see \cite{jjR95}). Hence the case $n\neq 2,6$ follows from Proposition~\ref{S_G_trivial}. The cases $n=2$ and $n=6$ follow from Proposition~\ref{exemple_2-grup_split_bis}. Indeed, we have $\Ssf_2\cong\ZZ_2$. Hence its center is $\ZZ_2$ and its outer automorphism group trivial. On the other hand, $\Ssf_6$ is centerless and has exactly one non-trivial outer automorphism (see also \cite{jjR95}). 
\end{proof}

It is worth emphasizing that non-isomorphic groups $\Gsf$ may correspond to equivalent permutation 2-groups. For instance, this is so for any pair of non-isomorphic complete groups, and for all the symmetric groups $\Ssf_n$ with $n\neq 2,6$. An example where the permutation 2-group is non-trivial is the following. 

\begin{ex}\label{S_D4_S_D6}{\rm
The permutation 2-groups $\SSS_{\Dsf_4}$ and $\SSS_{\Dsf_6}$ are both equivalent to the elementary  2-group $\ZZ_2[1]\rtimes\ZZ_2[0]$, with $\ZZ_2$ acting on itself trivially (see Example~\ref{splits_no_trivials_2}). }
\end{ex}

Let us further remark that the 2-group of permutations of the fundamental groupoid $\Pi_1(X)$ of a path-connected space $X$ is
$$
\SSS ym({\Pi_1(X)})\simeq\SSS_{\pi_1(X)}.
$$
Moreover, any group $\Gsf$ can be realized as the fundamental group of a path-connected CW-complex of dimension $\geq 2$. This means that all permutation 2-groups $\SSS_\Gsf$ are in fact permutation 2-groups of fundamental groupoids.

More generally, let $\{(n_i,\Gsf_i)\}_{i\in I}$ be any family of pairs as before, with $\Gsf_i\ncong\Gsf_{i'}$ for $i\neq i'$. We shall denote by $\Gg_{\{(n_i,\Gsf_i)\}_{i\in I}}$ the skeletal groupoid
$$
\Gg_{\{(n_i,\Gsf_i)\}_{i\in I}}:=\coprod_{i\in I}\Gg_{n_i,\Gsf_i},
$$
and by $\mathbb{S}_{\{(n_i,\Gsf_i)\}_{i\in I}}$ the associated permutation 2-group. When $|I|=k$ is finite, we shall just write $\Gg_{n_1,\Gsf_1;\ldots;n_k,\Gsf_k}$ and $\SSS_{n_1,\Gsf_1;\ldots;n_k,\Gsf_k}$, respectively. 

Observe that all 2-groups $\SSS_{\{(n_i,\Gsf_i)\}_{i\in I}}$ are strict. Thus they are strict as monoidal groupoids because $\mathbf{Gpd}$ is a strict 2-category, and any self-equivalence is bijective on objects and hence, strictly invertible because $\Gg_{\{(n_i,\Gsf_i)\}_{i\in I}}$ is skeletal. However, the underlying groupoid of $\SSS_{\{(n_i,\Gsf_i)\}_{i\in I}}$ is non skeletal. In fact, we shall see below that $\SSS_{\{(n_i,\Gsf_i)\}_{i\in I}}$ can be non-split. 

Equivalent objects in a 2-category have equivalent 2-groups of self-equivalences (see Example~\ref{equivalencia_Equiv(X)_Equiv(Y)}). Therefore, when dealing with finite type permutation 2-groups we shall assume without loss of generality that the groupoid is $\Gg_{\{(n_i,\Gsf_i)\}_{i\in I}}$. The goal of this section is to investigate the structure of the corresponding permutation 2-group $\mathbb{S}_{\{(n_i,\Gsf_i)\}_{i\in I}}$ and to compute its homotopy invariants. We shall see that $\mathbb{S}_{\{(n_i,\Gsf_i)\}_{i\in I}}$ can be constructed using as building blocks the permutation 2-groups $\SSS_{n}$ and $\SSS_\Gsf$, and how the homotopy invariants of $\mathbb{S}_{\{(n_i,\Gsf_i)\}_{i\in I}}$ can be computed in terms of the homotopy invariants of these basic blocks.

\subsection{Structure theorem}

Let $\{\Gg_i\}_{i\in I}$ be the homogeneous components of a groupoid $\Gg$. Then an endofunctor $\Ff:\Gg\To\Gg$ is given by a family of functors $\Ff_i:\Gg_i\To\Gg$, $i\in I$. When $\Ff$ is an equivalence, any object of $\Gg$ necessarily gets mapped by $\Ff$ to a (possibly non-isomorphic) object having the same group of automorphisms (up to isomorphism). Hence each $\Ff_i$ necessarily maps the homogeneous component $\Gg_i$ into itself. Moreover, it is a self-equivalence of $\Gg_i$ if the whole functor $\Ff$ is an equivalence. Therefore any self-equivalence $\Ee$ of $\Gg$ actually amounts to a family of self-equivalences $\{\Ee_i:\Gg_i\To\Gg_i\}_{i\in I}$. We shall call them the {\it homogeneous components} of $\Ee$.

Similarly, for any two self-equivalences $\Ee,\Ee'$ of $\Gg$ a natural isomorphism $\tau:\Ee\Rightarrow\Ee'$ is given by natural isomorphisms $\tau_i:\Ee_i\Rightarrow\Ee'_i$ between the respective homogeneous components, the {\it homogeneous components} of $\tau$. Moreover, the vertical composition is given componentwise. It follows that we have an isomorphism (not just an equivalence) of groupoids
$$
\Dd:\Ss ym(\Gg)\stackrel{\cong}{\longrightarrow}\prod_{i\in I}\Ss ym({\Gg_i})
$$
given on objects and morphisms by $\Ee\mapsto(\Ee_i)_{i\in I}$ and $\tau\mapsto(\tau_i)_{i\in I}$. 

\begin{lem}
The functor $\Dd:\Ss ym(\Gg)\To\prod_{i\in I}\Ss ym({\Gg_i})$ is a strict monoidal functor between $\SSS ym(\Gg)$ and the product 2-group $\prod_{i\in I}\SSS ym(\Gg_i)$.
\end{lem}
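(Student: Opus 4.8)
The plan is to verify that the bijection $\Dd$ respects the monoidal structures on both sides, i.e. that it is a strict monoidal functor, by unwinding the definitions of the tensor product on $\SSS ym(\Gg)$ (composition of self-equivalences and horizontal composition of natural isomorphisms) and on $\prod_{i\in I}\SSS ym(\Gg_i)$ (the componentwise product). Since $\Dd$ is already known to be an isomorphism of the underlying groupoids, it suffices to check three things: that $\Dd$ sends the unit object $id_{\Gg}$ to the unit object $(id_{\Gg_i})_{i\in I}$; that $\Dd(\Ee\circ\Ee') = \Dd(\Ee)\otimes\Dd(\Ee')$ on objects and the analogous identity on morphisms; and that $\Dd$ carries the structural isomorphisms $a,l,r$ of $\SSS ym(\Gg)$ to those of the product 2-group. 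Because $\mathbf{Gpd}$ is a strict 2-category and $\Gg$ is (or may be taken) skeletal in the cases of interest, $\SSS ym(\Gg)$ is strict, so the associator and unitors are identities on both sides and the last point is automatic; the content is entirely in the first two.

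First I would record the elementary observation already made in the text: if $\{\Gg_i\}_{i\in I}$ are the homogeneous components of $\Gg$, any self-equivalence $\Ee$ of $\Gg$ restricts to a self-equivalence $\Ee_i$ of each $\Gg_i$, and any natural isomorphism between self-equivalences decomposes componentwise; this is exactly what makes $\Dd$ well defined and bijective. Then I would check compatibility with composition. For self-equivalences $\Ee,\Ee'$ of $\Gg$, the composite $\Ee\circ\Ee'$ restricted to $\Gg_i$ is $(\Ee\circ\Ee')|_{\Gg_i} = \Ee|_{\Gg_i}\circ\Ee'|_{\Gg_i} = \Ee_i\circ\Ee'_i$, since $\Ee'$ maps $\Gg_i$ into itself; hence $\Dd(\Ee\circ\Ee') = (\Ee_i\circ\Ee'_i)_{i\in I}$, which is precisely $\Dd(\Ee)\otimes\Dd(\Ee')$ for the componentwise tensor product of the product 2-group. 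For a natural isomorphism $\tau:\Ee_1\Rightarrow\Ee_2$ and $\tau':\Ee'_1\Rightarrow\Ee'_2$, the tensor product in $\SSS ym(\Gg)$ is the horizontal composite $\tau\ast\tau':\Ee_1\circ\Ee'_1\Rightarrow\Ee_2\circ\Ee'_2$, whose component at an object $x\in\Gg_i$ is $(\tau\ast\tau')_x = (\tau_{\Ee'_2 x})\circ(\Ee_1(\tau'_x)) = \tau_{i,\Ee'_{2,i}x}\circ\Ee_{1,i}(\tau'_{i,x})$, which depends only on the homogeneous components and equals the $i$-th component of $\Dd(\tau)\otimes\Dd(\tau')$; so $\Dd(\tau\ast\tau') = (\tau_i\ast\tau'_i)_{i\in I}$. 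Finally $\Dd(id_{\Gg})$ is clearly $(id_{\Gg_i})_{i\in I}$, and the monoidal structure maps $\mu,\nu$ of $\Dd$ are all identities.

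There is no real obstacle here; the only point requiring a moment's care is the one already emphasized in the text, namely that a self-equivalence of $\Gg$ genuinely preserves each homogeneous component (so that the restriction makes sense and composition localizes), which follows because an equivalence must send an object to one with an isomorphic automorphism group and must remain an equivalence after restriction. Everything else is a direct comparison of the definition of horizontal composition of natural transformations with the componentwise tensor product, so I would simply state these identifications and note that strictness of both 2-groups makes the coherence data trivial, concluding that $\Dd$ is a strict monoidal isomorphism, hence in particular a strict morphism of 2-groups $\SSS ym(\Gg)\to\prod_{i\in I}\SSS ym(\Gg_i)$.
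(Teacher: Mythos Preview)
Your proposal is correct and follows essentially the same approach as the paper: both verify that composition of self-equivalences and horizontal composition of natural isomorphisms are computed componentwise on homogeneous components, that the unit is preserved, and that strictness of both monoidal groupoids makes the coherence condition automatic. You simply spell out the horizontal-composition check in slightly more detail than the paper does.
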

\begin{proof}
The tensor product in $\Ss ym(\Gg)$ is given by the composition of self-equivalences and the horizontal composition of natural isomorphisms. In terms of the homogeneous components, both operations are computed componentwise. On the other hand, the tensor product in $\prod_{i\in I}\SSS ym(\Gg_i)$ is also defined componentwise (see \S~\ref{2-categoria_2Grp}). Thus we have
$$
\Dd(\Ee'\circ\Ee)=\Dd(\Ee')\otimes\Dd(\Ee)
$$
$$
\Dd(\tau'\circ\tau)=\Dd(\tau')\otimes\Dd(\tau)
$$
for any objects $\Ee,\Ee'$ and morphisms $\tau,\tau'$ in $\Ss ym(\Gg)$. Furthermore, unit objects are strictly preserved because the homogeneous components of $id_\Gg$ are the identities $id_{\Gg_i}$, $i\in I$. Hence taking $\mu$ trivial the coherence axiom (\ref{axioma_coherencia}) holds because both $\SSS ym(\Gg)$ and $\prod_{i\in I}\SSS ym(\Gg_i)$ are strict as monoidal groupoids.
\end{proof}

In summary, we have shown the following:

\begin{thm}[Primary decomposition]\label{descomposicio_primaria_prop}
Let $\{\Gg_i\}_{i\in I}$ be the homogeneous components of a groupoid $\Gg$. Then the corresponding permutation 2-group decomposes in the form
$$
\SSS ym(\Gg)\cong\prod_{i\in I}\SSS ym({\Gg_i}).
$$
\end{thm}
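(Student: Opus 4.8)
The plan is to construct an explicit isomorphism of 2-groups $\Dd:\SSS ym(\Gg)\To\prod_{i\in I}\SSS ym(\Gg_i)$ and to check that it is a strict monoidal functor; since it is evidently invertible on objects and morphisms this gives the desired isomorphism $\SSS ym(\Gg)\cong\prod_{i\in I}\SSS ym(\Gg_i)$ in $2\Gg rp$ (in particular an equivalence of 2-groups).

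First I would set up the underlying functor. Write $\Gg=\coprod_{i\in I}\Gg_i$ as the (canonical) disjoint union of its homogeneous components. The key observation is that any self-equivalence $\Ee:\Gg\To\Gg$ must preserve each homogeneous component: an equivalence of groupoids sends an object to an object with isomorphic automorphism group, so it cannot move an object from the component with base group $\Gsf_i$ to one with base group $\Gsf_{i'}\ncong\Gsf_i$; and the restriction $\Ee_i:=\Ee|_{\Gg_i}$ is again an equivalence (it has a quasi-inverse, namely the restriction of a quasi-inverse of $\Ee$, which must land in $\Gg_i$ by the same argument). Dually, for self-equivalences $\Ee,\Ee'$ of $\Gg$ a natural isomorphism $\tau:\Ee\Rightarrow\Ee'$ restricts to natural isomorphisms $\tau_i:\Ee_i\Rightarrow\Ee'_i$, and a family $(\tau_i)_{i\in I}$ of such natural isomorphisms assembles back into a natural isomorphism on $\Gg$ because $\Gg$ has no morphisms between distinct components. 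This gives a bijection on objects $\Ee\mapsto(\Ee_i)_{i\in I}$ and on morphisms $\tau\mapsto(\tau_i)_{i\in I}$, functorial because vertical composition of natural transformations is computed componentwise on the components $\Gg_i$. Hence $\Dd:\Ss ym(\Gg)\To\prod_{i\in I}\Ss ym(\Gg_i)$ is an isomorphism of groupoids.

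Next I would upgrade $\Dd$ to a monoidal functor with trivial structural constraints $(\mu=\mathrm{id},\nu=\mathrm{id})$. The tensor product in $\SSS ym(\Gg)$ is composition of self-equivalences (and horizontal composition of natural isomorphisms), and restricting a composite $\Ee'\circ\Ee$ to $\Gg_i$ gives $\Ee'_i\circ\Ee_i$ since each factor preserves $\Gg_i$; likewise horizontal composition restricts componentwise. On the other side, the tensor product in $\prod_{i\in I}\SSS ym(\Gg_i)$ is defined componentwise (see \S~\ref{2-categoria_2Grp}). Therefore $\Dd(\Ee'\circ\Ee)=\Dd(\Ee')\otimes\Dd(\Ee)$ and similarly for 2-morphisms, so we may take $\mu$ to be the identity. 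The unit object $id_\Gg$ has homogeneous components the identities $id_{\Gg_i}$, which is exactly the unit object of the product 2-group, so the unit is strictly preserved and $\nu=\mathrm{id}$. Finally the coherence diagram (\ref{axioma_coherencia}) is automatic: both $\SSS ym(\Gg)$ (as a full sub-2-group of $\EE quiv_{\mathbf{Gpd}}(\Gg)$, with $\mathbf{Gpd}$ a strict 2-category) and $\prod_{i\in I}\SSS ym(\Gg_i)$ are strict as monoidal groupoids, so every instance of (\ref{axioma_coherencia}) with $\mu=\mathrm{id}$ reduces to a commuting square of identities; by Proposition~\ref{existencia_iso_unitat} the compatibility (\ref{coherencia_iota}) is then forced. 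Thus $\Dd$ is a strict monoidal functor, hence a morphism of 2-groups, and being bijective on objects and morphisms it is an isomorphism in $2\Gg rp$.

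The only genuinely delicate point is the claim that every self-equivalence of $\Gg$ restricts to each homogeneous component — i.e. that homogeneous components are an intrinsic, equivalence-invariant decomposition. This I expect to be the main (mild) obstacle, and the argument is: a self-equivalence preserves isomorphism classes of automorphism groups, so it permutes the connected components within a fixed homogeneous component (those sharing a given base group up to isomorphism) and cannot mix different homogeneous components; since a quasi-inverse does the same in the other direction, the restriction to each $\Gg_i$ is a genuine self-equivalence rather than merely an endofunctor. Everything else — functoriality of $\Dd$, strictness of the monoidal structure, bijectivity — is routine bookkeeping about disjoint unions of groupoids.
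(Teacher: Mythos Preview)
Your proof is correct and follows essentially the same approach as the paper: the paper also builds the isomorphism $\Dd$ by decomposing self-equivalences and natural isomorphisms into their homogeneous components, and then checks (in the lemma immediately preceding the theorem) that $\Dd$ is strictly monoidal because composition of self-equivalences and horizontal composition of natural isomorphisms are computed componentwise. One small slip: $\SSS ym(\Gg)$ is not a sub-2-group of $\EE quiv_{\mathbf{Gpd}}(\Gg)$ but equal to it by definition; the point you want there is simply that its underlying monoidal groupoid is strict because $\mathbf{Gpd}$ is a strict 2-category, which you use correctly.
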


\begin{cor}\label{corolari_descomposicio_primaria}
Let $\{(n_i,\Gsf_i)\}_{i\in I}$ be any family of pairs consisting of a positive integer $n_i\geq 1$ and a group $\Gsf_i$, with $\Gsf_i\ncong\Gsf_{i'}$ for $i\neq i'$. Then
$$
\SSS_{\{(n_i,\Gsf_i)\}_{i\in I}}\cong\prod_{i\in I}\SSS_{n_i,\Gsf_i}.
$$
\end{cor}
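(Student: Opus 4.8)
The statement is an immediate consequence of the Primary Decomposition Theorem~\ref{descomposicio_primaria_prop}, once the homogeneous components of the groupoid $\Gg_{\{(n_i,\Gsf_i)\}_{i\in I}}$ have been identified. The plan is therefore to carry out that identification and then simply quote the theorem together with the definitions of $\SSS_{\{(n_i,\Gsf_i)\}_{i\in I}}$ and $\SSS_{n_i,\Gsf_i}$.

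First I would recall that, by construction, $\Gg_{\{(n_i,\Gsf_i)\}_{i\in I}}=\coprod_{i\in I}\Gg_{n_i,\Gsf_i}$, where $\Gg_{n_i,\Gsf_i}$ is the coproduct of $n_i$ copies of the one-object groupoid $\Gsf_i$. Each such $\Gg_{n_i,\Gsf_i}$ has exactly $n_i$ connected components, and every one of them has automorphism group isomorphic to $\Gsf_i$; hence $\Gg_{n_i,\Gsf_i}$ is a homogeneous groupoid with base group $\Gsf_i$ in the sense introduced before Theorem~\ref{descomposicio_primaria_prop}.

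Next I would invoke the hypothesis $\Gsf_i\ncong\Gsf_{i'}$ for $i\neq i'$. This is precisely what ensures that no two of the pieces $\Gg_{n_i,\Gsf_i}$ can lie in a common homogeneous component of $\Gg_{\{(n_i,\Gsf_i)\}_{i\in I}}$: a homogeneous component is, by definition, the union of all connected components sharing a given automorphism group up to isomorphism, and the base groups $\Gsf_i$ are pairwise non-isomorphic. Therefore the canonical decomposition of $\Gg_{\{(n_i,\Gsf_i)\}_{i\in I}}$ into its homogeneous components is exactly $\{\Gg_{n_i,\Gsf_i}\}_{i\in I}$. This is the only place where the hypothesis enters, and it is essentially the whole content of the argument; I do not expect any genuine obstacle here, only the need to spell out that the non-isomorphism assumption is exactly what prevents distinct pieces from merging.

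Finally, applying Theorem~\ref{descomposicio_primaria_prop} with $\Gg=\Gg_{\{(n_i,\Gsf_i)\}_{i\in I}}$ and homogeneous components $\Gg_i=\Gg_{n_i,\Gsf_i}$ yields $\SSS ym(\Gg_{\{(n_i,\Gsf_i)\}_{i\in I}})\cong\prod_{i\in I}\SSS ym(\Gg_{n_i,\Gsf_i})$, which, since $\SSS_{\{(n_i,\Gsf_i)\}_{i\in I}}=\SSS ym(\Gg_{\{(n_i,\Gsf_i)\}_{i\in I}})$ and $\SSS_{n_i,\Gsf_i}=\SSS ym(\Gg_{n_i,\Gsf_i})$ by definition, is exactly the desired isomorphism $\SSS_{\{(n_i,\Gsf_i)\}_{i\in I}}\cong\prod_{i\in I}\SSS_{n_i,\Gsf_i}$.
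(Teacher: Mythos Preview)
Your proposal is correct and follows exactly the intended approach: the paper states this as an immediate corollary of Theorem~\ref{descomposicio_primaria_prop} without further proof, and your argument simply spells out why the hypothesis $\Gsf_i\ncong\Gsf_{i'}$ guarantees that the $\Gg_{n_i,\Gsf_i}$ are precisely the homogeneous components of $\Gg_{\{(n_i,\Gsf_i)\}_{i\in I}}$.
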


\noindent
Observe that this primary decomposition only exists in the higher dimensional setting. Indeed, the discrete groupoids are homogeneous.

\begin{cor}
Let $\Gg_{\rm FinSets}$ be the groupoid of all finite sets and bijections between them. Then the corresponding finite type permutation 2-group is
$$
\SSS ym({\Gg_{\rm FinSets}})\simeq\ZZ_2[1]\times\ZZ_2[0].
$$
\end{cor}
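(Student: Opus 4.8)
The plan is to assemble the corollary from the primary decomposition theorem together with the already computed permutation $2$-groups of the symmetric groups, so the argument is essentially bookkeeping. First I would replace $\Gg_{\rm FinSets}$ by an equivalent skeletal groupoid; by Example~\ref{equivalencia_Equiv(X)_Equiv(Y)} this does not change $\SSS ym$ up to equivalence. This skeleton is the coproduct $\coprod_{n\geq 1}\Ssf_n$ of the symmetric groups, regarded as one-object groupoids, with one summand for each isomorphism class of (non-empty) finite set. Since $|\Ssf_n|=n!$ is strictly increasing for $n\geq 1$, these groups are pairwise non-isomorphic, so each connected component $\Ssf_n$ is by itself a homogeneous component, and Theorem~\ref{descomposicio_primaria_prop} (primary decomposition) gives
$$\SSS ym(\Gg_{\rm FinSets})\ \cong\ \prod_{n\geq 1}\SSS ym(\Ssf_n)\ =\ \prod_{n\geq 1}\SSS_{\Ssf_n}.$$

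Next I would plug in Proposition~\ref{2-grups_permutacions_grups_simetrics}, which computes each factor: $\SSS_{\Ssf_2}\simeq\ZZ_2[1]$, $\SSS_{\Ssf_6}\simeq\ZZ_2[0]$, and $\SSS_{\Ssf_n}\simeq\mathbbm 1$ for every $n\geq 1$ with $n\neq 2,6$ (for $n=1$ because the trivial group is complete). It then remains to collapse the product. Using that a product, over any index set, of componentwise equivalences of $2$-groups is again an equivalence — the componentwise pseudoinverse works and the $2$-isomorphisms are taken componentwise — together with the elementary facts that an arbitrary product of copies of $\mathbbm 1$ is $\mathbbm 1$ and $\GG\times\mathbbm 1\cong\GG$, one concludes
$$\SSS ym(\Gg_{\rm FinSets})\ \cong\ \SSS_{\Ssf_2}\times\SSS_{\Ssf_6}\times\!\!\prod_{\substack{n\geq 1\\ n\neq 2,6}}\!\!\SSS_{\Ssf_n}\ \simeq\ \ZZ_2[1]\times\ZZ_2[0]\times\mathbbm 1\ \cong\ \ZZ_2[1]\times\ZZ_2[0].$$

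There is no genuine obstacle here: every ingredient (reduction to the skeleton, the primary decomposition, the classification of the $\SSS_{\Ssf_n}$) has already been established, and the only steps that require a little care are cosmetic. One is reading off the homogeneous components correctly: consistent with the earlier observation that the homogeneous components of $\Gg_{\rm FinSets}$ are connected, we regard $\Gg_{\rm FinSets}$ as the groupoid of non-empty finite sets, so that $\Ssf_n$ occurs exactly once (as a connected homogeneous component) for each $n\geq 1$; were the empty set also included, $\Ssf_0\cong\Ssf_1$ would be both trivial, the trivial homogeneous component would acquire two connected components, and it would contribute an extra factor $\SSS_2\cong\ZZ_2[0]$. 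The other is that the product in the displays is infinite, but $\mathbf{2Grp}$ has all products, formed componentwise, so both the application of Theorem~\ref{descomposicio_primaria_prop} and the collapsing argument above are legitimate.
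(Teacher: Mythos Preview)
Your proposal is correct and follows essentially the same route as the paper: pass to a skeleton, apply the primary decomposition (Theorem~\ref{descomposicio_primaria_prop}) to get $\SSS ym(\Gg_{\rm FinSets})\simeq\prod_n\SSS_{\Ssf_n}$, and then invoke Proposition~\ref{2-grups_permutacions_grups_simetrics} to collapse all but the $n=2$ and $n=6$ factors. Your treatment is in fact more careful than the paper's on one point: you correctly flag that $\Ssf_0\cong\Ssf_1$ are both trivial, so if the empty set is included the trivial homogeneous component has two connected components and contributes an extra $\SSS_2\cong\ZZ_2[0]$; the paper's earlier example asserts each homogeneous component of $\Gg_{\rm FinSets}$ is connected with base group $\Ssf_n$ for $n\geq 0$, and its proof indexes the product over $n\geq 0$, which glosses over this issue.
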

\begin{proof}
We know that $\Gg_{\rm FinSets}\simeq\coprod_{n\geq 0} \Ssf_n[1]$. Hence
$$
\SSS ym({\Gg_{\rm FinSets}})\simeq\prod_{n\geq 0}\SSS_{\Ssf_n}.
$$
The result follows then from Proposition~\ref{2-grups_permutacions_grups_simetrics}. 
\end{proof}

Let us now consider the permutation 2-group $\SSS_{n,\Gsf}$ for an arbitrary pair $(n,\Gsf)$ as before. The case $n=1$ has already been considered in Proposition~\ref{2-grup_autoequivalencies_grupoide}. Our purpose is to prove Proposition~\ref{S_n,G} below, according to which $\SSS_{n,\Gsf}$ for $n>1$ is isomorphic to the wreath 2-product $\SSS_n\wr\wr\ \SSS_\Gsf$.

Recall that the objects and morphisms in $\SSS_{n,\Gsf}$ are the self-equivalences of the groupoid $\coprod^n\Gsf$ and the natural isomorphisms between them. They have the following explicit descriptions, which generalize to arbitrary $n\geq 1$ what we encountered in Proposition~\ref{2-grup_autoequivalencies_grupoide} for $n=1$.

\begin{lem}\label{lema1}
An equivalence of groupoids $\Ee:\coprod^n\Gsf\To\coprod^n\Gsf$ amounts to a pair $(\sigma,\Phi)$, with $\sigma\in S_n$ and $\Phi=(\phi_1,\ldots,\phi_n)\in Aut(\Gsf)^n$.
\end{lem}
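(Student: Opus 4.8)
The plan is to exhibit an explicit bijection between the self-equivalences of $\coprod^n\Gsf$ and the pairs $(\sigma,\Phi)\in S_n\times\mathsf{Aut}(\Gsf)^n$. First I would fix the concrete model of $\coprod^n\Gsf$: its object set is $\{1,\dots,n\}$, the hom-set $\mathrm{Hom}(i,i)$ is a copy of the group $\Gsf$, and $\mathrm{Hom}(i,j)=\emptyset$ for $i\neq j$. Since $\coprod^n\Gsf$ is skeletal, any self-equivalence $\Ee$ is bijective on objects and hence a strictly invertible automorphism of the groupoid --- this is exactly the remark recorded in the previous subsection for all the groupoids $\Gg_{\{(n_i,\Gsf_i)\}_{i\in I}}$ --- so it suffices to describe the automorphisms of $\coprod^n\Gsf$ as a groupoid.

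Next I would extract the permutation. A functor sends isomorphic objects to isomorphic objects, and here two objects are isomorphic precisely when they are equal; since $\Ee$ is bijective on objects it therefore induces a permutation $\sigma\in S_n$ characterised by $\Ee(i)=\sigma(i)$. Because there are no morphisms between distinct objects, the action of $\Ee$ on morphisms is forced to respect this splitting: a morphism $g\in\mathrm{Hom}(i,i)$ goes to a morphism in $\mathrm{Hom}(\sigma(i),\sigma(i))$. Functoriality then says that the restriction $\phi_i:=\Ee|_{\mathrm{Hom}(i,i)}:\Gsf\To\Gsf$ is a group homomorphism, and since $\Ee$ is an isomorphism of groupoids (in particular fully faithful, with inverse acting as $\sigma^{-1}$ on objects) each $\phi_i$ is bijective, i.e.\ $\phi_i\in\mathsf{Aut}(\Gsf)$. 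This yields the pair $(\sigma,\Phi)$ with $\Phi=(\phi_1,\dots,\phi_n)$.

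Conversely, given $(\sigma,\Phi)\in S_n\times\mathsf{Aut}(\Gsf)^n$ I would define $\Ee_{\sigma,\Phi}$ by $i\mapsto\sigma(i)$ on objects and by $g\mapsto\phi_i(g)$ on $\mathrm{Hom}(i,i)$; functoriality is immediate since composition only pairs morphisms living at the same object, and $\Ee_{\sigma,\Phi}$ is invertible (its inverse is $\Ee_{\sigma^{-1},\Phi'}$ with $\phi'_j=\phi_{\sigma^{-1}(j)}^{-1}$), hence it is a self-equivalence. One then checks that $\Ee\mapsto(\sigma,\Phi)$ and $(\sigma,\Phi)\mapsto\Ee_{\sigma,\Phi}$ are mutually inverse, which gives the asserted correspondence.

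The argument is essentially bookkeeping, so there is no serious obstacle; the only point that needs care is the index convention --- whether $\phi_i$ is attached to the source object $i$ or to the target object $\sigma(i)$ --- since this choice must be made compatibly with the description of composition and tensor product used in the next lemma and in Proposition~\ref{S_n,G}, where $\SSS_{n,\Gsf}$ is identified with the wreath 2-product $\SSS_n\wr\wr\ \SSS_\Gsf$.
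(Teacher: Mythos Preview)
Your proof is correct and takes essentially the same approach as the paper: both describe an endofunctor of $\coprod^n\Gsf$ as a map on the object set together with a group endomorphism on each hom-set, and then invoke skeletality to conclude that an equivalence forces the object map to be a permutation and each endomorphism an automorphism. You are slightly more explicit about the inverse construction and about flagging the indexing convention (which the paper fixes immediately after in~(\ref{E_sigma_phi})), but the arguments coincide.
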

\begin{proof}
Let us denote by $\ast_1,\ldots,\ast_n$ the objects of $\coprod^n\Gsf$. Then a functor $\Ff:\coprod^n\Gsf\To\coprod^n\Gsf$ amounts to $n$ functors $\Ff_i:\Gsf\To\Gsf$, $i=1,\ldots,n$, and each such functor is completely given by the object $\ast_{f(i)}$ (the image of $\ast_i$ by $\Ff_i$) together a functor $\hat{\Ff}_i:\Gsf\To\Gsf$ and hence, a homomorphism of groups $\phi_i:\Gsf\To\Gsf$. In other words, $\Ff$ amounts to a map $f:\{1,\ldots,n\}\To\{1,\ldots,n\}$, and $n$ endomorphisms $\phi_1,\ldots,\phi_n$ of $\Gsf$. Since $\coprod^n\Gsf$ is skeletal, the functor $\Ff$ so defined is an equivalence if and only if $f$ is a permutation $\sigma\in S_n$ and all $\phi_1,\ldots,\phi_n$ are automorphisms.
\end{proof}

\espai
\noindent
We shall denote by $\Ee(\sigma,\Phi)$ the self-equivalence of $\coprod^n\Gsf$ defined by the pair $(\sigma,\Phi)$. Thus
\begin{equation}
\Ee(\sigma,\Phi)(g,\ast_i)=(\phi_i(g),\ast_{\sigma(i)})\label{E_sigma_phi}
\end{equation}
for any morphism $(g,\ast_i)$ in $\coprod^n\Gsf$.

\begin{lem}\label{lema3}
Let $(\sigma,\Phi),(\tilde{\sigma},\tilde{\Phi})\in S_n\times Aut(\Gsf)^n$. Then:
\begin{itemize}
\item[(1)] If $\sigma\neq\tilde{\sigma}$, there is no natural isomorphism between $\Ee(\sigma,\Phi)$ and $\Ee(\tilde{\sigma},\tilde{\Phi})$.
\item[(2)] If $\sigma=\tilde{\sigma}$, a natural isomorphism $\tau:\Ee(\sigma,\Phi)\Rightarrow\Ee(\sigma,\tilde{\Phi})$ is an element $\mathbf{g}=(g_1,\ldots,g_n)\in G^n$ such that
\begin{equation}\label{naturalitat_tau}
\tilde{\phi}_i=c_{g_i}\circ\phi_i,\qquad i=1,\ldots,n.
\end{equation} 
\end{itemize}
\end{lem}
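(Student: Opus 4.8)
The plan is to read off a natural transformation $\tau$ between the two self-equivalences from its components, exactly as was done in the one-copy case of Proposition~\ref{2-grup_autoequivalencies_grupoide}. By Lemma~\ref{lema1} and \eqref{E_sigma_phi}, the self-equivalence $\Ee(\sigma,\Phi)$ sends $\ast_i$ to $\ast_{\sigma(i)}$ and a morphism $(h,\ast_i):\ast_i\To\ast_i$ to $(\phi_i(h),\ast_{\sigma(i)})$, and likewise for $\Ee(\tilde{\sigma},\tilde{\Phi})$; hence a natural transformation $\tau:\Ee(\sigma,\Phi)\Rightarrow\Ee(\tilde{\sigma},\tilde{\Phi})$ consists of exactly one component $\tau_{\ast_i}:\ast_{\sigma(i)}\To\ast_{\tilde{\sigma}(i)}$ for each $i\in\{1,\dots,n\}$.

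First I would settle item~(1). Since $\coprod^n\Gsf$ is a disjoint union of one-object groupoids, there are no morphisms $\ast_j\To\ast_k$ when $j\neq k$; so if $\sigma(i)\neq\tilde{\sigma}(i)$ for some $i$ then no component $\tau_{\ast_i}$ can exist, and there is no natural transformation at all, a fortiori no natural isomorphism. This forces $\sigma=\tilde{\sigma}$.

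Assuming now $\sigma=\tilde{\sigma}$, each $\tau_{\ast_i}$ is an endomorphism of $\ast_{\sigma(i)}$ in the $\sigma(i)$-th copy of $\Gsf$, that is, an element $g_i\in G$, so $\tau$ is the same datum as a tuple $\mathbf{g}=(g_1,\dots,g_n)\in G^n$. I would then write out the naturality square for a morphism $(h,\ast_i)$: it is the equality $(\tilde{\phi}_i(h))\circ g_i=g_i\circ(\phi_i(h))$ inside the $\sigma(i)$-th copy of $\Gsf$, which, with the convention $\beta\circ\alpha=\beta\cdot\alpha$ for composition in a one-object groupoid (as fixed in the Notation paragraph and used in Proposition~\ref{2-grup_autoequivalencies_grupoide}), says $\tilde{\phi}_i(h)\,g_i=g_i\,\phi_i(h)$ for all $h\in G$, i.e. $\tilde{\phi}_i=c_{g_i}\circ\phi_i$. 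Conversely, any tuple $\mathbf{g}$ obeying these $n$ relations yields such a $\tau$, automatically invertible (inverse given componentwise by $g_i^{-1}$) because $\Gsf$ is a groupoid. This gives item~(2).

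I do not expect any genuine obstacle here: the argument is just a componentwise repackaging of the $n=1$ computation of Proposition~\ref{2-grup_autoequivalencies_grupoide}, and the only point deserving attention is bookkeeping the composition/conjugation convention so that the naturality condition comes out as $\tilde{\phi}_i=c_{g_i}\circ\phi_i$ rather than its inverse.
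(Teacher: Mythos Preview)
Your argument is correct and follows essentially the same route as the paper's own proof: both use that distinct objects in $\coprod^n\Gsf$ lie in different connected components to rule out the case $\sigma\neq\tilde\sigma$, and then read off the naturality square componentwise to obtain $\tilde\phi_i=c_{g_i}\circ\phi_i$. Your version is simply a bit more explicit about the naturality computation and the composition convention.
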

\begin{proof}
If $\sigma\neq\tilde{\sigma}$, there exists at least one object $\ast_i$ in $\coprod^n\Gsf$ that gets mapped to different objects $\ast_{\sigma(i)}$ and $\ast_{\tilde{\sigma}(i)}$ by $\Ee(\sigma,\Phi)$ and $\Ee(\tilde{\sigma},\tilde{\Phi})$, respectively, and hence to two different connected components. However, this is impossible. This proves the first statement. Let us now suppose that $\sigma=\tilde{\sigma}$. Then a natural isomorphism $\tau:\Ee(\sigma,\Phi)\Rightarrow\Ee(\sigma,\tilde{\Phi})$ is given by $n$ elements $g_1,\ldots,g_n\in G$, so that $\tau_{\ast_i}=(g_i,\ast_{\sigma(i)})$, and (\ref{naturalitat_tau}) is nothing but the naturality in $\ast_i$.
\end{proof}

\espai
\noindent
We shall denote by $\tau(\mathbf{g};\sigma,\Phi,\tilde{\Phi}):\Ee(\sigma,\Phi)\Rightarrow\Ee(\sigma,\tilde{\Phi})$ the natural isomorphism defined by any $\mathbf{g}\in G^n$ satisfying (\ref{naturalitat_tau}). Thus
\begin{equation}\label{tau_g's}
\tau(\mathbf{g};\sigma,\Phi,\tilde{\Phi})_{\ast_i}:=(g_i,\ast_{\sigma(i)}),\qquad i=1,\ldots,n.
\end{equation}

\noindent
As for the tensor product and the composition of morphisms in $\SSS_{n,\Gsf}$, they have the following explicit descriptions, which also generalize what we encountered in the case $n=1$.

\begin{lem}\label{lema2}
For any pairs $(\sigma,\Phi),(\sigma',\Phi')\in S_n\times Aut(\Gsf)^n$ we have
$$
\Ee(\sigma,\Phi)\otimes\Ee(\sigma',\Phi')=\Ee(\sigma\sigma',(\Phi\rhd\sigma')\circ\Phi'),
$$
where $\rhd$ denotes the usual (right) wreath action of $S_n$ on the direct product group $\mathsf{Aut}(\Gsf)^n$. In other words, the assignments $(\sigma,\Phi)\mapsto\Ee(\sigma,\Phi)$ define an isomorphism of groups between the wreath product $\Ssf_n\wr\mathsf{Aut}(\Gsf)$ and the group of objects in $\SSS_{n,\Gsf}$.
\end{lem}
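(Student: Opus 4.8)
The plan is to verify the tensor product formula by a direct computation on morphisms, using the explicit description of $\Ee(\sigma,\Phi)$ recorded in~(\ref{E_sigma_phi}) together with the fact that the tensor product in $\SSS_{n,\Gsf}=\SSS ym(\coprod^n\Gsf)$ is composition of self-equivalences, so that $\Ee\otimes\Ee'=\Ee\circ\Ee'$ (exactly as in Proposition~\ref{2-grup_autoequivalencies_grupoide} for the case $n=1$).

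First I would evaluate the composite $\Ee(\sigma,\Phi)\circ\Ee(\sigma',\Phi')$ on an arbitrary morphism $(g,\ast_i)$ of $\coprod^n\Gsf$. Applying~(\ref{E_sigma_phi}) to $\Ee(\sigma',\Phi')$ gives $(\phi'_i(g),\ast_{\sigma'(i)})$, and applying~(\ref{E_sigma_phi}) once more to $\Ee(\sigma,\Phi)$ gives $\bigl((\phi_{\sigma'(i)}\circ\phi'_i)(g),\ast_{\sigma(\sigma'(i))}\bigr)$. On the other hand, for any $\Psi=(\psi_1,\dots,\psi_n)\in\mathsf{Aut}(\Gsf)^n$ the self-equivalence $\Ee(\sigma\sigma',\Psi)$ sends $(g,\ast_i)$ to $(\psi_i(g),\ast_{(\sigma\sigma')(i)})$, and $(\sigma\sigma')(i)=\sigma(\sigma'(i))$. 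Comparing, the two self-equivalences coincide precisely when $\psi_i=\phi_{\sigma'(i)}\circ\phi'_i$ for every $i$. By the definition of the right wreath action one has $(\Phi\rhd\sigma')_i=\phi_{\sigma'(i)}$, and with the paper's convention $\phi\cdot\phi'=\phi\circ\phi'$ the $i$-th component of $(\Phi\rhd\sigma')\circ\Phi'$ is exactly $\phi_{\sigma'(i)}\circ\phi'_i$. This gives the claimed identity $\Ee(\sigma,\Phi)\otimes\Ee(\sigma',\Phi')=\Ee(\sigma\sigma',(\Phi\rhd\sigma')\circ\Phi')$.

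For the second assertion I would observe that by Lemma~\ref{lema1} the assignment $(\sigma,\Phi)\mapsto\Ee(\sigma,\Phi)$ is a bijection from $S_n\times\mathsf{Aut}(\Gsf)^n$ onto the set of objects of $\SSS_{n,\Gsf}$; it carries $(id_n,(id_\Gsf,\dots,id_\Gsf))$ to $id_{\coprod^n\Gsf}$ by~(\ref{E_sigma_phi}); and the formula just established shows it transports the group law $(\sigma,\Phi)\cdot(\sigma',\Phi')=(\sigma\sigma',(\Phi\rhd\sigma')\circ\Phi')$ of the wreath product $\Ssf_n\wr\mathsf{Aut}(\Gsf)$ to the composition of self-equivalences, which is the group operation on the object group of $\SSS_{n,\Gsf}$. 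Hence it is an isomorphism of groups.

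The computation is routine; I expect the only point requiring care to be bookkeeping rather than anything conceptual — namely keeping track of which of $\sigma$ and $\sigma'$ is applied first when composing self-equivalences, and the direction of the wreath action $\rhd$, so that the indices on the $\phi_j$'s land where the statement says they should.
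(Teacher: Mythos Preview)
Your proof is correct and follows essentially the same approach as the paper: both compute $\Ee(\sigma,\Phi)\circ\Ee(\sigma',\Phi')$ on a generic morphism $(g,\ast_i)$ using (\ref{E_sigma_phi}) and identify the result with $\Ee(\sigma\sigma',(\Phi\rhd\sigma')\circ\Phi')(g,\ast_i)$ via the observation that the $i$-th component of $(\Phi\rhd\sigma')\circ\Phi'$ is $\phi_{\sigma'(i)}\circ\phi'_i$. Your treatment of the second assertion (bijectivity from Lemma~\ref{lema1}, identity to identity, homomorphism from the formula) is slightly more explicit than the paper's, but the argument is the same.
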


\begin{proof}
The tensor product of two objects corresponds to their composition as self-equivalences of $\coprod^n\Gsf$. Now, it follows from (\ref{E_sigma_phi}) that
$$
(\Ee(\sigma,\Phi)\circ\Ee(\sigma',\Phi'))(g,\ast_i)=\Ee(\sigma,\Phi)(\phi'_i(g),\ast_{\sigma'(i)})=(\phi_{\sigma'(i)}(\phi'_i(g)),\ast_{\sigma(\sigma'(i))}).
$$
But $\phi_{\sigma'(i)}\circ\phi'_i$ is the $i^{th}$-component of $(\Phi\rhd\sigma')\circ\Phi'$. Hence
$$
(\Ee(\sigma,\Phi)\circ\Ee(\sigma',\Phi'))(g,\ast_i)=\Ee(\sigma\sigma',(\Phi\rhd\sigma')\circ\Phi')(g,\ast_i),
$$
and this is true for any object $(g,\ast_i)\in\coprod^n\Gsf$.
\end{proof}

\espai
\noindent
To have a compact description of the composition and tensor product of morphisms let us write
\begin{align*}
  \tilde{\mathbf{g}}\cdot\mathbf{g}&:=(\tilde{g}_1g_1,\ldots,\tilde{g}_ng_n), \\ \Phi(\mathbf{g})&:=(\phi_1(g_1),\ldots,\phi_n(g_n))
\end{align*}
for any $\mathbf{g},\mathbf{g}'\in G^n$ and any $\Phi\in Aut(\Gsf)^n$. Then composition and tensor product are as follows.

\begin{lem}\label{lema4}
For any morphisms $\tau(\mathbf{g};\sigma,\Phi,\tilde{\Phi})$, $\tau(\tilde{\mathbf{g}};\sigma,\tilde{\Phi},\tilde{\tilde{\Phi}})$, $\tau(\mathbf{g}';\sigma',\Phi',\tilde{\Phi}')$, with $\mathbf{g}=(g_1,\ldots,g_n)$ satisfying (\ref{naturalitat_tau}), and $\tilde{\mathbf{g}}=(\tilde{g}_1,\ldots,\tilde{g}_n)$ and $\mathbf{g}'=(g'_1,\ldots,g'_n)$ satisfying the analogous conditions for the corresponding collections of automorphisms of $\Gsf$, we have:
\begin{itemize}
\item[(1)] $\tau(\tilde{\mathbf{g}};\sigma,\tilde{\Phi},\tilde{\tilde{\Phi}})\circ\tau(\mathbf{g};\sigma,\Phi,\tilde{\Phi})=\tau(\tilde{\mathbf{g}}\cdot\mathbf{g};\sigma,\Phi,\tilde{\tilde{\Phi}})$.
\item[(2)] $\tau(\mathbf{g};\sigma,\Phi,\tilde{\Phi})\otimes\tau(\mathbf{g}';\sigma',\Phi',\tilde{\Phi}')=\tau((\mathbf{g}\rhd\sigma')\cdot(\Phi\rhd\sigma')(\mathbf{g}');\sigma\sigma',(\Phi\rhd\sigma')\circ\Phi',(\tilde{\Phi}\rhd\sigma')\circ\tilde{\Phi}')$.
\end{itemize}
\end{lem}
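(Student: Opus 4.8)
Both identities are natural isomorphisms in the 2-group $\SSS_{n,\Gsf}=\EE quiv_{\mathbf{Gpd}}(\coprod^n\Gsf)$, so I would verify them componentwise at each object $\ast_i$ of $\coprod^n\Gsf$, using the explicit description (\ref{tau_g's}) of the components of a $\tau(\mathbf{g};\sigma,\Phi,\tilde{\Phi})$ together with the fact that composition in the 2-category $\mathbf{Gpd}$ restricts, on 2-cells of $\SSS_{n,\Gsf}$, to the vertical composition of natural isomorphisms, while the tensor product $\otimes$ restricts to their horizontal composition (the tensor product of the 2-group of self-equivalences being the composition functor).

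For part (1): since $\tilde{\sigma}=\sigma$, both inputs have $\ast_i$-component an endomorphism of $\ast_{\sigma(i)}$ lying in the $\sigma(i)$-th copy of $\Gsf$, namely $(g_i,\ast_{\sigma(i)})$ and $(\tilde{g}_i,\ast_{\sigma(i)})$ by (\ref{tau_g's}). Vertical composition is computed componentwise, and composition in the one-object groupoid $\Gsf$ multiplies group elements in the written order (cf.\ Proposition~\ref{2-grup_autoequivalencies_grupoide}(c)), so the $\ast_i$-component of the composite is $(\tilde{g}_ig_i,\ast_{\sigma(i)})$; by (\ref{tau_g's}) this is exactly the $\ast_i$-component of $\tau(\tilde{\mathbf{g}}\cdot\mathbf{g};\sigma,\Phi,\tilde{\tilde{\Phi}})$. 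As this holds for every $i$, part (1) follows. One should also record that the composite is well defined, i.e.\ that $\tilde{\tilde{\phi}}_i=c_{\tilde{g}_ig_i}\circ\phi_i$, which is immediate from $\tilde{\tilde{\phi}}_i=c_{\tilde{g}_i}\circ\tilde{\phi}_i$ and $\tilde{\phi}_i=c_{g_i}\circ\phi_i$.

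For part (2): by Lemma~\ref{lema2} the horizontal composite $\tau(\mathbf{g};\sigma,\Phi,\tilde{\Phi})\otimes\tau(\mathbf{g}';\sigma',\Phi',\tilde{\Phi}')$ is a natural isomorphism from $\Ee(\sigma,\Phi)\circ\Ee(\sigma',\Phi')=\Ee(\sigma\sigma',(\Phi\rhd\sigma')\circ\Phi')$ to $\Ee(\sigma,\tilde{\Phi})\circ\Ee(\sigma',\tilde{\Phi}')=\Ee(\sigma\sigma',(\tilde{\Phi}\rhd\sigma')\circ\tilde{\Phi}')$, so it must be of the form $\tau(\mathbf{h};\sigma\sigma',(\Phi\rhd\sigma')\circ\Phi',(\tilde{\Phi}\rhd\sigma')\circ\tilde{\Phi}')$ for a unique family $\mathbf{h}\in G^n$, and only $\mathbf{h}$ has to be identified. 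Using the interchange-law expression for the horizontal composite, its $\ast_i$-component equals $\tau(\mathbf{g};\sigma,\Phi,\tilde{\Phi})_{\Ee(\sigma',\tilde{\Phi}')(\ast_i)}\circ\Ee(\sigma,\Phi)\bigl(\tau(\mathbf{g}';\sigma',\Phi',\tilde{\Phi}')_{\ast_i}\bigr)$. Here $\Ee(\sigma',\tilde{\Phi}')(\ast_i)=\ast_{\sigma'(i)}$, so by (\ref{tau_g's}) the first factor is $(g_{\sigma'(i)},\ast_{\sigma(\sigma'(i))})$; and $\tau(\mathbf{g}';\ldots)_{\ast_i}=(g'_i,\ast_{\sigma'(i)})$, whose image under $\Ee(\sigma,\Phi)$ is $(\phi_{\sigma'(i)}(g'_i),\ast_{\sigma(\sigma'(i))})$ by (\ref{E_sigma_phi}). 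Composing in the $\sigma(\sigma'(i))$-th copy of $\Gsf$ gives $g_{\sigma'(i)}\,\phi_{\sigma'(i)}(g'_i)$, which is precisely the $i$-th component of $(\mathbf{g}\rhd\sigma')\cdot(\Phi\rhd\sigma')(\mathbf{g}')$. Hence $\mathbf{h}=(\mathbf{g}\rhd\sigma')\cdot(\Phi\rhd\sigma')(\mathbf{g}')$, as claimed.

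The computations are routine; the main point requiring care is choosing the correct one of the two equivalent bracketings of the horizontal composite and checking it against the convention $\Ee\otimes\Ee'=\Ee\circ\Ee'$ fixed for the 2-group of self-equivalences, since the alternative bracketing superficially produces $\tilde{\phi}_{\sigma'(i)}(g'_i)\,g_{\sigma'(i)}$ — equal to the above only after invoking the naturality relation (\ref{naturalitat_tau}) of the inputs, but not manifestly so. A secondary bookkeeping point is keeping the two permutations $\sigma,\sigma'$ straight through the substitutions $\ast_i\mapsto\ast_{\sigma'(i)}\mapsto\ast_{\sigma(\sigma'(i))}$.
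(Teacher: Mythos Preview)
Your proposal is correct and follows essentially the same approach as the paper: both parts are verified componentwise at each $\ast_i$, using (\ref{tau_g's}) for vertical composition in (1) and the whiskering formula for horizontal composition together with (\ref{E_sigma_phi}) in (2). Your write-up is in fact slightly more careful than the paper's---you record the well-definedness check $\tilde{\tilde{\phi}}_i=c_{\tilde{g}_ig_i}\circ\phi_i$, and you correctly write $\phi_{\sigma'(i)}(g'_i)$ where the paper's displayed computation has a harmless typo $\phi'_{\sigma'(i)}(g'_i)$.
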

\begin{proof}
Compositon corresponds to the vertical composition of natural isomorpyhisms. Now, it follows from (\ref{tau_g's}) that
\begin{align*}
(\tau(\tilde{\mathbf{g}};\sigma,\tilde{\Phi},\tilde{\tilde{\Phi}})\cdot\tau(\mathbf{g};\sigma,\Phi,\tilde{\Phi}))_{\ast_i}&=\tau(\tilde{\mathbf{g}};\sigma,\tilde{\Phi},\tilde{\tilde{\Phi}})_{\ast_i}\circ\tau(\mathbf{g};\sigma,\Phi,\tilde{\Phi})_{\ast_i} \\ &=(\tilde{g}_i,\ast_{\sigma(i)})\circ(g_i,\ast_{\sigma(i)}) \\ &=(\tilde{g}_ig_i,\ast_{\sigma(i)}) \\ &=\tau(\tilde{\mathbf{g}}\cdot\mathbf{g};\sigma,\Phi,\tilde{\tilde{\Phi}})_{\ast_i}.
\end{align*}
Similarly, the tensor product is given by the horizontal composition, and it follows from (\ref{E_sigma_phi}) and (\ref{tau_g's}) that
\begin{align*}
(\tau(\mathbf{g};\sigma,\Phi,\tilde{\Phi})\circ\tau(\mathbf{g}';\sigma',\Phi',\tilde{\Phi}'))_{\ast_i}&=\tau(\mathbf{g};\sigma,\Phi,\tilde{\Phi})_{\Ee(\sigma',\Phi')(\ast_i)}\circ\Ee(\sigma,\Phi)(\tau(\mathbf{g}';\sigma',\Phi',\tilde{\Phi}')_{\ast_i}) \\ &=\tau(\mathbf{g};\sigma,\Phi,\tilde{\Phi})_{\ast_{\sigma'(i)}}\circ\Ee(\sigma,\Phi)(g'_i,\ast_{\sigma'(i)})) \\ &=(g_{\sigma'(i)},\ast_{(\sigma\sigma')(i)})\circ(\phi'_{\sigma'(i)}(g'_i),\ast_{(\sigma\sigma')(i)}) \\ &=(g_{\sigma'(i)}\phi'_{\sigma'(i)}(g'_i),\ast_{(\sigma\sigma')(i)}) \\ &=(((\mathbf{g}\rhd\sigma')\cdot(\Phi\rhd\sigma')(\mathbf{g}'))_i,\ast_{(\sigma\sigma')(i)}) \\ &=\tau((\mathbf{g}\rhd\sigma')\cdot(\Phi\rhd\sigma')(\mathbf{g}');\sigma\sigma',(\Phi\rhd\sigma')\circ\Phi',(\tilde{\Phi}\rhd\sigma')\circ\tilde{\Phi}')_{\ast_i}.
\end{align*}
\end{proof}

\noindent
Before proving our claim about the structure of $\SSS_{n,\Gsf}$, we still need some more notation concerning the product 2-group $\SSS_\Gsf^n$. Its objects will be denoted by
$$
\Ee(\Phi)=(\Ee(\phi_1),\ldots,\Ee(\phi_n)),
$$
for any $\Phi=(\phi_1,\ldots,\phi_n)\in Aut(\Gsf)^n$, where $\Ee(\phi)$ denotes the automorphism of $\Gsf$ associated to $\phi$. It follows from Proposition~\ref{2-grup_autoequivalencies_grupoide} that
\begin{equation}\label{otimes_objectes_S_n,G}
\Ee(\Phi)\otimes\Ee(\Phi')=\Ee(\Phi\circ\Phi').
\end{equation}
On the other hand, any $\mathbf{g}\in G^n$ satisfying (\ref{naturalitat_tau}) determines a morphism in $\Ss_\Gsf^n$ that we shall denote by
$$
\tau(\mathbf{g};\Phi,\tilde{\Phi}):\Ee(\Phi)\To\Ee(\tilde{\Phi}).
$$
In the notations of Proposition~\ref{2-grup_autoequivalencies_grupoide}, it is the morphism with components
\begin{equation}\label{components_g_phi_tildephi}
\tau(\mathbf{g};\Phi,\tilde{\Phi})_i=\tau(g_i;\phi_i,\tilde{\phi}_i):\Ee(\phi_i)\Rightarrow\Ee(\tilde{\phi}_i),\qquad i=1,\ldots,n.
\end{equation}
Since $\Ss_\Gsf^n$ is a product, the composite of $\tau(\mathbf{g};\Phi,\tilde{\Phi})$ with a morphism $\tau(\tilde{\mathbf{g}};\tilde{\Phi},\tilde{\tilde{\Phi}}):\Ee(\tilde{\Phi})\To\Ee(\tilde{\tilde{\Phi}})$ is computed componentwise. Proposition~\ref{2-grup_autoequivalencies_grupoide} then gives
\begin{equation}\label{composicio_S_G^n}
\tau(\tilde{\mathbf{g}};\tilde{\Phi},\tilde{\tilde{\Phi}})\circ\tau(\mathbf{g};\Phi,\tilde{\Phi})=\tau(\tilde{\mathbf{g}}\cdot\mathbf{g};\Phi,\tilde{\tilde{\Phi}}),
\end{equation}
which generalizes the formula we had for $n=1$ to arbitrary $n\geq 1$. Similarly, the tensor product of $\tau(\mathbf{g};\Phi,\tilde{\Phi})$ with a morphism $\tau(\mathbf{g}';\Phi',\tilde{\Phi}'):\Ee(\Phi')\To\Ee(\tilde{\Phi}')$ is also computed componentwise, and Proposition~\ref{2-grup_autoequivalencies_grupoide} gives
\begin{equation}\label{otimesS_G^n}
\tau(\mathbf{g};\Phi,\tilde{\Phi})\otimes\tau(\mathbf{g}';\Phi',\tilde{\Phi}')=\tau(\mathbf{g}\cdot\Phi(\mathbf{g}');\Phi\circ\Phi',\tilde{\Phi}\circ\tilde{\Phi}').
\end{equation}
This is again a generalization to arbitrary $n\geq 1$ of the formula we had for $n=1$.

Let us now consider the functor $\Tt_{n,\Gsf}:\Ssf_n[0]\times\Ss_\Gsf^n\To\Ss_{n,\Gsf}$ given on objects $(\sigma,\Ee(\Phi))$ and morphisms $(id_\sigma,\tau(\mathbf{g};\Phi,\tilde{\Phi}))$ by
\begin{align*}
\Tt_{n,\Gsf}(\sigma,\Ee(\Phi))&:=\Ee(\sigma,\Phi),\\ \Tt_{n,\Gsf}(id_\sigma,\tau(\mathbf{g};\Phi,\tilde{\Phi}))&:=\tau(\mathbf{g};\sigma,\Phi,\tilde{\Phi}).
\end{align*}
It follows from (\ref{composicio_S_G^n}) and item (1) in Lemma~\ref{lema4} that these assignments are functorial. Indeed, we have
\begin{align*}
\Tt_{n,\Gsf}((id_\sigma,\tau(\tilde{\mathbf{g}};\tilde{\Phi},\tilde{\tilde{\Phi}}))\circ(id_\sigma,\tau(\mathbf{g};\Phi,\tilde{\Phi})))&=\Tt_{n,\Gsf}(id_\sigma,\tau(\tilde{\mathbf{g}}\cdot\mathbf{g};\Phi,\tilde{\tilde{\Phi}})) \\ &=\tau(\tilde{\mathbf{g}}\cdot\mathbf{g};\sigma,\Phi,\tilde{\tilde{\Phi}}) \\ &=\tau(\tilde{\mathbf{g}};\sigma,\tilde{\Phi},\tilde{\tilde{\Phi}})\cdot\tau(\mathbf{g};\sigma,\Phi,\tilde{\Phi}) \\ &=\Tt_{n,\Gsf}(id_\sigma,\tau(\tilde{\mathbf{g}};\tilde{\Phi},\tilde{\tilde{\Phi}}))\circ\Tt_{n,\Gsf}(id_\sigma,\tau(\mathbf{g};\Phi,\tilde{\Phi})).
\end{align*}
Then we have the following.

\begin{prop}\label{S_n,G}
For any $n\geq 1$ and any group $\Gsf$, the above functor $\Tt_{n,\Gsf}$ gives a strict isomorphism of 2-groups
$$
\TT_{n,\Gsf}:\SSS_n\wr\wr\ \SSS_\Gsf\longrightarrow\SSS_{n,\Gsf}.
$$ 
\end{prop}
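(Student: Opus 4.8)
The plan is to show that the functor $\Tt_{n,\Gsf}$ is a strict monoidal functor (i.e.\ preserves the tensor product, the unit object, and the associator/unitors strictly, so that the monoidal structure $\mu$ is trivial) and that it is an isomorphism of the underlying groupoids; by Proposition~\ref{S_n_wreath_GG} the source $\SSS_n\wr\wr\ \SSS_\Gsf$ is the groupoid $\Ssf_n[0]\times\Ss_\Gsf^n$ with an explicit monoidal structure, so everything is a matter of matching formulas.

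First I would check that $\Tt_{n,\Gsf}$ is an isomorphism of groupoids. On objects this follows from Lemma~\ref{lema1}: the assignment $(\sigma,\Phi)\mapsto\Ee(\sigma,\Phi)$ is a bijection between $S_n\times Aut(\Gsf)^n$ and the objects of $\Ss_{n,\Gsf}$, and the objects of the source groupoid $\Ssf_n[0]\times\Ss_\Gsf^n$ are exactly the pairs $(\sigma,\Ee(\Phi))$. On morphisms, Lemma~\ref{lema3} tells us that a morphism $\Ee(\sigma,\Phi)\Rightarrow\Ee(\tilde\sigma,\tilde\Phi)$ exists only when $\sigma=\tilde\sigma$ and is then given by an $\mathbf g\in G^n$ satisfying (\ref{naturalitat_tau}); the morphisms of $\Ssf_n[0]\times\Ss_\Gsf^n$ with source $(\sigma,\Ee(\Phi))$ are exactly the pairs $(id_\sigma,\tau(\mathbf g;\Phi,\tilde\Phi))$ by Proposition~\ref{2-grup_autoequivalencies_grupoide}, so $\Tt_{n,\Gsf}$ is a bijection on each hom-set. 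Functoriality was already verified in the text preceding the statement (using (\ref{composicio_S_G^n}) and item~(1) of Lemma~\ref{lema4}), so $\Tt_{n,\Gsf}$ is an isomorphism of groupoids.

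Next I would verify strict monoidality. On objects, using (\ref{otimes_objectes_S_n,G}) the tensor product in $\Ssf_n[0]\times\Ss_\Gsf^n$ of $(\sigma,\Ee(\Phi))$ and $(\sigma',\Ee(\Phi'))$ is, by Proposition~\ref{S_n_wreath_GG}(a), the pair $(\sigma\sigma',(\Ee(\Phi)\rhd\sigma')\otimes\Ee(\Phi'))=(\sigma\sigma',\Ee((\Phi\rhd\sigma')\circ\Phi'))$, and applying $\Tt_{n,\Gsf}$ gives $\Ee(\sigma\sigma',(\Phi\rhd\sigma')\circ\Phi')$; on the other hand $\Tt_{n,\Gsf}(\sigma,\Ee(\Phi))\otimes\Tt_{n,\Gsf}(\sigma',\Ee(\Phi'))=\Ee(\sigma,\Phi)\otimes\Ee(\sigma',\Phi')$, which equals the same object by Lemma~\ref{lema2}. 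The analogous check on morphisms is the comparison of Proposition~\ref{S_n_wreath_GG}(a) together with (\ref{otimesS_G^n}) against item~(2) of Lemma~\ref{lema4}: both sides produce $\tau((\mathbf g\rhd\sigma')\cdot(\Phi\rhd\sigma')(\mathbf g');\sigma\sigma',(\Phi\rhd\sigma')\circ\Phi',(\tilde\Phi\rhd\sigma')\circ\tilde\Phi')$. The unit object $(id_n,\mathbf e)$ of the wreath 2-product maps to $\Ee(id_n,(id_\Gsf,\ldots,id_\Gsf))=id_{\coprod^n\Gsf}$, the unit of $\SSS_{n,\Gsf}$, so units are strictly preserved. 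Finally, both 2-groups are strict (as noted just before the statement, $\SSS_{n,\Gsf}$ is strict because $\mathbf{Gpd}$ is a strict 2-category and $\coprod^n\Gsf$ is skeletal, and $\SSS_n\wr\wr\ \SSS_\Gsf$ is strict by Proposition~\ref{S_n_wreath_GG} since $\SSS_\Gsf$ is strict), so the associators and unitors are all identities and the coherence axiom (\ref{axioma_coherencia}) holds trivially with $\mu=id$.

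The main obstacle, such as it is, is purely bookkeeping: one must be careful that the right wreath action $\rhd$ on $Aut(\Gsf)^n$ used in Lemma~\ref{lema2} and Lemma~\ref{lema4} is the \emph{same} convention as the one built into the wreath 2-action $\WW_{n,\Gsf}$ of Example~\ref{accio_wreath} (the permutation automorphism $\Pp_\sigma(\mathbf x)=(x_{\sigma(1)},\ldots,x_{\sigma(n)})$), so that the components of $\Ee(\sigma,\Phi)\otimes\Ee(\sigma',\Phi')$ pair up correctly with the components produced by the semidirect-product tensor product $(\sigma\sigma',(\mathbf x\rhd\sigma')\otimes\mathbf x')$ of Proposition~\ref{S_n_wreath_GG}. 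Once the index conventions are pinned down, every required identity has already been established in Lemmas~\ref{lema1}--\ref{lema4} and equations (\ref{otimes_objectes_S_n,G})--(\ref{otimesS_G^n}), and the proof reduces to citing them; I would leave the remaining routine verifications to the reader.
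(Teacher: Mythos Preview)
Your proof is correct and follows essentially the same approach as the paper's: both establish that $\Tt_{n,\Gsf}$ is an isomorphism of groupoids via Lemmas~\ref{lema1} and~\ref{lema3}, then verify strict preservation of the tensor product on objects (Proposition~\ref{S_n_wreath_GG} plus Lemma~\ref{lema2}) and on morphisms (Proposition~\ref{S_n_wreath_GG} plus Lemma~\ref{lema4}(2) and~(\ref{otimesS_G^n})), and finally invoke strictness of both 2-groups to trivialize the coherence condition. Your additional remarks on unit preservation and the index-convention bookkeeping are helpful but do not change the argument.
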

\begin{proof}
It follows from Lemmas~\ref{lema1} and \ref{lema3} that $\Tt_{n,\Gsf}$ is an isomorphism of groupoids. It remains to see that it preserves strictly the monoidal structures in $\SSS_n\wr\wr\ \SSS_\Gsf$ and $\SSS_{n,\Gsf}$. Let us consider any objects $(\sigma,\Ee(\Phi)),(\sigma',\Ee(\Phi'))\in\Ssf_n[0]\times\Ss_\Gsf^n$. It follows from Proposition~\ref{S_n_wreath_GG} and (\ref{otimes_objectes_S_n,G}) that their tensor product is given
$$
(\sigma,\Ee(\Phi))\otimes(\sigma',\Ee(\Phi'))=(\sigma\sigma',\Ee((\Phi\rhd\sigma')\circ\Phi')).
$$
Hence by Lemma~\ref{lema2} we have
\begin{align*}
\Tt_{n,\Gsf}(\sigma,\Ee(\Phi))\otimes\Tt_{n,\Gsf}(\sigma',\Ee(\Phi'))&=\Ee(\sigma,\Phi)\otimes\Ee(\sigma',\Phi')\\ &=\Ee(\sigma\sigma',(\Phi\rhd\sigma')\circ\Phi') \\ &=\Tt_{n,\Gsf}(\sigma\sigma',\Ee((\Phi\rhd\sigma')\circ\Phi'))  \\ &=\Tt_{n,\Gsf}((\sigma,\Ee(\Phi))\otimes(\sigma',\Ee(\Phi'))).
\end{align*}
On the other hand, for any morphisms $(id_\sigma,\tau(\mathbf{g};\Phi,\tilde{\Phi})),(id_{\sigma'},\tau(\mathbf{g}';\Phi',\tilde{\Phi}'))$ in $\Ssf_n[0]\times\Ss_\Gsf^n$, their tensor product is given by Proposition~\ref{S_n_wreath_GG}, which together with (\ref{components_g_phi_tildephi}) and (\ref{otimesS_G^n}) gives
\begin{align*}
(id_\sigma,\tau(\mathbf{g};\Phi,\tilde{\Phi}))\otimes(id_{\sigma'},\tau(\mathbf{g}';&\Phi',\tilde{\Phi}')) \\ &\ =\ (id_{\sigma\sigma'},(\tau(\mathbf{g};\Phi,\tilde{\Phi})\rhd\sigma')\otimes\tau(\mathbf{g}';\Phi',\tilde{\Phi}')) \\ &\stackrel{(\ref{components_g_phi_tildephi})}{=}(id_{\sigma\sigma'},(\tau(\mathbf{g}\rhd\sigma';\Phi\rhd\sigma',\tilde{\Phi}\rhd\sigma')\otimes\tau(\mathbf{g}';\Phi',\tilde{\Phi}')) \\ &\stackrel{(\ref{otimesS_G^n})}{=}(id_{\sigma\sigma'},\tau((\mathbf{g}\rhd\sigma')\cdot(\Phi\rhd\sigma')(\mathbf{g}');(\Phi\rhd\sigma')\circ\Phi',(\tilde{\Phi}\rhd\sigma')\circ\tilde{\Phi}')) .
\end{align*}
Thus by item (2) in Lemma~\ref{lema4} we have
\begin{align*}
\Tt_{n,\Gsf}(id_\sigma,\tau(\mathbf{g};\Phi,\tilde{\Phi}))\otimes&\Tt_{n,\Gsf}(id_{\sigma'},\tau(\mathbf{g}';\Phi',\tilde{\Phi}')) \\ &=\tau(\mathbf{g};\sigma,\Phi,\tilde{\Phi})\otimes\tau(\mathbf{g}';\sigma',\Phi',\tilde{\Phi}') \\ &=\tau((\mathbf{g}\rhd\sigma')\cdot(\Phi\rhd\sigma')(\mathbf{g}');\sigma\sigma',(\Phi\rhd\sigma')\circ\Phi',(\tilde{\Phi}\rhd\sigma')\circ\tilde{\Phi}') \\ &=\Tt_{n,\Gsf}(id_{\sigma\sigma'},\tau((\mathbf{g}\rhd\sigma')\cdot(\Phi\rhd\sigma')(\mathbf{g}');(\Phi\rhd\sigma')\circ\Phi',(\tilde{\Phi}\rhd\sigma')\circ\tilde{\Phi}')) \\ &=\Tt_{n,\Gsf}((id_\sigma,\tau(\mathbf{g};\Phi,\tilde{\Phi}))\otimes(id_{\sigma'},\tau(\mathbf{g}';\Phi',\tilde{\Phi}'))).
\end{align*}
This proves that the two functors $\otimes\circ(\Tt_{n,\Gsf}\times\Tt_{n,\Gsf})$ and $\Tt_{n,\Gsf}\circ\otimes$ are actually the same and hence, that we can take $\mu$ equal to the identity. Finally, both $\SSS_n\wr\wr\ \SSS_\Gsf$ and $\SSS_{n,\Gsf}$ are strict 2-groups, the first one because $\SSS_\Gsf$ is strict. Hence (\ref{axioma_coherencia}) commutes when $\mu=1$.
\end{proof}

Together with Corollary~\ref{corolari_descomposicio_primaria}, Proposition~\ref{S_n,G} implies the following general structure theorem for the permutation 2-groups of finite type.

\begin{thm}[Structure Theorem]\label{teorema_estructura}
Let $\{(n_i,\Gsf_i)\}_{i\in I}$ be any family of pairs consisting of a positive integer $n_i\geq 1$ and a group $\Gsf_i$, with $\Gsf_i\ncong\Gsf_{i'}$ for $i\neq i'$. Then
$$
\SSS_{\{(n_i,\Gsf_i)\}_{i\in I}}\cong\prod_{i\in I}\SSS_{n_i}\wr\wr\ \SSS_{\Gsf_i}.
$$
\end{thm}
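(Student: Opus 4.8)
The plan is to splice together the two results already established in this section, namely the primary decomposition of Corollary~\ref{corolari_descomposicio_primaria} and the identification of each factor $\SSS_{n,\Gsf}$ with a wreath 2-product in Proposition~\ref{S_n,G}. First I would invoke Corollary~\ref{corolari_descomposicio_primaria}, which gives a strict isomorphism of 2-groups
$$
\SSS_{\{(n_i,\Gsf_i)\}_{i\in I}}\cong\prod_{i\in I}\SSS_{n_i,\Gsf_i}.
$$
This rests on Theorem~\ref{descomposicio_primaria_prop} together with the observation that the groupoids $\Gg_{n_i,\Gsf_i}$ are precisely the homogeneous components of $\Gg_{\{(n_i,\Gsf_i)\}_{i\in I}}$; here the standing hypothesis $\Gsf_i\ncong\Gsf_{i'}$ for $i\neq i'$ is exactly what guarantees that distinct indices contribute distinct homogeneous components, so that no collapsing or merging of factors occurs.

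Next I would apply Proposition~\ref{S_n,G} to each index $i\in I$, obtaining a strict isomorphism of 2-groups $\TT_{n_i,\Gsf_i}:\SSS_{n_i}\wr\wr\ \SSS_{\Gsf_i}\To\SSS_{n_i,\Gsf_i}$. Since the product 2-group $\prod_{i\in I}\SSS_{n_i,\Gsf_i}$ is the product in the underlying category $2\Gg rp$ and its monoidal structure is defined componentwise (see \S~\ref{2-categoria_2Grp}), the family $\{\TT_{n_i,\Gsf_i}^{-1}\}_{i\in I}$ assembles into a strict isomorphism of 2-groups
$$
\prod_{i\in I}\SSS_{n_i,\Gsf_i}\To\prod_{i\in I}\SSS_{n_i}\wr\wr\ \SSS_{\Gsf_i}.
$$
Composing this with the isomorphism of the previous step, and using that strict isomorphisms of 2-groups are closed under composition, yields the asserted strict isomorphism $\SSS_{\{(n_i,\Gsf_i)\}_{i\in I}}\cong\prod_{i\in I}\SSS_{n_i}\wr\wr\ \SSS_{\Gsf_i}$.

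There is no serious obstacle left at this point: all of the genuine content sits in Theorem~\ref{descomposicio_primaria_prop}/Corollary~\ref{corolari_descomposicio_primaria} on the one hand, and in Proposition~\ref{S_n,G} on the other. The only step that deserves a line of justification is the purely formal bookkeeping that a product of strict monoidal functors between 2-groups carrying the componentwise monoidal structure is again a strict monoidal functor, and is an isomorphism when each factor is; both facts are immediate from the description of products in $\mathbf{2Grp}$ recalled in \S~\ref{2-categoria_2Grp}. So the proof is essentially a two-line assembly of the machinery developed above.
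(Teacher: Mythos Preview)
Your proposal is correct and follows exactly the approach the paper takes: the theorem is stated as an immediate consequence of Corollary~\ref{corolari_descomposicio_primaria} and Proposition~\ref{S_n,G}, with no further proof given. Your write-up simply spells out the (trivial) assembly step that the paper leaves implicit.
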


As an example, let us consider the permutation 2-groups $\SSS_{n,\Asf}$ for $n\geq 1$ and $\Asf$ an abelian group. These will be called {\em Cayley 2-groups}. Indeed, these are the permutation 2-groups which appear in the analog for 2-groups of Cayley's theorem. Thus we saw in \S~\ref{invariants_homotopics} that the underlying groupoid of any 2-group $\GG$ with a finite number $n$ of isomorphism classes of objects is equivalent to the groupoid $\coprod^n\pi_1(\GG)[1]$, with $\pi_1(\GG)$ an abelian group. Hence the corresponding permutation 2-group is equivalent to the Cayley 2-group $\SSS_{n,\pi_1(\GG)}$. The higher version of Cayley's theorem will be discussed in a sequel to this paper. Here let us just observe the following.

\begin{prop}\label{2-grup_permutacions_grupoide_subjacent_2-grup}
For any $n\geq 1$ and any abelian group $\Asf$ we have
\begin{equation*}
\SSS_{n,\Asf}\simeq\Asf^n[1]\rtimes(\Ssf_n\wr\mathsf{Aut}(\Asf))[0],
\end{equation*}
with $\Ssf_n\wr\mathsf{Aut}(\Asf)$ acting on $\Asf^n$ by
$$
(\sigma,\Phi)\lhd(a_1,\ldots,a_n)=(\phi_{\sigma^{-1}(1)}(a_{\sigma^{-1}(1)}),\ldots,\phi_{\sigma^{-1}(n)}(a_{\sigma^{-1}(n)})).
$$
\end{prop}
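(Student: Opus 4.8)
The plan is to reduce everything to the Structure Theorem together with the already-established splitness of $\SSS_\Asf$, and then to read off the homotopy invariants of the resulting wreath 2-product.

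First I would invoke Proposition~\ref{S_n,G} (equivalently, Theorem~\ref{teorema_estructura} in the case of the single pair $(n,\Asf)$) to get a strict isomorphism $\SSS_{n,\Asf}\cong\SSS_n\wr\wr\ \SSS_\Asf$. By Corollary~\ref{Sim(A)} the 2-group $\SSS_\Asf$ is equivalent to the elementary 2-group $\Asf[1]\rtimes\mathsf{Aut}(\Asf)[0]$ with $\mathsf{Aut}(\Asf)$ acting canonically on $\Asf$; since equivalent 2-groups have equivalent wreath 2-products with the same permutation group (the remark following Proposition~\ref{S_n_wreath_GG}), this yields
$$
\SSS_{n,\Asf}\simeq\SSS_n\wr\wr\ (\Asf[1]\rtimes\mathsf{Aut}(\Asf)[0]).
$$
Set $\GG:=\Asf[1]\rtimes\mathsf{Aut}(\Asf)[0]$. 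This is a strict 2-group and it is split, so $\SSS_n\wr\wr\ \GG$ is split by Proposition~\ref{splitness_producte_wreath_1}; hence, by Corollary~\ref{corolari}, it is equivalent to $\pi_1(\SSS_n\wr\wr\ \GG)[1]\rtimes\pi_0(\SSS_n\wr\wr\ \GG)[0]$ with $\pi_0$ acting on $\pi_1$ via the canonical action (\ref{accio}).

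Next I would compute these invariants. From (\ref{pi0_producte_wreath}) and $\pi_0(\GG)=\mathsf{Aut}(\Asf)$ we get $\pi_0(\SSS_n\wr\wr\ \GG)\cong\Ssf_n\wr\mathsf{Aut}(\Asf)$, and from (\ref{pi1_producte_wreath}) and $\pi_1(\GG)=\Asf$ we get $\pi_1(\SSS_n\wr\wr\ \GG)\cong\Asf^n$. For the module structure I would use the explicit formula (\ref{accio_producte_wreath}), which applies because $\GG$ is strict: writing $(\sigma,\Phi)$ for an element of $\Ssf_n\wr\mathsf{Aut}(\Asf)$ with $\Phi=(\phi_1,\ldots,\phi_n)$ playing the role of $[\mathbf{x}]$, the $i$-th component of $id_\mathbf{x}\otimes\mathbf{u}\otimes id_{\mathbf{x}^{-1}}$ equals $[x_i]\lhd u_i$ in $\GG$ by (\ref{accio_cas_estricte}), and under the identifications $\pi_0(\GG)=\mathsf{Aut}(\Asf)$, $\pi_1(\GG)=\Asf$ this canonical action is the natural one $\phi\lhd a=\phi(a)$ (the remark following (\ref{delta_split})). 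Applying $\rhd\sigma^{-1}$, i.e.\ re-indexing by $\sigma^{-1}$ as in (\ref{def_P_sigma_1}), then gives
$$
(\sigma,\Phi)\lhd(a_1,\ldots,a_n)=(\phi_{\sigma^{-1}(1)}(a_{\sigma^{-1}(1)}),\ldots,\phi_{\sigma^{-1}(n)}(a_{\sigma^{-1}(n)})),
$$
which is exactly the asserted action, completing the argument.

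The step needing the most care is this last bookkeeping: one must track the $\mathsf{Aut}(\Asf)$-factors through the permutation $\sigma^{-1}$ and make sure the canonical action produced by (\ref{accio}) and Corollary~\ref{corolari} agrees on the nose with the displayed formula, rather than with its inverse or a $\sigma$-shifted variant. An alternative route, which avoids the explicit action formula, is to observe that $\alpha(\SSS_{n,\Asf})=0$ (since $\SSS_\Asf$ is split and splitness is preserved by $\Ssf_n\wr\wr\ -$, Proposition~\ref{caracter_split_producte_wreath}) and then appeal to Sinh's theorem, but one still has to compute the same module structure, so the difficulty is unchanged.
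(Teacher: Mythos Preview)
Your proof is correct and follows essentially the same route as the paper: both reduce to $\SSS_n\wr\wr\ (\Asf[1]\rtimes\mathsf{Aut}(\Asf)[0])$ via the Structure Theorem and Corollary~\ref{Sim(A)}, observe the wreath 2-product is split, and then invoke Corollary~\ref{corolari} together with the homotopy-group computations of \S~\ref{splitness_producte_wreath_2}. The only difference is that you spell out the action computation via (\ref{accio_producte_wreath}) more explicitly than the paper does.
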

\begin{proof}
It follows from Theorem~\ref{teorema_estructura} and Proposition~\ref{Sim(A)_bis} that
\begin{equation*}
\SSS_{n,\Asf}\simeq\SSS_n\wr\wr\ \SSS_\Asf\simeq\SSS_n\wr\wr\ (\Asf[1]\rtimes\mathsf{Aut}(\Asf)[0]).
\end{equation*}
Moreover, since $\Asf[1]\rtimes\mathsf{Aut}(\Asf)[0]$ is split, the wreath 2-product is also split (see Proposition~\ref{caracter_split_producte_wreath}). Then the statement follows from Corollary~\ref{corolari} and the computation in \S~\ref{splitness_producte_wreath_2} of the homotopy groups of the wreath 2-product.
\end{proof}

\subsection{Homotopy invariants}

According to Theorem~\ref{teorema_estructura}, in order to classify the finite type permutation 2-groups $\SSS_{\{(n_i,\Gsf_i)\}_{i\in I}}$ it is enough to establish the relationship between the homotopy invariants of a product of 2-groups and those of the factors, and to compute the homotopy invariants of a wreath 2-product $\SSS_n\wr\wr\ \GG$. Let us start by computing the homotopy invariants of a product of 2-groups.

Recall that the cohomology groups $\Hsf^\bullet(\Gsf,\Asf)$ are functorial in the pair $(\Gsf,\Asf)$, contravariantly in $\Gsf$ and covariantly in $\Asf$. In particular, for any groups $\{\Gsf_i\}_{i\in I}$ and any abelian groups $\{\Asf_i\}_{i\in I}$, with $\Gsf_i$ acting on $\Asf_i$, we have canonical homomorphisms
$$
\textstyle{\Hsf^\bullet(\pi_j,\iota_j):\Hsf^\bullet(\Gsf_j,\Asf_j)\To \Hsf^\bullet(\prod_{i\in I}\Gsf_i,\prod_{i\in I}\Asf_i)}
$$
induced by the canonical projections $\pi_j:\prod_{i\in I}\Gsf_i\To\Gsf_j$ and injections $\iota_j:\Asf_j\To\prod_{i\in I}\Asf_i$. Here $\prod_{i\in I}\Asf_i$ is thought of as a $\prod_{i\in I}\Gsf_i$-module in the obvious way. They are given by
$$
\Hsf^n(\pi_j,\iota_j)([z_j]):=[\iota_j\circ z_j\circ\pi_j^n],\qquad j\in I,\quad n\geq 0.
$$
Together, they define a canonical homomorphism of groups
$$
\textstyle{\zeta:\bigoplus_{i\in I}\Hsf^\bullet(\Gsf_i,\Asf_i)\To\Hsf^\bullet(\prod_{i\in I}\Gsf_i,\prod_{i\in I}\Asf_i)}.
$$
It is the homomorphism mapping $([z_i])_{i\in I}$ to the cohomology class of the n-cocycle on $\prod_{i\in I}\Gsf_i$ defined by the composite map
\begin{equation}\label{3-cocicle_producte}
\xymatrix{
(\prod_{i\in I}G_i)^n\ar[r]^{\cong} & \prod_{i\in I}G_i^n\ar[rr]^{\ \ \ \ \prod_{i\in I}z_i\ \ } && \prod_{i\in I} A_i. }
\end{equation}

\begin{lem}\label{zeta_mono}
The map $\zeta:\bigoplus_{i\in I}\Hsf^\bullet(\Gsf_i,\Asf_i)\To\Hsf^\bullet(\prod_{i\in I}\Gsf_i,\prod_{i\in I}\Asf_i)$ is a monomorphism.
\end{lem}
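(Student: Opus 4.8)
The plan is to prove injectivity by producing, for each index $j\in I$, an explicit left inverse onto the $j$-th summand: a homomorphism $r_j:\Hsf^\bullet(\prod_{i\in I}\Gsf_i,\prod_{i\in I}\Asf_i)\To\Hsf^\bullet(\Gsf_j,\Asf_j)$ such that $r_j\circ\zeta$ is the canonical projection onto $\Hsf^\bullet(\Gsf_j,\Asf_j)$. Granting this, $\zeta(x)=0$ forces the $j$-th component of $x$ to vanish for every $j$, hence $x=0$. Recall that, by $(\ref{3-cocicle_producte})$, the restriction of $\zeta$ to the $j$-th factor is exactly the map $\Hsf^\bullet(\pi_j,\iota_j)$ induced by the projection $\pi_j:\prod_i\Gsf_i\To\Gsf_j$ and the injection $\iota_j:\Asf_j\To\prod_i\Asf_i$, with $\zeta([z_i]_i)$ represented by the product cocycle $((g_k^{(i)})_i)_{k}\mapsto (z_i(g_1^{(i)},\ldots,g_n^{(i)}))_i$ (after the obvious reindexing).

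The candidate for $r_j$ is the map induced by the pair $(s_j,q_j)$, where $s_j:\Gsf_j\To\prod_i\Gsf_i$ is the inclusion of the $j$-th factor (the unit in the other coordinates) and $q_j:\prod_i\Asf_i\To\Asf_j$ is the projection. The one thing needing verification before $r_j$ even makes sense is that $q_j$ is a homomorphism of $\Gsf_j$-modules, where $\prod_i\Asf_i$ carries the $\Gsf_j$-action pulled back along $s_j$; this is immediate because $s_j(g)$ acts only on the $j$-th coordinate, so $q_j(g\cdot a)=g\cdot q_j(a)$. Then $r_j:=\Hsf^\bullet(s_j,q_j)$ is the composite of restriction along $s_j$ with the push-forward $(q_j)_\ast$.

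Next I would compute $r_j\circ\zeta=\mathrm{pr}_j$. The cleanest way is to work with a normalized cocycle representative $z_i$ of each cohomology class: then $r_j$ sends the class of the product cocycle $w$ above to the class of the cochain $q_j\circ w\circ(s_j^{\,n})$ on $\Gsf_j$, which on a tuple $(h_1,\ldots,h_n)\in\Gsf_j^{\,n}$ equals $z_j(h_1,\ldots,h_n)$ — the $i$-th components for $i\neq j$ contribute $z_i(e,\ldots,e)=0$ by normalization. Hence $r_j\circ\zeta$ is precisely "take the $j$-th coordinate'', i.e.\ $\mathrm{pr}_j$. (Equivalently, one may argue abstractly from the identities $\pi_j\circ s_j=\mathrm{id}_{\Gsf_j}$, $q_j\circ\iota_j=\mathrm{id}_{\Asf_j}$, $q_j\circ\iota_k=0$ and $\pi_k\circ s_j$ trivial for $k\neq j$, together with the commutation of restriction and push-forward in $\Hsf^\bullet$; this gives $r_j\circ\Hsf^\bullet(\pi_k,\iota_k)=\delta_{jk}\,\mathrm{id}$.) Injectivity of $\zeta$ follows.

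I do not anticipate a real obstacle: the argument is a routine "the Künneth-type comparison map admits coordinate retractions'' computation. The only points to handle with some care are (i) the module-map check for $q_j$ above, which legitimizes $r_j$, and (ii) in the case of an infinite index set $I$, making sure every sum that appears is interpreted correctly — either by restricting to the finitely-supported elements of $\bigoplus_i\Hsf^\bullet(\Gsf_i,\Asf_i)$ and using additivity of $r_j$ termwise, or, more robustly, by simply running the normalized-cocycle computation of the previous paragraph, which never requires summing infinitely many classes and therefore works verbatim whether the source is the direct sum or the direct product.
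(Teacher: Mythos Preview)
Your argument is correct and is essentially the paper's own: both use restriction along the section $s_j:\Gsf_j\to\prod_i\Gsf_i$ together with the projection $q_j:\prod_i\Asf_i\to\Asf_j$, the paper working directly at the cochain level (taking $c_j:=q_j\circ c\circ s_j^{\,n-1}$ and observing $z_j=\partial c_j$) while you package the same maps as the cohomological retraction $r_j=\Hsf^\bullet(s_j,q_j)$. Your appeal to normalization is harmless but superfluous, since $q_j$ already discards the $i\neq j$ coordinates regardless of their value.
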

\begin{proof}
Let $(z_i)_{i\in I}$, with $z_i\in Z^n(\Gsf_i,\Asf_i)$, be such that $\zeta([z_i])_{i\in I}=0$. Let $c\in C^{n-1}(\prod_{i\in I}G_i,\prod_{i\in I}A_i)$ be an (n-1)-cochain whose coboundary is equal to the n-cocycle (\ref{3-cocicle_producte}). Then if $(c_i)_{i\in I}$ are the components of $c$, it easily follows from $z=\partial c$ that $z_i=\partial c_i$ for all $i\in I$.
\end{proof}

The relationship between the homotopy invariants of a product 2-group and those of the factors is then as follows.
\begin{prop}\label{invariants_homotopics_producte}
Let $\{\mathbb{G}_i\}_{i\in I}$ be an arbitrary family of 2-groups. Then:
\begin{itemize}
\item[(1)] $\pi_0(\prod_{i\in I}\mathbb{G}_i)=\prod_{i\in I}\pi_0(\mathbb{G}_i)$.
\item[(2)] $\pi_1(\prod_{i\in I}\mathbb{G}_i)=\prod_{i\in I}\pi_1(\mathbb{G}_i)$.
\item[(3)] The canonical left action $\lhd_I$ of $\pi_0(\prod_{i\in I}\mathbb{G}_i)$ on $\pi_1(\prod_{i\in I}\mathbb{G}_i)$ is given componentwise, i.e.
\begin{equation*}
([x_i])_{i\in I}\lhd_I(u_i)_{i\in I}=([x_i]\lhd_i u_i)_{i\in I}.
\end{equation*}
\item[(4)] $\alpha(\prod_{i\in I}\mathbb{G}_i)=\zeta(\alpha(\mathbb{G}_i)_{i\in I})$.
\end{itemize}
\end{prop}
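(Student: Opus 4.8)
The plan is to dispatch (1)--(3) by directly unwinding the componentwise description of the product 2-group $\prod_{i\in I}\GG_i$ recalled in \S~\ref{2-categoria_2Grp}, and to prove (4) by running Sinh's algorithm of \S~\ref{invariants_homotopics} on a suitably chosen product \'epinglage.

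For (1) and (2), first I would note that two objects of $\prod_{i\in I}\Gg_i$ are isomorphic precisely when they are isomorphic in each component, so $\pi_0(\prod_{i\in I}\GG_i)=\prod_{i\in I}\pi_0(\GG_i)$ as sets; since the tensor product and the unit object of $\prod_{i\in I}\GG_i$ are componentwise, this is an equality of groups. Likewise $\pi_1(\prod_{i\in I}\GG_i)=\mathsf{Aut}(\mathbf{e})=\prod_{i\in I}\mathsf{Aut}(e_i)=\prod_{i\in I}\pi_1(\GG_i)$, the group law (composition, equivalently $\otimes$) again being componentwise. For (3), the homomorphisms $\gamma_{\mathbf{x}},\delta_{\mathbf{x}}:\mathsf{Aut}(\mathbf{e})\To\mathsf{Aut}(\mathbf{x})$ are built from the unitors and the tensor product of $\prod_{i\in I}\GG_i$, all of which are componentwise, so $\gamma_{\mathbf{x}}=\prod_{i\in I}\gamma_{x_i}$ and $\delta_{\mathbf{x}}=\prod_{i\in I}\delta_{x_i}$; plugging these into the definition (\ref{accio}) of the canonical action gives $([x_i])_{i\in I}\lhd(u_i)_{i\in I}=(\gamma_{x_i}^{-1}(\delta_{x_i}(u_i)))_{i\in I}=([x_i]\lhd_i u_i)_{i\in I}$.

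For (4), the plan is as follows. Fix for each $i\in I$ an \'epinglage $(s_i,\theta^{(i)})$ of $\GG_i$ and form the pair $(s,\theta)$ on $\prod_{i\in I}\GG_i$ with $s$ sending $([x_i])_{i\in I}$ to $(s_i[x_i])_{i\in I}$ and $\theta_{\mathbf{x}}:=(\theta^{(i)}_{x_i})_{i\in I}$. I would first check that $(s,\theta)$ is an \'epinglage: $s$ is a normalized section of the projection because each $s_i$ is, and the normalization conditions on $\theta$ hold componentwise since the unitors of the product are componentwise. Next I would compute the classifying $3$-cocycle $\mathsf{z}(\prod_{i\in I}\GG_i)$ determined by $(s,\theta)$ through the defining diagram (\ref{definicio_3-cocicle}): every arrow occurring there (the components of $\theta$, the associator $a$, and the isomorphism $\gamma$) is componentwise for $\prod_{i\in I}\GG_i$, and $\gamma_{\mathbf{x}}=\prod_{i\in I}\gamma_{x_i}$ is an isomorphism, so the diagram for the product is the product over $i\in I$ of the diagrams defining $\mathsf{z}(\GG_i)$, whence by uniqueness of the filler $\mathsf{z}(\prod_{i\in I}\GG_i)(([x_i]),([x'_i]),([x''_i]))=(\mathsf{z}(\GG_i)([x_i],[x'_i],[x''_i]))_{i\in I}$. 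This is exactly the product $3$-cocycle (\ref{3-cocicle_producte}) representing $\zeta((\alpha(\GG_i))_{i\in I})$, and since $\alpha$ of a $2$-group does not depend on the chosen \'epinglage, it follows that $\alpha(\prod_{i\in I}\GG_i)=\zeta((\alpha(\GG_i))_{i\in I})$.

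The routine parts are (1)--(3) together with the verification that $(s,\theta)$ is an admissible \'epinglage; the point that needs most care is the claim that the instance of diagram (\ref{definicio_3-cocicle}) for $\prod_{i\in I}\GG_i$ factors as the product over $i\in I$ of the corresponding instances for the $\GG_i$, so that the unique morphism making it commute is the tuple of the uniquely determined morphisms $\gamma_{s_i[\cdots]}(\mathsf{z}(\GG_i)(\cdots))$. This is forced once one records that $s$, $\theta$, $a$ and $\gamma$ are all componentwise, but writing it out carefully is the only real content of the argument. (If $I$ is infinite one should also remark that the product cocycle of (\ref{3-cocicle_producte}), and hence $\zeta$, still makes sense on all of $\prod_{i\in I}\Hsf^3(\pi_0(\GG_i),\pi_1(\GG_i))$, even though by Lemma~\ref{zeta_mono} it is only guaranteed to be injective on the direct sum.)
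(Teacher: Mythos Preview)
Your proof is correct and follows essentially the same approach as the paper's: both dispatch (1)--(3) by unwinding the componentwise structure of the product and prove (4) by choosing the product \'epinglage $(s,\theta)=((s_i)_{i\in I},(\theta^{(i)})_{i\in I})$ and observing that diagram (\ref{definicio_3-cocicle}) factors componentwise. Your additional remark about the domain of $\zeta$ for infinite $I$ is a useful clarification not made explicit in the paper.
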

\begin{proof}
Items (1) and (2) follow readily from the definition of the product of 2-groups. As for item (3), let us first observe that the canonical homomorphisms $\gamma_\mathbf{x},\delta_\mathbf{x}:\mathsf{Aut}(\mathbf{e})\To\mathsf{Aut}(\mathbf{x})$ of the product are given by
\begin{align*}
\gamma_{\mathbf{x}}&=(\gamma_{x_i})_{i\in I}, \\ \delta_{\mathbf{x}}&=(\delta_{x_i})_{i\in I}
\end{align*}
for any object $\mathbf{x}=(x_i)_{i\in I}\in\prod_{i\in I}\Gg_i$. This follows from the expressions for the unitors of the product 2-group (see \S~\ref{2-categoria_2Grp}). Therefore the action of $[\mathbf{x}]=([x_i])_{i\in I}$ on $\mathbf{u}=(u_i)_{i\in I}$ is given by
\begin{align*}
[\mathbf{x}]\lhd_I \mathbf{u}&=\gamma^{-1}_{\mathbf{x}}(\delta_{\mathbf{x}}(\mathbf{u})) \\ &=(\gamma^{-1}_{x_i}(\delta_{x_i}(u_i)))_{i\in I} \\ &=([x_i]\lhd_i u_i)_{i\in I}.
\end{align*}
To prove (4), it is enough to check that (\ref{3-cocicle_producte}) is a classifying 3-cocycle of the product 2-group when $z_i$ is a classifying 3-cocycle of $\GG_i$ for each $i\in I$. Indeed, we saw in \S~\ref{invariants_homotopics} that a classifying 3-cocycle of the product is obtained by chosing an {\em \'epinglage} $(s,\theta)$ of $\prod_{i\in I}\mathbb{G}_i$. Now, such an {\em \'epinglage} is clearly given by
\begin{align*}
s([\mathbf{x}])&:=(s_i[x_i])_{i\in I}, \\ \theta_{(x_i)_{i\in I}}&:=(\theta_{i;x_i})_{i\in I}
\end{align*}
for any {\em \'epinglages} $(s_i,\theta_i)$ of $\mathbb{G}_i$, $i\in I$. Then if $z_i=\mathsf{z}_i(\GG_i)$ is the classifying 3-cocycle of $\mathbb{G}_i$ obtained from the {\it \'epinglage} $(s_i,\theta_i)$ by the procedure described in \S~\ref{invariants_homotopics}, it is easy to check that (\ref{3-cocicle_producte}) is precisely the classifying 3-cocycle of $\prod_{i\in I}\mathbb{G}_i$ obtained from the {\it \'epinglage} $(s,\theta)$. 
\end{proof}

\begin{cor}\label{producte_2-grups_escindits}
Let $\{\mathbb{G}_i\}_{i\in I}$ be any family of 2-groups. Then the product 2-group $\prod_{i\in I}\mathbb{G}_i$ is split if and only if $\mathbb{G}_i$ is split for all $i\in I$. In particular, the product of any family of elementary 2-groups $\{\Asf_i[1]\rtimes\Gsf_i[0]\}_{i\in I}$ is the elementary 2-group
$$
\prod_{i\in I}\Asf_i[1]\rtimes\Gsf_i[0]=(\prod_{i\in I}\Asf_i)[1]\rtimes(\prod_{i\in I}\Gsf_i)[0],
$$
with $\prod_{i\in I}\Gsf_i$ acting on $\prod_{i\in I}\Asf_i$ componentwise.
\end{cor}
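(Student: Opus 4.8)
The strategy is to reduce everything to the Postnikov invariant, using the already established machinery. The first assertion is an immediate consequence of Theorem~\ref{criteri_general}, Proposition~\ref{invariants_homotopics_producte} and Lemma~\ref{zeta_mono}; the ``in particular'' part follows by combining the first assertion with the explicit description of elementary 2-groups in Proposition~\ref{A[1]_rtimes_G[0]} and the componentwise description of product 2-groups in \S~\ref{2-categoria_2Grp}.

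\emph{First assertion.} By Theorem~\ref{criteri_general}, a 2-group is split precisely when its Postnikov invariant is zero. By Proposition~\ref{invariants_homotopics_producte}(4) we have
$$
\alpha\Bigl(\prod_{i\in I}\mathbb{G}_i\Bigr)=\zeta\bigl((\alpha(\mathbb{G}_i))_{i\in I}\bigr),
$$
and by Lemma~\ref{zeta_mono} the homomorphism $\zeta$ is a monomorphism. Hence $\alpha(\prod_{i\in I}\mathbb{G}_i)=0$ if and only if $(\alpha(\mathbb{G}_i))_{i\in I}$ is the zero element, i.e. if and only if $\alpha(\mathbb{G}_i)=0$ for every $i\in I$. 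Applying Theorem~\ref{criteri_general} once more, now to each factor, this says exactly that $\prod_{i\in I}\mathbb{G}_i$ is split if and only if each $\mathbb{G}_i$ is split.

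\emph{The elementary case.} Each $\Asf_i[1]\rtimes\Gsf_i[0]$ is split by definition, so by the part just proved $\prod_{i\in I}(\Asf_i[1]\rtimes\Gsf_i[0])$ is split. To identify it explicitly I would use Proposition~\ref{A[1]_rtimes_G[0]}: the factor $\Asf_i[1]\rtimes\Gsf_i[0]$ is the strict 2-group with object set $G_i$, with morphisms $g\to g$ the pairs $(a,g)\in A_i\times G_i$, and with composition and tensor product as displayed there. Taking the product groupoid gives object set $\prod_{i\in I}G_i$ and morphism sets $\prod_{i\in I}(A_i\times G_i)\cong(\prod_{i\in I}A_i)\times(\prod_{i\in I}G_i)$; since by \S~\ref{2-categoria_2Grp} the tensor product, unit object, associator and unitors of a product 2-group are all defined componentwise, the canonical identification of morphism sets is a strict monoidal isomorphism onto the strict 2-group associated by Proposition~\ref{A[1]_rtimes_G[0]} to the pair $\bigl(\prod_{i\in I}\Asf_i,\prod_{i\in I}\Gsf_i\bigr)$ with $\prod_{i\in I}\Gsf_i$ acting on $\prod_{i\in I}\Asf_i$ componentwise, that is, onto $(\prod_{i\in I}\Asf_i)[1]\rtimes(\prod_{i\in I}\Gsf_i)[0]$. (Alternatively one may invoke Corollary~\ref{corolari} together with parts (1)--(3) of Proposition~\ref{invariants_homotopics_producte}, which compute $\pi_0$, $\pi_1$ and the action of a product, to obtain the same conclusion up to equivalence.)

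There is essentially no hard step here: the first assertion is purely formal given Proposition~\ref{invariants_homotopics_producte} and Lemma~\ref{zeta_mono}. The only point deserving a moment of care in the elementary case is verifying that the canonical bijection between morphism sets respects the componentwise tensor product, i.e. that the identifying functor is \emph{strictly} monoidal and not merely monoidal; but this is immediate on comparing the formulas of Proposition~\ref{A[1]_rtimes_G[0]} with the componentwise monoidal structure of a product 2-group, so I would leave it to the reader.
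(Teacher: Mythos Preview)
Your proof is correct and follows essentially the same approach as the paper: the first assertion is deduced from Theorem~\ref{criteri_general}, Proposition~\ref{invariants_homotopics_producte}(4) and Lemma~\ref{zeta_mono}, exactly as in the paper's argument. Your treatment of the ``in particular'' part is more explicit than the paper's (which leaves it to the reader), and you are right to note that the componentwise comparison via Proposition~\ref{A[1]_rtimes_G[0]} yields the claimed equality of 2-groups, whereas the alternative route through Corollary~\ref{corolari} would only give an equivalence.
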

\begin{proof}
If $\mathbb{G}_i$ is split for all $i\in I$, it follows from item (4) in Proposition~\ref{invariants_homotopics_producte} together with Theorem~\ref{criteri_general} that the product is also split. Conversely, if the product is split, each $\GG_i$ for $i\in I$ is split again by item (4) together now with Lemma~\ref{zeta_mono}.
\end{proof}

We know from \S~\ref{splitness_producte_wreath_2} that the homotopy groups of a wreath 2-product $\SSS_n\wr\wr\ \GG$ are
\begin{align*}
\pi_0(\SSS_n\wr\wr\ \GG)&\cong\Ssf_n\wr\pi_0(\GG), \\ \pi_1(\SSS_n\wr\wr\ \GG)&\cong\pi_1(\GG)^n,
\end{align*}
with the first acting on the second according to
\begin{equation}\label{accio_Sn_wreath_G}
(\sigma,[\mathbf{x}])\lhd\mathbf{u}=([\mathbf{x}]\lhd_n\mathbf{u})\rhd\sigma^{-1}.
\end{equation}
Here $\lhd_n$ denotes the componentwise action induced by the action of $\pi_0(\GG)$ on $\pi_1(\GG)$. When $\GG$ is strict, this reduces to (\ref{accio_producte_wreath}). When $\GG=\SSS_\Gsf$, this gives
\begin{align*}
\pi_0(\SSS_n\wr\wr\ \SSS_\Gsf)&\cong\Ssf_n\wr\mathsf{Out}(\Gsf), \\ \pi_1(\SSS_n\wr\wr\ \SSS_\Gsf)&\cong\Zsf(\Gsf)^n,
\end{align*}
with the first acting on the second according to
\begin{equation}\label{accio_pi0_S_n_wr_S_G_sobre_pi1}
(\sigma,[\Phi])\lhd(u_1,\ldots,u_n)=(\phi_{\sigma^{-1}(1)}(u_{\sigma^{-1}(1)}),\ldots,\phi_{\sigma^{-1}(n)}(u_{\sigma^{-1}(n)})).
\end{equation}
As for the Postnikov invariant, we know that $\SSS_n\wr\wr\ \GG$ is split if and only if $\GG$ is itself split (see Proposition~\ref{caracter_split_producte_wreath}). Now, we shall see in \S~\ref{seccio_2-grups_permutacions_no_escindits} that there exists groups $\Gsf$ such that $\SSS_\Gsf$ is non-split. Hence the 2-group $\SSS_n\wr\wr\ \SSS_\Gsf$ can be non-split. Therefore to completely classify $\SSS_n\wr\wr\ \SSS_\Gsf$ and more generally, $\SSS_n\wr\wr\ \GG$ we need to compute its Postnikov invariant in terms of the Postnikov invariant of $\GG$.

Let us turn our attention for a moment to the general setting of an arbitrary group $\Gsf$ and an arbitrary left $\Gsf$-module $\Asf$. For any $n\geq 1$, the left action $\lhd$ of $\Gsf$ on $\Asf$ induces an obvious left action $\lhd_n$ of $\Gsf^n$ on $\Asf^n$ given componentwise. This in turn induces a left action $\tilde{\lhd}_n$ of $\Ssf_n\wr\Gsf$ on $\Asf^n$ generalizing (\ref{accio_Sn_wreath_G}), i.e.
\begin{equation*}
(\sigma,\mathbf{g})\tilde{\lhd}_n\mathbf{a}:=(\mathbf{g}\lhd_n\mathbf{a})\rhd\sigma^{-1},
\end{equation*}
where $\rhd$ denotes the usual (right) wreath action. Given $n\geq 1$ and $k\geq 0$, let
$$
f^{(n)}_k:C^k(\Gsf,\Asf)\To C^k(\Ssf_n\wr\Gsf,\Asf^n)
$$
be the map defined as follows. For any $c\in C^k(\Gsf,\Asf)$ and any $j=1,\ldots,n$ the $j^{th}$-component of
$$
f^{(n)}_k(c):(S_n\wr G)\times\stackrel{k)}{\cdots}\times(S_n\wr G)\To A^n
$$
is given by
\begin{equation*}
f^{(n)}_k(c)((\sigma_1,\mathbf{g}_1),\ldots,(\sigma_k,\mathbf{g}_k))_j:=c(g_{1,\sigma_1^{-1}(j)},g_{2,(\sigma_1\sigma_2)^{-1}(j)},\ldots,g_{k,(\sigma_1\cdots\sigma_k)^{-1}(j)})
\end{equation*}
when $\mathbf{g}_i=(g_{i,1},\ldots,g_{i,n})$ for each $i=1,\ldots,k$.

\begin{lem}
The maps $\{f^{(n)}_k\}_{k\geq 0}$ give the components of a morphism of cochain complexes $f^{(n)}:C^\bullet(\Gsf,\Asf)\To C^\bullet(\Ssf_n\wr\Gsf,\Asf^n)$. 
\end{lem}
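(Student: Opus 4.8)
The plan is to show that each $f^{(n)}_k$ intertwines the coboundary maps, i.e. $\partial\circ f^{(n)}_k=f^{(n)}_{k+1}\circ\partial$ for all $k\geq 0$, by reducing everything to a single (nonabelian) $1$-cocycle identity. Write $W=\Ssf_n\wr\Gsf$ and let $\Omega=\{1,\dots,n\}$, on which $W$ acts through the projection $W\To\Ssf_n$, so $(\sigma,\mathbf{g})\cdot j=\sigma(j)$. Introduce the coordinate map $\theta\colon W\times\Omega\To\Gsf$, $\theta((\sigma,\mathbf{g}),j):=g_{\sigma^{-1}(j)}$. The first step records two structural facts. From the wreath‑product multiplication $(\sigma,\mathbf{g})(\sigma',\mathbf{g}')=(\sigma\sigma',(\mathbf{g}\rhd\sigma')\cdot\mathbf{g}')$ used in Lemma~\ref{lema2} (with $(\mathbf{g}\rhd\sigma')_i=g_{\sigma'(i)}$ as in \eqref{def_P_sigma_1}) one gets at once
\[
\theta(ww',j)=\theta(w,j)\cdot\theta(w',w^{-1}\cdot j),\qquad w,w'\in W,\ j\in\Omega;
\]
and unwinding the definition of $\tilde{\lhd}_n$ shows that, identifying $\Asf^n$ with maps $\Omega\To A$, the module structure reads $(w\,\tilde{\lhd}_n\,\mathbf{a})(j)=\theta(w,j)\lhd a(w^{-1}\cdot j)$, i.e. $\Asf^n$ carries exactly the module structure on $\mathrm{Map}(\Omega,\Asf)$ determined by $\theta$.

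The second step re‑expresses $f^{(n)}_k$ through $\theta$. Given a bar tuple $(w_1,\dots,w_k)$ in $W$ and $j\in\Omega$, set $h_i:=\theta\big(w_i,(w_1\cdots w_{i-1})^{-1}\cdot j\big)$ for $i=1,\dots,k$ (with $w_1\cdots w_0=e$). Since $W$ acts on $\Omega$ through $\Ssf_n$, a direct computation gives $h_i=g_{i,(\sigma_1\cdots\sigma_i)^{-1}(j)}$, so the definition of $f^{(n)}_k$ becomes the compact formula
\[
f^{(n)}_k(c)(w_1,\dots,w_k)_j=c(h_1,\dots,h_k),
\]
which is precisely the familiar Shapiro‑type cochain map attached to the pair $(\Omega,\Asf)$.

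The third step is the term‑by‑term comparison. Fix $c\in C^k(\Gsf,\Asf)$ and a tuple $(w_1,\dots,w_{k+1})$ in $W$, write $h_1,\dots,h_{k+1}$ for its coordinate elements at $j$, and expand the $j$‑th component of $\partial\big(f^{(n)}_k(c)\big)$ and of $f^{(n)}_{k+1}(\partial c)$ at this tuple using the usual inhomogeneous bar differential — on $W$ with coefficients $\Asf^n$, and on $\Gsf$ with coefficients $\Asf$, respectively. The leading term matches because the $\Asf^n$‑action is the one induced by $\theta$, giving $\big(w_1\,\tilde{\lhd}_n\,f^{(n)}_k(c)(w_2,\dots,w_{k+1})\big)_j=\theta(w_1,j)\lhd c(h_2,\dots,h_{k+1})$ (the coordinate elements of $(w_2,\dots,w_{k+1})$ at $w_1^{-1}\cdot j$ being exactly $h_2,\dots,h_{k+1}$); the trailing term $(-1)^{k+1}c(h_1,\dots,h_k)$ matches verbatim; and for $1\le i\le k$ the $i$‑th term matches because the cocycle identity yields $h_ih_{i+1}=\theta\big(w_iw_{i+1},(w_1\cdots w_{i-1})^{-1}\cdot j\big)$, which is exactly the $i$‑th coordinate element of $(w_1,\dots,w_iw_{i+1},\dots,w_{k+1})$. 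Since this holds for every $j\in\Omega$, the two compositions agree, which is the claim.

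I expect no genuine obstacle: the entire content is the cocycle identity for $\theta$, and the only delicate point is the index bookkeeping — the accumulated twist $(\sigma_1\cdots\sigma_i)^{-1}(j)$ running along the bar tuple — which is exactly what passing to $\theta$ is designed to absorb. Carrying out the comparison directly in coordinates is possible but makes the matching of the $g_ig_{i+1}$‑terms notationally heavy, so the reformulation via $\theta$ is worth its small setup cost.
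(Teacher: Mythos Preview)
Your proof is correct. The paper simply writes ``Left to the reader'' for this lemma, so there is nothing to compare against; your argument is a clean way to fill in the omitted verification. The reformulation via the $1$-cocycle $\theta\colon W\times\Omega\to\Gsf$ is a nice conceptual touch: it identifies $f^{(n)}$ as the standard Shapiro-type map associated to the induced/coinduced module $\mathrm{Map}(\Omega,\Asf)\cong\Asf^n$, which explains structurally why it is a chain map rather than leaving this as a coordinate miracle. A direct index-chasing verification (tracking the subscripts $(\sigma_1\cdots\sigma_i)^{-1}(j)$ through the bar differential) would also work and is presumably what the authors had in mind, but your packaging makes the matching of the face maps transparent and avoids the notational clutter.
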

\begin{proof}
Left to the reader.
\end{proof}

\medskip
We shall denote by $\xi_n:H^\bullet(\Gsf,\Asf)\To H^\bullet(\Ssf_n\wr\Gsf,\Asf^n)$ the homomorphism induced in cohomology by this morphism of cochain complexes.

\begin{lem}\label{lema_naturalitat_xi_n}
The homomorphism $\xi_n:H^\bullet(\Gsf,\Asf)\To H^\bullet(\Ssf_n\wr\Gsf,\Asf^n)$ is natural with respect to isomorphisms of pairs $(\Gsf,\Asf)$, i.e. for any isomorphism of groups $\rho:\Gsf\To\Gsf'$ and any isomorphism of $\Gsf$-modules $\beta:\Asf\To\Asf'_\rho$, the induced diagram
$$
\xymatrix{
H^\bullet(\Gsf,\Asf)\ar[rr]^{\Hsf^\bullet(\rho^{-1},\beta)}\ar[d]_{\xi_n} && H^\bullet(\Gsf',\Asf')\ar[d]^{\xi'_n} \\ H^\bullet(\Ssf_n\wr\Gsf,\Asf^n)\ar[rr]_{\Hsf^\bullet(\Ssf_n\wr\rho^{-1},\ \beta^n)} && H^\bullet(\Ssf_n\wr\Gsf',(\Asf')^n)
}
$$
commutes, where $\Ssf_n\wr\rho^{-1}:\Ssf_n\wr\Gsf\To\Ssf_n\wr\Gsf'$ is the obvious isomorphism induced by $\rho^{-1}$.
\end{lem}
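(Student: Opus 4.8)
The plan is to verify the asserted commutativity already at the level of cochain complexes, whence the statement in cohomology follows for free. Indeed, all four arrows of the square are induced in cohomology by morphisms of cochain complexes: the two vertical ones by $f^{(n)}$ and by its analogue $(f')^{(n)}$ for the pair $(\Gsf',\Asf')$, and the two horizontal ones by the standard pullback cochain maps, which on $C^k$ precompose a cochain with the given group isomorphism on each of its $k$ arguments and postcompose with the coefficient isomorphism ($\beta$ along the top, $\beta^n$ along the bottom). These pullback maps are genuine morphisms of cochain complexes precisely because $\beta\colon\Asf\To\Asf'_\rho$ is a morphism of $\Gsf$-modules (and, for the bottom arrow, because $\beta^n$ is then a morphism of the corresponding $\Ssf_n\wr\Gsf$-modules), which is exactly the hypothesis under which $\Hsf^\bullet(\rho^{-1},\beta)$ and $\Hsf^\bullet(\Ssf_n\wr\rho^{-1},\beta^n)$ are defined in the first place. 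Since each of these cochain maps is given by an explicit substitution formula, the two ways around the square are strict equalities of cochain maps, not merely equalities up to chain homotopy; so it suffices to evaluate a single cochain.

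First I would fix $c\in C^k(\Gsf,\Asf)$ and an arbitrary $k$-tuple $\bigl((\sigma_1,\mathbf{g}_1),\ldots,(\sigma_k,\mathbf{g}_k)\bigr)$ in $\Ssf_n\wr\Gsf'$, with $\mathbf{g}_i=(g_{i,1},\ldots,g_{i,n})\in(G')^n$, and compare the two composites at this tuple, coordinate by coordinate in $(\Asf')^n$. Going ``down then across'' one applies $f^{(n)}_k$ --- which by definition places $c(g_{1,\sigma_1^{-1}(j)},\,g_{2,(\sigma_1\sigma_2)^{-1}(j)},\,\ldots,\,g_{k,(\sigma_1\cdots\sigma_k)^{-1}(j)})$ in the $j$-th slot --- and then applies the chain-level map underlying $\Hsf^k(\Ssf_n\wr\rho^{-1},\beta^n)$, which substitutes $\rho^{-1}$ into each group argument coordinatewise and applies $\beta$ in each slot. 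Going ``across then down'' one first forms the cochain $c'\in C^k(\Gsf',\Asf')$ underlying $\Hsf^k(\rho^{-1},\beta)(c)$, that is, $c'(g'_1,\ldots,g'_k)=\beta\bigl(c(\rho^{-1}(g'_1),\ldots,\rho^{-1}(g'_k))\bigr)$, and then applies $(f')^{(n)}_k$.

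The only point to observe is that the isomorphism $\Ssf_n\wr\rho^{-1}$ leaves the symmetric-group part untouched and applies $\rho^{-1}$ to each of the $n$ coordinates of the $\Gsf'$-part; hence applying $\rho^{-1}$ coordinatewise commutes with the reindexing $j\mapsto(\sigma_1\cdots\sigma_i)^{-1}(j)$ built into the formula for $f^{(n)}_k$. Carrying this out, both composites deliver, in the $j$-th slot,
$$
\beta\Bigl(c\bigl(\rho^{-1}(g_{1,\sigma_1^{-1}(j)}),\,\rho^{-1}(g_{2,(\sigma_1\sigma_2)^{-1}(j)}),\,\ldots,\,\rho^{-1}(g_{k,(\sigma_1\cdots\sigma_k)^{-1}(j)})\bigr)\Bigr),
$$
which is precisely $(f')^{(n)}_k(c')\bigl((\sigma_1,\mathbf{g}_1),\ldots,(\sigma_k,\mathbf{g}_k)\bigr)_j$ by the definition of $c'$. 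Hence the square of cochain maps commutes on the nose, and passing to cohomology yields the claim.

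The argument is pure bookkeeping, so I do not expect any genuine obstacle; the one thing to be careful about is the interplay of variances and of the wreath-action index conventions --- in particular that the right wreath action contributes the inverses $(\sigma_1\cdots\sigma_i)^{-1}$ in the definition of $f^{(n)}_k$, and that $\Ssf_n\wr\rho^{-1}$, fixing the symmetric-group factor, does nothing to these. No spectral sequence, no explicit cocycle manipulation, and no chain homotopy are needed.
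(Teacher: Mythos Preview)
Your proof is correct: the paper's own ``proof'' is simply ``Left to the reader,'' and the direct cochain-level verification you carry out is exactly the expected argument. Your bookkeeping with the substitution formulas and the observation that the symmetric-group factor is untouched by $\Ssf_n\wr\rho^{-1}$ (so that the reindexing $(\sigma_1\cdots\sigma_i)^{-1}(j)$ commutes with applying $\rho^{-1}$ coordinatewise) is the only real point, and you handle it cleanly.
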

\begin{proof}
Left to the reader.
\end{proof}

Let us now take $\Gsf=\pi_0(\GG)$ and $\Asf=\pi_1(\GG)$. It follows that the canonical action of $\pi_0(\GG)$ on $\pi_1(\GG)$ induces a morphism of cochain complexes 
$$
f^{(n)}:C^\bullet(\pi_0(\GG),\pi_1(\GG))\To C^\bullet(\Ssf_n\wr\pi_0(\GG),\pi_1(\GG)^n).
$$
The homomomorphism in cohomology
$$
\xi_n:H^\bullet(\pi_0(\GG),\pi_1(\GG))\To H^\bullet(\Ssf_n\wr\pi_0(\GG),\pi_1(\GG)^n)
$$
is natural in $(\pi_0(\GG),\pi_1(\GG))$ in the sense of Lemma~\ref{lema_naturalitat_xi_n}. Then we have the following.

\begin{prop}
For any 2-group $\GG$ and any $n\geq 1$, the Postnikov invariant of $\SSS_n\wr\wr\ \GG$ is the image by $\xi_n:H^3(\pi_0(\GG),\pi_1(\GG))\To H^3(\Ssf_n\wr\pi_0(\GG),\pi_1(\GG)^n)$ of the Postnikov invariant of $\GG$.
\end{prop}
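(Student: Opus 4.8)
The plan is to compute a classifying $3$-cocycle of $\SSS_n\wr\wr\ \GG$ straight from Sinh's algorithm and to recognize it as $f^{(n)}_3$ applied to a classifying $3$-cocycle of $\GG$. Since the Postnikov invariant is a homotopy invariant, since equivalent $2$-groups have equivalent wreath $2$-products with the same symmetric group (the remark after Proposition~\ref{S_n_wreath_GG}), and since $\xi_n$ is natural with respect to isomorphisms of pairs (Lemma~\ref{lema_naturalitat_xi_n}), I would first replace $\GG$ by the skeletal model $\widehat{\GG}=\pi_1(\GG)[1]\rtimes_{\mathsf{z}}\pi_0(\GG)[0]$ furnished by Sinh's theorem from a fixed normalized classifying $3$-cocycle $\mathsf{z}=\mathsf{z}(\GG)$. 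For this reduction to be harmless one has to observe that an equivalence $\SSS_n\wr\wr\ \GG\simeq\SSS_n\wr\wr\ \widehat{\GG}$ (obtained componentwise from an equivalence $\GG\simeq\widehat{\GG}$) induces on $\pi_0$ and $\pi_1$ precisely the maps gotten by applying $\Ssf_n\wr(-)$ and $(-)^n$ to the induced isomorphisms $\pi_0(\GG)\cong\pi_0(\widehat{\GG})$ and $\pi_1(\GG)\cong\pi_1(\widehat{\GG})$ (immediate from the componentwise description of the wreath $2$-action and from \S~\ref{splitness_producte_wreath_2}); then, by Lemma~\ref{lema_naturalitat_xi_n}, the two sides of the asserted equality transform in the same way and it suffices to prove it for $\widehat{\GG}$.

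So assume $\GG=\pi_1(\GG)[1]\rtimes_{\mathsf{z}}\pi_0(\GG)[0]$. By Proposition~\ref{S_n_wreath_GG}, $\SSS_n\wr\wr\ \GG$ is then skeletal with trivial unitors: its objects are the pairs $(\sigma,\mathbf{x})\in S_n\times\pi_0(\GG)^n$, and its associator is $a_{(\sigma,\mathbf{x}),(\sigma',\mathbf{x}'),(\sigma'',\mathbf{x}'')}=(id_{\sigma\sigma'\sigma''},a_{\mathbf{x}\rhd(\sigma'\sigma''),\,\mathbf{x}'\rhd\sigma'',\,\mathbf{x}''})$, where the associator of $\GG^n$ is computed componentwise and each component is the associator $a_{y,y',y''}=(\mathsf{z}(y,y',y''),yy'y'')$ of $\GG$. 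A skeletal $2$-group with trivial unitors carries the trivial \emph{épinglage} — the section $\mathbf{s}$ equal to the identity identification of $\pi_0$ with the set of objects, and all $\Theta$'s identities — and this \emph{épinglage} meets the normalization conditions exactly because the unitors vanish. For it, every $\theta$ and every $\mu$ occurring in diagram~(\ref{definicio_3-cocicle}) is an identity, so that diagram collapses and a classifying $3$-cocycle of $\SSS_n\wr\wr\ \GG$ is read off directly from the $\pi_1$-part of its associator.

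Carrying this out and using that the associator of $\GG^n$ is componentwise, the classifying $3$-cocycle thereby obtained, evaluated on a triple $\big((\sigma,[\mathbf{x}]),(\sigma',[\mathbf{x}']),(\sigma'',[\mathbf{x}''])\big)$, is the map whose $j$-th component is $\mathsf{z}$ evaluated at $\big(x_{(\sigma'\sigma'')(j)},\,x'_{\sigma''(j)},\,x''_{j}\big)$; since $\mathsf{z}(\GG)$ itself is obtained from the same reading-off in diagram~(\ref{definicio_3-cocicle}), the two carry matching normalizations and signs. Transporting along the canonical isomorphisms $\pi_0(\SSS_n\wr\wr\ \GG)\cong\Ssf_n\wr\pi_0(\GG)$ and $\pi_1(\SSS_n\wr\wr\ \GG)\cong\pi_1(\GG)^n$ of \S~\ref{splitness_producte_wreath_2} — under which the index shifts by the right wreath action $\rhd$ turn into the index shifts by $\sigma^{-1}$ built into the definition of $f^{(n)}$, just as the passage from $\rhd$ to the left action records the $\sigma^{-1}$ in~(\ref{accio_pi0_S_n_wr_S_G_sobre_pi1}) — this $3$-cocycle is identified with $f^{(n)}_3(\mathsf{z})$, and these identifications are compatible with the $\Ssf_n\wr\pi_0(\GG)$-module structure on $\pi_1(\GG)^n$. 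Passing to cohomology classes yields $\alpha(\SSS_n\wr\wr\ \GG)=\xi_n(\alpha(\GG))$.

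The only place where genuine care is needed — though the work is routine — is the index bookkeeping of the last step: one must line up the convention for the right wreath action $\rhd$ (with $(\mathbf{x}\rhd\sigma)_j=x_{\sigma(j)}$), the group law on $\Ssf_n\wr\pi_0(\GG)$, the module action~(\ref{accio_pi0_S_n_wr_S_G_sobre_pi1}), and the definition of $f^{(n)}_k$; and, in the first paragraph, one must check that the equivalence-induced identifications of $\pi_0$ and $\pi_1$ agree on the two sides. Everything else is a mechanical unwinding of Sinh's algorithm applied to the explicit presentation of $\SSS_n\wr\wr\ \GG$ in Proposition~\ref{S_n_wreath_GG}.
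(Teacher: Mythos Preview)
Your proof is correct and follows essentially the same strategy as the paper's: reduce by naturality of $\xi_n$ to a convenient model, choose an {\it \'epinglage} adapted to the wreath structure, and verify that Sinh's algorithm produces $f^{(n)}_3(\mathsf{z}(\GG))$. The only difference is that the paper reduces to a strict model and uses a general {\it \'epinglage} $(s,\theta)$ lifted componentwise to $(s^{(n)},\theta^{(n)})$, whereas you reduce all the way to the special skeletal model $\widehat{\GG}$ so that the {\it \'epinglage} becomes trivial and the $3$-cocycle is read off directly from the associator; this makes the bookkeeping you flag (the $\gamma$-shift by $\sigma\sigma'\sigma''$ that converts the indices $(\sigma'\sigma'')(j),\,\sigma''(j),\,j$ into the $\sigma^{-1}(j),\,(\sigma\sigma')^{-1}(j),\,(\sigma\sigma'\sigma'')^{-1}(j)$ appearing in $f^{(n)}_3$) shorter than in the paper, where it is left to the reader.
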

 \begin{proof}
We can assume without loss of generality that $\GG$ is strict. Indeed, if $\GG$ is non-strict there exists an equivalent strict 2-group $\GG_s$, and any equivalence $\EE:\GG\To\GG_s$ induces a commutative diagram as before with $\rho=\pi_0(\EE)$ and $\beta=\pi_1(\EE)$. It follows that the claim is true for $\GG$ when it is true for $\GG_s$.

Let $(s,\theta)$ be any {\it \'epinglage} of $\GG$, and let $\mathsf{z}(\GG)$ be the classifying 3-cocycle of $\GG$ obtained from it by the method described in \S~\ref{invariants_homotopics}. Now, associated to $(s,\theta)$ there is a canonical {\em \'epinglage} $(s^{(n)},\theta^{(n)})$ of $\SSS_n\wr\wr\ \GG$ consisting of the map
$$
s^{(n)}:=id_{S_n}\times s\ \times\stackrel{n)}{\cdots}\times\ s:S_n\wr\pi_0(\GG)\To S_n\wr G_0,
$$
together with the isomorphisms in $S_n[0]\times\Gg^n$ given by
$$
\theta^{(n)}_{(\sigma,\mathbf{x})}:=(id_\sigma,(\theta_{x_1},\ldots,\theta_{x_n})):(\sigma,(s[x_1],\ldots,s[x_n]))\To(\sigma,(x_1,\ldots,x_n))
$$
for all $(\sigma,\mathbf{x})\in S_n\wr G_0$. Then it is a tedious but straightforward computation checking that the classifying 3-cocycle of $\SSS_n\wr\wr\ \GG$ obtained from the {\em \'epinglage} $(s^{(n)},\theta^{(n)})$ by the usual procedure is precisely the image of $\mathsf{z}(\GG)$ by the above morphism of cochain complexes $f^{(n)}$. The computation basically consists in making explicit the commutative diagram (\ref{definicio_3-cocicle}) that defines $\mathsf{z}(\SSS_n\wr\wr\ \GG)$. This is a diagram in the product groupoid $S_n[0]\times\Gg^n$. Hence it consists of a diagram in $S_n[0]$, which trivially commutes, and a diagram in $\Gg^n$ with lots of tensor products of objects and morphisms. Now, the tensor product in $\GG$ is given componentwise, so that this diagram in $\Gg^n$ actually amounts to $n$ diagrams in $\Gg$ with lots of tensor products now in $\Gg$. For any elements $(\sigma,[\mathbf{x}]),(\sigma',[\mathbf{x}']),(\sigma'',[\mathbf{x}''])\in S_n\wr\pi_0(\GG)$ let us denote by
$$
u_i((\sigma,[\mathbf{x}]),(\sigma',[\mathbf{x}']),(\sigma'',[\mathbf{x}'']))\in\pi_1(\GG)
$$
the $i^{th}$-component of
$$
\mathsf{z}(\SSS_n\wr\wr\ \GG)((\sigma,[\mathbf{x}]),(\sigma',[\mathbf{x}']),(\sigma'',[\mathbf{x}'']))\in\pi_1(\GG)^n
$$
for any $i=1,\ldots,n$. Then the point is that the required commutativity condition for each $u_i$ is the same for all of them, and it is exactly the commutative diagram that defines $\mathsf{z}(\GG)$ avaluated on the appropriate triple. The details are left to the reader.
\end{proof}  

\medskip
\noindent
In summary, we have shown that the homotopy invariants of a wreath 2-product are as follows:

\begin{prop}\label{invariants_producte_wreath}
For any 2-group $\GG$ we have:
\begin{itemize}
\item[(1)] $\pi_0(\SSS_n\wr\wr\ \GG)\cong\Ssf_n\wr\pi_0(\GG)$.
\item[(2)] $\pi_1(\SSS_n\wr\wr\ \GG)\cong\pi_1(\GG)^n$, with the action of the previous group on it given by (\ref{accio_Sn_wreath_G}). 
\item[(3)] $\alpha(\SSS_n\wr\wr\ \GG)=\xi_n(\alpha(\GG))$.
\end{itemize}
\end{prop}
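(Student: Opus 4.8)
The plan is to assemble the three assertions, each of which has in effect already been obtained in the preceding subsections, so that this Proposition is really a recapitulation rather than a fresh argument.

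For item (1), I would quote the computation of the first homotopy group carried out at the start of \S\ref{splitness_producte_wreath_2}: by Proposition~\ref{S_n_wreath_GG} two objects $(\sigma,\mathbf{x})$ and $(\sigma',\mathbf{x}')$ of $\SSS_n\wr\wr\ \GG$ are isomorphic exactly when $\sigma=\sigma'$ and $\mathbf{x}\cong\mathbf{x}'$ componentwise in $\Gg^n$, so that $\pi_0(\SSS_n\wr\wr\ \GG)$ is $S_n\times\pi_0(\GG)^n$ equipped with the group law induced by the tensor product of Proposition~\ref{S_n_wreath_GG}(a), and that law is precisely the wreath product $\Ssf_n\wr\pi_0(\GG)$; this is exactly (\ref{pi0_producte_wreath}). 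For item (2), I would recall that $\mathsf{Aut}(id_n,\mathbf{e})=\{id_{id_n}\}\times\mathsf{Aut}(\mathbf{e})\cong\pi_1(\GG)^n$, which gives the identification (\ref{pi1_producte_wreath}), and that the induced $\pi_0$-module structure is formula (\ref{accio_Sn_wreath_G}). The latter was established in \S\ref{splitness_producte_wreath_2} directly for strict $\GG$, where it takes the form (\ref{accio_producte_wreath}); the general case then follows by passing to an equivalent strict 2-group $\GG_s$ and using that $\pi_0$, $\pi_1$ and the canonical action are homotopy invariants, together with the implication $\GG\simeq\GG'\ \Rightarrow\ \Ssf_n\wr\wr\ \GG\simeq\Ssf_n\wr\wr\ \GG'$ noted after Proposition~\ref{S_n_wreath_GG}.

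Finally, item (3) is word for word the statement of the Proposition immediately preceding this one, so nothing further is required here. Consequently I expect no genuine obstacle in the present proof: every piece is either a direct citation or was reduced to the strict case by invariance of the homotopy data. The one place where actual work occurs is hidden inside that preceding Proposition, where the real difficulty is to make the defining diagram (\ref{definicio_3-cocicle}) for a classifying $3$-cocycle explicit for the induced \'epinglage $(s^{(n)},\theta^{(n)})$ of $\SSS_n\wr\wr\ \GG$, to observe that it splits componentwise in $\Gg^n$ into $n$ copies of the diagram defining $\mathsf{z}(\GG)$ evaluated on the appropriate translated triple, and hence to recognize the result as $\xi_n(\alpha(\GG))$; but that computation has already been carried out, so in the proof of Proposition~\ref{invariants_producte_wreath} it suffices to cite it.
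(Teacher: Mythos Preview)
Your proposal is correct and matches the paper's treatment exactly: the paper introduces Proposition~\ref{invariants_producte_wreath} with the words ``In summary, we have shown that the homotopy invariants of a wreath 2-product are as follows,'' and gives no further proof, since items (1)--(2) were computed in \S\ref{splitness_producte_wreath_2} and item (3) is the content of the immediately preceding (unnumbered) Proposition. Your reduction of the general action formula (\ref{accio_Sn_wreath_G}) to the strict case via homotopy invariance is a welcome clarification that the paper leaves implicit.
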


Next result is now an immediate consequence of Proposition~\ref{caracter_split_producte_wreath}, Theorem~\ref{teorema_estructura}, Propositions~\ref{invariants_homotopics_producte} and \ref{invariants_producte_wreath} and the above computation of the homotopy groups of $\SSS_\Gsf$.

\begin{thm}\label{invariants_homotopics_2-grup_permutacions_general}
Let $\{(n_i,\Gsf_i)\}_{i\in I}$ be any family of pairs consisting of a positive integer $n_i\geq 1$ and a group $\Gsf_i$, with $\Gsf_i\ncong\Gsf_{i'}$ for $i\neq i'$. Then:
\begin{itemize}
\item[(1)] $\pi_0(\SSS_{\{(n_i,\Gsf_i)\}_{i\in I}})\cong\prod_{i\in I}\Ssf_{n_i}\wr\mathsf{Out}(\Gsf_i)$.
\item[(2)] $\pi_1(\SSS_{\{(n_i,\Gsf_i)\}_{i\in I}})\cong\prod_{i\in I}\Zsf(\Gsf_i)^{n_i}$ equipped with the $\pi_0(\SSS_{\{(n_i,\Gsf_i)\}_{i\in I}})$-module structure given componentwise by (\ref{accio_pi0_S_n_wr_S_G_sobre_pi1}).
\item[(3)] $\alpha(\SSS_{\{(n_i,\Gsf_i)\}_{i\in I}})=\zeta(\xi_{n_i}(\alpha_i)_{i\in I})$, where $\alpha_i=\alpha(\SSS_{\Gsf_i})$ for $i\in I$. Moreover, $\SSS_{\{(n_i,\Gsf_i)\}_{i\in I}}$ is split if and only if $\SSS_{\Gsf_i}$ is split for all $i\in I$.
\end{itemize}
\end{thm}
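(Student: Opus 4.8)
The plan is to obtain all three items by chaining together results already established, so that no fresh computation is needed. First I would apply the Structure Theorem~\ref{teorema_estructura} to replace $\SSS_{\{(n_i,\Gsf_i)\}_{i\in I}}$ by the isomorphic 2-group $\prod_{i\in I}\SSS_{n_i}\wr\wr\ \SSS_{\Gsf_i}$; since isomorphic (in particular equivalent) 2-groups share the same homotopy invariants, it suffices to compute the invariants of this product. Proposition~\ref{invariants_homotopics_producte} then reduces the problem to a single factor: items (1) and (2) of that proposition give $\pi_0$ and $\pi_1$ as the products over $i\in I$ of the corresponding groups of the factors, with the $\pi_0$-action on $\pi_1$ taken componentwise, while item (4) gives $\alpha(\prod_{i\in I}\SSS_{n_i}\wr\wr\ \SSS_{\Gsf_i})=\zeta((\alpha(\SSS_{n_i}\wr\wr\ \SSS_{\Gsf_i}))_{i\in I})$.

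Next I would insert Proposition~\ref{invariants_producte_wreath}, which expresses the invariants of $\SSS_n\wr\wr\ \GG$ in terms of those of $\GG$: one has $\pi_0(\SSS_n\wr\wr\ \GG)\cong\Ssf_n\wr\pi_0(\GG)$, $\pi_1(\SSS_n\wr\wr\ \GG)\cong\pi_1(\GG)^n$ with action (\ref{accio_Sn_wreath_G}), and $\alpha(\SSS_n\wr\wr\ \GG)=\xi_n(\alpha(\GG))$. Specializing to $\GG=\SSS_{\Gsf_i}$ and substituting the homotopy groups of $\SSS_{\Gsf}$ recorded in Proposition~\ref{exemple_2-grup_split_bis} — namely $\pi_0(\SSS_{\Gsf})=\mathsf{Out}(\Gsf)$, $\pi_1(\SSS_{\Gsf})=\Zsf(\Gsf)$, with $\mathsf{Out}(\Gsf)$ acting by $[\phi]\lhd z=\phi(z)$ — yields $\pi_0(\SSS_{n_i}\wr\wr\ \SSS_{\Gsf_i})\cong\Ssf_{n_i}\wr\mathsf{Out}(\Gsf_i)$ and $\pi_1(\SSS_{n_i}\wr\wr\ \SSS_{\Gsf_i})\cong\Zsf(\Gsf_i)^{n_i}$; feeding the natural $\mathsf{Out}(\Gsf_i)$-action on $\Zsf(\Gsf_i)$ into (\ref{accio_Sn_wreath_G}) turns it into exactly (\ref{accio_pi0_S_n_wr_S_G_sobre_pi1}). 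Together with the product formulas this proves (1), (2), and the first half of (3), namely $\alpha(\SSS_{\{(n_i,\Gsf_i)\}_{i\in I}})=\zeta((\xi_{n_i}(\alpha_i))_{i\in I})$ with $\alpha_i=\alpha(\SSS_{\Gsf_i})$.

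For the splitness assertion in (3) I would combine Theorem~\ref{criteri_general} with Lemma~\ref{zeta_mono}: the permutation 2-group is split iff its Postnikov invariant vanishes, and since $\zeta$ is a monomorphism, $\zeta((\xi_{n_i}(\alpha_i))_{i\in I})=0$ iff each $\xi_{n_i}(\alpha_i)=\alpha(\SSS_{n_i}\wr\wr\ \SSS_{\Gsf_i})$ vanishes, i.e. iff each $\SSS_{n_i}\wr\wr\ \SSS_{\Gsf_i}$ is split, which by Proposition~\ref{caracter_split_producte_wreath} holds precisely when every $\SSS_{\Gsf_i}$ is split. (Alternatively one may invoke Corollary~\ref{producte_2-grups_escindits} for the product together with Proposition~\ref{caracter_split_producte_wreath} for the wreath factors, sidestepping $\zeta$ altogether.) No step here is genuinely hard; the argument is a bookkeeping exercise of invoking the cited results in order. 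The only point requiring care is checking that the induced actions transported along the isomorphism of Theorem~\ref{teorema_estructura} reproduce the index conventions of (\ref{accio_pi0_S_n_wr_S_G_sobre_pi1}) — in particular the placement of the inverse permutation — which is a matter of unwinding (\ref{accio_Sn_wreath_G}) with $\lhd_n$ taken to be the componentwise natural action; since $\SSS_{\Gsf}$ is strict, Lemma~\ref{lema_naturalitat_xi_n} is not even needed, the transport of the Postnikov class being the one already supplied by Proposition~\ref{invariants_producte_wreath}.
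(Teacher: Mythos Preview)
Your proposal is correct and follows essentially the same route as the paper: the theorem is stated there as an immediate consequence of Theorem~\ref{teorema_estructura}, Propositions~\ref{invariants_homotopics_producte} and~\ref{invariants_producte_wreath}, Proposition~\ref{caracter_split_producte_wreath}, and the computation of the homotopy groups of $\SSS_\Gsf$ in Proposition~\ref{exemple_2-grup_split_bis}. Your alternative for the splitness clause via Corollary~\ref{producte_2-grups_escindits} is equally valid and amounts to the same thing.
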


\subsection{Non-split permutation 2-groups}
\label{seccio_2-grups_permutacions_no_escindits}

It is still open the question whether all finite type permutation 2-groups are split. By our previous discussion, the problem is to determine whether all 2-groups $\SSS_\Gsf$ are split for all groups $\Gsf$. 

The split character of $\SSS_\Gsf$ is just a property of the group $\Gsf$. This suggests introducing the following definition.

\begin{defn}
A group $\Gsf$ is {\em permutationally split} if the corresponding permutation 2-group $\SSS_\Gsf$ is split (hence, equivalent to the 2-group $\Zsf(\Gsf)[1]\rtimes\mathsf{Out}(\Gsf)[0]$ by Corollary~\ref{corolari}).
\end{defn}

Two families of permutationally split groups are the following.
\begin{prop}
All abelian and all centerless groups are permutationally split.
\end{prop}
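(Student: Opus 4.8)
The plan is to invoke the first splitness criterion, Theorem~\ref{criteri_general}, according to which a 2-group is split precisely when its Postnikov invariant vanishes, together with the description of the homotopy invariants of $\SSS_\Gsf$ furnished by Proposition~\ref{exemple_2-grup_split_bis}. Recall from there that $\pi_0(\SSS_\Gsf)\cong\mathsf{Out}(\Gsf)$, that $\pi_1(\SSS_\Gsf)\cong\Zsf(\Gsf)$, and that a classifying $3$-cocycle of $\SSS_\Gsf$ is the map $\mathsf{z}_{s,t}$ of formula (\ref{3-cocicle_classificador}) attached to an {\em \'epinglage} $(s,t)$. Hence in both cases it suffices to exhibit an {\em \'epinglage} for which $\mathsf{z}_{s,t}$ is a coboundary; in fact we shall arrange it to be identically trivial.

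For centerless groups this is immediate and is Corollary~\ref{splits_no_trivials_1}: if $\Zsf(\Gsf)=\mathsf{1}$ then $\pi_1(\SSS_\Gsf)$ is the trivial group, so $\Hsf^3(\pi_0(\SSS_\Gsf),\pi_1(\SSS_\Gsf))=0$ and a fortiori $\alpha(\SSS_\Gsf)=0$; equivalently, every value of the $3$-cocycle (\ref{3-cocicle_classificador}) lands in the trivial group $\Zsf(\Gsf)$, so it is the trivial cocycle.

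For an abelian group $\Asf$ (Corollary~\ref{Sim(A)}), every inner automorphism is the identity, so $\mathsf{Out}(\Asf)=\mathsf{Aut}(\Asf)$ and we may take the section $s\colon\mathsf{Out}(\Asf)\to\mathsf{Aut}(\Asf)$ to be the identity, which is normalized. The accompanying map $t\colon\mathsf{Aut}(\Asf)\to A$ is then forced by the normalization condition $t(s[\phi])=e$ to be constantly $e$, and this trivially fulfils the requirement $\phi=c_{t(\phi)}\circ s[\phi]$ because conjugation in $\Asf$ is trivial. Feeding $s=\mathrm{id}$, $t\equiv e$ into (\ref{3-cocicle_classificador}) yields $\mathsf{z}_{s,t}\equiv e$, hence $\alpha(\SSS_\Asf)=0$. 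More slackly, one may allow an arbitrary normalized $t$, which then automatically takes values in $\Zsf(\Asf)=\Asf$, and observe that $\mathsf{z}_{\mathrm{id},t}$ is the coboundary of the $2$-cochain $(\phi,\phi')\mapsto t(\phi\circ\phi')$. Combining the two cases with Corollary~\ref{corolari}, $\SSS_\Gsf$ is in fact equivalent to $\Zsf(\Gsf)[1]\rtimes\mathsf{Out}(\Gsf)[0]$ in either situation.

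There is essentially no obstacle here once the homotopy invariants of $\SSS_\Gsf$ are in hand; the only point worth a moment's attention is that in the abelian case the auxiliary map $t$ of an {\em \'epinglage} of $\SSS_\Asf$ is a priori valued in $A$ rather than in $\Zsf(\Asf)$ --- but since $\Asf$ is abelian these coincide, and this is precisely what turns the classifying cocycle into a coboundary.
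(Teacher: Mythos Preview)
Your proof is correct and follows essentially the same approach as the paper: the paper's proof simply cites Corollaries~\ref{Sim(A)} and \ref{splits_no_trivials_1}, and you have reproduced the arguments of those corollaries (triviality of $\Zsf(\Gsf)$ in the centerless case, and the choice $s=\mathrm{id}$ in the abelian case making the classifying cocycle vanish or become a coboundary).
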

\begin{proof}
It is a direct consequence of Corollaries~\ref{Sim(A)} and \ref{splits_no_trivials_1}.
\end{proof} 

Examples of permutationally split groups which are neither abelian nor centerless are the dihedral groups $\Dsf_4$ and $\Dsf_6$ (see Example~\ref{splits_no_trivials_2}).

In fact, all of the examples of permutation 2-groups $\SSS_\Gsf$ we have given until now are split, and one might be tempted to think that this is always the case. However, this is false. To prove it, we shall make use of the next result, which gives a characterization of the permutationally split groups in more elementary terms. This follows immediately by applying Theorem~\ref{criteri_split_2-grups_estrictes} to $\SSS_\Gsf$.

\begin{prop}\label{caracteritzacio_grup_permutacionalment_escindit}
Let $\Gsf$ be any group, and let us consider the associated exact sequence
\begin{equation}\label{4-successio_S_G}
\xymatrix{
0\ar[r] & \Zsf(\Gsf)\ar[r] & \Gsf\ar[r]^{c\ \ \ \ \ } & \mathsf{Aut}(\Gsf)\ar[r]^p & \mathsf{Out}(\Gsf)\ar[r] & \mathsf{1}\ ,}
\end{equation}
with $c$ the map sending $g\in G$ to the corresponding inner automorphism $c_g$. Then $\Gsf$ is permutationally split if and only if there exists a normalized set theoretic section $s:Out(\Gsf)\To Aut(\Gsf)$ of $p$ such that the map $Out(\Gsf)\times Out(\Gsf)\To Inn(\Gsf)$ defined by 
$$
([\phi],[\phi'])\mapsto s[\phi]\circ s[\phi]\circ s[\phi\circ\phi']^{-1}
$$
has a normalized lifting $\psi_s:Out(\Gsf)\times Out(\Gsf)\To G$ satisfying the ``2-cocycle condition''
\begin{equation}\label{condicio_2-cocicle_G}
\psi_s([\phi],[\phi'])\ \psi_s([\phi\circ\phi'],[\phi''])=s[\phi](\psi_s([\phi'],[\phi'']))\ \psi_s([\phi],[\phi'\circ\phi'']).
\end{equation}
for all $[\phi],[\phi'],[\phi'']\in Out(\Gsf)$.
\end{prop}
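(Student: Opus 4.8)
The plan is to obtain the Proposition as the special case $\GG=\SSS_\Gsf$ of the splitness criterion of Theorem~\ref{criteri_split_2-grups_estrictes}. This is available because $\SSS_\Gsf=\SSS ym(\Gsf)$ is a strict 2-group, by Proposition~\ref{2-grup_autoequivalencies_grupoide}; so all that is needed is to identify, one by one, the ingredients entering that Theorem with the data occurring in the present statement. The relevant piece of structure is the exact 4-sequence of $\SSS_\Gsf$, which by Example~\ref{s.e._Equiv(G[1])} is (\ref{4-successio_S_G}): here $\pi_0(\SSS_\Gsf)=\mathsf{Out}(\Gsf)$, the group $G_0$ of objects is $\mathsf{Aut}(\Gsf)$, the projection $p$ is the canonical one, $K_0=\ker p=\mathrm{Inn}(\Gsf)$, the group $\Hsf_0$ of morphisms of $\SSS_\Gsf$ with domain the unit object $\Ee(id_\Gsf)$ is identified with $\Gsf$ via $g\leftrightarrow\tau(g;id_\Gsf,c_g)$ (using the explicit description of $\SSS_\Gsf$ in Proposition~\ref{2-grup_autoequivalencies_grupoide}), and under this identification $t\colon\Hsf_0\to G_0$ becomes the map $c\colon\Gsf\to\mathsf{Aut}(\Gsf)$, $g\mapsto c_g$.

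Granting these identifications, I would translate the items occurring in Theorem~\ref{criteri_split_2-grups_estrictes}. A normalized set-theoretic section $s\colon\pi_0(\SSS_\Gsf)\to G_0$ of $p$ is exactly a normalized section $s\colon\mathsf{Out}(\Gsf)\to\mathsf{Aut}(\Gsf)$ of $p$. Since the tensor product of objects of $\SSS_\Gsf$ is composition of automorphisms and $\Ee(\phi)$ has strict $\otimes$-inverse $\Ee(\phi^{-1})$, the induced map $\hat s$ of (\ref{definicio_hat_s}) is $([\phi],[\phi'])\mapsto s[\phi]\circ s[\phi']\circ s[\phi\circ\phi']^{-1}$, which lies in $\mathrm{Inn}(\Gsf)=K_0$ and is precisely the map in the statement; a normalized lifting of it along $t$ is therefore a normalized map $\psi_s\colon\mathsf{Out}(\Gsf)\times\mathsf{Out}(\Gsf)\to\Gsf$ with $c_{\psi_s([\phi],[\phi'])}=s[\phi]\circ s[\phi']\circ s[\phi\circ\phi']^{-1}$. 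Finally I would make the weak action $\lhd_s$ explicit: starting from $[\phi']\lhd_s f=id_{\Ee(s[\phi'])}\otimes f\otimes id_{\Ee(s[\phi'])^{-1}}$ and applying the horizontal-composition rule $\tau(g_1;\phi_1,\tilde\phi_1)\otimes\tau(g_2;\phi_2,\tilde\phi_2)=\tau(g_1\,\phi_1(g_2);\phi_1\circ\phi_2,\tilde\phi_1\circ\tilde\phi_2)$ of Proposition~\ref{2-grup_autoequivalencies_grupoide} twice, one computes $[\phi']\lhd_s\tau(g;id_\Gsf,c_g)=\tau(s[\phi'](g);id_\Gsf,c_{s[\phi'](g)})$, i.e. under $\Hsf_0\cong\Gsf$ the action is $[\phi']\lhd_s g=s[\phi'](g)$.

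With all the dictionary entries in hand, the ``2-cocycle condition'' (\ref{condicio_2-cocicle}) of Theorem~\ref{criteri_split_2-grups_estrictes}, spelled out for a triple $([\phi],[\phi'],[\phi''])$ and using that the group law of $\Hsf_0$ corresponds to that of $\Gsf$ and that $[\Ee(\phi)\otimes\Ee(\phi')]=[\phi\circ\phi']$ in $\mathsf{Out}(\Gsf)$, becomes verbatim the condition (\ref{condicio_2-cocicle_G}); the equivalence asserted in the Proposition is then precisely Theorem~\ref{criteri_split_2-grups_estrictes} applied to $\GG=\SSS_\Gsf$. I do not expect any genuine obstacle: the only step requiring a little care is the computation of $\lhd_s$ (and, if one does not simply quote Example~\ref{s.e._Equiv(G[1])}, the verification that $\Hsf_0\cong\Gsf$ is a group isomorphism carrying $t$ to $c$), which is a short and routine manipulation of the formulas in Proposition~\ref{2-grup_autoequivalencies_grupoide}.
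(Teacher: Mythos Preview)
Your proposal is correct and follows exactly the same approach as the paper: apply Theorem~\ref{criteri_split_2-grups_estrictes} to $\GG=\SSS_\Gsf$, invoke Example~\ref{s.e._Equiv(G[1])} to identify the exact 4-sequence with (\ref{4-successio_S_G}), and then check that the weak action $\lhd_s$ translates to $[\phi]\lhd_s g=s[\phi](g)$. The paper's proof is in fact terser than yours---it merely states that this last computation is the only point needing verification and leaves it to the reader---so your explicit unwinding of the horizontal composition formula from Proposition~\ref{2-grup_autoequivalencies_grupoide} is a welcome addition.
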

\begin{proof}
As shown in Example~\ref{s.e._Equiv(G[1])}, (\ref{4-successio_S_G}) is indeed the exact 4-sequence of $\SSS_\Gsf$. Therefore, the only point which needs to be checked is that the term $s[x]\lhd_s\psi_s([x'],[x''])$ in (\ref{condicio_2-cocicle}) indeed reduces in this case to $s[\phi](\psi_s([\phi'],[\phi'']))$, and this is left to the reader.
\end{proof}

\begin{cor}\label{condicio_suficient_split_S_G}
Let $\Gsf$ be any group having a non-trivial outer automorphism $[\varphi]\in\mathsf{Out}(\Gsf)$ such that
\begin{itemize}
\item[(1)] $[\varphi]^2=[id_\Gsf]$ in $\mathsf{Out}(\Gsf)$, and
\item[(2)] for any automorphism $\phi\in[\varphi]$ and any $g\in G$ such that $\phi^2=c_g$, we have $\phi(g)\neq g$.
\end{itemize}
Then $\Gsf$ is non permutationally split.
\end{cor}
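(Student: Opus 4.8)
The plan is to argue by contradiction, exploiting the elementary reformulation of permutational splitness given by Proposition~\ref{caracteritzacio_grup_permutacionalment_escindit}. So suppose that $\Gsf$ were permutationally split. Then there would exist a normalized set theoretic section $s:\mathsf{Out}(\Gsf)\to\mathsf{Aut}(\Gsf)$ of $p$ together with a normalized lifting $\psi_s:\mathsf{Out}(\Gsf)\times\mathsf{Out}(\Gsf)\to G$ of the map $([\phi],[\phi'])\mapsto s[\phi]\circ s[\phi']\circ s[\phi\circ\phi']^{-1}$ satisfying the ``$2$-cocycle condition'' (\ref{condicio_2-cocicle_G}). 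The idea is to evaluate (\ref{condicio_2-cocicle_G}) on one carefully chosen triple and extract from it a violation of hypothesis~(2).

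The key step is to substitute $[\phi]=[\phi']=[\phi'']=[\varphi]$ into (\ref{condicio_2-cocicle_G}). Since $[\varphi]^2=[id_\Gsf]$ in $\mathsf{Out}(\Gsf)$, we have $[\varphi\circ\varphi]=[id_\Gsf]$, so both factors $\psi_s([\varphi\circ\varphi],[\varphi])$ on the left and $\psi_s([\varphi],[\varphi\circ\varphi])$ on the right collapse to $e$ because $\psi_s$ is normalized. Hence (\ref{condicio_2-cocicle_G}) reduces to the single identity $\psi_s([\varphi],[\varphi])=s[\varphi]\bigl(\psi_s([\varphi],[\varphi])\bigr)$.

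Finally, set $\phi:=s[\varphi]$ and $g:=\psi_s([\varphi],[\varphi])$. Then $\phi\in[\varphi]$, since $p(\phi)=[\varphi]$, and as $[\varphi]$ is non-trivial this places $\phi$ in the situation of hypothesis~(2). Moreover, because $\psi_s$ lifts $([\phi'],[\phi''])\mapsto s[\phi']\circ s[\phi'']\circ s[\phi'\circ\phi'']^{-1}$ along $c$, and $s$ is normalized so that $s[\varphi\circ\varphi]=s[id_\Gsf]=id_\Gsf$, we obtain $c_g=\phi\circ\phi=\phi^2$. The identity of the previous paragraph now reads precisely $\phi(g)=g$, contradicting hypothesis~(2). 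Therefore no such pair $(s,\psi_s)$ exists, and $\Gsf$ is not permutationally split. I do not anticipate any genuine obstacle in carrying this out; the only point requiring care is the bookkeeping with the normalization conditions on $s$ and $\psi_s$, which is exactly what makes the many $\psi_s$-terms in (\ref{condicio_2-cocicle_G}) disappear and leaves the clean relation $\phi(g)=g$.
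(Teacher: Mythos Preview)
Your proof is correct and follows essentially the same approach as the paper's own proof: both argue via Proposition~\ref{caracteritzacio_grup_permutacionalment_escindit}, evaluate the $2$-cocycle condition~(\ref{condicio_2-cocicle_G}) at the triple $([\varphi],[\varphi],[\varphi])$, use the normalization of $s$ and $\psi_s$ together with $[\varphi]^2=[id_\Gsf]$ to reduce it to $\psi_s([\varphi],[\varphi])=s[\varphi](\psi_s([\varphi],[\varphi]))$, and then identify $\phi=s[\varphi]$ and $g=\psi_s([\varphi],[\varphi])$ as a pair violating hypothesis~(2).
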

\begin{proof}
Let us assume that there exists $[\varphi]\in Out(\Gsf)$ as in the statement. Let $s$ be any normalized set theoretic section of $p$, and $\psi_s$ any normalized lifting of the induced map $\hat{s}$. Since $\psi_s$ is normalized, we have
$$
\psi_s([id_G],[\phi])=\psi_s([\phi],[id_G])=e
$$
for all $[\phi]\in Out(\Gsf)$. We claim that no such $\psi_s$ can satisfy (\ref{condicio_2-cocicle_G}) for all $[\phi],[\phi'],[\phi'']\in Out(\Gsf)$. Indeed, it is enough to take $([\phi],[\phi'],[\phi''])$ equal to $([\varphi],[\varphi],[\varphi])$. Since $[\varphi]^2=[id_G]$, (\ref{condicio_2-cocicle_G}) takes the form
$$
\psi_s([\varphi],[\varphi])\ \psi_s([id_G],[\varphi])=s[\varphi](\psi_s([\varphi],[\varphi]))\ \psi_s([\varphi],[id_G]),
$$
i.e.
\begin{equation}\label{condicio_sobre_psi_s}
\psi_s([\varphi],[\varphi])=s[\varphi](\psi_s([\varphi],[\varphi])).
\end{equation}
Now, since $s$ is normalized, item (1) in the statement implies that $\psi_s([\varphi],[\varphi])$ is an element $g\in G$ such that $s[\varphi]^2=c_g$. Hence, (\ref{condicio_sobre_psi_s}) requires the existence of some automorphism $\phi\in[\varphi]$ which leaves invariant at least one of the elements $g\in G$ such that $\phi^2=c_g$, and no such $\phi$ exists by item (2). Hence, $\Gsf$ can not be permutationally split. 
\end{proof}

Besides the abelian and the centerless groups, it readily follows from Proposition~\ref{caracteritzacio_grup_permutacionalment_escindit} that all groups with only inner automorphisms are also permutationally split. Hence any attempt to find a non permutationally split group requires looking at non-abelian groups with a non-trivial center and at least one non-trivial outer automorphism. Among the simplest examples of such groups we have the dihedral groups $\Dsf_n$ for $n\geq 4$ even (otherwise, $\Dsf_n$ is abelian or centerless). We already know that $\Dsf_4$ and $\Dsf_6$ are both permutationally split (in fact, both lead to the same permutation 2-group up to equivalence). However, $\Dsf_8$ is no longer permutationally split. In fact, we have the following.

\begin{prop}\label{2-grups_permutacions_no_escindits}
For any $k\geq 1$, the dihedral group $\Dsf_{8k}$ is non permutationally split and consequently, the permutation 2-group $\SSS ym(\Dsf_{8k})$ is non split.
\end{prop}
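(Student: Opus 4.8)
The plan is to apply Corollary~\ref{condicio_suficient_split_S_G} to $\Gsf=\Dsf_{8k}$. That is, I would exhibit a non-trivial outer automorphism $[\varphi]\in\mathsf{Out}(\Dsf_{8k})$ satisfying conditions (1) and (2) of that corollary, and the non-splitness of $\SSS ym(\Dsf_{8k})$ follows at once.

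First I would recall the structure of $\Dsf_n=\langle r,s\mid r^n=s^2=e,\ sr=r^{-1}s\rangle$ for $n=8k$: its center is $\Zsf(\Dsf_n)=\{e,r^{n/2}\}$, and (as already used in Example~\ref{splits_no_trivials_2}) $\mathsf{Aut}(\Dsf_n)\cong\ZZ_n\rtimes(\ZZ_n)^\ast$, with $(p,q)\in\ZZ_n\times(\ZZ_n)^\ast$ corresponding to $\phi_{p,q}$ defined by $\phi_{p,q}(r)=r^q$, $\phi_{p,q}(s)=sr^p$. The inner automorphisms are $c_{r^a}=\phi_{2a,1}$ and $c_{sr^a}=\phi_{2a,-1}$, so $\mathrm{Inn}(\Dsf_n)$ consists of the $\phi_{p,q}$ with $p$ even and $q=\pm1$, whence $\mathsf{Out}(\Dsf_n)\cong(\ZZ_n/2\ZZ_n)\rtimes((\ZZ_n)^\ast/\{\pm1\})$. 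The candidate I would use is the class $[\varphi]$ of $\varphi=\phi_{1,1}$, i.e. the automorphism $r\mapsto r$, $s\mapsto sr$; it is non-trivial because $1$ is odd, and $[\varphi]^2=[\phi_{2,1}]=[c_r]=[id]$, giving condition (1). (One could alternatively use $\phi_{1,-1}$; the computations are parallel.)

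Next I would verify condition (2): for every $\phi\in[\varphi]$ and every $g\in\Dsf_{8k}$ with $\phi^2=c_g$, one needs $\phi(g)\neq g$. The automorphisms in $[\varphi]$ are exactly those of the form $\phi=c_h\circ\phi_{1,1}$ with $h$ ranging over $\Dsf_{8k}$; a direct computation gives $\phi=\phi_{p,1}$ with $p$ odd (when $h$ is a rotation) or $\phi=\phi_{p,-1}$ with $p$ odd (when $h$ is a reflection). In each case $\phi^2$ is an inner automorphism by a specific power of $r$: for $\phi=\phi_{p,1}$ one gets $\phi^2=\phi_{2p,1}=c_{r^p}$, and for $\phi=\phi_{p,-1}$ one gets $\phi^2=\phi_{0,1}=c_e$. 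So the set of $g$ with $\phi^2=c_g$ is either $\{r^p,r^{p+4k}\}$ or $\{e,r^{4k}\}=\Zsf(\Dsf_{8k})$. Now $\phi$ fixes $r$ (since $q=\pm1$ acts on $\langle r\rangle$ by $r\mapsto r^{\pm1}$), hence fixes every power of $r$ when $q=1$, but when $q=-1$ it sends $r^p\mapsto r^{-p}$. The crucial point, where the hypothesis $8\mid n$ enters: when $\phi=\phi_{p,1}$ with $p$ odd, the relevant $g$'s are $r^p$ and $r^{p+4k}$, both of which are fixed by $\phi$ — so condition~(2) would \emph{fail} for this $\phi$. This means $[\phi_{1,1}]$ is the wrong choice, and I would instead take $[\varphi]=[\phi_{1,-1}]$, whose square in $\mathsf{Out}$ is still trivial ($\phi_{1,-1}^2=\phi_{0,1}=id$), but now every $\phi\in[\phi_{1,-1}]$ is of the form $\phi_{p,-1}$ ($p$ odd) or $\phi_{p,1}$ ($p$ odd), and one checks that for $\phi=\phi_{p,-1}$ the elements $g$ with $\phi^2=c_g$ are $e$ and $r^{4k}$, central hence fixed — again a failure.

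The real subtlety — and the step I expect to be the main obstacle — is therefore choosing the class $[\varphi]$ correctly and pinning down exactly which powers of $r$ arise as the "$g$ with $\phi^2=c_g$", then checking $\phi(g)\neq g$ for all of them; this is where the arithmetic condition $8\mid n$ (as opposed to merely $2\mid n$ or $4\mid n$) becomes essential, and it explains why $\Dsf_4$ and $\Dsf_6$ are permutationally split while $\Dsf_{8k}$ is not. Concretely, I would look for $\phi\in\mathsf{Aut}(\Dsf_{8k})$ of order $2$ in $\mathsf{Out}$ but with $\phi^2=c_{r^a}$ for $a$ such that $r^a\notin\Zsf$ and $\phi(r^a)\neq r^a$, i.e. $\phi$ must invert $r$ (so $q=-1$) while $\phi^2$ is conjugation by a non-central odd power of $r$ — and one verifies that such $\phi$ exist precisely when $8\mid n$ (e.g. $\phi=\phi_{p,q}$ with suitable $p$ and with $q^2\equiv1$ but $q\not\equiv\pm1$, available only once $(\ZZ_n)^\ast$ is large enough, which for the subgroup generated forces $8\mid n$). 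Once such a $\phi$ and the corresponding $g=r^a$ with $\phi(g)=g^{-1}=r^{-a}\neq r^a$ (valid since $2a\not\equiv0\pmod{n}$, i.e. $r^a\notin\Zsf$) are exhibited, together with the verification that $[\varphi]$ is a well-defined order-$2$ element of $\mathsf{Out}(\Dsf_{8k})$ and that condition (2) holds for \emph{all} representatives and all such $g$, Corollary~\ref{condicio_suficient_split_S_G} immediately yields that $\Dsf_{8k}$ is not permutationally split, and hence by definition $\SSS ym(\Dsf_{8k})$ is non-split. I would organize the write-up as: (i) recall $\mathsf{Aut}$, $\mathsf{Inn}$, $\Zsf$; (ii) name the automorphism $\varphi$ and check it represents an order-$2$ class in $\mathsf{Out}$; (iii) enumerate the representatives $\phi\in[\varphi]$ and, for each, the finite set $\{g:\phi^2=c_g\}$; (iv) check $\phi(g)\neq g$ throughout, highlighting where $8\mid n$ is used; (v) invoke Corollary~\ref{condicio_suficient_split_S_G}.
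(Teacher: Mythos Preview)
Your overall strategy matches the paper's: apply Corollary~\ref{condicio_suficient_split_S_G} to a suitable outer automorphism of $\Dsf_{8k}$, using the description $\mathsf{Aut}(\Dsf_n)\cong\ZZ_n\rtimes(\ZZ_n)^\ast$. You also correctly discover, by trying $q=1$ and $q=-1$, that no representative with $q\equiv\pm 1$ can satisfy condition~(2), and you correctly isolate the key requirement: a unit $q\in(\ZZ_{8k})^\ast$ with $q^2\equiv 1$ but $q\not\equiv\pm 1$.

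However, there is a genuine gap. Your final paragraph contains an internal contradiction: you write ``$\phi$ must invert $r$ (so $q=-1$)'' and, in the same sentence, ``$q^2\equiv 1$ but $q\not\equiv\pm 1$''. The first is wrong (as you yourself showed, $q=-1$ forces $\phi^2=id$, hence $g\in\Zsf(\Dsf_{8k})$ and condition~(2) fails), and the second is the correct idea --- but you never name such a $q$ or carry out the verification. The paper's choice is $q=4k-1$ (equivalently $q=4k+1$): one has $(4k-1)^2=16k^2-8k+1\equiv 1\pmod{8k}$, and $4k-1\not\equiv\pm 1\pmod{8k}$. With $\varphi=\phi_{1,4k-1}$, the class $[\varphi]$ consists of the automorphisms $\phi_{p,\pm(4k-1)}$ with $p$ odd; for each such $\phi$ one computes $\phi^2$ explicitly (it is $\phi_{4k,1}$ or $\phi_{2+4i-4k,1}$), reads off the two $g=r^a$ with $\phi^2=c_g$, and checks $\phi(r^a)=r^{a(4k\mp 1)}\neq r^a$, which amounts to $a(4k\mp 2)\not\equiv 0\pmod{8k}$ --- true because the relevant $a$ is odd or equals $2k,6k$. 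This is exactly step~(iv) of your outline, and it is where the hypothesis $8\mid n$ is actually used; until you pin down $q=4k\pm 1$ and run this computation, the argument is incomplete.
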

\begin{proof}
Let us see that $\Dsf_{8k}$ has a non-trivial outer automorphism satisfying the conditions in Corollary~\ref{condicio_suficient_split_S_G}. Recall that the group $\mathsf{Aut}(\Dsf_n)$ is isomorphic to $\ZZ_n\rtimes(\ZZ_n)^\ast$, with $(\ZZ_n)^\ast$ acting on $\ZZ_n$ by multiplication. An element $(\overline{p},\overline{q})\in\ZZ_n\rtimes(\ZZ_n)^\ast$ has to be identified with the automorphism mapping the generators $r,s$ of $\Dsf_n$ to $r^q$ and $sr^p$, respectively. We shall denote this automorphism by $\phi(p,q)$, with $p,q$ always thought mod $n$. For even $n$, the inner automorphisms are then the automorphisms $\phi(p,\pm 1)$ for all $p\in\{0,2,\ldots,n-2\}$. In fact, we have
$$
c_{r^l}=\phi(-2l,1),\qquad  c_{sr^l}=\phi(2l,-1)
$$
for any $l\in\{0,1,\ldots,n-1\}$. Moreover, the center is $\Zsf(\Dsf_n)=\{e,r^{n/2}\}$, so that for each inner automorphism there are exactly two elements in $\Dsf_n$ giving rise to it.

Let us now consider the case $n=8k$ for any $k\geq 1$. Clearly $(8k,4k-1)=1$, so that $\phi(1,4k-1)$ is a non inner automorphisms of $\Dsf_{8k}$. We claim that the outer automorphism $[\phi(1,4k-1)]$ satisfies the conditions in Corollary~\ref{condicio_suficient_split_S_G}. Indeed, its square is $\phi(4k,1)$, which is an inner automorphism. Moreover, one easily checks that $[\phi(1,4k-1)]$ consists of the automorphisms
$$
[\phi(1,4k-1)]=\{\phi(1,\pm(4k-1)),\phi(3,\pm(4k-1)),\ldots,\phi(8k-1,\pm(4k-1))\},
$$  
whose respective squares are given by
\begin{align*}
\phi(1+2i,4k-1)^2&=\phi(4k,1), \\ \phi(1+2i,1-4k)^2&=\phi(2+4i-4k,1)
\end{align*}
for $i=0,1,\ldots,4k-1$. The automorphism $\phi(4k,1)$ is both conjugation by $r^{2k}$ and by $r^{6k}$, while $\phi(2+4i-4k,1)$ is conjugation by $r^{2k-1-2i}$ and by $r^{6k-1-2i}$. Now, we have
\begin{align*}
&\phi(1+2i,4k-1)(r^{2k})=r^{6k}\neq r^{2k}, \\ &\phi(1+2i,4k-1)(r^{6k})=r^{2k}\neq r^{6k}, \\ &\phi(1+2i,1-4k)(r^{2k-1-2i})=r^{6k-1-2i}\neq r^{2k-1-2i}, \\ &\phi(1+2i,1-4k)(r^{6k-1-2i})=r^{2k-1-2i}\neq r^{6k-1-2i}.
\end{align*} 
Hence, by Corollary~\ref{condicio_suficient_split_S_G}, $\Dsf_{8k}$ is non permutationally split. 
\end{proof}

\bibliographystyle{plain}
\bibliography{2-grups_permutacions_I_versio_revisada}

\end{document}